\setlist[enumerate]{itemsep=0pt,parsep=0pt}
\title{On the rank of the multivariable $(\varphi,\OK\x)$-modules associated to mod $p$ representations of $\GL_2(K)$}
\author{Yitong Wang\thanks{E-mail address: \texttt{yitongw.wang@utoronto.ca}}}
\date{}
\def\FF{\mathbb{F}}
\def\RR{\mathbb{R}}
\def\ZZ{\mathbb{Z}}
\def\NNN{\mathbb{Z}_{\geq0}}
\def\cC{\mathcal{C}}
\def\cJ{\mathcal{J}}
\def\cS{\mathcal{S}}
\def\fm{\mathfrak{m}}
\def\fp{\mathfrak{p}}
\def\ft{\mathfrak{t}}
\def\sh{\mathsf{h}}
\def\cont{\operatorname{cont}}
\def\deg{\operatorname{deg}} 
\def\det{\operatorname{det}}
\def\dim{\operatorname{dim}}
\def\Gal{\operatorname{Gal}}
\def\GL{\operatorname{GL}}
\def\gr{\operatorname{gr}}
\def\Hom{\operatorname{Hom}}
\def\id{\operatorname{id}}
\def\Ind{\operatorname{Ind}}
\def\Inj{\operatorname{Inj}}
\def\JH{\operatorname{JH}}
\def\M{\operatorname{M}}
\def\Mat{\operatorname{Mat}}
\def\max{\operatorname{max}}
\def\min{\operatorname{min}}
\def\mod{\operatorname{mod}}
\def\nss{\operatorname{nss}}
\def\rank{\operatorname{rank}} 
\def\reg{\operatorname{reg}}
\def\sh{\operatorname{sh}}
\def\soc{\operatorname{soc}}
\def\ss{\operatorname{ss}}
\def\Sym{\operatorname{Sym}}
\def\eqdef{\overset{\mathrm{def}}{=}}
\def\Fp{\mathbb{F}_p}
\def\Fq{\mathbb{F}_q}
\def\into{\hookrightarrow}
\def\inv{^{-1}}
\def\ism{\stackrel{\sim}{\rightarrow}}
\def\loc{\textit{loc.cit.}}
\def\OK{\mathcal{O}_K}
\def\onto{\twoheadrightarrow}
\def\Qp{\mathbb{Q}_p}
\def\Qpbar{\overline{\mathbb{Q}}_p}
\def\rhobar{\overline{\rho}}
\def\x{^{\times}}
\newcommand{\abs}[1]{|#1|}
\newcommand{\ang}[1]{\langle#1\rangle}
\newcommand{\babs}[1]{\left|#1\right|}
\newcommand{\bang}[1]{\left\langle#1\right\rangle}
\newcommand{\bbbra}[1]{\left[#1\right]}
\newcommand{\bbra}[1]{\left(#1\right)}
\newcommand{\bigabs}[1]{\big|#1\big|}
\newcommand{\bigang}[1]{\big\langle#1\big\rangle}
\newcommand{\bigbbbra}[1]{\big[#1\big]}
\newcommand{\bigbra}[1]{\big(#1\big)}
\newcommand{\bigset}[1]{\big\{#1\big\}}
\newcommand{\bra}[1]{(#1)}
\newcommand{\dbra}[1]{(\!(#1)\!)}
\newcommand{\ddbra}[1]{[\![#1]\!]}
\newcommand{\norm}[1]{\|#1\|}
\newcommand{\ovl}[1]{\overline{#1}}
\newcommand{\pmat}[1]{\begin{pmatrix}#1\end{pmatrix}}
\newcommand{\set}[1]{\{ #1 \}}
\newcommand{\smat}[1]{\left(\begin{smallmatrix}#1\end{smallmatrix}\right)}
\newcommand{\sset}[1]{\left\{ #1 \right\}}
\newcommand{\un}[1]{\underline{#1}}
\newcommand{\wh}[1]{\widehat{#1}}
\begin{document}

\newtheorem{definition}{Definition}[section] 
\newtheorem{remark}[definition]{Remark}
\newtheorem{example}[definition]{Example}
\newtheorem{proposition}[definition]{Proposition}
\newtheorem{lemma}[definition]{Lemma}
\newtheorem{corollary}[definition]{Corollary}
\newtheorem{theorem}[definition]{Theorem}
\newtheorem{conjecture}[definition]{Conjecture}

\maketitle

\begin{abstract}
    Let $p$ be a prime number, $K$ a finite unramified extension of $\Qp$ and $\FF$ a finite extension of $\Fp$. For $\pi$ an admissible smooth representation of $\GL_2(K)$ over $\FF$ satisfying certain multiplicity-one properties, we compute the rank of the associated \'etale $(\varphi,\OK\x)$-module $D_A(\pi)$ defined in \cite{BHHMS2}, extending the results of \cite{BHHMS2} and \cite{BHHMS3}. 
\end{abstract}

\tableofcontents

\section{Introduction}

Let $p$ be a prime number. The mod $p$ Langlands correspondence for $\GL_2(\Qp)$ is completely known by the work of Breuil, Colmez, Emerton, etc. In particular, Colmez (\cite{Col10}) constructed a functor from the category of admissible finite length mod $p$ representations of $\GL_2(\Qp)$ to the category of finite-dimensional continuous mod $p$ representations of $\Gal(\Qpbar/\Qp)$, using Fontaine's category of $(\varphi,\Gamma)$-modules (\cite{Fon90}) as an intermediate step. This gives a functorial way to realize the mod $p$ Langlands correspondence for $\GL_2(\Qp)$.

However, the situation becomes much more complicated when we consider $\GL_2(K)$ for $K$ a nontrivial finite extension of $\Qp$. For example, there are many more supersingular representations of $\GL_2(K)$ (\cite{BP12}) and we don't have a classification of these representations. Moreover, they are not of finite presentation (\cite{Sch15}, \cite{Wu21}), and it is impossible so far to write down explicitly one of these representations. 

Another important feature of the mod $p$ Langlands correspondence for $\GL_2(\Qp)$ is that it satisfies the local-global compatibility (\cite{Eme11}), in the sense that it can be realized in the $H^1$ of towers of modular curves. Motivated by this, we are particularly interested in the mod $p$ representations $\pi$ of $\GL_2(K)$ coming from the cohomology of towers of Shimura curves, see for example \cite[(1)]{BHHMS1}. We hope that these representations play a role in the mod $p$ Langlands correspondence for $\GL_2(K)$. 

There have been many results on the representation-theoretic properties of $\pi$ as above. For example, when $K$ is unramified over $\Qp$, we can explicitly describe the finite-dimensional invariant subspace $\pi^{K_1}$ of $\pi$ where $K_1\eqdef1+p\M_2(\OK)$ (\cite{Le19}), and we know that $\pi$ has Gelfand--Kirillov dimension $[K:\Qp]$ (\cite{BHHMS1}, \cite{HW22}, \cite{Wang1}). However, the complete understanding of $\pi$ still seems a long way off.

In \cite{BHHMS2}, Breuil-Herzig-Hu-Morra-Schraen constructed an exact functor $D_A$ from a nice subcategory of the category of admissible smooth mod $p$ representations of $\GL_2(K)$ (which contains $\pi$) to the category of multivariable $(\varphi,\OK\x)$-modules. This functor generalizes Colmez's functor. Then the key question is to determine the structure of $D_A(\pi)$ for $\pi$ as above. In this article, we answer this question further, extending the results of \cite{BHHMS2} and \cite{BHHMS3}. The results of this article can be used to deduce important properties of $\pi$.

\hspace{\fill}

To state the main result, we begin with the construction of the functor $D_A$. We let $K$ be a finite unramified extension of $\Qp$ of degree $f\geq1$ with ring of integers $\OK$ and residue field $\Fq$ (hence $q=p^f$). Let $\FF$ be a large enough finite extension of $\Fp$ and fix an embedding $\sigma_0:\Fq\into\FF$. We let $N_0\eqdef\smat{1&\OK\\0&1}\subseteq\GL_2(\OK)$. Then we have $\FF\ddbra{N_0}=\FF\ddbra{Y_0,\ldots,Y_{f-1}}$ with $Y_j\eqdef\sum\nolimits_{a\in\Fq\x}\sigma_0(a)^{-p^j}\smat{1&[a]\\0&1}\in\FF\ddbra{N_0}$ for $0\leq j\leq f-1$, where $[a]\in\OK\x$ is the Techm\"uller lift of $a\in\Fq\x$. We let $A$ be the completion of $\FF\ddbra{N_0}[1/(Y_0\cdots Y_{f-1})]$ with respect to the $(Y_0,\ldots,Y_{f-1})$-adic topology on $\FF\ddbra{N_0}$. There is an $\FF$-linear action of $\OK\x$ on $\FF\ddbra{N_0}$ given by multiplication on $N_0\cong\OK$, and an $\FF$-linear Frobenius $\varphi$ on $\FF\ddbra{N_0}$ given by multiplication by $p$ on $N_0\cong\OK$. They extend canonically by continuity to commuting continuous $\FF$-linear actions of $\varphi$ and $\OK\x$ on $A$. Then an \'etale $(\varphi,\OK\x)$-module over $A$ is by definition a finite free $A$-module endowed with a semi-linear Frobenius $\varphi$ and a commuting continuous semi-linear action of $\OK\x$ such that the image of $\varphi$ generates everything.

For $\pi$ an admissible smooth representation of $\GL_2(K)$ over $\FF$ with central character, we let $\pi^{\vee}$ be its $\FF$-linear dual, which is a finitely generated $\FF\ddbra{I_1}$-module and is endowed with the $\fm_{I_1}$-adic topology, where $I_1\eqdef\smat{1+p\OK&\OK\\p\OK&1+p\OK}\subseteq\GL_2(\OK)$ and $\fm_{I_1}$ is the maximal ideal of $\FF\ddbra{I_1}$. We define $D_A(\pi)$ to be the completion of $A\otimes_{\FF\ddbra{N_0}}\pi^{\vee}$ with respect to the tensor product topology. The $\OK\x$-action on $\pi^{\vee}$ given by $f\mapsto f\circ\smat{a&0\\0&1}$ (for $a\in\OK\x$) extends by continuity to $D_A(\pi)$, and the $\psi$-action on $\pi^{\vee}$ given by $f\mapsto f\circ\smat{p&0\\0&1}$ induces a continuous $A$-linear map
\begin{equation}\label{General Eq beta}
    \beta:D_A(\pi)\to A\otimes_{\varphi,A}D_A(\pi).
\end{equation}
Let $\cC$ be the abelian category of admissible smooth representations $\pi$ of $\GL_2(K)$ over $\FF$ with central characters such that $\gr{(D_A(\pi))}$ is a finitely generated $\gr(A)$-module. Then for $\pi$ in $\cC$, $D_A(\pi)$ is a finite free $A$-module (see \cite[Cor.~3.1.2.9]{BHHMS2} and \cite[Remark.~2.6.2]{BHHMS3}). If moreover $\beta$ is an isomorphism, then its inverse $\beta\inv=\id\otimes\varphi$ makes $D_A(\pi)$ an \'etale $(\varphi,\OK\x)$-module.

\hspace{\fill}

Let $\rhobar:\GL_2(K)\to\GL_2(\FF)$ be a continuous representation of the following form up to twist:

\begin{equation}\label{General Eq genericity}
    \rhobar|_{I_K}\cong\pmat{\omega_f^{\sum\nolimits_{j=0}^{f-1}(r_j+1)p^j}&*\\0&1}~\text{with}~2f+1\leq r_j\leq p-3-2f~\forall\,0\leq j\leq f-1,
\end{equation}
where $\omega_f:I_K\to\FF\x$ is the fundamental character of level $f$ (associated to $\sigma_0$). If $f=1$, we assume moreover that $r_0\geq4$. In particular, we have $p\geq4f+4$.
    
Let $\pi$ be a smooth representation of $\GL_2(K)$ over $\FF$ which satisfies
\begin{enumerate}
    \item 
    $\pi^{K_1}\cong D_0(\rhobar)$ as $K\x\GL_2(\OK)$-representations, where $D_0(\rhobar)$ is the representation of $\GL_2(\Fq)$ defined in \cite[\S13]{BP12} and is viewed as a representation of $\GL_2(\OK)$ by inflation, and $K\x$ acts on $D_0(\rhobar)$ by the character $\det(\rhobar)\omega\inv$, where $\omega$ is the mod $p$ cyclotomic character;
    \item 
    for any character $\chi:I\to\FF\x$ appearing in $\pi[\fm_{I_1}]=\pi^{I_1}$, we have $[\pi[\fm_{I_1}^3]:\chi]=1$, where $\pi[\fm_{I_1}^3]$ is the set of elements of $\pi$ annihilated by $\fm_{I_1}^3$, and $[\pi[\fm_{I_1}^3]:\chi]$ is the multiplicity of $\chi$ in the semisimplification of $\pi[\fm_{I_1}^3]$ as $I$-representations.
\end{enumerate}
In particular, (i) and (ii) are satisfied for those $\pi$ coming from the cohomology of towers of Shimura curves in a ``multiplicity-one'' situation (\cite{BHHMS1}, \cite{HW22}, \cite{Wang1}). Our main result is the following:

\begin{theorem}[\S\ref{General Sec basis}]\label{General Thm main1}
    Suppose that $\rhobar$ and $\pi$ are as above. Then $\pi$ is in $\cC$, $\beta$ in (\ref{General Eq beta}) is an isomorphism and $$\rank_AD_A(\pi)=2^f.$$
\end{theorem}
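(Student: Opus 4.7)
The plan is to establish the three assertions in turn: first that $\pi\in\cC$, then the rank computation, and finally the étaleness of $\beta$. For $\pi\in\cC$ one needs $\gr(D_A(\pi))$ to be finitely generated over $\gr(A)$, which translates into a statement about the associated graded of $\pi^\vee$ for the $\fm_{I_1}$-adic filtration. Hypothesis (i) pins down $\pi^{K_1}\cong D_0(\rhobar)$, giving a precise description of the top layer $\pi[\fm_{I_1}]$, while (ii) bounds the multiplicities of characters of $I$ occurring in $\pi[\fm_{I_1}^3]$. This is exactly the input needed to run the finiteness criterion developed in \cite{BHHMS2,BHHMS3}: once the bottom three graded pieces are understood, a standard lifting argument produces a finite set of generators of the graded module, with the present multiplicity-one condition playing the role of the (stronger) hypotheses used \loc.

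The rank computation is the heart of the theorem. The target $2^f$ coincides with the number of subsets $J\subseteq\{0,\ldots,f-1\}$, which is also the number of Jordan--H\"older constituents of $D_0(\rhobar)$ and the generic number of Serre weights attached to $\rhobar$. For each such $J$, hypothesis~(ii) isolates a distinguished (up to scalar) $I$-eigenvector $v_J\in\pi^{I_1}$ with some character $\chi_J$, and dually a linear functional $e_J\in\pi^\vee$. The plan is then to promote $\{1\otimes e_J\}_J$ to an $A$-basis of $D_A(\pi)$, and in particular to show that $\rank_A D_A(\pi)=2^f$.

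The crux of the argument---and the expected main obstacle---is to show simultaneously that the $2^f$ elements $1\otimes e_J$ generate $D_A(\pi)$ and are $A$-linearly independent. For generation I would pass to $\gr(D_A(\pi))$ over $\gr(A)$ and invoke a graded Nakayama argument, matching the contribution of each $e_J$ against the explicit composition factors of $\pi^{K_1}\cong D_0(\rhobar)$; the delicate step is controlling how each $v_J$ lifts through the filtration $\pi[\fm_{I_1}]\subseteq\pi[\fm_{I_1}^2]\subseteq\pi[\fm_{I_1}^3]$, for which hypothesis~(ii) is indispensable. For independence, suppose $\sum_J a_J\cdot(1\otimes e_J)=0$ with $a_J\in A$; the semi-linear $\OK\x$-action acts on $e_J$ through the pairwise distinct characters $\chi_J|_{\OK\x}$, and one can twist the relation by various elements of $\OK\x$ and take suitable $\FF$-linear combinations to isolate each $a_J$. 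The genericity condition $2f+1\leq r_j\leq p-3-2f$ enters precisely to guarantee that these characters remain separated under the combinatorial manipulations involved, avoiding accidental collisions that would otherwise destroy the argument (in the case $f=1$, the extra bound $r_0\geq 4$ plays the analogous role).

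Finally, once $\rank_A D_A(\pi)=2^f$ is established, the isomorphism $\beta$ follows quickly: both $D_A(\pi)$ and $A\otimes_{\varphi,A}D_A(\pi)$ are finite free of the same rank over $A$, so it suffices to verify that $\beta$ is surjective, and this can be read off the explicit basis by computing $\psi(e_J)$ from the formula $f\mapsto f\circ\smat{p&0\\0&1}$ and using that the resulting elements expand back into the $e_J$'s with invertible coefficient matrix. Thus, the rank computation and the étale property will fall out together from a sufficiently robust analysis of the $2^f$ eigenvectors produced by the multiplicity-one hypothesis.
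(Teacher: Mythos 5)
Your outline of \emph{generation} is essentially the easy half of the paper's argument: the multiplicity-one structure of $\pi^{I_1}=D_0(\rhobar)^{I_1}$ produces $2^f$ dual vectors $v_J^*\in\gr_0(\pi^{\vee})$ and a surjection $\bigoplus_{J}\gr A\onto\gr D_A(\pi)$, which already gives $\rank_A D_A(\pi)\leq 2^f$ (this upper bound is in fact quoted from \cite{BHHMS2}). The genuine gap is in your proof of linear independence, which is where all the difficulty of the theorem is concentrated.

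Your proposed separation argument --- twist the relation $\sum_J a_J(1\otimes e_J)=0$ by elements of $\OK\x$ and use distinctness of the characters $\chi_J$ --- cannot work as stated. The $\OK\x$-action on $D_A(\pi)$ is only \emph{semilinear} over $A$, and $A$ itself carries a highly nontrivial $\OK\x$-action, so twisting produces $\sum_J a(a_J)\cdot a(1\otimes e_J)=0$ in which $a(1\otimes e_J)$ is not a scalar multiple of $1\otimes e_J$; the character $\chi_J$ only governs the action modulo the filtration. More fundamentally, linear independence over $A$ is not a formal consequence of eigencharacter bookkeeping on $\pi^{I_1}$: it encodes structural information about $\pi$ arbitrarily deep in the $\fm_{I_1}$-adic filtration. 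Indeed, for a proper subrepresentation $\pi_1\subseteq\pi$ the analogous elements can become dependent and the rank drops (Theorem \ref{General Thm rank sub}), so any argument that only sees $\pi[\fm_{I_1}^3]$ and the characters $\chi_J$ proves too much.

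What is actually needed, and what occupies \S\S\ref{General Sec shift}--\ref{General Sec basis} and the appendices, is the construction for each $J$ of an entire projective system $x_J=(x_{J,k})_{k\geq0}$ of elements of $\pi$ with $\un{Y}^{\un{1}}x_{J,k}=x_{J,k-1}$ and $x_{J,0}$ essentially equal to $v_J$, built inductively via the action of $\smat{p&0\\0&1}$ and the explicit relations among the $v_J$ inside $D_0(\rhobar)$ (Propositions \ref{General Prop vector simple}, \ref{General Prop relation 1}, \ref{General Prop relation 2}, \ref{General Prop vector}). One must then prove the degree bound $x_{J,k}\in\pi[\fm_{I_1}^{kf+1}]$ (this is where hypothesis (ii) enters, via Proposition \ref{General Prop degree}) so that $x_J$ defines an element of $\Hom_{\FF}^{\cont}(D_A(\pi),\FF)$, and the finiteness condition (Theorem \ref{General Thm finiteness}) so that $x_J$ lies in the image of $\mu_*$ and hence in $\Hom_A(D_A(\pi),A)$. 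The perfect pairing $\ang{\gr(x'_J),v_{J'}^*}=\delta_{J=J'}\un{y}^{-\un{1}}$ is what forces the surjection above to be an isomorphism; without producing these dual functionals there is no mechanism in your proposal to rule out a nontrivial kernel. Your treatment of $\beta$ is fine in outline but likewise presupposes the explicit basis, since the invertibility of $\Mat(\varphi)$ is read off from the relations defining the $x_{J,\un{i}}$.
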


By \cite[Remark~3.3.2.6(ii)]{BHHMS2} we know that $\pi$ is in $\cC$. By \cite[Thm.~3.3.2.1]{BHHMS2} and localization we know that $\rank_AD_A(\pi)\leq2^f$. Theorem \ref{General Thm main1} is proved by \cite[Thm.~3.1.3]{BHHMS3} when $\rhobar$ is semisimple. We generalize the method of \cite{BHHMS3} to the non-semisimple case, which is seriously more delicate.

The proof of Theorem \ref{General Thm main1} is by an explicit construction of an $A$-basis of the dual \'etale $(\varphi,\OK\x)$-module $\Hom_A(D_A(\pi),A)$ in the following steps. 

\vspace{0.2cm}

\noindent\textbf{Step 1.} We construct $2^f$ projective systems $(x_{J,k})_{k\geq0}$ of elements of $\pi$ indexed by the subsets $J\subseteq\set{0,1,\ldots,f-1}$ (see Theorem \ref{General Thm seq}). To do this, we first define suitable elements $x_{J,k}\in\pi^{K_1}\cong D_0(\rhobar)$ for $k\leq f$. Since $D_0(\rhobar)$ is explicit, we can then study the precise relations among these elements, which enable us to define $x_{J,k}\in\pi$ for all $k$ inductively. This is the content of \S\S\ref{General Sec shift}-\ref{General Sec xJi} and Appendix \ref{General Sec app vanish}.

\vspace{0.2cm}

\noindent\textbf{Step 2.} We prove that $x_{J,k}\in\pi[\fm_{I_1}^{kf+1}]$ for all $J\subseteq\set{0,1,\ldots,f-1}$ and $k\geq0$ (see Proposition \ref{General Prop degree}, whose proof makes use of condition (ii) and is given in Appendix \ref{General Sec app degree}), from which we deduce that each projective system $x_J$ can be regarded as an element of $\Hom_{\FF}^{\cont}(D_A(\pi),\FF)$.

\vspace{0.2cm}

\noindent\textbf{Step 3.} There is a canonical $A$-linear injection (see \cite[(87)]{BHHMS3})
\begin{equation*}
    \mu_*:\Hom_A(D_A(\pi),A)\into\Hom_{\FF}^{\cont}(D_A(\pi),\FF).
\end{equation*}
We prove that each $x_J\in\Hom_{\FF}^{\cont}(D_A(\pi),\FF)$ satisfies a crucial finiteness condition (see Theorem \ref{General Thm finiteness}), which guarantees that it lies in the image of $\mu_*$. Once we prove that $x_J\in\Hom_A(D_A(\pi),A)$ for all $J$, it is not difficult to conclude that they form an $A$-basis of $\Hom_A(D_A(\pi),A)$.


\hspace{\fill}

Finally, we prove the following generalization of Theorem \ref{General Thm main1}. This result is crucially needed to prove that $\pi$ is of finite length (in the non-semisimple case) in \cite{BHHMS4}.

\begin{theorem}[Theorem \ref{General Thm rank sub}]\label{General Thm main2}
    Suppose that $\rhobar$ and $\pi$ are as above. Then for $\pi_1$ a subrepresentation of $\pi$, we have
    \begin{equation*}
        \rank_AD_A(\pi_1)=\babs{\JH\bra{\pi_1^{K_1}}\cap W(\rhobar^{\ss})},
    \end{equation*}
    where $\JH\bra{\pi_1^{K_1}}$ is the set of Jordan--H\"older factors of $\pi_1^{K_1}$ as a $\GL_2(\OK)$-representation, $\rhobar^{\ss}$ is the semisimplification of $\rhobar$, and $W(\rhobar^{\ss})$ is the set of Serre weights of $\rhobar^{\ss}$ defined in \cite[\S3]{BDJ10}.
\end{theorem}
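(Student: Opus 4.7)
The plan is to exploit the explicit $A$-basis of $\Hom_A(D_A(\pi), A)$ constructed in the proof of Theorem \ref{General Thm main1}, which consists of $2^f$ elements $x_J$ indexed by subsets $J \subseteq \set{0, 1, \ldots, f-1}$. Each $x_J$ is obtained from a projective system $(x_{J,k})_{k \geq 0}$ whose initial term $x_{J,0} \in \pi^{K_1} \cong D_0(\rhobar)$ is a designated element representing a specific Jordan--H\"older constituent $\sigma_J$ of $D_0(\rhobar)$. Under the standard parametrization (see e.g.~\cite{BDJ10}, \cite{BP12}), the assignment $J \mapsto \sigma_J$ yields a bijection between subsets of $\set{0, \ldots, f-1}$ and the $2^f$ Serre weights of $\rhobar^{\ss}$. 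The goal is then to show that the $x_J$ with $\sigma_J \in \JH(\pi_1^{K_1})$ descend to an $A$-basis of $\Hom_A(D_A(\pi_1), A)$.

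For the lower bound, I would show that whenever $\sigma_J \in \JH(\pi_1^{K_1}) \cap W(\rhobar^{\ss})$, the entire projective system $(x_{J,k})_{k \geq 0}$ can be built inside $\pi_1$. The base case uses that $\pi_1^{K_1}$ is a subrepresentation of $D_0(\rhobar)$ containing $\sigma_J$ as a Jordan--H\"older constituent, so a representative $x_{J,0} \in \pi_1^{K_1}$ is available. The inductive step of \S\S\ref{General Sec shift}--\ref{General Sec xJi} constructs $x_{J,k+1}$ from $x_{J,k}$ by applying matrix actions (which preserve $\pi_1$) and solving linear equations in $\pi$ whose solvability rests on the multiplicity-one hypothesis (ii); since (ii) is inherited by $\pi_1$ and $\pi_1$ is $\GL_2(K)$-stable, these preimages can be chosen inside $\pi_1$. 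Exactness of $D_A$ on $\cC$ then yields a surjection $D_A(\pi) \twoheadrightarrow D_A(\pi_1)$, hence an injection $\Hom_A(D_A(\pi_1), A) \hookrightarrow \Hom_A(D_A(\pi), A)$ into which the selected $x_J$ factor. Being a subset of a free $A$-basis they remain $A$-linearly independent, giving $\rank_A D_A(\pi_1) \geq \babs{\JH(\pi_1^{K_1}) \cap W(\rhobar^{\ss})}$. For the matching upper bound I would apply \cite[Thm.~3.3.2.1]{BHHMS2} together with the localization argument used for Theorem \ref{General Thm main1}: only those Jordan--H\"older constituents of $\pi_1^{K_1}$ that are Serre weights of $\rhobar^{\ss}$ contribute, producing the reverse inequality $\rank_A D_A(\pi_1) \leq \babs{\JH(\pi_1^{K_1}) \cap W(\rhobar^{\ss})}$.

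The main obstacle I anticipate is verifying that the recursive construction of the $x_{J,k}$ can actually be performed inside $\pi_1$ when $\sigma_J \in \JH(\pi_1^{K_1})$. The construction from Theorem \ref{General Thm main1} is intricate and involves carefully chosen preimages under specific Iwahori-equivariant operators, and one must confirm that these preimages can always be taken within $\pi_1$ rather than merely within $\pi$. This is especially delicate in the non-semisimple case when $\sigma_J \in W(\rhobar^{\ss}) \setminus W(\rhobar)$: such $\sigma_J$ then appears higher in the socle filtration of $D_0(\rhobar)$, so the existence of a suitable $x_{J,0} \in \pi_1^{K_1}$ must be argued from the explicit submodule structure of $\pi_1^{K_1} \subseteq D_0(\rhobar)$ rather than from a direct socle decomposition, and this structural analysis is where the bulk of the work should lie.
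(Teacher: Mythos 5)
Your lower-bound strategy matches the paper's: one shows $x_{J,\un{i}}\in\pi_1$ for the relevant $J$ and all $\un{i}$, so that $x_J\in\Hom_A(D_A(\pi_1),A)\into\Hom_A(D_A(\pi),A)$, and linear independence is inherited from the basis of Theorem \ref{General Thm rank 2^f}. But you have not identified the mechanism that makes the recursion stay inside $\pi_1$. The defining formula (\ref{General Eq Seq def}) for $x_{J,\un{i}}$ involves $x_{J+1,\un{i}'}$ and $x_{J',\cdot}$ for all $J^{\ss}\subseteq J'\subsetneqq J$ (there is no choice of preimage to make -- the elements are given by explicit formulas), so the induction only closes up if the index set $S\eqdef\set{J:\sigma_{\un{e}^J}\in\JH(\pi_1^{K_1})}$ is stable under $J\mapsto J-1$ and (when $J_{\rhobar}\neq\cJ$) under passing to subsets. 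Establishing these closure properties, together with the identification $\JH(\pi_1^{K_1})=\set{\sigma_{\un{b}}:\set{j:b_j\geq1}\in S}$ that supplies the base cases $\un{Y}^{-\un{i}}v_J\in\pi_1^{K_1}$, is the content of Lemma \ref{General Lem pi1^K1} and requires the explicit submodule structure of $D_0(\rhobar)$ (Corollary \ref{General Cor structure of D0} and the $I(\sigma,\tau)$'s). Also note the bijection with $W(\rhobar^{\ss})$ is $J\mapsto\sigma_{\un{e}^J}$, not $J\mapsto\sigma_J=\sigma_{\un{a}^J}$; these differ when $J\nsubseteq J_{\rhobar}$.

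The upper bound is a genuine gap. The localization argument via \cite[Thm.~3.3.2.1]{BHHMS2} bounds $\rank_AD_A(\pi)$ by $2^f=|W(\rhobar^{\ss})|$ for the ambient $\pi$; it does not produce a bound for an arbitrary subrepresentation $\pi_1$ in terms of $\JH(\pi_1^{K_1})\cap W(\rhobar^{\ss})$, and there is no short way to extract one from it. The paper instead proves directly that $\set{x_J:J\in S}$ \emph{spans} $\Hom_A(D_A(\pi_1),A)$: assuming a relation $\sum_ia_{J_i}x_{J_i}\in\Hom_A(D_A(\pi_1),A)$ with all $J_i\notin S$, one normalizes a carefully chosen leading term $a_{J_0}$ (using the filtration degree on $A$ and $|\partial J_i|$), exhibits a $\GL_2(\OK)$-subrepresentation $V\subseteq D_0(\rhobar)$ containing $\pi_1^{K_1}$ but avoiding $\sigma_{\un{e}^{J_0}}$, and shows the zeroth term of the resulting sequence lands in $D_0(\rhobar)\setminus V$ while also lying in $\pi_1^{K_1}\subseteq V$ -- a contradiction. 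This spanning argument is the hardest part of the proof and is entirely absent from your proposal.
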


\subsection*{Organization of the article}

In \S\S\ref{General Sec sw}-\ref{General Sec ps}, we review the notion of the extension graph and recall some results of \cite[\S2]{BP12} that are needed in the proof of Theorem \ref{General Thm main1} and Theorem \ref{General Thm main2}. In \S\S\ref{General Sec shift}-\ref{General Sec finiteness} and Appendix \ref{General Sec app vanish}, we explicitly construct some projective systems of elements of $\pi$ and study their basic properties. In particular, we prove the crucial finiteness condition in \S\ref{General Sec finiteness}. In \S\ref{General Sec basis} and Appendix \ref{General Sec app degree}, we use these projective systems to construct an explicit basis of $D_A(\pi)$ and finish the proof of Theorem \ref{General Thm main1}. In \S\ref{General Sec subrep}, we finish the proof of Theorem \ref{General Thm main2}. In Appendix \ref{General Sec app okx}, we compute the actions of $\varphi$ and $\OK\x$ on $D_A(\pi)$. Finally, in Appendix \ref{General Sec app lemmas}, we list some equalities among the various constants defined throughout this article.

\subsection*{Acknowledgements}

We thank Christophe Breuil for suggesting this problem, and thank Christophe Breuil and Ariane M\'ezard for helpful discussions and a careful reading of the earlier drafts of this paper. 

This work was supported by the Ecole Doctorale de Math\'ematiques Hadamard (EDMH).

\subsection*{Notation}

Let $p$ be a prime number. We fix an algebraic closure $\Qpbar$ of $\Qp$. Let $K\subseteq\Qpbar$ be the unramified extension of $\Qp$ of degree $f\geq1$ with ring of integers $\OK$ and residue field $\Fq$ (hence $q=p^f$). We denote by $G_K\eqdef\Gal(\Qpbar/K)$ the absolute Galois group of $K$ and $I_K\subseteq G_K$ the inertia subgroup. Let $\FF$ be a large enough finite extension of $\FF_p$. Fix an embedding $\sigma_0:\Fq\into\FF$ and let $\sigma_j\eqdef\sigma_0\circ\varphi^j$ for $j\in\ZZ$, where $\varphi:x\mapsto x^p$ is the arithmetic Frobenius on $\Fq$. We identify $\cJ\eqdef\Hom(\Fq,\FF)$ with $\set{0,1,\ldots,f-1}$, which is also identified with $\ZZ/f\ZZ$ so that the addition and subtraction in $\cJ$ are modulo $f$. For $a\in\OK$, we denote by $\ovl{a}\in\Fq$ its reduction modulo $p$. For $a\in\Fq$, we also view it as an element of $\FF$ via $\sigma_0$.

For $F$ a perfect ring of characteristic $p$, we denote by $W(F)$ the ring of Witt vectors of $F$. For $x\in F$, we denote by $[x]\in W(F)$ its Techm\"uller lift.

Let $I\eqdef\smat{\OK\x&\OK\\p\OK&\OK\x}$ be the Iwahori subgroup of $\GL_2(\OK)$, $I_1\eqdef\smat{1+p\OK&\OK\\p\OK&1+p\OK}$ be the pro-$p$ Iwahori subgroup, $K_1\eqdef1+p\M_2(\OK)$ be the first congruence subgroup, $N_0\eqdef\smat{1&\OK\\0&1}$ and $H\eqdef\smat{[\Fq\x]&0\\0&[\Fq\x]}$.

For $P$ a statement, we let $\delta_P\eqdef1$ if $P$ is true and $\delta_P\eqdef0$ otherwise.

Throughout this article, we let $\rhobar:G_K\to\GL_2(\FF)$ be as in \eqref{General Eq genericity} and $\pi$ be an admissible smooth representation of $\GL_2(K)$ over $\FF$ satisfying the conditions (i),(ii) before Theorem \ref{General Thm main1}.

\section{Combinatorics of Serre weights}\label{General Sec sw}

In this section, we review the notion of the extension graph following \cite{BHHMS1}.

We write $\un{i}$ for an element $(i_0,\ldots,i_{f-1})\in\ZZ^f$. For $a\in\ZZ$, we write $\un{a}\eqdef(a,\ldots,a)\in\ZZ^f$. For each $j\in\cJ$, we define $e_j\in\ZZ^f$ to be $1$ in the $j$-th coordinate, and $0$ otherwise. For $J\subseteq\cJ$, we define $\un{e}^J\in\ZZ^f$ by $e^J_j\eqdef\delta_{j\in J}$. We say that $\un{i}\leq\un{i}'$ if $i_j\leq i'_j$ for all $j$. We write
\begin{align*}
    X_1(\un{T})&\eqdef\sset{(\un{\lambda}_1,\un{\lambda}_2)\in\ZZ^{2f}:\un{0}\leq\un{\lambda}_1-\un{\lambda}_2\leq\un{p}-\un{1}};\\
    X_{\reg}(\un{T})&\eqdef\sset{(\un{\lambda}_1,\un{\lambda}_2)\in\ZZ^{2f}:\un{0}\leq\un{\lambda}_1-\un{\lambda}_2\leq\un{p}-\un{2}};\\
    X^0(\un{T})&\eqdef\sset{(\un{\lambda}_1,\un{\lambda}_2)\in\ZZ^{2f}:\un{\lambda}_1=\un{\lambda}_2}.
\end{align*}
We define the left shift $\delta:\ZZ^f\to\ZZ^f$ by $\delta(\un{i})_j\eqdef i_{j+1}$ and define $\pi:\ZZ^{2f}\to\ZZ^{2f}$ by $\pi(\un{\lambda}_1,\un{\lambda}_2)\eqdef\bigbra{\delta(\un{\lambda}_1),\delta(\un{\lambda}_2)}$. 

A \textbf{Serre weight} of $\GL_2(\Fq)$ is an isomorphism class of an absolutely irreducible representation of $\GL_2(\Fq)$ over $\FF$. For $\lambda=(\un{\lambda}_1,\un{\lambda}_2)\in X_1(\un{T})$, we define 
\begin{equation*}
    F(\lambda)\eqdef\bigotimes\limits_{j=0}^{f-1}\bbra{\!\bbra{\Sym^{\lambda_{1,j}-\lambda_{2,j}}\Fq^2\otimes_{\Fq}\det^{\lambda_{2,j}}}\otimes_{\Fq,\sigma_j}\FF}.
\end{equation*}
We also denote it by $(\un{\lambda}_1-\un{\lambda}_2)\otimes\det^{\un{\lambda}_2}$. This induces a bijection $$F:X_1(\un{T})/(p-\pi)X^0(\un{T})\ism\set{\text{Serre weights of }\GL_2(\Fq)}.$$ We say that a Serre weight $\sigma$ is \textbf{regular} if $\sigma\cong F(\lambda)$ with $\lambda\in X_{\reg}(\un{T})$. 

For $\lambda=(\un{\lambda}_1,\un{\lambda}_2)\in\ZZ^{2f}$, we define the character $\chi_{\lambda}:I\to\FF\x$ by $\smat{a&b\\pc&d}\mapsto(\ovl{a})^{\un{\lambda}_1}(\ovl{d})^{\un{\lambda}_2}$, where $a,d\in\OK\x$ and $b,c\in\OK$. Here, for $x\in\FF$ and $\un{i}\in\ZZ^f$ we write $x^{\un{i}}\eqdef x^{\sum\nolimits_{j=0}^{f-1}i_jp^j}$. In particular, if $\lambda\in X_1(\un{T})$, then $\chi_{\lambda}$ is the $I$-character acting on $F(\lambda)^{I_1}$. We still denote $\chi_{\lambda}$ for its restriction to $H$. For each $j\in\cJ$ we define $\alpha_j\eqdef(e_j,-e_j)\in\ZZ^{2f}$, and for each $\un{i}\in\ZZ^f$ we define $\alpha^{\un{i}}\eqdef\sum\nolimits_{j=0}^{f-1}i_j\alpha_j\in\ZZ^{2f}$. We also denote $\alpha_j$ and $\alpha^{\un{i}}$ the corresponding characters $\chi_{\alpha_j}$ and $\chi_{\alpha^{\un{i}}}$ when there is no possible confusion. Concretely, we have $\alpha^{\un{i}}\bbra{\!\smat{a&b\\pc&d}\!}=\bigbra{\ovl{a}\ovl{d}\inv}^{\sum\nolimits_{j=0}^{f-1}i_jp^j}$.

For $\mu=(\un{\mu}_1,\un{\mu}_2)\in\ZZ^{2f}$, we define the extension graph associated to $\mu$ by
\begin{equation}\label{General Eq extension graph}
    \Lambda_W^{\mu}\eqdef\sset{\un{b}\in\ZZ^f:0\leq\un{\mu}_1-\un{\mu}_2+\un{b}\leq\un{p}-\un{2}}.
\end{equation}
As in \cite[p.16]{BHHMS1}, there is a map
\begin{equation*}
    \ft_{\mu}:\Lambda_W^{\mu}\to X_{\reg}(\un{T})/(p-\pi)X^0(\un{T}),
\end{equation*}
such that the map $\un{b}\mapsto F(\ft_{\mu}(\un{b}))$ gives a bijection between $\Lambda_W^{\mu}$ and the set of regular Serre weights of $\GL_2(\Fq)$ with central character $\chi_{\mu}|_Z$, where $Z\cong\Fq\x$ is the center of $\GL_2(\Fq)$.

We let $\mu_{\un{r}}\eqdef(\un{r},\un{0})\in\ZZ^{2f}$ with $\un{r}=(r_0,\ldots,r_{f-1})$ and $r_j$ as in \eqref{General Eq genericity}. For $\un{b}\in\ZZ^f$ such that $-\un{r}\leq\un{b}\leq\un{p}-\un{2}-\un{r}$, we denote $\sigma_{\un{b}}\eqdef F\bigbra{\ft_{\mu_{\un{r}}}(\un{b})}$. For $\rhobar$ as in \eqref{General Eq genericity}, we let $J_{\rhobar}\subseteq\cJ$ be as in \cite[(17)]{Bre14}. Then by \cite[Prop.~A.3]{Bre14} and \cite[(14)]{BHHMS1} we have
\begin{equation}\label{General Eq SW of rhobar}
    W(\rhobar)=\sset{\sigma_{\un{b}}:~\begin{array}{ll}b_j=0&\text{if}~j\notin J_{\rhobar}\\b_j\in\set{0,1}&\text{if}~j\in J_{\rhobar}\end{array}}.
\end{equation}
In particular, $\rhobar$ is semisimple if and only if $J_{\rhobar}=\cJ$. For each $J\subseteq\cJ$, we define $\sigma_J\eqdef\sigma_{\un{a}^J}$ with
\begin{equation}\label{General Eq aJ}
    a^J_j\eqdef
    \begin{cases}
        0&\text{if}~j\notin J\\
        1&\text{if}~j\in J,~j+1\notin J~\text{or}~j\in J,~j+1\in J,~j\in J_{\rhobar}\\
        -1&\text{if}~j\in J,~j+1\in J,~j\notin J_{\rhobar}.
    \end{cases}
\end{equation}
In particular, for $J\subseteq J_{\rhobar}$ we have $\sigma_J=\sigma_{\un{e}^J}$. Then as a special case of \cite[(14)]{BHHMS1}, we have $\sigma_J=(\un{s}^J)\otimes\det^{\un{t}^J}$ with
\begin{align}
    \label{General Eq sJ}s^J_j&\eqdef
    \begin{cases}
        r_j&\text{if}~j\notin J,~j+1\notin J\\
        r_j+1&\text{if}~j\in J,~j+1\notin J\\
        p-2-r_j&\text{if}~j\notin J,~j+1\in J\\
        p-1-r_j&\text{if}~j\in J,~j+1\in J,~j\notin J_{\rhobar}\\
        p-3-r_j&\text{if}~j\in J,~j+1\in J,~j\in J_{\rhobar};
    \end{cases}\\
    \label{General Eq tJ}t^J_j&\eqdef
    \begin{cases}
        0&\text{if}~j\notin J,~j+1\notin J\\
        -1&\text{if}~j\in J,~j+1\notin J\\
        r_j+1&\text{if}~j\notin J,~j+1\in J~\text{or}~j\in J,~j+1\in J,~j\in J_{\rhobar}\\
        r_j&\text{if}~j\in J,~j+1\in J,~j\notin J_{\rhobar}.
    \end{cases}
\end{align}
We let $\chi_J\eqdef\chi_{\lambda_J}$ with $\lambda_J\eqdef(\un{s}^J+\un{t}^J,\un{t}^J)$. Then $\chi_J$ is the $I$-character acting on $\sigma_J^{I_1}$. For each $I$-character $\chi$, we denote by $\chi^s$ its conjugation by the matrix $\smat{0&1\\p&0}$. 

For $J\subseteq\cJ$ and $k\in\ZZ$, we define $J+k\eqdef\set{j+k:j\in J}$. Then we define the \textbf{semisimple part} of $J$, the \textbf{non-semisimple part} of $J$ and the \textbf{shifting index} of $J$ to be respectively
\begin{equation}\label{General Eq Jsh}
    J^{\ss}\eqdef J\cap J_{\rhobar},\quad J^{\nss}\eqdef J\setminus J_{\rhobar}=J\setminus J^{\ss},\quad J^{\sh}\eqdef J\cap(J-1)\cap J_{\rhobar}\subseteq J^{\ss}.
\end{equation}
In particular, if $\rhobar$ is semisimple, then $J^{\ss}=J$ and $J^{\nss}=\emptyset$ for all $J\subseteq\cJ$. By (\ref{General Eq genericity}), we have from (\ref{General Eq sJ}) and (\ref{General Eq Jsh})
\begin{equation}\label{General Eq bound s}
    2(f-\delta_{j\in J^{\sh}})+1\leq2(f-\delta_{j\in J^{\sh}})+1+\delta_{f=1}\leq s^J_j\leq p-2-2(f+\delta_{j\in J^{\sh}})~\forall\,j\in\cJ.
\end{equation}

\begin{lemma}\label{General Lem change origin}
    Let $J\subseteq\cJ$ and $\un{b}\in\ZZ^f$ such that $-\bigbra{2(\un{f}-\un{e}^{J^{\sh}})+\un{1}}\leq\un{b}\leq2(\un{f}+\un{e}^{J^{\sh}})$. Then we have $F\bigbra{\ft_{\lambda_J}(\un{b})}=\sigma_{\un{a}}$ with $a_j=(-1)^{\delta_{j+1\in J}}(b_j+\delta_{j\in J})+2\delta_{j\in J^{\sh}}$ for all $j\in\cJ$. In particular,
    \begin{enumerate}
    \item 
    we have $\sigma_{J^{\ss}}=F\bigbra{\ft_{\lambda_J}(-\un{b})}$ with $b_j=\delta_{j\in J^{\nss}}$ for all $j\in\cJ$;
    \item 
    we have $\sigma_{(J-1)^{\ss}}=F\bigbra{\ft_{\lambda_J}(-\un{b})}$ with $b_j=\delta_{j\in J\Delta(J-1)^{\ss}}$ for all $j\in\cJ$;
    \item 
    for each $J'\subseteq\cJ$, we have $\sigma_{J'}=F\bigbra{\ft_{\lambda_J}(-\un{b})}$ with $b_j=\delta_{j\in J}+\delta_{j\in J'}(-1)^{\delta_{j+1\notin J\Delta J'}}$ if $j\notin J_{\rhobar}$, and $b_j=\bigbra{\delta_{j\in J}-\delta_{j\notin J'}}(-1)^{\delta_{j+1\in J}}$ if $j\in J_{\rhobar}$.
    \end{enumerate}
    Here we recall that $J\Delta J'\eqdef (J\setminus J')\sqcup(J'\setminus J)$.
\end{lemma}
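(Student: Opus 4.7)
The plan is to establish the master formula $F\bigbra{\ft_{\lambda_J}(\un{b})} = \sigma_{\un{a}}$ as a single ``change of base point'' identity for the extension graph, from which the three specializations (i)-(iii) follow by substitution. The main inputs are the explicit description of $\ft_\mu$ recalled from \cite[\S2]{BHHMS1}, the formulas (\ref{General Eq sJ})-(\ref{General Eq tJ}) for $\lambda_J$, and the formula (\ref{General Eq aJ}) for $\un{a}^J$. The stated bound on $\un{b}$ is exactly what ensures that both $\un{b} \in \Lambda_W^{\lambda_J}$ and the resulting $\un{a} \in \Lambda_W^{\mu_{\un{r}}}$, which one verifies against (\ref{General Eq bound s}).

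First I would handle the base case $\un{b} = \un{0}$. By construction, $F\bigbra{\ft_{\lambda_J}(\un{0})}$ is the Serre weight attached to $\lambda_J$, which by (\ref{General Eq sJ})-(\ref{General Eq tJ}) is $\sigma_J = \sigma_{\un{a}^J}$. Substituting $\un{b} = \un{0}$ into the proposed formula for $\un{a}$ gives $a_j = (-1)^{\delta_{j+1 \in J}}\delta_{j \in J} + 2\delta_{j \in J^{\sh}}$, and a case-by-case check against the four cases in (\ref{General Eq aJ}) (using $j \in J^{\sh}$ iff $j \in J$, $j+1 \in J$ and $j \in J_{\rhobar}$) confirms this equals $a^J_j$.

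Next I would promote the base case to arbitrary $\un{b}$ by showing that $\un{b} \mapsto \un{a}$ is componentwise affine: incrementing $b_j$ by $1$ shifts $\ft_{\lambda_J}(\un{b})$ by $\alpha_j$ modulo $(p-\pi)X^0(\un{T})$, and comparing with the $\mu_{\un{r}}$-centered graph this translates into shifting $a_j$ by $(-1)^{\delta_{j+1 \in J}}$. The sign is dictated by whether $s^J_j$ is an increasing or decreasing function of $r_j$ in (\ref{General Eq sJ}), which depends only on whether $j+1 \in J$; the additive constant $2\delta_{j \in J^{\sh}}$ is independent of $\un{b}$ and is set by the base case.

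The specializations then follow by substitution. In (i), substituting $b_j = \delta_{j \in J^{\nss}}$ (with the sign in $-\un{b}$) yields $a_j = (-1)^{\delta_{j+1\in J}}\bra{\delta_{j\in J} - \delta_{j \in J^{\nss}}} + 2\delta_{j \in J^{\sh}} = (-1)^{\delta_{j+1\in J}}\delta_{j\in J^{\ss}} + 2\delta_{j \in J^{\sh}}$, which matches $a^{J^{\ss}}_j$ via (\ref{General Eq aJ}) applied to $J^{\ss}$; case (ii) is analogous upon observing that $J \Delta (J-1)^{\ss}$ encodes the same flip pattern relative to $(J-1)^{\ss}$ that $J^{\nss}$ does for $J^{\ss}$. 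Case (iii) is the most intricate, requiring a split into the eight possibilities determined by the memberships of $j$ and $j+1$ in $J$, $J'$, and $J_{\rhobar}$; this final case analysis will be the main obstacle, but it reduces to patient bookkeeping once the master formula is in hand.
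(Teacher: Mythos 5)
Your proposal is correct and follows essentially the same route as the paper: the paper also reduces to the base point $\un b=\un 0$ (where $F(\ft_{\lambda_J}(\un 0))=\sigma_{\un a^J}$ with $a^J_j=\delta_{j\in J}(-1)^{\delta_{j+1\in J}}+2\delta_{j\in J^{\sh}}$) and then applies the change-of-origin formula $a_j=a^J_j+(-1)^{a^J_{j+1}}b_j$ of \cite[Lemma~2.4.4]{BHHMS1} — which is exactly the componentwise-affine structure with slope $(-1)^{\delta_{j+1\in J}}$ that you re-derive — before leaving (i)--(iii) to a substitution and case check, as you do.
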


\begin{proof}
    The assumption on $\un{b}$ implies that $F(\ft_{\lambda_J}(\un{b}))$ is well-defined. By (\ref{General Eq aJ}) and a case-by-case examination we have
    \begin{equation*}
        a^J_j=\delta_{j\in J}(-1)^{\delta_{j+1\in J}}+2\delta_{j\in J^{\sh}}~\forall\,j\in\cJ.
    \end{equation*}
    Then by \cite[Lemma~2.4.4]{BHHMS1} applied to $\mu=\mu_{\un{r}}$ and $\omega=\un{b}$, we deduce that
    \begin{equation*}
    \begin{aligned}
        a_j\!=\!a^J_j\!+\!(-1)^{a^J_{j+1}}b_j\!=\!\bigbra{\delta_{j\in J}(-1)^{\delta_{j+1\in J}}\!+\!2\delta_{j\in J^{\sh}}}\!+\!(-1)^{\delta_{j+1\in J}}b_j\!=\!(-1)^{\delta_{j+1\in J}}(b_j\!+\!\delta_{j\in J})\!+\!2\delta_{j\in J^{\sh}}.
    \end{aligned}
    \end{equation*}
    The assertions (i),(ii),(iii) then follow easily whose proofs are left as an exercise.
\end{proof}

\section{The principal series}\label{General Sec ps}

In this section, we recall some results of \cite[\S2]{BP12}. 

For $j\in\cJ$, we define
\begin{equation*}
    Y_j\eqdef\sum\limits_{a\in\Fq\x}a^{-p^j}\pmat{1&[a]\\0&1}\in\FF\ddbra{N_0}.
\end{equation*}
Then we have $\FF\ddbra{N_0}=\FF\ddbra{Y_0,\ldots,Y_{f-1}}$. For $\un{i}=(i_0,\ldots,i_{f-1})\in\ZZ^f$, we set $\norm{\un{i}}\eqdef\sum\nolimits_{j=0}^{f-1}i_j$ and write $\un{Y}^{\un{i}}$ for $\prod\nolimits_{j=0}^{f-1}Y_j^{i_j}$. We recall the following results of \cite[Lemma~3.2.2.1]{BHHMS2}.

\begin{lemma}\label{General Lem Yj}
    For $j\in\cJ$ and $\mu_1,\mu_2\in\Fq\x$, we have
    \begin{enumerate}
    \item 
    $Y_j^p\smat{p&0\\0&1}=\smat{p&0\\0&1}Y_{j+1}$;
    \item 
    $\smat{[\mu_1]&0\\0&[\mu_2]}Y_j=(\mu_1\mu_2\inv)^{p^j}Y_j\smat{[\mu_1]&0\\0&[\mu_2]}$. In particular, if $V$ is a representation of $I$ and $v\in V^{H=\chi}$, then for $\un{i}\geq\un{0}$ we have $\un{Y}^{\un{i}}v\in V^{H=\chi\alpha^{\un{i}}}$.    
    \end{enumerate}
\end{lemma}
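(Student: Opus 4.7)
The statement is a pair of direct commutation identities in the completed group ring $\FF\ddbra{N_0}$ together with its natural extension to the elements $\smat{p&0\\0&1}$ and $\smat{[\mu_1]&0\\0&[\mu_2]}$, so the plan is a straightforward computation from the definition $Y_j=\sum_{a\in\Fq\x} a^{-p^j}\smat{1&[a]\\0&1}$.

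For (i), the key observation is that all matrices $\smat{1&[a]\\0&1}$ with $a\in\Fq\x$ commute (they lie in the abelian group $N_0\cong\OK$), and $\FF$ has characteristic $p$. Hence the Frobenius identity $(\sum x_a)^p=\sum x_a^p$ applies to $Y_j^p$, giving
\begin{equation*}
    Y_j^p=\sum_{a\in\Fq\x} a^{-p^{j+1}}\pmat{1&p[a]\\0&1}.
\end{equation*}
Multiplying by $\smat{p&0\\0&1}$ on the right and using the elementary matrix identity $\smat{1&p[a]\\0&1}\smat{p&0\\0&1}=\smat{p&0\\0&1}\smat{1&[a]\\0&1}$, one pulls the diagonal matrix out to the left and recognizes the resulting sum as $Y_{j+1}$.

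For (ii), I would start from the identity $\smat{[\mu_1]&0\\0&[\mu_2]}\smat{1&[a]\\0&1}=\smat{1&[\mu_1 a\mu_2\inv]\\0&1}\smat{[\mu_1]&0\\0&[\mu_2]}$, which uses the multiplicativity of the Teichm\"uller lift ($[\mu_1][a][\mu_2]\inv=[\mu_1 a\mu_2\inv]$ since $\mu_1,\mu_2\in\Fq\x$). Substituting this into the sum defining $Y_j$ and changing variables $a\mapsto\mu_1\inv\mu_2 a$ in $\Fq\x$ produces the scalar $(\mu_1\mu_2\inv)^{p^j}$ from the factor $a^{-p^j}$, and yields exactly $Y_j\smat{[\mu_1]&0\\0&[\mu_2]}$.

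The corollary about $H$-eigenvectors is then immediate: since the $Y_j$ pairwise commute (they all lie in the abelian ring $\FF\ddbra{N_0}$), one iterates (ii) to obtain $\smat{[\mu_1]&0\\0&[\mu_2]}\un{Y}^{\un{i}}=(\mu_1\mu_2\inv)^{\sum_j i_jp^j}\un{Y}^{\un{i}}\smat{[\mu_1]&0\\0&[\mu_2]}$, i.e.\ conjugation by $\smat{[\mu_1]&0\\0&[\mu_2]}$ scales $\un{Y}^{\un{i}}$ by $\alpha^{\un{i}}\bbra{\!\smat{[\mu_1]&0\\0&[\mu_2]}\!}$; applying this to $v\in V^{H=\chi}$ gives $\un{Y}^{\un{i}}v\in V^{H=\chi\alpha^{\un{i}}}$. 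There is no real obstacle here; the only subtlety worth stating carefully is that all identities take place in $\FF\ddbra{N_0}$ (where $p=0$ in $\FF$, but the element $\smat{1&p[a]\\0&1}$ is still a bona fide matrix since the $p$ in its entry comes from $\OK$, not from $\FF$).
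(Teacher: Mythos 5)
Your proof is correct, and it is the standard direct computation (Frobenius/freshman's dream for (i), conjugation plus a change of variable in $\Fq\x$ for (ii)); the paper itself gives no proof but simply cites \cite[Lemma~3.2.2.1]{BHHMS2}, where the argument is exactly of this form. The one subtlety worth flagging — that $p[a]$ lives in $\OK$ even though $p=0$ in $\FF$ — you already address.
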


Let $\lambda=(\un{\lambda}_1,\un{\lambda}_2)\in X_1(\un{T})$ such that $\un{1}\leq\un{\lambda}_1-\un{\lambda}_2\leq\un{p}-\un{2}$. Let $f_0,\ldots,f_{q-1},\phi$ be the elements of $\Ind_I^{\GL_2(\OK)}(\chi_{\lambda}^s)$ defined as in \cite[\S2]{BP12}. For $0\leq\un{i}\leq\un{p}-\un{1}$ we let $i\eqdef\sum\nolimits_{j=0}^{f-1}i_jp^j$. Then by definition and \cite[Lemma~3.2.2.5(ii)]{BHHMS2} we have
\begin{equation}\label{General Eq fi}
    (-1)^{f-1}\bbbra{\scalebox{1}{$\prod\limits_{j=0}^{f-1}$}i_j!}\un{Y}^{\un{p}-\un{1}-\un{i}}\smat{0&1\\1&0}\phi=
    \begin{cases}
        f_i&\text{if}~0\leq i\leq q-2\\
        f_{q-1}-f_0&\text{if}~i=q-1.
    \end{cases}
\end{equation}
The following lemma is a restatement of some results of \cite[\S2]{BP12}.

\begin{lemma}\label{General Lem BP12}
    \begin{enumerate}
    \item
    The $\GL_2(\OK)$-representation $\Ind_I^{\GL_2(\OK)}(\chi_{\lambda}^s)$ is multiplicity-free with constituents $\sset{F(\ft_{\lambda}(-\un{b})):\un{0}\leq\un{b}\leq\un{1}}$. Moreover, the constituent
    $F\bigbra{\ft_{\lambda}(-\un{b})}$ corresponds to the subset $\set{j:b_{j+1}=1}$ in the parametrization of \cite[\S2]{BP12}.
    \item
    The elements $\sset{\un{Y}^{\un{k}}\smat{0&1\\1&0}\phi:\un{0}\leq\un{k}\leq\un{p}-\un{1},\phi}$ form a basis of $\Ind_I^{\GL_2(\OK)}(\chi_{\lambda}^s)$. Moreover, $\phi$ has $H$-eigencharacter $\chi_{\lambda}^s$ and $\un{Y}^{\un{k}}\smat{0&1\\1&0}\phi$ has $H$-eigencharacter $\chi_{\lambda}\alpha^{-\un{k}}=\chi_{\lambda}^s\alpha^{\un{r}-\un{k}}$.
    \item
    Let $\tau$ be the constituent of $\Ind_I^{\GL_2(\OK)}(\chi_{\lambda}^s)$ corresponding to $J\subseteq\cJ$ as in (i) and denote by $Q(\chi_{\lambda}^s,J)$ the unique quotient of $\Ind_I^{\GL_2(\OK)}(\chi_{\lambda}^s)$ with socle $\tau$ (see \cite[Thm.~2.4(iv)]{BP12}).
    \begin{enumerate}
    \item 
    If $J=\emptyset$, then the following $H$-eigenvectors
    \begin{equation*}
        \sset{\un{Y}^{\un{k}}\smat{0&1\\1&0}\phi:\un{p}-\un{1}-(\un{\lambda}_1-\un{\lambda}_2)<\un{k}\leq\un{p}-\un{1},\un{Y}^{\un{p}-\un{1}-(\un{\lambda}_1-\un{\lambda}_2)}\smat{0&1\\1&0}\phi+x\phi}
    \end{equation*}
    with $x=(-1)^{\norm{\un{\lambda}_1}+(f-1)}\bigbra{\!\prod\nolimits_{j=0}^{f-1}(\lambda_{1,j}-\lambda_{2,j})!}\inv\in\FF\x$ form a basis of $\tau$ inside $Q(\chi_{\lambda}^s,\emptyset)=\Ind_I^{\GL_2(\OK)}(\chi_{\lambda}^s)$.
    \item 
    If $J\neq\emptyset$, then the following $H$-eigenvectors
    \begin{equation*}
        \sset{\un{Y}^{\un{k}}\smat{0&1\\1&0}\phi:\begin{aligned}
            &0\leq k_j\leq p-2-(\lambda_{1,j}-\lambda_{2,j})+\delta_{j-1\in J}&\text{if}~j\in J\\
            &p-1-(\lambda_{1,j}-\lambda_{2,j})+\delta_{j-1\in J}\leq k_j\leq p-1&\text{if}~j\notin J
        \end{aligned}}
    \end{equation*}
    map to a basis of $\tau$ inside $Q(\chi_{\lambda}^s,J)$.
    \end{enumerate}
    \end{enumerate}
\end{lemma}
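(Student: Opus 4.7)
The plan is to reduce each of (i)-(iii) to explicit results in \cite[\S2]{BP12}, the main work being the translation between the BP12 parametrization of the constituents of $\Ind_I^{\GL_2(\OK)}(\chi_{\lambda}^s)$ by subsets of $\cJ$ and the extension-graph parametrization $\un{b} \mapsto F(\ft_\lambda(-\un{b}))$ employed here. For (i), the fact that $\Ind_I^{\GL_2(\OK)}(\chi_{\lambda}^s)$ is multiplicity-free of length $2^f$ is part of \cite[Thm.~2.4]{BP12}. To match the $2^f$ constituents with the subsets $\un{b} \in \set{0,1}^f$, I would compute the $H$-eigencharacter on $F(\ft_\lambda(-\un{b}))^{I_1}$ directly from the definition of $\ft_\lambda$ recalled from \cite[p.~16]{BHHMS1}, compare with the analogous computation for the constituent indexed by a subset $J' \subseteq \cJ$ in the BP12 parametrization, and read off $J' = \set{j : b_{j+1} = 1}$.

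For (ii), the asserted set is a basis by the Bruhat decomposition $\GL_2(\OK) = I \sqcup I\smat{0&1\\1&0}N_0$ combined with $\FF\ddbra{N_0} = \FF\ddbra{Y_0,\ldots,Y_{f-1}}$: the monomials $\un{Y}^{\un{k}}$ with $\un{0} \leq \un{k} \leq \un{p}-\un{1}$ form an $\FF$-basis of the group algebra of the finite quotient of $N_0$ acting faithfully on $\smat{0&1\\1&0}\phi$. The $H$-eigencharacter assertion then follows from Lemma \ref{General Lem Yj}(ii) applied to $\smat{0&1\\1&0}\phi$, which has $H$-eigencharacter $\chi_\lambda$ because conjugation by $\smat{0&1\\1&0}$ has the same effect on $H$ as conjugation by $\smat{0&1\\p&0}$; the identity $\chi_\lambda \alpha^{-\un{k}} = \chi_{\lambda}^s \alpha^{\un{r}-\un{k}}$ is then a direct character calculation.

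For (iii), the description of each constituent as a subspace of the quotient $Q(\chi_\lambda^s, J)$ is essentially the content of the proof of \cite[Thm.~2.4]{BP12} together with \cite[Lem.~2.7]{BP12}. When $J \neq \emptyset$, the prescribed ranges of the exponents $k_j$ are read off from the monomial description of $Q(\chi_\lambda^s, J)$, with the shift $\delta_{j-1 \in J}$ tracking the translation between the $f_i$-basis of \eqref{General Eq fi} and the $\un{Y}^{\un{k}}\smat{0&1\\1&0}\phi$-basis. When $J = \emptyset$, the socle is the unique irreducible subrepresentation of the full induced representation; the correction term $x\phi$ appears because \eqref{General Eq fi} identifies $\un{Y}^{\un{p}-\un{1}-(\un{\lambda}_1-\un{\lambda}_2)}\smat{0&1\\1&0}\phi$ with $f_{q-1} - f_0$ (up to a prefactor) rather than with a single $f_i$, and a multiple of $\phi$ must be added to produce an $H$-eigenvector lying in the socle.

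The hard part will be pinning down the exact constant $x \in \FF^\times$ in (iii)(a): the sign $(-1)^{\norm{\un{\lambda}_1}+(f-1)}$ and the factorial $\bigbra{\prod\nolimits_{j=0}^{f-1}(\lambda_{1,j}-\lambda_{2,j})!}^{-1}$ have to be tracked carefully through an explicit socle-generating element (as in \cite[Lem.~2.7]{BP12}) and the base-change formula \eqref{General Eq fi}, keeping track of all signs coming from the reduction modulo the relation $f_{q-1} = (f_{q-1}-f_0) + f_0$.
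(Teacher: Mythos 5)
Your proposal is correct and follows essentially the same route as the paper, which simply observes that (i) is \cite[Lemma~6.2.1(i)]{BHHMS1} (itself the parametrization comparison you describe) and that (ii) and (iii) are restatements of \cite[Lemma~2.5]{BP12} and \cite[Lemma~2.7]{BP12} via the base-change formula \eqref{General Eq fi}. The extra detail you give (the Bruhat-decomposition basis count, the $H$-eigencharacter matching, and the origin of the correction term $x\phi$ from $f_{q-1}-f_0$) is exactly the bookkeeping implicit in that restatement.
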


\begin{proof}
    The first statement of (i) is \cite[Lemma~6.2.1(i)]{BHHMS1}, and the second statement of (i) follows from the proof of \cite[Lemma~6.2.1(i)]{BHHMS1}. (ii) and (iii) are restatements of \cite[Lemma~2.5]{BP12} and \cite[Lemma~2.7]{BP12} using (\ref{General Eq fi}).
\end{proof}

\section{On certain \texorpdfstring{$H$}.-eigenvectors in \texorpdfstring{$D_0(\rhobar)$}.}\label{General Sec shift}

In this section, we construct some elements in $D_0(\rhobar)$, which is the (finite-dimensional) representation of $\GL_2(\OK)$ defined in \cite[\S13]{BP12} and is identified with $\pi^{K_1}$ from now on (see condition (i) above Theorem \ref{General Thm main1}). The main result is Proposition \ref{General Prop shift}. They will be the first step in constructing elements of $D_A(\pi)$.

\begin{lemma}\label{General Lem Diamond diagram}
    \begin{enumerate}
    \item
    The $\GL_2(\OK)$-representation $D_0(\rhobar)$ is multiplicity-free with constituents
    \begin{equation*}
        \JH\bra{D_0(\rhobar)}=\sset{\sigma_{\un{b}}:\begin{array}{ll}b_j\in\set{-1,0,1}&\text{if}~j\notin J_{\rhobar}\\b_j\in\set{-1,0,1,2}&\text{if}~j\in J_{\rhobar}\end{array}}.
    \end{equation*}
    Moreover, there is a decomposition of $\GL_2(\OK)$-representations $D_0(\rhobar)=\oplus_{J\subseteq J_{\rhobar}}D_{0,\sigma_J}(\rhobar)$ such that for each $J\subseteq J_{\rhobar}$, $D_{0,\sigma_J}(\rhobar)$ has socle $\sigma_{J}=\sigma_{\un{e}^J}$ and has constituents
    \begin{equation}\label{General Eq JH D_0,sigma}
        \JH\bra{D_{0,\sigma_J}(\rhobar)}=\sset{\sigma_{\un{b}}:\begin{array}{ll}b_j\in\set{-1,0,1}&\text{if}~j\notin J_{\rhobar}\\b_j\in\set{-1,0}&\text{if}~j\in J_{\rhobar}\setminus J\\b_j\in\set{1,2}&\text{if}~j\in J\end{array}}.
    \end{equation}
    \item 
    The $I$-representation $D_0(\rhobar)^{I_1}$ is a direct sum of distinct $I$-characters. For each $J\subseteq\cJ$, $\chi_J$ occurs as a direct summand.
    \item 
    For each $J\subseteq\cJ$, the character $\chi_J$ appears in the component $D_{0,\sigma_{J^{\ss}}}(\rhobar)$, and the character $\chi_J^s$ appears in the component $D_{0,\sigma_{(J-1)^{\ss}}}(\rhobar)$.    
    \end{enumerate}
\end{lemma}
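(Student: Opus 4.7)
The plan is to treat the three parts in sequence, leaning on the Breuil–Pa\v{s}k\={u}nas classification for (i) and reducing (ii)–(iii) to explicit combinatorial checks against \eqref{General Eq aJ}, \eqref{General Eq sJ}, \eqref{General Eq tJ} and the constituent list in \eqref{General Eq JH D_0,sigma}.

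For part (i), I would cite the structure theorem for $D_0(\rhobar)$ from \cite[\S13--15]{BP12} (or its reformulation in \cite[\S3]{Bre14}), which under the genericity condition \eqref{General Eq genericity} gives the decomposition $D_0(\rhobar)=\bigoplus_{J\subseteq J_{\rhobar}} D_{0,\sigma_J}(\rhobar)$ indexed by the Serre weights \eqref{General Eq SW of rhobar}, where $D_{0,\sigma_J}(\rhobar)$ is the maximal subrepresentation with socle $\sigma_J$. The list of Jordan--H\"older constituents given in \eqref{General Eq JH D_0,sigma} is then a repackaging of the explicit constituent list in \loc: each summand can only extend in the directions ``towards'' another Serre weight of $\rhobar$, which at a coordinate $j\in J$ forces $b_j\in\{1,2\}$, at $j\in J_{\rhobar}\setminus J$ forces $b_j\in\{-1,0\}$, and at $j\notin J_{\rhobar}$ leaves $b_j\in\{-1,0,1\}$.

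For (ii), the $I$-representation $D_0(\rhobar)^{I_1}$ equals the direct sum over $\mathrm{JH}(D_0(\rhobar))$ of the $H$-eigenline $\sigma_\mu^{I_1}$, with character $\chi_{\lambda_\mu}$ on the latter. Distinctness of these characters follows from the genericity bound \eqref{General Eq bound s}: different admissible $\un{b}$ from (i) produce different pairs $(\un{\lambda}_1,\un{\lambda}_2)$ modulo $(p-\pi)X^0(\un{T})$. To see that $\chi_J$ occurs for every $J\subseteq\cJ$ (not just $J\subseteq J_{\rhobar}$), I would check that the vector $\un{a}^J$ from \eqref{General Eq aJ} satisfies the coordinate constraints of (i): case analysis on whether $j\in J$, $j+1\in J$, and $j\in J_{\rhobar}$ gives $a^J_j\in\{-1,0,1\}$ when $j\notin J_{\rhobar}$ and $a^J_j\in\{0,1\}\subseteq\{-1,0,1,2\}$ when $j\in J_{\rhobar}$.

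For (iii), the first claim is immediate once (ii) is done: comparing $\un{a}^J$ to \eqref{General Eq JH D_0,sigma} coordinate-by-coordinate forces the indexing subset to be $J\cap J_{\rhobar}=J^{\ss}$, so $\sigma_J$ (and hence $\chi_J$) lives in $D_{0,\sigma_{J^{\ss}}}(\rhobar)$. For the second claim I would first use \eqref{General Eq sJ}--\eqref{General Eq tJ} to compute $\chi_J^s=\chi_{\un{t}^J,\,\un{s}^J+\un{t}^J}$ explicitly, then normalize the pair into $X_1(\un{T})$ modulo $(p-\pi)X^0(\un{T})$ to identify the unique Serre weight $\sigma_{\un{b}'}$ with $I_1$-character $\chi_J^s$; Lemma \ref{General Lem change origin} (applied with origin $\mu_{\un{r}}$) translates this into an explicit $\un{b}'$, and the last step is to verify the constraints of \eqref{General Eq JH D_0,sigma} with $J'=(J-1)^{\ss}$. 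The main obstacle is this final bookkeeping: the coordinate shift between $J$ and $J-1$ interacts nontrivially with the three-way split of \eqref{General Eq aJ} (whether $j\in J^{\sh}$, $j\in J^{\ss}\setminus J^{\sh}$, or $j\in J^{\nss}$), and the formula for $\un{b}'$ depends on these cases; handling each case and matching to the membership criterion $b'_j\in\{1,2\}$ iff $j\in(J-1)^{\ss}$, $b'_j\in\{-1,0\}$ iff $j\in J_{\rhobar}\setminus(J-1)^{\ss}$ is the combinatorial heart of the argument.
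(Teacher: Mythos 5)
Your part (i) follows the same route as the paper (quoting the Breuil--Pa\v{s}k\=unas structure theory and matching constituent lists), and your reduction of (iii) to a coordinate computation with Lemma \ref{General Lem change origin} is in the spirit of what the paper does via \cite[\S15]{BP12}. The problem is part (ii), on which (iii) also leans.

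The claim that $D_0(\rhobar)^{I_1}$ ``equals the direct sum over $\JH(D_0(\rhobar))$ of the $H$-eigenlines $\sigma_\mu^{I_1}$'' is false: taking $I_1$-invariants is only left exact, and the $I_1$-invariants of $D_0(\rhobar)$ do not decompose along Jordan--H\"older constituents. A dimension count already rules it out. By \cite[Thm.~2.4]{BP12}, $\Ind_I^{\GL_2(\OK)}(\chi^s)$ has $2^f$ constituents but its $I_1$-invariants are only $2$-dimensional; correspondingly, for $\rhobar$ nonsplit with $J_{\rhobar}=\emptyset$ and $f=1$, the representation $D_0(\rhobar)=D_{0,\sigma_{\emptyset}}(\rhobar)$ has three constituents by (i) while $\dim_{\FF}D_0(\rhobar)^{I_1}=2$ (only $\chi_{\emptyset}$ and $\chi_{\emptyset}^s=\chi_{\cJ}$ occur), and in general $\dim_{\FF}D_0(\rhobar)^{I_1}$ is strictly smaller than $3^{f-|J_{\rhobar}|}4^{|J_{\rhobar}|}=|\JH(D_0(\rhobar))|$. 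Two consequences for your argument: multiplicity-freeness of $D_0(\rhobar)^{I_1}$ does not follow from the distinctness of the highest weights of the constituents (it is the nontrivial content of \cite[Lemma~14.1]{BP12}); and, more seriously, checking that $\un{a}^J$ satisfies the constraints of (i) only shows that $\sigma_J$ is a \emph{constituent} of $D_0(\rhobar)$, which does not imply that its highest-weight character $\chi_J$ survives into $D_0(\rhobar)^{I_1}$. The correct criterion, which the paper extracts from the proof of \cite[Cor.~13.6]{BP12}, is that a character $\chi$ occurs in $D_0(\rhobar)^{I_1}$ if and only if $W(\rhobar)\cap\JH\bigbra{\Ind_I^{\GL_2(\OK)}(\chi^s)}\neq\emptyset$; one must verify this for $\chi=\chi_J$ (via \cite[Prop.~4.2]{Bre14} and \eqref{General Eq sJ}), not merely locate $\sigma_J$ in the constituent list. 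The same gap affects your treatment of (iii): you need to know that $\chi_J^s$ occurs at all, and in which summand, which the paper gets from the $\ell$-computations of \cite[Lemmas~15.2, 15.3]{BP12} combined with \cite[Prop.~13.4]{BP12}, not from membership of $\sigma_J^{[s]}$ in a constituent list alone.
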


\begin{proof}
    (i). For $J\subseteq J_{\rhobar}$ and $\tau$ an arbitrary Serre weight, we let $\ell(\sigma_J,\tau)\in\NNN\cup\set{\infty}$ be as in \cite[\S12]{BP12} and $\ell(\rhobar,\tau)\eqdef\min_{J\subseteq J_{\rhobar}}\ell(\sigma_J,\tau)\in\NNN\cup\set{\infty}$. Then by \cite[Cor.~4.11]{BP12} and \cite[Lemma~2.4.4]{BHHMS1}, $\ell(\sigma_J,\tau)<\infty$ if and only if  $\tau=\sigma_{\un{b}}$ with $-\un{1}\leq\un{b}-\un{e}^J\leq\un{1}$, in which case we have $\ell(\sigma_J,\tau)=\abs{\set{j:b_j\neq\delta_{j\in J}}}$. In particular, $\ell(\rhobar,\tau)<\infty$ if and only if $\tau=\sigma_{\un{b}}$ with $-\un{1}\leq\un{b}-\un{e}^J\leq\un{1}+\un{e}^{J_{\rhobar}}$, in which case we have $\ell(\rhobar,\tau)=\ell(\sigma_{J(\tau)},\tau)$ with $J(\tau)\eqdef\set{j\in J_{\rhobar}:b_j\geq1}$, and $\tau$ is a constituent of $D_{0,\sigma_{J(\tau)}}(\rhobar)$ by Proposition \cite[Prop.~13.4]{BP12}. Hence for each $J\subseteq J_{\rhobar}$, $D_{0,\sigma_J}(\rhobar)$ has constituents $\tau$ as above such that $J(\tau)=J$, which agrees with (\ref{General Eq JH D_0,sigma}). The other assertions then follow from \cite[Prop.~13.4]{BP12} and \cite[Cor.~13.5]{BP12}.

    (ii). By \cite[Lemma~14.1]{BP12}, the $I$-representation $D_0(\rhobar)^{I_1}$ is a direct sum of distinct $I$-characters. By the proof of \cite[Cor.~13.6]{BP12}, it suffices to find $I$-characters $\chi$ such that $\sigma_{\un{0}}\in\JH\bigbra{\Ind_I^{\GL_2(\OK)}\chi^s}$. Then we conclude using \cite[Prop.~4.2]{Bre14} and \eqref{General Eq sJ}. 

    (iii). The first assertion is clear since $\sigma_J$ lies in
    the component $D_{0,\sigma_{J^{\ss}}}(\rhobar)$ by (\ref{General Eq JH D_0,sigma}). To prove the second assertion, we follow the notation of \cite[\S15]{BP12}. In particular, we let $\cS,\cS^-,\cS^+\subseteq\cJ$ be the subsets associated to $\rhobar^{\ss}$ and $\sigma_J$. By definition we have $\cS=J$, $\cS^-=\cS^+=\emptyset$, hence by \cite[Lemma~15.2]{BP12} applied to $\rhobar^{\ss}$ and $\sigma_J$, we deduce that $\ell\bigbra{\rhobar^{\ss},\sigma_J^{[s]}}=\ell\bigbra{\sigma_{\un{e}^{J-1}},\sigma_J^{[s]}}$. Then by \cite[Lemma~15.3]{BP12} applied to $\rhobar$ and $\sigma_J^{[s]}$ we deduce that $\ell\bigbra{\rhobar,\sigma_J^{[s]}}=\ell\bigbra{\sigma_{(J-1)^{\ss}},\sigma_J^{[s]}}$ (note that the Serre weight $\sigma^{\max}$ in the statement of \cite[Lemma~15.3]{BP12} is our $\sigma_{J_{\rhobar}}$), which completes the proof using \cite[Prop.~13.4]{BP12}.
\end{proof}

For each $J\subseteq\cJ$ we fix a choice of $0\neq v_J\in\pi^{I_1}=D_0(\rhobar)^{I_1}$ with $I$-character $\chi_J$, which is unique up to scalar by Lemma \ref{General Lem Diamond diagram}(ii). The following proposition shows the existence of certain shifts of the elements $v_J$. We will apply $\smat{p&0\\0&1}$ to these elements in order to go beyond $\pi^{K_1}=D_0(\rhobar)$.

\begin{proposition}\label{General Prop shift}
    Let $J\subseteq\cJ$ and $\un{i}\in\ZZ^f$ such that $\un{0}\leq\un{i}\leq\un{f}-\un{e}^{J^{\sh}}$ (see (\ref{General Eq Jsh}) for $J^{\sh}$). Then there exists a unique $H$-eigenvector $y\in D_0(\rhobar)$ satisfying
    \begin{enumerate}
        \item 
        $Y_j^{i_j+1}y=0\ \forall\,j\in\cJ$;
        \item 
        $\un{Y}^{\un{i}}y=v_J$.
    \end{enumerate}
    Moreover, $y$ has $H$-eigencharacter $\chi_J\alpha^{-\un{i}}$. The $\GL_2(\OK)$-subrepresentation of $D_0(\rhobar)$ generated by $y$ lies in $D_{0,J^{\ss}}(\rhobar)$ and has constituents $\sigma_{\un{b}}$ with
    \begin{equation}\label{General Eq range of b}
    \begin{cases}
        b_j=\delta_{j\in J}(=\delta_{j\in J^{\ss}})&\text{if}~j\notin J^{\nss}\\
        b_j\in\set{0,(-1)^{\delta_{j+1\in J}}}&\text{if}~j\in J^{\nss},~i_j=0\\
        b_j\in\set{-1,0,1}&\text{if}~j\in J^{\nss},~i_j>0.
    \end{cases}
    \end{equation}
    We denote this element $y$ by $\un{Y}^{-\un{i}}v_J$.
\end{proposition}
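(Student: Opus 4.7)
The plan is to construct $y$ by lifting $v_J$ through an explicit computation in a principal series of $\GL_2(\OK)$, and then to transport it to $D_0(\rhobar)$ using the structural description in \cite[\S13-\S15]{BP12} that underlies Lemma \ref{General Lem Diamond diagram}. Uniqueness will come from the multiplicity-one features of Lemma \ref{General Lem Diamond diagram}(ii). Throughout, the genericity bound (\ref{General Eq bound s}) on $s_j^J$ together with the hypothesis $\un{0}\leq\un{i}\leq\un{f}-\un{e}^{J^{\sh}}$ keeps the relevant exponents inside the ranges demanded by Lemma \ref{General Lem BP12}(iii).

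For existence, I would work inside $\Ind_I^{\GL_2(\OK)}(\chi_J^s)$, whose basis (by Lemma \ref{General Lem BP12}(ii)) consists of the vectors $\un{Y}^{\un{k}}\smat{0&1\\1&0}\phi$ for $\un{0}\leq\un{k}\leq\un{p}-\un{1}$, sweeping out all $H$-characters of the form $\chi_J\alpha^{-\un{k}}$, and in particular the target character $\chi_J\alpha^{-\un{i}}$. Using the construction of $D_{0,\sigma_{J^{\ss}}}(\rhobar)$ from \cite[\S13-\S15]{BP12}, I would identify an appropriate quotient $Q$ of this principal series that maps into $D_{0,\sigma_{J^{\ss}}}(\rhobar)$ in a way sending the distinguished $H$-vector of character $\chi_J$ in $Q$ to (a scalar multiple of) $v_J$. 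The candidate $y$ is then the image in $D_0(\rhobar)$ of the $H$-eigenvector in $Q$ of character $\chi_J\alpha^{-\un{i}}$ singled out by Lemma \ref{General Lem BP12}(iii); condition (ii) is then automatic. Condition (i), $Y_j^{i_j+1}y=0$, will follow from the vanishing relations in Lemma \ref{General Lem BP12}(iii)(b) inside $Q$: after applying $Y_j^{i_j+1}$ the total exponent exceeds the allowed range guaranteed by the bound on $s_j^J$ and the constraint $i_j\leq f-\delta_{j\in J^{\sh}}$.

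For uniqueness, the difference of two candidates would lie in the kernel of $\un{Y}^{\un{i}}$ acting on the $H$-eigenspace of character $\chi_J\alpha^{-\un{i}}$ in $D_0(\rhobar)$; using the block decomposition of Lemma \ref{General Lem Diamond diagram}(i) together with the multiplicity-one statements in (ii)-(iii) and the principal series description of each block, this kernel is trivial. To read off the list of constituents of $\GL_2(\OK)\cdot y$ in (\ref{General Eq range of b}), I would combine the parametrization of constituents of $Q$ given in Lemma \ref{General Lem BP12}(i) with the change-of-origin formula of Lemma \ref{General Lem change origin}, doing a case analysis on whether $j$ lies in $J^{\ss}$, $J^{\nss}$, or $J^{\sh}$. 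The main obstacle is the non-semisimple case $J_{\rhobar}\neq\cJ$: then $D_{0,\sigma_{J^{\ss}}}(\rhobar)$ is no longer a principal series quotient but a more delicate extension (as reflected in the $\ell(\rhobar,\tau)$ analysis used in the proof of Lemma \ref{General Lem Diamond diagram}), and one must carefully track the interplay between the principal series lifts and this extension structure. The asymmetric behavior of (\ref{General Eq range of b}) distinguishing $i_j=0$ from $i_j>0$ at indices $j\in J^{\nss}$ reflects exactly this: at such an index, one extra power of $Y_j$ can shift the $\GL_2(\OK)$-orbit into a neighboring constituent $\sigma_{\un{b}}$ with $b_j=\pm 1$, which has to be detected through the extension data.
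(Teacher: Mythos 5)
Your existence construction has a genuine gap: the element $y$ cannot be produced inside (a quotient of) the single principal series $\Ind_I^{\GL_2(\OK)}(\chi_J^s)$. By Frobenius reciprocity, $\Hom_{\GL_2(\OK)}\bigbra{\Ind_I^{\GL_2(\OK)}(\chi_J^s),D_{0,\sigma_{J^{\ss}}}(\rhobar)}\cong\Hom_I\bigbra{\chi_J^s,D_{0,\sigma_{J^{\ss}}}(\rhobar)^{I_1}}$, and by Lemma~\ref{General Lem Diamond diagram}(ii),(iii) the character $\chi_J^s$ occurs exactly once in $D_0(\rhobar)^{I_1}$, namely in the component $D_{0,\sigma_{(J-1)^{\ss}}}(\rhobar)$. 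Hence this Hom-space vanishes whenever $(J-1)^{\ss}\neq J^{\ss}$ (the generic situation), and the same holds for every quotient $Q$ of that principal series; so there is no map carrying the $\chi_J$-eigenvector of $Q$ to $v_J\in D_{0,\sigma_{J^{\ss}}}(\rhobar)$. There is a second, independent obstruction: when $i_j>0$ for some $j\in J^{\nss}$, the list \eqref{General Eq range of b} contains constituents $\sigma_{\un{b}}$ with $b_j$ taking the three values $-1,0,1$, whereas the constituents of any single principal series occupy a translate of the cube $\set{0,1}^f$ in the extension graph (Lemma~\ref{General Lem BP12}(i)); thus $\bang{\GL_2(\OK)y}$ is not a subquotient of one principal series, and $y$ is not the image of any vector in one.

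The fix is to induce not a character but the whole indecomposable $I/K_1$-module $W'$ with socle $\chi_J$, cosocle $\chi_J\alpha^{-\un{i}}$ and constituents $\chi_J\alpha^{-\un{i}'}$ for $\un{0}\leq\un{i}'\leq\un{i}$: one shows $V'=\Ind_I^{\GL_2(\OK)}(W')$ is multiplicity-free with $\sigma_{J^{\ss}}\in\JH(V')$ (reached through the graded piece $\Ind_I^{\GL_2(\OK)}(\chi_J)$, not $\Ind_I^{\GL_2(\OK)}(\chi_J^s)$), maps $V'$ to $\Inj_{\GL_2(\Fq)}\sigma_{J^{\ss}}$, and identifies the constituents of the image by a path analysis in the extension graph, using \cite[Prop.~4.1.1]{LLHLM20} together with \cite[Lemma~3.8]{HW22} to detect the length-two subquotients linking different graded pieces. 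Existence and uniqueness then both follow from $\dim_{\FF}\Hom_I\bigbra{W',D_{0,\sigma_{J^{\ss}}}(\rhobar)}=1$ plus an injectivity check. Your uniqueness argument is also not justified as stated: the kernel of $\un{Y}^{\un{i}}$ on the $\chi_J\alpha^{-\un{i}}$-eigenspace of $D_0(\rhobar)$ is not obviously trivial (one must also use condition (i) and the full $I$-module structure of the difference), and the premise that each block $D_{0,\sigma_{J'}}(\rhobar)$ admits a ``principal series description'' fails in the non-semisimple case, where each block is a sum of several subrepresentations $I\bigbra{\sigma_{J'},\cdot}$ as in Corollary~\ref{General Cor structure of D0}.
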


\begin{proof}
    For each $y\in D_0(\rhobar)$ satisfying (i) and (ii), by Lemma \ref{General Lem Yj}(ii) the $I$-representation generated by $y$ is an $I/K_1$-representation with socle $\chi_J$ and cosocle $\chi_J\alpha^{-\un{i}}$, and has constituents $\chi_J\alpha^{-\un{i}'}$ with $\un{0}\leq\un{i}'\leq\un{i}$, each occurring with multiplicity $1$. By \cite[Lemma~6.1.3]{BHHMS1}, such a representation is unique up to isomorphism, and we denote it by $W'$. To prove the existence and uniqueness of such $y$, it suffices to show that there is a unique (up to scalar) $I$-equivariant injection $W'\into D_0(\rhobar)$. Since $W'$ is indecomposable with $I$-socle $\chi_J$, which appears in $D_{0,\sigma_{J^{\ss}}}(\rhobar)$ by Lemma \ref{General Lem Diamond diagram}(iii), any such injection factors through $D_{0,\sigma_{J^{\ss}}}(\rhobar)$.

    \hspace{\fill}

    \noindent\textbf{Claim 1.} The $\GL_2(\OK)$-representation $V'\eqdef\Ind_{I}^{\GL_2(\OK)}(W')$ is multiplicity-free and $\sigma_{J^{\ss}}\in\JH(V')$.
    
    \proof By \cite[Lemma~2.2]{BP12}, $\Ind_{I}^{\GL_2(\OK)}\bigbra{\chi_J\alpha^{-\un{i}'}}$ and $\Ind_{I}^{\GL_2(\OK)}\bigbra{\chi_J\alpha^{-\un{i}'}}^s$ have the same constituents. Since twisting $\chi_J$ by $\alpha^{-\un{i}'}$ corresponds to shifting by $-2\un{i}'$ in the extension graph, it follows from Lemma \ref{General Lem BP12}(i), \cite[Remark~2.4.5(ii)]{BHHMS1} and (\ref{General Eq bound s}) that $\Ind_{I}^{\GL_2(\OK)}\bigbra{\chi_J\alpha^{-\un{i}'}}$ is multiplicity-free and has constituents $F\bigbra{\ft_{\lambda_J}(-\un{b})}$ with $2\un{i}'\leq\un{b}\leq2\un{i}'+\un{1}$. Hence the $\GL_2(\OK)$-representation $V'$ is multiplicity-free and has constituents $F\bigbra{\ft_{\lambda_J}(-\un{b})}$ with $\un{0}\leq\un{b}\leq2\un{i}+\un{1}$. By Lemma \ref{General Lem change origin}(i) and taking $b_j=\delta_{j\in J^{\nss}}$  we deduce that $\sigma_{J^{\ss}}\in\JH\bigbra{\Ind_{I}^{\GL_2(\OK)}(\chi_J)}\subseteq\JH(V')$.\qed

    \hspace{\fill}
    
    It follows from Claim 1 that there is a unique (up to scalar) $\GL_2(\OK)$-equivariant map $f:V'\to\Inj_{\GL_2(\Fq)}\sigma_{J^{\ss}}$. We denote by $V''$ the image of $f$. 

    \hspace{\fill}

    \noindent\textbf{Claim 2.} The $\GL_2(\OK)$-representation $V''$ has constituents $\sigma_{\un{b}}$ for $\un{b}$ as in (\ref{General Eq range of b}). 
    
    \proof We let $\tau$,$\tau'$ be constituents of $V'$ such that $\tau=F\bigbra{\ft_{\lambda_J}(-\un{b})}$ and $\tau'=F\bigbra{\ft_{\lambda_J}(-\un{b}+e_{j_0})}$ with $\un{0}<\un{b}\leq2\un{i}+\un{1}$, $j_0\in\cJ$ and $b_{j_0}\neq0$. We write $\un{b}=2\un{c}+\un{\varepsilon}$ with $\un{0}\leq\un{c}\leq\un{i}$ and $\un{0}\leq\un{\varepsilon}\leq\un{1}$. If $\varepsilon_{j_0}=1$, then both $\tau$ and $\tau'$ are constituents of $\Ind_I^{\GL_2(\OK)}(\chi_J\alpha^{-\un{c}})$. We deduce from \cite[Thm.~2.4]{BP12} that $V'$ has a length $2$ subquotient with socle $\tau$ and cosocle $\tau'$. If $\varepsilon_{j_0}=0$, then we deduce from \cite[Lemma~3.8]{HW22} (with $j=j_0$, $\chi=\chi_J\alpha^{-\un{c}}$, $J(\tau)=\set{j:\varepsilon_{j+1}=0}$, $J(\tau')=J(\tau)\setminus\set{j_0-1}$) that $V'$ has a length $2$ subquotient with socle $\tau'$ and cosocle $\tau$. Moreover, these are all possible non-split length $2$ subquotients of $V'$ by \cite[Lemma~2.4.6]{BHHMS1}. 
    
    Then we use the notation of \cite[\S4.1.1]{LLHLM20}. We make $\JH(V')$ into a directed graph by letting $\sigma\in\JH(V')$ point to $\sigma'\in\JH(V')$ if $V'$ has a length $2$ subquotient with socle $\sigma'$ and cosocle $\sigma$. By construction, $V''$ is a quotient of $V'$ with socle $\sigma_{J^{\ss}}$. It follows from the dual version of \cite[Prop.~4.1.1]{LLHLM20} that the constituents of $V''$ are those $\sigma\in\JH(V')$ which admit a path towards $\sigma_{J^{\ss}}=F\bigbra{\ft_{\lambda_J}(-\un{e}^{J^{\nss}})}$. From the structure of $\JH(V')$ we deduce that $V''$ has constituents $F\bigbra{\ft_{\lambda_{J}}(-\un{b})}$ with
    \begin{equation*}
    \begin{cases}
        b_j=0&\text{if}~j\notin J^{\nss}\\
        b_j\in\set{0,1}&\text{if}~j\in J^{\nss},~i_j=0\\
        b_j\in\set{0,1,2}&\text{if}~j\in J^{\nss},~i_j>0.
    \end{cases}
    \end{equation*}
    Then we conclude (\ref{General Eq range of b}) by Lemma \ref{General Lem change origin} with a case-by-case examination. \qed

    \hspace{\fill}
    
    Since $V'$ is multiplicity-free, it follows from Claim 2 and (\ref{General Eq JH D_0,sigma}) that $f$ factors through $D_{0,\sigma_{J^{\ss}}}(\rhobar)$. Then by Frobenius reciprocity, we have
    \begin{equation*}
        \dim_{\FF}\Hom_I\bigbra{W',D_{0,\sigma_{J^{\ss}}}(\rhobar)|_I}=\dim_{\FF}\Hom_{\GL_2(\OK)}\bigbra{V',D_{0,\sigma_{J^{\ss}}}(\rhobar)}=1.
    \end{equation*}
    To complete the proof, it remains to show that any nonzero $I$-equivariant map $W'\to D_{0,\sigma_{J^{\ss}}}(\rhobar)$ is injective. Since $W'$ has $I$-socle $\chi_J$, it suffices to show that the image of $\chi_J$ is nonzero. By Frobenius reciprocity, it suffices to show that the image of the subrepresentation $\Ind_I^{\GL_2(\OK)}(\chi_J)$ of $V'$ under $f$ is nonzero. This follows from the fact that both $\Ind_I^{\GL_2(\OK)}(\chi_J)$ and $V''$ contain  $\sigma_{J^{\ss}}$ as a constituent, and $V'$ is multiplicity-free.
\end{proof}

\begin{remark}
    When $J_{\rhobar}\neq\emptyset$, up to scalars there are more $I_1$-invariants than these $v_J$ for $J\subseteq\cJ$. However, Proposition \ref{General Prop shift} does not hold for these extra $I_1$-invariants.
\end{remark}

\section{The relations between \texorpdfstring{$H$}.-eigenvectors}\label{General Sec relation}

In this section, we study various $\GL_2(\OK)$-subrepresentations of $\pi$ generated by the elements $\un{Y}^{-\un{i}}v_J$ defined in Proposition \ref{General Prop shift}. The main results are Proposition \ref{General Prop relation 1} and Proposition \ref{General Prop relation 2}. Then we study the relations between the vectors $v_J$ for $J\subseteq\cJ$. The main results are Proposition \ref{General Prop vector} and Proposition \ref{General Prop vector complement}.

Recall that we have defined $Q(\chi_{\lambda}^s,J)$ for $\lambda=(\un{\lambda}_1,\un{\lambda}_2)\in X_1(\un{T})$ such that $\un{1}\leq\un{\lambda}_1-\un{\lambda}_2\leq\un{p}-\un{2}$ and $J\subseteq\cJ$ in Lemma \ref{General Lem BP12}(iii). The following lemma is a generalization of \cite[Lemma~3.2.3.3]{BHHMS2} (where $\rhobar$ was assumed to be semisimple).

\begin{lemma}\label{General Lem p001}
    Let $J\subseteq\cJ$ and $\un{i}\in\ZZ^f$ such that $\un{0}\leq\un{i}\leq\un{f}-\un{e}^{J^{\sh}}$. 
\begin{enumerate}
    \item The $\GL_2(\OK)$-subrepresentation $\bang{\GL_2(\OK)\smat{p&0\\0&1}\un{Y}^{-\un{i}}v_J}$ of $\pi$ is multiplicity-free with socle $\sigma_{(J-1)^{\ss}}=\sigma_{\un{e}^{(J-1)^{\ss}}}$ and cosocle $\sigma_{\un{c}}$ with 
    \begin{equation}\label{General Eq p001 c}
        c_j=(-1)^{\delta_{j+1\notin J}}\bigbra{2i_j+1+\delta_{j\in(J-1)^{\ss}}-\delta_{j\in J\Delta(J-1)^{\ss}}}~\forall\,j\in\cJ.
    \end{equation}
    \item 
    We have
    \begin{multline}\label{General Eq p001}
        \bang{\GL_2(\OK)\smat{p&0\\0&1}\un{Y}^{-\un{i}}v_J}\Big/\sum\limits_{\un{0}\leq\un{i}'<\un{i}}\bigang{\!\GL_2(\OK)\smat{p&0\\0&1}\un{Y}^{-\un{i}'}v_J}\\
        \cong Q\bbra{\chi_J^s\alpha^{\un{i}},\sset{j:j+1\in J\Delta(J-1)^{\ss},\ i_{j+1}=0}}.
    \end{multline}
    \item 
    Let $\un{m}\in\ZZ^f$ with each $m_j$ between $\delta_{j\in(J-1)^{\ss}}$ and $c_j$ (as in (\ref{General Eq p001 c})). Then there is a unique subrepresentation $I\bigbra{\sigma_{(J-1)^{\ss}},\sigma_{\un{m}}}$ of $\bang{\GL_2(\OK)\smat{p&0\\0&1}\un{Y}^{-\un{i}}v_J}$ with cosocle $\sigma_{\un{m}}$. In particular, $\bang{\GL_2(\OK)\smat{p&0\\0&1}\un{Y}^{-\un{i}}v_J}=I\bigbra{\sigma_{(J-1)^{\ss}},\sigma_{\un{c}}}$. Moreover, $I\bigbra{\sigma_{(J-1)^{\ss}},\sigma_{\un{m}}}$ has constituents $\sigma_{\un{b}}$ with each $b_j$ between $\delta_{j\in(J-1)^{\ss}}$ and $m_j$, and we have
    \begin{equation}\label{General Eq p001 dim1}
        \dim_{\FF}\Hom_{\GL_2(\OK)}\bigbra{I\bigbra{\sigma_{(J-1)^{\ss}},\sigma_{\un{m}}},\pi}=1.
    \end{equation}
\end{enumerate}
\end{lemma}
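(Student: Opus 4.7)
The plan is to realize $V \eqdef \bang{\GL_2(\OK)\smat{p&0\\0&1}\un{Y}^{-\un{i}}v_J}$ as an explicit quotient of the principal series $\Ind_I^{\GL_2(\OK)}(\chi_J^s\alpha^{\un{i}})$, then use Lemma \ref{General Lem BP12} and the multiplicity-one hypothesis (ii) on $\pi$ to pin down its structure. The element $x \eqdef \smat{p&0\\0&1}\un{Y}^{-\un{i}}v_J$ is an $H$-eigenvector with character $\chi_J\alpha^{-\un{i}}=(\chi_J^s\alpha^{\un{i}})^s$ (since $\smat{p&0\\0&1}$ centralizes $H$), and Lemma \ref{General Lem Yj}(i) together with the vanishing $Y_j^{i_j+1}\un{Y}^{-\un{i}}v_J=0$ from Proposition \ref{General Prop shift} gives
\[
Y_j^{p(i_{j+1}+1)}x \;=\; \smat{p&0\\0&1}Y_{j+1}^{i_{j+1}+1}\un{Y}^{-\un{i}}v_J \;=\; 0 \qquad\forall\,j\in\cJ,
\]
so $V$ is $\GL_2(\OK)$-finite. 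I would next exhibit a nonzero $I_1$-invariant $u\in V$ of $H$-character $\chi_J^s\alpha^{\un{i}}$ by locating $V$ inside the $\GL_2(\OK)$-component $D_{0,\sigma_{(J-1)^{\ss}}}(\rhobar)$ of $\pi^{K_1}$ (Lemma \ref{General Lem Diamond diagram}(iii) tells us that $\chi_J^s$ sits in this component, and the twist by $\alpha^{\un{i}}$ is built in through the $Y_j$-action). Frobenius reciprocity then yields $\Phi\colon\Ind_I^{\GL_2(\OK)}(\chi_J^s\alpha^{\un{i}})\to\pi$ with $\Phi(\phi)=u$. By Lemma \ref{General Lem BP12}(ii), $\smat{0&1\\1&0}\phi$ is the unique basis vector of the induction with $H$-character $\chi_J\alpha^{-\un{i}}$; combined with the multiplicity-one condition (ii) on $\pi^{I_1}$, this forces $\Phi(\smat{0&1\\1&0}\phi)$ to be a scalar multiple of $x$, so $V=\Im(\Phi)$. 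Multiplicity-freeness of $V$ in (i) then follows from Lemma \ref{General Lem BP12}(i), and the socle identification $\sigma_{(J-1)^{\ss}}$ from Lemma \ref{General Lem Diamond diagram}(iii).

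For the cosocle formula \eqref{General Eq p001 c} and the identification in (ii), I would exploit the recursion
\[
Y_j^p\smat{p&0\\0&1}\un{Y}^{-\un{i}}v_J \;=\; \smat{p&0\\0&1}\un{Y}^{-(\un{i}-e_{j+1})}v_J \qquad(\text{when } i_{j+1}\ge 1),
\]
which is a consequence of Lemma \ref{General Lem Yj}(i) and the uniqueness clause of Proposition \ref{General Prop shift}. It shows that, under $\Phi$, the sum $\sum_{\un{i}'<\un{i}}\bang{\GL_2(\OK)\smat{p&0\\0&1}\un{Y}^{-\un{i}'}v_J}$ pulls back to the sub-induction complementary to the basis vectors listed in Lemma \ref{General Lem BP12}(iii) for $J'\eqdef\set{j:j+1\in J\Delta(J-1)^{\ss},\ i_{j+1}=0}$. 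The appearance of $J\Delta(J-1)^{\ss}$ is dictated by Lemma \ref{General Lem change origin}(ii), which writes $\sigma_{(J-1)^{\ss}}=F(\ft_{\lambda_J}(-\un{b}))$ with $b_j=\delta_{j\in J\Delta(J-1)^{\ss}}$. This identifies the successive quotient as $Q(\chi_J^s\alpha^{\un{i}},J')$ and recovers \eqref{General Eq p001 c} by reading off the top weight via Lemma \ref{General Lem change origin}. Part (iii) follows formally: multiplicity-freeness of $V$ gives the existence and uniqueness of $I(\sigma_{(J-1)^{\ss}},\sigma_{\un{m}})$ with the claimed constituents, and \eqref{General Eq p001 dim1} reduces by Frobenius reciprocity to the uniqueness of the $\chi_J^s\alpha^{\un{i}}$-eigenline in $\pi^{I_1}$ guaranteed by condition (ii).

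The main obstacle is the construction of $u\in V$ of character $\chi_J^s\alpha^{\un{i}}$ in the first paragraph above. In the semisimple case of \cite[Lemma~3.2.3.3]{BHHMS2} one has $J^{\nss}=\emptyset$ and all Jordan--H\"older factors of $\Ind_I^{\GL_2(\OK)}(\chi_J^s\alpha^{\un{i}})$ lie in $W(\rhobar)$, so the identification is essentially tautological. In the present non-semisimple setting, the extra constituents indexed by $J^{\nss}$ (cf.\ the range in \eqref{General Eq range of b}) force a delicate case analysis of which basis vectors of the induction survive inside $\pi^{K_1}=D_0(\rhobar)$; this is encoded combinatorially by the shift $\delta_{j\in J\Delta(J-1)^{\ss}}$ in \eqref{General Eq p001 c}, and is the principal technical content distinguishing this lemma from its semisimple predecessor.
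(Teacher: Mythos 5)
There is a genuine structural gap at the very first step: you try to realize $V\eqdef\bang{\GL_2(\OK)\smat{p&0\\0&1}\un{Y}^{-\un{i}}v_J}$ as the image of a map $\Phi$ out of the \emph{single} principal series $\Ind_I^{\GL_2(\OK)}(\chi_J^s\alpha^{\un{i}})$, but this is impossible as soon as $\un{i}\neq\un{0}$. The generator $\smat{0&1\\p&0}\un{Y}^{-\un{i}}v_J$ is not $I_1$-invariant: it generates an $I$-representation $W$ of dimension $\prod_j(i_j+1)$, and $V$ is a quotient of $\Ind_I^{\GL_2(\OK)}(W)$, which is multiplicity-free with $\prod_j(2i_j+2)$ Jordan--H\"older factors (this is exactly what the lemma asserts in (iii): the constituents $\sigma_{\un{b}}$ have each $b_j$ ranging over an interval of length $|c_j-\delta_{j\in(J-1)^{\ss}}|+1$, which exceeds $2$ whenever $i_j\geq1$). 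A quotient of a single principal series has at most $2^f$ constituents, so $V\neq\Im(\Phi)$; already for $f=1$, $J=\emptyset$, $i_0=1$ one gets $4$ constituents versus $2$. Consequently the derivation of (i) and (iii) from Lemma \ref{General Lem BP12}(i) collapses. The intermediate step is also unjustified on its own terms: condition (ii) on $\pi$ bounds multiplicities of $I$-characters in $\pi[\fm_{I_1}^3]$, and neither it nor Lemma \ref{General Lem Diamond diagram}(ii) forces two $H$-eigenvectors of $\pi$ with the same $H$-character to be proportional, so you cannot conclude that $\Phi\bigbra{\smat{0&1\\1&0}\phi}$ is a multiple of $x$.

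The correct framework (the paper's, following \cite[Lemma~3.2.3.3]{BHHMS2}) is to induce the whole $I$-representation $W=\smat{0&1\\p&0}W'$, where $W'$ is the uniserial-in-each-variable $I$-module of Proposition \ref{General Prop shift}. Then $V'\eqdef\Ind_I^{\GL_2(\OK)}(W)$ carries a filtration whose graded pieces are $\Ind_I^{\GL_2(\OK)}(\chi_J^s\alpha^{\un{i}'})$ for all $\un{0}\leq\un{i}'\leq\un{i}$; only the top piece, i.e.\ precisely the quotient appearing in part (ii), is a quotient of the principal series you start from. Your recursion $Y_j^p\smat{p&0\\0&1}\un{Y}^{-\un{i}}v_J=\smat{p&0\\0&1}\un{Y}^{-(\un{i}-e_{j+1})}v_J$ is correct and is indeed how the lower filtration steps match up with $\sum_{\un{i}'<\un{i}}\bang{\GL_2(\OK)\smat{p&0\\0&1}\un{Y}^{-\un{i}'}v_J}$, but it cannot substitute for the structure theory of $V'$ (multiplicity-freeness, the description of subrepresentations by cosocle, and the extension pattern) that parts (i) and (iii) require. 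Finally, the socle identification needs more than Lemma \ref{General Lem Diamond diagram}(iii): one must first show that every element of $W(\rhobar)\cap\JH(V')$ already occurs in $\Ind_I^{\GL_2(\OK)}(\chi_J^s)$ and then use $\soc_{\GL_2(\OK)}\pi=\oplus_{\sigma\in W(\rhobar)}\sigma$ together with the fact that the image of that bottom piece lands in $D_{0,\sigma_{(J-1)^{\ss}}}(\rhobar)$; similarly \eqref{General Eq p001 dim1} is proved by this socle argument, not by Frobenius reciprocity (which does not apply to $I(\sigma_{(J-1)^{\ss}},\sigma_{\un{m}})$, as it is not an induced representation).
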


\begin{proof}
    (i). We follow closely the proof of \cite[Lemma~3.2.3.3]{BHHMS2}. The vectors $\un{Y}^{-\un{i}}v_J$ and $\un{Y}^{-\un{i'}}v_J$ are defined in Proposition \ref{General Prop shift}. We let $W'$ (resp.\,$W$) be the $I$-subrepresentation of $\pi$ generated by $\un{Y}^{-\un{i}}v_J$ (resp.\,$\smat{0&1\\p&0}\un{Y}^{-\un{i}}v_J$) and $V\eqdef\Ind_I^{\GL_2(\OK)}(W)$. In particular, $W'$ is the same representation as in the proof of Proposition \ref{General Prop shift}. By the proof of \cite[Lemma~3.2.3.3]{BHHMS2}, we have:
    \begin{enumerate}
        \item[(a)]
        $V$ is multiplicity-free as a $\GL_2(\OK)$-representation with constituents $F\bigbra{\ft_{\lambda_J}(-\un{b})}$ for $\un{0}\leq\un{b}\leq2\un{i}+1$ (they are well-defined by (\ref{General Eq bound s}));
        \item[(b)] 
        For each $\un{0}\leq\un{b}\leq2\un{i}+1$, the unique subrepresentation of $V$ with cosocle $F\bigbra{\ft_{\lambda_J}(-\un{b})}$ has constituents $F\bigbra{\ft_{\lambda_J}(-\un{a})}$ for $\un{0}\leq\un{a}\leq\un{b}$;
        \item[(c)]
        $V$ has a filtration with subquotients $\Ind_I^{\GL_2(\OK)}\!\bigbra{\chi_J^s\alpha^{\un{i}'}}$ for $\un{0}\leq\un{i}'\leq\un{i}$. Each $\Ind_I^{\GL_2(\OK)}\!\bigbra{\chi_J^s\alpha^{\un{i}'}}$ has constituents $F\bigbra{\ft_{\lambda_J}(-\un{b})}$ with $2\un{i}'\leq\un{b}\leq2\un{i}'+1$, and the constituent $F\bigbra{\ft_{\lambda_J}(-\un{b})}$ of $\Ind_I^{\GL_2(\OK)}\!\bigbra{\chi_J^s\alpha^{\un{i}'}}$ corresponds to the subset $\sset{j:b_{j+1}\ \text{is odd}}\subseteq\cJ$ (see Lemma \ref{General Lem BP12}(i)).
    \end{enumerate}
    The $I$-equivariant inclusion $W'\into D_0(\rhobar)$ in the proof of Proposition \ref{General Prop shift} induces an $I$-equivariant inclusion $W\into\pi$ by applying $\smat{0&1\\p&0}$. By Frobenius reciprocity, this induces a $\GL_2(\OK)$-equivariant map $V\to\pi$ with image $\ovl{V}\eqdef\bang{\GL_2(\OK)\smat{0&1\\p&0}\un{Y}^{-\un{i}}v_J}=\bang{\GL_2(\OK)\smat{p&0\\0&1}\un{Y}^{-\un{i}}v_J}\subseteq\pi$. In particular, it follows from (b) that the cosocle of $\ovl{V}$ is $F\bigbra{\ft_{\lambda_J}(-(2\un{i}+1))}=\sigma_{\un{c}}$ with
    \begin{align*}
        c_j&=(-1)^{\delta_{j+1\in J}}\bigbra{-(2i_j+1)+\delta_{j\in J}}+2\delta_{j\in J^{\sh}}\\
        &=(-1)^{\delta_{j+1\notin J}}\bigbra{2i_j+1-\delta_{j\in J}+2(-1)^{\delta_{j+1\notin J}}\delta_{j\in J^{\sh}}}\\
        &=(-1)^{\delta_{j+1\notin J}}\bigbra{2i_j+1+\delta_{j\in(J-1)^{\ss}}-\delta_{j\in J\Delta(J-1)^{\ss}}},
    \end{align*}
    where the first equality follows from Lemma \ref{General Lem change origin} and the last equality is elementary (for example, one can separate the cases $j\in(J-1)^{\ss}$ and $j\notin(J-1)^{\ss}$).
    
    \hspace{\fill}

    \noindent\textbf{Claim.} We have $W(\rhobar)\cap\JH(V)\subseteq\JH\!\bigbra{\!\Ind_I^{\GL_2(\OK)}(\chi_J^s)}$. 
    
    \proof It suffices to show that for each $\sigma\in W(\rhobar)$, we have $\sigma=F\bigbra{\ft_{\lambda_J}(-\un{b})}$ for some $\un{b}\leq\un{1}$. We check it for $\sigma_{\emptyset}$, the other cases being similar. By Lemma \ref{General Lem change origin} (with $\un{a}=\un{0}$), we get $b_j=(-1)^{\delta_{j+1\in J}}\bigbra{2\delta_{j\in J^{\sh}}}+\delta_{j\in J}$. If $j\notin J^{\sh}$, then $b_j=\delta_{j\in J}\leq1$. If $j\in J^{\sh}$, then $b_j=-2+1=-1$.\qed

    \hspace{\fill}
    
    Recall that $\soc_{\GL_2(\OK)}\pi=\bigoplus_{\sigma\in W(\rhobar)}\sigma$. Assume that $\sigma$ is in the socle of $\ovl{V}$, then we have $\sigma\in W(\rhobar)\cap\JH(V)\subseteq\JH\!\bigbra{\!\Ind_I^{\GL_2(\OK)}(\chi_J^s)}$ by the claim above. Moreover, the image of the subrepresentation $\Ind_I^{\GL_2(\OK)}(\chi_J^s)$ of $V$ in $\pi$ lies in $D_0(\rhobar)$, hence lies in the component $D_{0,\sigma_{(J-1)^{\ss}}}(\rhobar)$ by Lemma \ref{General Lem Diamond diagram}(iii) and Frobenius reciprocity, which implies that $\sigma$ must be $\sigma_{(J-1)^{\ss}}$, the only Serre weight of $\rhobar$ appearing in $D_{0,\sigma_{(J-1)^{\ss}}}(\rhobar)$. Since $V$ is multiplicity-free by (a), we deduce that $\ovl{V}$ is the unique quotient of $V$ with socle $\sigma_{(J-1)^{\ss}}$.

    \hspace{\fill}
    
    (ii). By Lemma \ref{General Lem change origin}(ii), we have $\sigma_{(J-1)^{\ss}}=F\bigbra{\ft_{\lambda_J}(-\un{e}^{J\Delta(J-1)^{\ss}})}$, hence $\ovl{V}$ has constituents $F\bigbra{\ft_{\lambda_J}(-\un{b})}$ with $\delta_{j\in J\Delta(J-1)^{\ss}}\leq b_j\leq2i_j+1$ for all $j$ (or equivalently, $\sigma_{\un{b}}$ with each $b_j$ between $\delta_{j\in(J-1)^{\ss}}$ and $c_j$ by Lemma \ref{General Lem change origin}). By (c), the LHS of (\ref{General Eq p001}) is the quotient of $\Ind_I^{\GL_2(\OK)}\!\bigbra{\chi_J^s\alpha^{\un{i}}}$ whose constituents are $F\bigbra{\ft_{\lambda_J}(-\un{b})}$ with $\max(\delta_{j\in J\Delta(J-1)^{\ss}},2i_j)\leq b_j\leq2i_j+1$, hence it has irreducible socle $F\bigbra{\ft_{\lambda_J}(-\un{a})}$ with $a_j=\max(\delta_{j\in J\Delta(J-1)^{\ss}},2i_j)$ by (b). Since $a_j$ is odd if and only if $i_j=0$ and $j\in J\Delta(J-1)^{\ss}$, it follows from (c) that the constituent $F\bigbra{\ft_{\lambda_J}(-\un{a})}$ of $\Ind_I^{\GL_2(\OK)}\!\bigbra{\chi_J^s\alpha^{\un{i}}}$ corresponds to the subset $\sset{j:j+1\in J\Delta(J-1)^{\ss},\ i_{j+1}=0}$. 

    \hspace{\fill}

    (iii). Since $\sigma_{\un{m}}$ is a constituent of the multiplicity-free representation $\ovl{V}$ by the previous paragraph, there is a unique subrepresentation of $\ovl{V}$ with cosocle $\sigma_{\un{m}}$, which moreover has constituents as in the statement by (b). We denote it by $I\bigbra{\sigma_{(J-1)^{\ss}},\sigma_{\un{m}}}$. By the last paragraph of the proof of (i), any constituent of $I\bigbra{\sigma_{(J-1)^{\ss}},\sigma_{\un{m}}}$ which is also an element of $W(\rhobar)$ must appear in $D_{0,\sigma_{(J-1)^{\ss}}}(\rhobar)$, hence has to be $\sigma_{(J-1)^{\ss}}$. Together with $\soc_{\GL_2(\OK)}\pi=\oplus_{\sigma\in W(\rhobar)}\sigma$, we deduce that 
    \begin{equation*}
        1\leq\dim_{\FF}\Hom_{\GL_2(\OK)}\bigbra{I\bigbra{\sigma_{(J-1)^{\ss}},\sigma_{\un{m}}},\pi}\leq\dim_{\FF}\Hom_{\GL_2(\OK)}\bigbra{\sigma_{(J-1)^{\ss}},\pi}=1,
    \end{equation*}
    which completes the proof.
\end{proof}

\begin{remark}\label{General Rk Q123}
    For $\lambda=(\un{\lambda}_1,\un{\lambda}_2)\in X_1(\un{T})$, $\un{i}\in\NNN^f$ such that $2\un{i}+\un{1}\leq\un{\lambda}_1-\un{\lambda}_2\leq\un{p}-\un{2}$ and $J'\subseteq\cJ$, we let $W'$ be the $I$-representation as in the proof of Proposition \ref{General Prop shift} with $\chi_J$ replaced by $\chi_{\lambda}$, and we denote by $Q\bigbra{\chi_{\lambda}^s,\chi_{\lambda}^s\alpha^{\un{i}},J'}$ the unique quotient of the $\GL_2(\OK)$-representation $\Ind_I^{\GL_2(\OK)}\bbra{\!\smat{0&1\\p&0}W'}$ whose socle is the constituent of $\Ind_I^{\GL_2(\OK)}(\chi_{\lambda}^s)$ corresponding to $J'$. Then the proof of Lemma \ref{General Lem p001} shows that $\bang{\GL_2(\OK)\smat{p&0\\0&1}\un{Y}^{-\un{i}}v_J}\cong Q\bigbra{\chi_J^s,\chi_J^s\alpha^{\un{i}},(J\Delta(J-1)^{\ss})-1}$.
\end{remark}

\begin{corollary}\label{General Cor structure of D0}
    For each $J\subseteq J_{\rhobar}$, we have (see Lemma \ref{General Lem p001}(iii) for the notation)
    \begin{equation*}
        D_{0,\sigma_J}(\rhobar)=\sum\limits_{(J')^{\ss}=J}I\bigbra{\sigma_{J},\sigma_{\un{e}^{J}+\un{\varepsilon}^{J'}}}=\sum\limits_{(J')^{\ss}=J}I\bigbra{\sigma_{\un{e}^{(J')^{\ss}}},\sigma_{\un{e}^{(J')^{\ss}}+\un{\varepsilon}^{J'}}},
    \end{equation*}
    where $\un{\varepsilon}^{J'}\in\set{\pm1}^f$ with $\varepsilon^{J'}_j\eqdef(-1)^{\delta_{j\notin J'}}$.
\end{corollary}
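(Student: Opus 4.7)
The plan is to realize each summand $I(\sigma_J,\sigma_{\un{e}^J+\un{\varepsilon}^{J'}})$ on the right-hand side as a subrepresentation of $D_{0,\sigma_J}(\rhobar)$ by invoking Lemma \ref{General Lem p001}(iii), and then to match Jordan--H\"older factors on both sides using the multiplicity-freeness of $D_0(\rhobar)$ from Lemma \ref{General Lem Diamond diagram}(i). The second equality in the statement is tautological since $(J')^{\ss}=J$ forces $\un{e}^{(J')^{\ss}}=\un{e}^J$, so only the first equality requires proof.

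For each $J'\subseteq\cJ$ with $(J')^{\ss}=J$ (equivalently, $J\subseteq J'$ and $J'\setminus J\subseteq\cJ\setminus J_{\rhobar}$), I would apply Lemma \ref{General Lem p001}(iii) with its parameter $J$ replaced by $J'+1$, so that $(J'+1-1)^{\ss}=(J')^{\ss}=J$ matches the required socle $\sigma_J$. An inspection of \eqref{General Eq p001 c} shows that the sign $(-1)^{\delta_{j+1\notin J'+1}}=(-1)^{\delta_{j\notin J'}}$ is positive exactly for $j\in J'$, matching the sign of $m_j\eqdef(\un{e}^J+\un{\varepsilon}^{J'})_j$ (namely $+1$ or $+2$ for $j\in J'$, $-1$ for $j\notin J'$). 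The magnitude constraint is then met by choosing $\un{i}\in\ZZ^f$ within the permitted range $\un{0}\leq\un{i}\leq\un{f}-\un{e}^{(J'+1)^{\sh}}$ with $i_j\in\{0,1\}$ determined by a parity condition between $J$ and $J'$; the hypothesis $f\geq 1$ together with the genericity \eqref{General Eq genericity} (notably $r_0\geq 4$ when $f=1$) ensures such $\un{i}$ exists. By the constituent description in Lemma \ref{General Lem p001}(iii), $I(\sigma_J,\sigma_{\un{e}^J+\un{\varepsilon}^{J'}})$ has Jordan--H\"older factors $\sigma_{\un{b}}$ with $b_j\in\{1,2\}$ for $j\in J$, $b_j\in\{0,1\}$ for $j\in J'\setminus J$, and $b_j\in\{-1,0\}$ for $j\notin J'$; all of these lie in $\JH(D_{0,\sigma_J}(\rhobar))$ by Lemma \ref{General Lem Diamond diagram}(i).

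To promote this to the inclusion $I(\sigma_J,\sigma_{\un{e}^J+\un{\varepsilon}^{J'}})\subseteq D_{0,\sigma_J}(\rhobar)$ of subrepresentations of $\pi$, I would combine \eqref{General Eq p001 dim1} (uniqueness of the embedding into $\pi$) with the fact that $D_{0,\sigma_J}(\rhobar)$ is the unique indecomposable summand of $\pi^{K_1}$ with socle $\sigma_J$, forcing the embedding of $I(\cdots)$ to factor through $D_{0,\sigma_J}(\rhobar)$. The reverse inclusion $D_{0,\sigma_J}(\rhobar)\subseteq\sum_{J'}I(\cdots)$ then reduces via multiplicity-freeness to checking that every $\sigma_{\un{b}}\in\JH(D_{0,\sigma_J}(\rhobar))$ appears as a Jordan--H\"older factor on the right-hand side: given such $\sigma_{\un{b}}$, take $J'\eqdef J\sqcup\set{j\in\cJ\setminus J_{\rhobar}:b_j\geq 0}$; then $(J')^{\ss}=J$ and $\sigma_{\un{b}}$ lies in the constituent list of $I(\sigma_J,\sigma_{\un{e}^J+\un{\varepsilon}^{J'}})$ displayed above. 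The main obstacle I anticipate is the factorization through $D_{0,\sigma_J}(\rhobar)$: although the Jordan--H\"older factors of $I(\cdots)$ all lie in $D_0(\rhobar)$, the ambient $\bang{\GL_2(\OK)\smat{p&0\\0&1}\un{Y}^{-\un{i}}v_{J'+1}}$ from Lemma \ref{General Lem p001}(iii) need not lie in $\pi^{K_1}$, so the factorization must be extracted carefully from \eqref{General Eq p001 dim1} together with the indecomposable summand structure of $D_0(\rhobar)$.
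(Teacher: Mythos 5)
Your approach is essentially the paper's: it likewise realizes each $I\bigbra{\sigma_J,\sigma_{\un{e}^J+\un{\varepsilon}^{J'}}}$ via Lemma \ref{General Lem p001}(i),(iii) applied with $(J,\un{i})$ there equal to $\bigbra{J'+1,\un{f}-\un{e}^{(J'+1)^{\sh}}}$ (taking the maximal admissible $\un{i}$ spares the case-by-case choice of $i_j\in\set{0,1}$ and the attendant bound checks), and then concludes by matching the union of the constituent lists over all $J'$ with $(J')^{\ss}=J$ against \eqref{General Eq JH D_0,sigma}, using multiplicity-freeness and \eqref{General Eq p001 dim1}. Your sign and constituent computations are correct, as is the reverse inclusion via the choice $J'=J\sqcup\set{j\notin J_{\rhobar}:b_j\geq0}$.

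The one step you leave open --- the factorization $I\bigbra{\sigma_J,\sigma_{\un{e}^J+\un{\varepsilon}^{J'}}}\subseteq D_{0,\sigma_J}(\rhobar)$ --- is a genuine point, and your proposed tool (``the unique indecomposable summand of $\pi^{K_1}$ with socle $\sigma_J$'') does not suffice by itself, for exactly the reason you identify: the ambient $\bang{\GL_2(\OK)\smat{p&0\\0&1}\un{Y}^{-\un{i}}v_{J'+1}}$ is not contained in $\pi^{K_1}$, so there is nothing yet to ``factor through''. The missing ingredient is \cite[Prop.~13.4]{BP12}, which the paper cites for precisely this purpose: comparing your constituent list with \eqref{General Eq SW of rhobar} shows that the only constituent of $I(\cdots)$ lying in $W(\rhobar)$ is $\sigma_J$, occurring with multiplicity one; since $I(\cdots)$ has irreducible socle $\sigma_J$ it embeds into $\Inj_{\GL_2(\Fq)}\sigma_J$, and the maximality characterization of $D_{0,\sigma_J}(\rhobar)$ inside $\Inj_{\GL_2(\Fq)}\sigma_J$ from \textit{loc.\,cit.} forces that abstract embedding to land in $D_{0,\sigma_J}(\rhobar)$. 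This produces a second embedding $I(\cdots)\into\pi$ with image in $\pi^{K_1}$, and \eqref{General Eq p001 dim1} identifies it (up to scalar) with the copy inside $\bang{\GL_2(\OK)\smat{p&0\\0&1}\un{Y}^{-\un{i}}v_{J'+1}}$. With this step supplied, your argument is complete.
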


\begin{proof}
    For each $J'\subseteq\cJ$ such that $(J')^{\ss}=J$, by applying Lemma \ref{General Lem p001}(i),(iii) with $(J,\un{i})$ there being $(J'+1,\un{f}-\un{e}^{(J'+1)^{\sh}})$, we see that $I\bigbra{\sigma_{J},\sigma_{\un{e}^{J}+\un{\varepsilon}^{J'}}}$ is well-defined. Then the result follows from Lemma \ref{General Lem Diamond diagram}(i), \cite[Prop.~13.4]{BP12} and (\ref{General Eq p001 dim1}).
\end{proof}

The following proposition is a generalization of \cite[Lemma~3.2.3.1]{BHHMS2} (where $\rhobar$ was assumed to be semisimple), which gives a first example of the relations between the vectors $v_J\in D_0(\rhobar)$ and is a special case of Proposition \ref{General Prop vector} below.

\begin{proposition}\label{General Prop vector simple}
    For $J\subseteq\cJ$, there exists a unique element $\mu_{J,(J-1)^{\ss}}\in\FF\x$ such that 
    \begin{equation}\label{General Eq vector simple}
        \bbbra{\prod\limits_{j+1\in J\Delta(J-1)^{\ss}}Y_j^{s^{(J-1)^{\ss}}_j}\prod\limits_{j+1\notin J\Delta(J-1)^{\ss}}Y_j^{p-1}}\smat{p&0\\0&1}v_J=\mu_{J,(J-1)^{\ss}}v_{(J-1)^{\ss}}.
    \end{equation}
\end{proposition}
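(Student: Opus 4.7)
The plan is to apply Lemma \ref{General Lem p001} with $\un{i} = \un{0}$ and then identify $\un{Y}^{\un{k}^0}\smat{p&0\\0&1}v_J$ as a nonzero element of the socle of the resulting subrepresentation, for an explicit $\un{k}^0$. Set $V \eqdef \bang{\GL_2(\OK)\smat{p&0\\0&1}v_J} \subseteq \pi$. By Lemma \ref{General Lem p001}(i),(ii) (or equivalently Remark \ref{General Rk Q123}), $V$ is isomorphic to $Q(\chi_J^s, J'')$ with $J'' \eqdef \sset{j : j+1 \in J\Delta(J-1)^{\ss}}$, and has simple socle $\tau = \sigma_{(J-1)^{\ss}}$. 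Under the surjection $\Ind_I^{\GL_2(\OK)}(\chi_J^s) \twoheadrightarrow V$ provided by Frobenius reciprocity, the canonical vector $\phi$ maps to $\smat{0&1\\p&0}v_J$, hence $\smat{0&1\\1&0}\phi$ maps to $\smat{p&0\\0&1}v_J$.

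Next, by Lemma \ref{General Lem BP12}(iii), an explicit basis of $\tau$ inside $Q(\chi_J^s, J'')$ consists of vectors of the form $\un{Y}^{\un{k}}\smat{0&1\\1&0}\phi$ with $\un{k}$ lying in a range depending on $J''$ and $\un{s}^J$. Since $\tau^{I_1}$ is one-dimensional with $H$-character $\chi_{(J-1)^{\ss}}$, exactly one such basis element has this eigencharacter, and it suffices to verify that it corresponds to the exponent vector $\un{k}^0$ defined by $k^0_j = s^{(J-1)^{\ss}}_j$ for $j \in J''$ and $k^0_j = p-1$ for $j \notin J''$. The verification comprises two parts: (a) showing that $\un{k}^0$ lies in the allowed range from Lemma \ref{General Lem BP12}(iii)(b), i.e.\ $s^{(J-1)^{\ss}}_j \leq p-2-s^J_j + \delta_{j-1 \in J''}$ whenever $j \in J''$; and (b) showing that $\un{Y}^{\un{k}^0}\smat{0&1\\1&0}\phi$ has $H$-character $\chi_{(J-1)^{\ss}}$, computed via Lemma \ref{General Lem BP12}(ii) (equivalently Lemma \ref{General Lem Yj}(ii)).

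Both (a) and (b) reduce to case analyses on the membership of $j$ and $j+1$ in $J$, $J-1$, and $J_{\rhobar}$, using (\ref{General Eq sJ}), (\ref{General Eq tJ}), and the genericity bound (\ref{General Eq bound s}). For (a), a direct computation shows that $s^J_j + s^{(J-1)^{\ss}}_j \in \sset{p-2,p-1}$ for $j \in J''$, and that the value $p-1$ occurs precisely when $j-1 \in J''$, so the $\delta_{j-1 \in J''}$ correction always provides the needed slack. For (b), the character identity $\chi_J\alpha^{-\un{k}^0} = \chi_{(J-1)^{\ss}}$ reduces (modulo $q-1$) to the congruence $\sum_{j \in J''}\bigl(s^{(J-1)^{\ss}}_j-(p-1)\bigr)p^j \equiv \sum_j\bigl(t^J_j - t^{(J-1)^{\ss}}_j\bigr)p^j$, which I would verify by the analogous case analysis.

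With $\un{k}^0$ identified, the image $\un{Y}^{\un{k}^0}\smat{p&0\\0&1}v_J \in V$ is a nonzero $I_1$-invariant $H$-eigenvector of character $\chi_{(J-1)^{\ss}}$ lying in $\tau \subseteq D_0(\rhobar) = \pi^{K_1}$. By Lemma \ref{General Lem Diamond diagram}(ii), the $\chi_{(J-1)^{\ss}}$-isotypic subspace of $D_0(\rhobar)^{I_1}$ is one-dimensional, spanned by $v_{(J-1)^{\ss}}$, yielding the unique scalar $\mu_{J,(J-1)^{\ss}} \in \FF\x$ satisfying (\ref{General Eq vector simple}). The main obstacle lies in the combinatorial bookkeeping in (a) and (b): in the semisimple case $J_{\rhobar} = \cJ$ treated in \cite[Lemma~3.2.3.1]{BHHMS2}, $(J-1)^{\ss}$ collapses to $J-1$ and several cases coalesce, whereas in the general case the presence of the shifting index $J^{\sh}$ and of constituents of $D_0(\rhobar)$ outside $W(\rhobar)$ makes the analysis markedly more delicate.
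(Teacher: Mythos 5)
Your proposal is correct and follows essentially the same route as the paper: identify $\bang{\GL_2(\OK)\smat{p&0\\0&1}v_J}$ with $Q(\chi_J^s,(J\Delta(J-1)^{\ss})-1)$ via Lemma \ref{General Lem p001}, use Lemma \ref{General Lem BP12}(iii) together with the exponent identity $s^{(J-1)^{\ss}}_j=(p-2-s^J_j)+\delta_{j-1\in J''}$ (this is exactly Lemma \ref{General Lem p-2-s and s} with $J'=(J-1)^{\ss}$, where the $2\delta_{j\in(J\cap J')^{\nss}}$ term vanishes) to recognize the left-hand side as the $I_1$-invariant of the socle $\sigma_{(J-1)^{\ss}}$, and conclude by multiplicity-freeness of $D_0(\rhobar)^{I_1}$. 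The only cosmetic difference is that the paper pins down the socle vector as the unique $H$-eigenvector killed by all $Y_j$ (using that your computation in (a) is an equality, so $\un{k}^0$ sits at the top of the allowed range), whereas you additionally match the $H$-eigencharacter, which is redundant but harmless.
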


\begin{proof}
    By Lemma \ref{General Lem p001}(ii) and its proof, we have $\bang{\GL_2(\OK)\smat{p&0\\0&1}v_J}\cong Q\bigbra{\chi_J^s,(J\Delta(J-1)^{\ss})-1}$ such that $\smat{0&1\\p&0}v_J$ corresponds to the image of $\phi\in\Ind_I^{\GL_2(\OK)}(\chi_J^s)$ (see above (\ref{General Eq fi}) for $\phi$) in $Q\bigbra{\chi_J^s,(J\Delta(J-1)^{\ss})-1}$, and the socle is $\sigma_{(J-1)^{\ss}}$ which corresponds to the subset $(J\Delta(J-1)^{\ss})-1$ for $\Ind_I^{\GL_2(\OK)}(\chi_J^s)$ (see Lemma \ref{General Lem BP12}(i)). By Lemma \ref{General Lem p-2-s and s} applied to $J$ and $J'=(J-1)^{\ss}$, for $j\in J\Delta(J-1)^{\ss}-1$ we have $(p-2-s^J_j)+\delta_{j-1\in(J\Delta(J-1)^{\ss})-1}=s^{(J-1)^{\ss}}_j$. Then by Lemma \ref{General Lem BP12}(iii) applied to $\lambda=\lambda_J$ (and recall that $\chi_J=\chi_{\lambda_J}$ with $\lambda_J=(\un{s}^J+\un{t}^J,\un{t}^J)$), the LHS of (\ref{General Eq vector simple}) is nonzero in $\sigma_{(J-1)^{\ss}}$ and is the unique (up to scalar) $H$-eigenvector in $\sigma_{(J-1)^{\ss}}$ killed by all $Y_j$. It follows that the LHS of (\ref{General Eq vector simple}) is a nonzero $I_1$-invariant of $\sigma_{(J-1)^{\ss}}$, hence is a scalar multiple of $v_{(J-1)^{\ss}}$.
\end{proof}

For $J,J'\subseteq\cJ$, we define $\un{t}^J(J')\in\ZZ^f$ by
\begin{equation}\label{General Eq tJJ'}
    t^J(J')_j\eqdef p-1-s^J_j+\delta_{j-1\in J'},
\end{equation}
where $s^J_j$ is defined in (\ref{General Eq sJ}). In particular, by (\ref{General Eq bound s}) we have 
\begin{equation}\label{General Eq bound tJ}
    1\leq t^J(J')_j\leq p-1-2(f-\delta_{j\in J^{\sh}})~\forall\,j\in\cJ.
\end{equation}
The following proposition is a generalization of \cite[Lemma~3.2.3.4]{BHHMS2} (where $\rhobar$ was assumed to be semisimple).

\begin{proposition}\label{General Prop relation 1}
    Let $J\subseteq\cJ$, $j_0\in\cJ$ and $\un{i}\in\ZZ^f$ such that $\un{0}\leq\un{i}\leq\un{f}-\un{e}^{J^{\sh}}$ and $i_{j_0+1}=0$. Suppose that $j_0+1\in J\Delta(J-1)^{\ss}$. Then for each $J'\subseteq\cJ$ such that $j_0\notin J'$, we have
    \begin{equation}\label{General Eq relation 1 statement}
    \begin{aligned}
        \bbbra{Y_{j'}^{\delta_{J'=\emptyset}}\prod\limits_{j\notin J'}Y_j^{2i_j+t^J(J')_j}}\smat{p&0\\0&1}\bbra{\un{Y}^{-\un{i}}v_J}&=0~\forall\,j'\in\cJ&.
    \end{aligned}
    \end{equation}
\end{proposition}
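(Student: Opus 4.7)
The plan is to reduce the identity to a direct computation in the principal series $\Ind_I^{\GL_2(\OK)}(\chi_J^s\alpha^{\un{i}})$ using the setup of Lemma~\ref{General Lem p001} and Remark~\ref{General Rk Q123}. Recall from the proof of Lemma~\ref{General Lem p001} that $\bang{\GL_2(\OK)\smat{p&0\\0&1}(\un{Y}^{-\un{i}}v_J)}$ is the image $\ovl{V}$ of $V=\Ind_I^{\GL_2(\OK)}(W)$, where $W$ is the $I$-subrepresentation generated by $\smat{0&1\\p&0}(\un{Y}^{-\un{i}}v_J)$, and that $V$ admits a filtration whose graded pieces are $\Ind_I^{\GL_2(\OK)}(\chi_J^s\alpha^{\un{i}'})$ for $\un{0}\leq\un{i}'\leq\un{i}$. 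Under this filtration, the image of $\smat{p&0\\0&1}(\un{Y}^{-\un{i}}v_J)$ corresponds to $\smat{0&1\\1&0}\phi$ in the top piece $\Ind_I^{\GL_2(\OK)}(\chi_J^s\alpha^{\un{i}})$, and by Lemma~\ref{General Lem BP12}(ii) together with Lemma~\ref{General Lem Yj}(ii) the action of the monomial in the LHS of (\ref{General Eq relation 1 statement}) sends this to a scalar multiple of $\un{Y}^{\un{k}}\smat{0&1\\1&0}\phi$ with $k_j=2i_j+t^J(J')_j$ on $j\notin J'$ and $k_j=0$ on $j\in J'$, plus the extra $+e_{j'}$ when $J'=\emptyset$.

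I plan to proceed by induction on $|\un{i}|\eqdef\sum_j i_j$. For the inductive step I will use Lemma~\ref{General Lem p001}(ii) to pass to the quotient $Q(\chi_J^s\alpha^{\un{i}},J'')$, where $J''\eqdef\sset{j:j+1\in J\Delta(J-1)^{\ss},\ i_{j+1}=0}$; the hypothesis $j_0+1\in J\Delta(J-1)^{\ss}$ together with $i_{j_0+1}=0$ ensures $j_0\in J''$, so this quotient is genuine and Lemma~\ref{General Lem BP12}(iii)(b) applies with $J=J''$. Writing $\chi_J^s\alpha^{\un{i}}=\chi_{\lambda'}^s$ with $\un{\lambda}'_1-\un{\lambda}'_2=\un{s}^J-2\un{i}$, the socle basis of $Q$ provided by Lemma~\ref{General Lem BP12}(iii)(b) consists of $\un{Y}^{\un{k}}\smat{0&1\\1&0}\phi$ with $k_j\in[0,\,p-2-s^J_j+2i_j+\delta_{j-1\in J''}]$ for $j\in J''$ and $k_j\in[p-1-s^J_j+2i_j+\delta_{j-1\in J''},\,p-1]$ for $j\notin J''$; any monomial whose exponent vector falls outside these ranges (and the analogous ranges coming from the other constituents of the multiplicity-free $Q$) must vanish in $Q$.

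Vanishing in the top layer will then follow from a case analysis at $j=j_0\notin J'$, possibly with $j'=j_0$: using the genericity bound $s^J_{j_0}\leq p-2-2f$ from (\ref{General Eq bound s}) and carefully tracking the discrepancy $\delta_{j_0-1\in J''}-\delta_{j_0-1\in J'}$, the exponent $k_{j_0}=2i_{j_0}+t^J(J')_{j_0}+\delta_{J'=\emptyset}\delta_{j'=j_0}$ either strictly exceeds $p-1$ or falls into the forbidden region between the two admissible intervals determined by Lemma~\ref{General Lem BP12}(iii)(b), forcing the product to be zero in $Q(\chi_J^s\alpha^{\un{i}},J'')$. The base case $\un{i}=\un{0}$ is handled analogously, and can alternatively be deduced from Proposition~\ref{General Prop vector simple} by multiplying (\ref{General Eq vector simple}) by an additional $Y$-factor and exploiting the same $\delta$-bookkeeping via Lemma~\ref{General Lem p-2-s and s}; the lower layers of the filtration are dispatched by the inductive hypothesis.

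The main obstacle will be the case analysis in the previous paragraph: there are four interacting indicator functions at play, namely $\delta_{J'=\emptyset}$, $\delta_{j-1\in J'}$, $\delta_{j-1\in J''}$, and $\delta_{j+1\in J\Delta(J-1)^{\ss}}$, and confirming the required inequalities at $j=j_0$ in every sub-case requires the genericity bounds (\ref{General Eq bound s}) and (\ref{General Eq bound tJ}) to be used essentially. The exact identities relating $\un{s}^J$, $\un{s}^{(J-1)^{\ss}}$, and $\un{t}^J(J')$ encoded in Lemma~\ref{General Lem p-2-s and s} (cited from later in the paper) should provide the cancellations that make the exponents land precisely on the ``kernel'' side of the admissible ranges.
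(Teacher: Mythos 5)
Your overall strategy (compute the image of the element in the principal-series filtration of $\bang{\GL_2(\OK)\smat{p&0\\0&1}\un{Y}^{-\un{i}}v_J}$ and show it dies) is in the right spirit, but two steps are genuinely broken. The main one is your vanishing criterion. You claim that a monomial $\un{Y}^{\un{k}}\smat{0&1\\1&0}\phi$ whose exponent vector falls outside the admissible ranges of Lemma~\ref{General Lem BP12}(iii)(b) --- e.g.\ because $k_{j_0}$ ``strictly exceeds $p-1$'' or lands in the gap between the two intervals --- must vanish in $Q$. This is false: the obstruction to a vector surviving in $Q$ is that its $H$-eigencharacter does not occur in $Q$, and $H$-eigencharacters only see the exponent $\sum_j k_jp^j$ modulo $q-1$. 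For instance $Y_j^{p}\smat{0&1\\1&0}\phi$ has the same $H$-eigencharacter as $Y_{j+1}\smat{0&1\\1&0}\phi$ and is in general a nonzero combination of admissible basis vectors, not zero. The actual proof must therefore show that the $H$-eigencharacter $\chi_J\alpha^{-\un{i}}\prod_{j\notin J'}\alpha_j^{2i_j+t^J(J')_j}$ of the left-hand side of \eqref{General Eq relation 1 statement} does not occur anywhere in the representation. Doing this requires rewriting the relevant congruence mod $q-1$ in standard $p$-adic digit form (this is where genericity \eqref{General Eq bound s} enters, via correction terms that absorb the carries), checking the digits are all in $[0,p-1]$ and neither all $0$ nor all $p-1$ so that the candidate $\un{k}$ is \emph{unique}, and then running an iteration on $j_0$ (``$j_0\notin J'$ and $j_0\in J_0$ forces $j_0-1\notin J'$ and $j_0-1\in J_0$'') that terminates in the contradiction $J'=\emptyset$. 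None of this bookkeeping is present in your sketch, and it is the heart of the proof.

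The second problem is the induction on $\norm{\un{i}}$. Once you know the image of $B\eqdef\bigbbbra{Y_{j'}^{\delta_{J'=\emptyset}}\prod_{j\notin J'}Y_j^{2i_j+t^J(J')_j}}\smat{p&0\\0&1}(\un{Y}^{-\un{i}}v_J)$ in the top graded piece vanishes, you only learn that $B$ lies in $\sum_{\un{0}\leq\un{i}'<\un{i}}\bang{\GL_2(\OK)\smat{p&0\\0&1}\un{Y}^{-\un{i}'}v_J}$; $B$ is not presented there as a monomial times $\smat{p&0\\0&1}(\un{Y}^{-\un{i}'}v_J)$, so the inductive hypothesis (which concerns only such specific products, and with exponents $2i'_j+t^J(J')_j$ rather than $2i_j+t^J(J')_j$) says nothing about it. The character-theoretic argument sidesteps this by excluding the eigencharacter from \emph{every} graded piece $Q(\chi_J^s\alpha^{\un{i}'},J_{\un{i}'})$, $\un{0}\leq\un{i}'\leq\un{i}$, at once, with no induction. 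Finally, your proposed alternative for the base case via Proposition~\ref{General Prop vector simple} does not work either: for $j\in J'$ the monomial in \eqref{General Eq relation 1 statement} has exponent $0$ while the one in \eqref{General Eq vector simple} has a strictly positive exponent there, so the former is not a multiple of the latter.
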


\begin{proof}
    The proof is analogous to the one of \cite[Lemma~3.2.3.4]{BHHMS2}. We assume that $J'\neq\emptyset$. The case $J=\emptyset$ is similar and is left as an exercise. By Lemma \ref{General Lem Yj}(ii), it suffices to show that the $H$-eigencharacter $\chi\eqdef\chi_J\alpha^{-\un{i}}\bigbbbra{\prod\nolimits_{j\notin J'}\alpha_j^{2i_j+p-1-s^J_j+\delta_{j-1\in J'}}}$ does not occur in $\bang{\GL_2(\OK)\smat{p&0\\0&1}\un{Y}^{-\un{i}}v_J}$. By Lemma \ref{General Lem p001}(ii), it suffices to show that the $H$-character $\chi$ does not occur in $V_{\un{i}'}\eqdef Q\bigbra{\chi_J^s\alpha^{\un{i}'},J_{\un{i}'}}$ for $\un{0}\leq\un{i}'\leq\un{i}$, where $J_{\un{i}'}\eqdef\set{j:j+1\in J\Delta(J-1)^{\ss}, i'_{j+1}=0}$. Note that $j_0\in J_{\un{i}'}$ for all $\un{0}\leq\un{i}'\leq\un{i}$ by assumption.

    We have $\chi_J\alpha^{-\un{i}'}=\chi_{\lambda_J-\alpha^{\un{i}'}}$ (see \S\ref{General Sec sw} for the notation). Then by Lemma \ref{General Lem BP12}(i),(ii),(iii)(b) applied to $\lambda=\lambda_J-\alpha^{\un{i}'}$, the $H$-eigencharacters that occur in $V_{\un{i}'}$ are $\chi_J\alpha^{-\un{i}'}\alpha^{-\un{k}}$ (coming from the element $\un{Y}^{\un{k}}\smat{p&0\\0&1}\bigbra{\un{Y}^{-\un{i}'}v_J}$), where
    \begin{equation}\label{General Eq relation 1 bound}
    \begin{cases}
        0\leq k_j\leq p-2-(s^J_j-2i'_j)+\delta_{j-1\in J_0}&~\text{if}~j\in J_0\\
        p-1-(s^J_j-2i_j')+\delta_{j-1\in J_0}\leq k_j\leq p-1&~\text{if}~j\notin J_0
    \end{cases}
    \end{equation}
    for $J_0\supseteq J_{\un{i}'}$. In particular, we have $j_0\in J_0$.
    
    Assume $\chi=\chi_J\alpha^{-\un{i}'}\alpha^{\un{k}}$ for some $\un{i}',\un{k}$ as above, then from the definition of $\chi$ we have 
    \begin{equation*}
        \sum\limits_{j\notin J'}(2i_j+p-1-s^J_{j}+\delta_{j-1\in J'})p^j-\sum\limits_{j=0}^{f-1}(i_j-i'_j)p^j\equiv\sum\limits_{j=0}^{f-1}k_jp^j~\mod~(q-1),
    \end{equation*}
    or equivalently,
    \begin{equation}\label{General Eq relation 1 mod q-1}
        \sum\limits_{j\notin J'}(i_j+i'_j+p-1-s^J_{j}+\delta_{j-1\in J'})p^j-\sum\limits_{j\in J'}(i_j-i'_j)p^j\equiv\sum\limits_{j=0}^{f-1}k_jp^j~\mod~(q-1).
    \end{equation}

    Then we define integers $\eta_j\in\ZZ$ for all $j\in\cJ$. For $j_1\notin J'$ (such $j_1$ exists since $J'\neq\emptyset$), we let $w\in\set{0,\ldots,f-1}$ (depending on $j_1$) such that $j_1+1,\ldots,j_1+w\in J'$ and $j_1+w+1\notin J'$ (so $w=0$ if $j_1+1\notin J'$). We define $\eta_j$ for $j=j_1+1,\ldots,j_1+w+1$ as follows: 
    \begin{enumerate}
    \item 
    If $i_{j_1+w'}=i'_{j_1+w'}$ for all $1\leq w'\leq w$ (which is automatic if $w=0$), then we define $\eta_j\eqdef0$ for all $j=j_1+1,\ldots,j_1+w+1$;
    \item
    Otherwise, we let $w_0\in\set{1,\ldots,w}$ be minimal such that $i_{j_1+w_0}\neq i'_{j_1+w_0}$ and we define
    \begin{equation}\label{General Eq etaJ}
        \eta_j\eqdef
    \begin{cases}
        0&\text{if}~j=j_1+1,\ldots,j_1+w_0-1~(\text{and $w_0\neq1$})\\
        p&\text{if}~j=j_1+w_0\\
        p-1&\text{if}~j=j_1+w_0+1,\ldots,j_1+w~(\text{and $w_0\neq w$})\\
        -1&\text{if}~j=j_1+w+1.
    \end{cases}
    \end{equation}
    \end{enumerate}
    In particular, we have $\sum\nolimits_{j=j_1+1}^{j_1+w+1}\eta_jp^j\equiv0~\mod~(q-1)$. When we vary $j_1\notin J'$, we get the definition of $\eta_j$ for all $j\in\cJ$. By adding $\sum\nolimits_{j=0}^{f-1}\eta_jp^j$ to (\ref{General Eq relation 1 mod q-1}) for all $j_1\notin J'$, we get
    \begin{equation}\label{General Eq relation 1 mod q-1 eta}
        \sum\limits_{j\notin J'}\bigbra{i_j+i'_j+p-1-s^J_{j}+\delta_{j-1\in J'}+\eta_j}p^j+\sum\limits_{j\in J'}\bigbra{\eta_j-(i_j-i'_j)}p^j\equiv\sum\limits_{j=0}^{f-1}k_jp^j~\mod~(q-1).
    \end{equation}

    \hspace{\fill}

    \noindent\textbf{Claim 1.} Each coefficient of the LHS of (\ref{General Eq relation 1 mod q-1 eta}) is between $0$ and $p-1$, not all equal to $0$ and not all equal to $p-1$. 
    
    \proof First we prove that each coefficient of the LHS of (\ref{General Eq relation 1 mod q-1 eta}) is between $0$ and $p-1$. By (\ref{General Eq bound s}) we have
    \begin{equation}\label{General Eq bound s weak}
         1\leq p-1-s^J_j\leq p-2-2(f-\delta_{j\in J^{\sh}}).
    \end{equation}    
    We remark that the first inequality of (\ref{General Eq bound s weak}) is weaker than (\ref{General Eq bound s}), and is needed to prove Remark \ref{General Rk relation 1 general}. If $j\notin J'$, then using $0\leq i_j,i'_j\leq f-\delta_{j\in J^{\sh}}$, $\delta_{j-1\in J'}\in\set{0,1}$ and $\eta_j\in\set{-1,0}$ since $j\notin J'$, we deduce from (\ref{General Eq bound s weak}) that $0\leq(i_j+i'_j+p-1-s^J_{j}+\delta_{j-1\in J'}+\eta_j)\leq p-1$. If $j\in J'$, by the definition of $\eta_j$ and a case-by-case examination, we deduce that $0\leq\eta_j-(i_j-i'_j)\leq p-1$.
    
    Next we prove that the coefficients of the LHS of (\ref{General Eq relation 1 mod q-1 eta}) are not all equal to $0$. Otherwise, by the previous paragraph we must in particular have $\eta_j=-1$ for all $j\notin J'$. By the definition of $\eta_j$ for $j\notin J'$ (that is, for $j=j_1+w+1$ in (\ref{General Eq etaJ})), there exists $j'\in J'$ such that $\eta_{j'}=p$, which implies $\eta_{j'}-(i_{j'}-i'_{j'})>0$ since $p\geq4f+4$ by (\ref{General Eq genericity}), a contradiction. 
    
    Finally we prove that the coefficients of the LHS of (\ref{General Eq relation 1 mod q-1 eta}) are not all equal to $p-1$. Otherwise, by the first paragraph we must have $\eta_j=0$ for all $j\notin J'$. By the definition of $\eta_j$ for $j\notin J'$, we must have $\eta_j=0$ for all $j\in\cJ$, hence $\eta_j-(i_j-i'_j)$ cannot be $p-1$. This implies $J'=\emptyset$, which is a contradiction.\qed

    \hspace{\fill}
    
    It follows from Claim 1 that the equation (\ref{General Eq relation 1 mod q-1 eta}) has solution   
    \begin{equation}\label{General Eq relation 1 kj}
        k_j=
    \begin{cases}
        i_j+i'_j+p-1-s^J_{j}+\delta_{j-1\in J'}+\eta_j&\text{if}~j\notin J'\\
        \eta_j-(i_j-i'_j)&\text{if}~j\in J'.
    \end{cases}
    \end{equation}

    \hspace{\fill}

    \noindent\textbf{Claim 2.} We have $j_0-1\notin J'$ and $j_0-1\in J_0$.
    
    \proof Since $j_0\notin J'$ and $j_0\in J_0$, by (\ref{General Eq relation 1 bound}) and (\ref{General Eq relation 1 kj}) we have 
    \begin{equation}\label{General Eq relation 1 kj0}
        k_{j_0}=i_{j_0}+i'_{j_0}+p-1-s^J_{j_0}+\delta_{j_0-1\in J'}+\eta_{j_0}\leq p-2-s^J_{j_0}+2i'_{j_0}+\delta_{j_0-1\in J_0}.
    \end{equation}
    By the definition of $\eta_j$, if $j_0-1\notin J'$, then $\eta_{j_0}=0$ since $j_0\notin J'$, and thus $\eta_{j_0}=-1$ implies $j_0-1\in J'$. In particular, we have $\delta_{j_0-1\in J'}+\eta_{j_0}\geq0$. Then we deduce from (\ref{General Eq relation 1 kj0}) that $i_{j_0}+1\leq i'_{j_0}+\delta_{j_0-1\in J_0}$, which implies $i_{j_0}=i'_{j_0}$ and $j_0-1\in J_0$ since $i'_{j_0}\leq i_{j_0}$. 
    
    Then by (\ref{General Eq relation 1 bound}) we have 
    \begin{equation}\label{General Eq relation 1 kj0-1}
        k_{j_0-1}\leq p-2-(s^J_{j_0-1}-2i'_{j_0-1})+\delta_{j_0-2\in J_0}\leq p-1-s^J_{j_0-1}+2i'_{j_0-1}.
    \end{equation}
    Suppose that $j_0-1\in J'$, then by (\ref{General Eq relation 1 kj0}) and using $i_{j_0}=i'_{j_0}$ and $j_0-1\in J_0$, we must have $\eta_{j_0}=-1$. Then by (\ref{General Eq etaJ}) we have $\eta_{j_0-1}\geq p-1$, which implies $k_{j_0-1}\geq p-1-(i_{j_0-1}-i'_{j_0-1})$ by (\ref{General Eq relation 1 kj}). Combining with (\ref{General Eq relation 1 kj0-1}) we deduce that $s^J_{j_0-1}\leq i_{j_0-1}+i'_{j_0-1}\leq 2(f-\delta_{j_0-1\in J^{\sh}})$ since $\un{i}'\leq\un{i}\leq\un{f}-\un{e}^{J^{\sh}}$, which contradicts (\ref{General Eq bound s}). Thus we have $j_0-1\notin J'$.\qed
    
    \hspace{\fill}
    
    Since Claim 2 proves that $j_0-1\notin J'$ and $j_0-1\in J_0$ assuming $j_0\notin J'$ and $j_0\in J_0$, we can continue this process and finally deduce that $J'=\emptyset$, which is a contradiction.
\end{proof}

\begin{remark}\label{General Rk relation 1 general}
    Let $\lambda=(\un{\lambda}_1,\un{\lambda}_2)\in X_1(\un{T})$, $\un{i}\in\NNN^f$ such that $2\un{i}+\un{1}\leq\un{\lambda}_1-\un{\lambda}_2\leq\un{p}-\un{2}$, and $J,J'\subseteq\cJ$. Assume that there exists $j_0\in\cJ$ such that $j_0\in J$, $j_0\notin J'$ and $i_{j_0+1}=0$. We consider the $H$-character 
    \begin{equation*}
        \chi\eqdef\chi_{\lambda}\alpha^{-\un{i}}\scalebox{1}{$\prod\limits_{j\notin J'}$}\alpha_j^{2i_j+p-1-(\lambda_{1,j}-\lambda_{2,j})+\delta_{j-1\in J'}}.
    \end{equation*}
    Then the same proof as in Proposition \ref{General Prop relation 1} shows that (see Remark \ref{General Rk Q123} for the notation) the $H$-character $\chi\alpha_{j'}^{\delta_{J'=\emptyset}}$ does not occur in $Q\bigbra{\chi_{\lambda}^s,\chi_{\lambda}^s\alpha^{\un{i}},J}$ for all $j'\in\cJ$.
\end{remark}

For $J,J'\subseteq\cJ$ and $\un{i}\in\ZZ^f$ such that $\un{0}\leq\un{i}\leq\un{f}-\un{e}^{J^{\sh}}$, we define $\un{m}=\un{m}(\un{i},J,J')\in\ZZ^f$ by 
\begin{equation}\label{General Eq mj}
    m_j\eqdef(-1)^{\delta_{j+1\notin J}}(2i_j+\delta_{j\in(J-1)^{\ss}}-\delta_{j\in J\Delta(J-1)^{\ss}}+\delta_{j-1\in J'}).
\end{equation}
In particular, if $2i_j-\delta_{j\in J\Delta(J-1)^{\ss}}+\delta_{j-1\in J'}\geq0$ for all $j$, then by Lemma \ref{General Lem p001}(iii), $\sigma_{\un{m}}$ is a constituent of $\bang{\GL_2(\OK)\smat{p&0\\0&1}\un{Y}^{-\un{i}}v_J}$. The following proposition is a generalization of Proposition \ref{General Prop relation 1}.

\begin{proposition}\label{General Prop Isigma0m}
    Let $J,J'\subseteq\cJ$, $\un{i}\in\ZZ^f$ such that $\un{0}\leq\un{i}\leq\un{f}-\un{e}^{J^{\sh}}$ and $\un{m}=\un{m}(\un{i},J,J')$. We denote $B\eqdef\bigbbbra{\prod\nolimits_{j\notin J'}Y_j^{2i_j+t^J(J')_j}}\smat{p&0\\0&1}\bbra{\un{Y}^{-\un{i}}v_J}\in\pi$. Then we have (see Lemma \ref{General Lem p001}(iii) for the notation)
    \begin{equation*}
    \begin{cases}
        Y_{j'}^{\delta_{J'=\emptyset}}B=0~\forall\,j'\in\cJ&\text{if $2i_j-\delta_{j\in J\Delta(J-1)^{\ss}}+\delta_{j-1\in J'}<0$ for some $j$}\\
        Y_{j'}{\delta_{J'=\emptyset}}B\in I\bigbra{\sigma_{(J-1)^{\ss}},\sigma_{\un{m}}}~\forall\,j'\in\cJ&\text{if $2i_j-\delta_{j\in J\Delta(J-1)^{\ss}}+\delta_{j-1\in J'}\geq0$ for all $j$}.
    \end{cases}
    \end{equation*}
\end{proposition}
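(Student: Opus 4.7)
The proof splits naturally into the two cases, with Case 1 reducing directly to Proposition \ref{General Prop relation 1} and Case 2 requiring a refinement of the same digit-adic argument.

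\textbf{Case 1.} Suppose $2i_j - \delta_{j \in J \Delta (J-1)^{\ss}} + \delta_{j-1 \in J'} < 0$ for some $j \in \cJ$. Since $i_j \geq 0$ and each $\delta$-symbol lies in $\{0,1\}$, this forces $i_j = 0$, $j \in J \Delta (J-1)^{\ss}$, and $j-1 \notin J'$. Setting $j_0 \eqdef j-1$, these are exactly the hypotheses of Proposition \ref{General Prop relation 1}, which yields $Y_{j'}^{\delta_{J'=\emptyset}} B = 0$ for every $j' \in \cJ$, as required.

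\textbf{Case 2.} Assume $2i_j - \delta_{j \in J \Delta (J-1)^{\ss}} + \delta_{j-1 \in J'} \geq 0$ for all $j$. By Lemma \ref{General Lem Yj}(ii), $v \eqdef Y_{j'}^{\delta_{J'=\emptyset}} B$ is an $H$-eigenvector in $V \eqdef \bang{\GL_2(\OK)\smat{p&0\\0&1}\un{Y}^{-\un{i}}v_J}$ with explicit character $\chi_v = \chi_J \alpha^{-\un{i}} \alpha_{j'}^{\delta_{J'=\emptyset}} \prod_{j \notin J'} \alpha_j^{2i_j + t^J(J')_j}$. By Lemma \ref{General Lem p001}, $V$ is multiplicity-free with socle $\sigma_{(J-1)^{\ss}}$ and cosocle $\sigma_{\un{c}}$, and an elementary check (using that $j+1 \notin J$ forces $\delta_{j \in (J-1)^{\ss}} = 0$) shows that in Case 2 each $m_j$ lies between $\delta_{j \in (J-1)^{\ss}}$ and $c_j$, so that $I(\sigma_{(J-1)^{\ss}}, \sigma_{\un{m}})$ is well-defined. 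Since $V$ is multiplicity-free, to prove $v \in I(\sigma_{(J-1)^{\ss}}, \sigma_{\un{m}})$ it suffices to show that $\chi_v$ does not appear as an $H$-eigencharacter of any constituent $\sigma_{\un{b}}$ of $V$ with some $b_{j_*}$ lying strictly past $m_{j_*}$ in the direction of $c_{j_*}$.

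Using the filtration of $V$ by $\Ind_I^{\GL_2(\OK)}(\chi_J^s \alpha^{\un{i}'})$ for $\un{0} \leq \un{i}' \leq \un{i}$ (property (c) in the proof of Lemma \ref{General Lem p001}), every $H$-eigencharacter of $V$ takes the form $\chi_J \alpha^{-\un{i}'} \alpha^{\un{k}}$ with $\un{k}$ in the explicit range \eqref{General Eq relation 1 bound}. Equating with $\chi_v$ gives a congruence modulo $q-1$ analogous to \eqref{General Eq relation 1 mod q-1}, and we introduce auxiliary digit exponents $\eta_j$ as in \eqref{General Eq etaJ}. The digit analysis then proceeds exactly as in Proposition \ref{General Prop relation 1} and Remark \ref{General Rk relation 1 general}; the main obstacle is no longer to rule out all solutions, but to verify that the resulting parameters $\un{b}$, translated via Lemma \ref{General Lem change origin}, satisfy the sharper bound $\delta_{j \in (J-1)^{\ss}} \leq (-1)^{\delta_{j+1 \notin J}} b_j \leq (-1)^{\delta_{j+1 \notin J}} m_j$ rather than merely the cosocle bound $\leq c_j$. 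This requires carefully accounting for the extra $\alpha_{j'}^{\delta_{J'=\emptyset}}$ twist and exploiting the genericity hypothesis \eqref{General Eq genericity} to prevent any $p$-adic wrap-around; once done, it places $v$ inside $I(\sigma_{(J-1)^{\ss}}, \sigma_{\un{m}})$ and concludes Case 2.
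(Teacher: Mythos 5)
Your Case 1 is complete and is exactly the paper's argument: the strict inequality forces $i_j=0$, $j\in J\Delta(J-1)^{\ss}$, $j-1\notin J'$, and Proposition \ref{General Prop relation 1} applies with $j_0=j-1$. Your reduction at the start of Case 2 is also sound: since $\JH(V)\setminus\JH\bigbra{I(\sigma_{(J-1)^{\ss}},\sigma_{\un{m}})}$ consists precisely of the constituents with some coordinate strictly past $m_{j_*}$, it does suffice to show that $\chi_v$ has multiplicity zero there.

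The gap is in how you discharge that non-occurrence claim. You assert that ``the digit analysis then proceeds exactly as in Proposition \ref{General Prop relation 1} and Remark \ref{General Rk relation 1 general}'', but it cannot proceed exactly as there: the engine of that analysis (Claim 2 in the proof of Proposition \ref{General Prop relation 1}, and the hypothesis of Remark \ref{General Rk relation 1 general}) requires a distinguished index $j_0$ with $i_{j_0+1}=0$, $j_0\in J$ (resp.\ $j_0+1\in J\Delta(J-1)^{\ss}$) and $j_0\notin J'$ --- and in Case 2 these hypotheses fail by construction, which is precisely why $B$ is nonzero. So the verification you describe only vaguely (``carefully accounting for the extra twist'', ``preventing $p$-adic wrap-around'') is the actual content of the proof and is not carried out. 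The paper's solution is a re-centering trick you are missing: for each $j_0$ with $j_0-1\notin J'$, it identifies the quotient of $V$ detecting the extreme value of the $j_0$-th coordinate with $Q\bigbra{\chi_\lambda^s,\chi_\lambda^s\alpha^{\un{i}-i_{j_0}e_{j_0}},J''}$ for the twisted weight $\lambda=\lambda_J\alpha_{j_0}^{-i_{j_0}}$ and $J''=\bigbra{(J\Delta(J-1)^{\ss})-1}\cup\set{j_0-1}$; after this twist the relevant coordinate of the shift vector is $0$ and $j_0-1$ lies in $J''$ but not in $J'$, so Remark \ref{General Rk relation 1 general} applies verbatim (with $j_0$ replaced by $j_0-1$) and gives the non-occurrence of $\chi_v$ in each such quotient. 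Without this (or an equivalent explicit computation), Case 2 remains an unproved assertion.
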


\begin{proof}
    Suppose that $2i_j-\delta_{j\in J\Delta(J-1)^{\ss}}+\delta_{j-1\in J'}<0$ for some $j$, then we must have $i_j=0$, $j\in J\Delta(J-1)^{\ss}$ and $j-1\notin J'$. Hence $Y_{j'}^{\delta_{J'=\emptyset}}B=0$ by Proposition \ref{General Prop relation 1} applied to $(\un{i},J,J')$ as above and $j_0=j-1$.

    Suppose that $2i_j-\delta_{j\in J\Delta(J-1)^{\ss}}+\delta_{j-1\in J'}\geq0$ for all $j$. By Lemma \ref{General Lem p001}(iii) and Remark \ref{General Rk Q123}, we have 
    \begin{equation*}
        \bang{\GL_2(\OK)\smat{p&0\\0&1}\un{Y}^{-\un{i}}v_J}=I\bigbra{\sigma_{(J-1)^{\ss}},\sigma_{\un{b}}}\cong Q\bigbra{\chi_J^s,\chi_J^s\alpha^{\un{i}},(J\Delta(J-1)^{\ss})-1}
    \end{equation*}
    with $b_j=(-1)^{\delta_{j+1\notin J}}(2i_j+\delta_{j\in(J-1)^{\ss}}+1-\delta_{j\in J\Delta(J-1)^{\ss}})$ for $j\in\cJ$. Since $b_j=m_j$ if and only if $j-1\in J'$, to prove $Y_{j'}^{\delta_{J'=\emptyset}}B\in I\bigbra{\sigma_{(J-1)^{\ss}},\sigma_{\un{m}}}$, it suffices to show that for each $j_0\in\cJ$ such that $j_0-1\notin J'$, the image of $Y_{j'}^{\delta_{J'=\emptyset}}B$ in the unique quotient $Q$ of $\bang{\GL_2(\OK)\smat{p&0\\0&1}\un{Y}^{-\un{i}}v_J}$ with socle $\sigma_{\un{e}^{(J-1)^{\ss}\setminus\set{j_0}}+b_{j_0}e_{j_0}}$ is zero. 
    
    By Lemma \ref{General Lem BP12}(i), we have $Q\cong Q\bigbra{\chi_J^s\alpha_{j_0}^{i_{j_0}},\chi_J^s\alpha^{\un{i}},J''}$ with $J''\eqdef\bigbra{(J\Delta(J-1)^{\ss})-1}\cup\set{j_0-1}$.    Since $(\un{i}-i_{j_0}e_{j_0})_{j_0}=0$, $j_0-1\notin J'$ and $j_0-1\in J''$, it follows from Remark \ref{General Rk relation 1 general} (with $\lambda=\lambda_J\alpha_{j_0}^{-i_{j_0}}$, $\un{i}$ replaced with $\un{i}-i_{j_0}e_{j_0}$ and $j_0$ replaced with $j_0-1$) that the $H$-eigencharacter of $Y_{j'}^{\delta_{J'=\emptyset}}B$ does not occur in $Q$, hence $Y_{j'}^{\delta_{J'=\emptyset}}B$ maps to zero in $Q$.
\end{proof}

The following proposition studies the overlaps between different $\GL_2(\OK)$-subrepresentations $\bang{\GL_2(\OK)\smat{p&0\\0&1}\un{Y}^{-\un{i}}v_J}$ of $\pi$. This phenomenon is new in the non-semisimple case.

\begin{proposition}\label{General Prop relation 2}
    Let $J\subseteq\cJ$ and $\un{i}\in\ZZ^f$ such that $\un{0}\leq\un{i}\leq\un{f}-\un{e}^{J^{\sh}}$. Let $j_0\in\cJ$ such that $j_0+1\in(J-1)^{\nss}$ and $i_{j_0+1}=0$. Let $J'\subseteq\cJ$ such that $j_0\in J'$ if $j_0+1\in J$ and $j_0\notin J'$ if $j_0+1\notin J$. We let $J''\eqdef J'\Delta\set{j_0+1}$ and let $\un{i}'\in\ZZ^f$ be such that $i'_j=i_j$ if $j\neq j_0+2$ and $i'_{j_0+2}=i_{j_0+2}-\delta_{j_0+1\notin J'}+\delta_{j_0+2\in(J-1)^{\ss}}$. Then we have
    \begin{multline}\label{General Eq relation 2 statement}
        Y_{j_0+1}^{\delta_{j_0+1\notin J}}\bbbra{\prod\limits_{j\notin J'}Y_j^{2i_j+t^J(J')_j}}\smat{p&0\\0&1}\bbra{\un{Y}^{-\un{i}}v_J}\\
        =\frac{\mu_{J,(J-1)^{\ss}}}{\mu_{J\setminus\set{j_0+2},(J-1)^{\ss}}}Y_{j_0+1}^{\delta_{j_0+1\notin J}}\bbbra{\prod\limits_{j\notin J''}Y_j^{2i'_j+t^{J\setminus\set{j_0+2}}(J'')_j}}\smat{p&0\\0&1}\bbra{\un{Y}^{-\un{i}'}v_{J\setminus\set{j_0+2}}},
    \end{multline}
    where $\mu_{J,(J-1)^{\ss}}$ and $\mu_{J\setminus\set{j_0+2},(J-1)^{\ss}}$ are defined in Proposition \ref{General Prop vector simple}, and we let $\un{Y}^{-\un{i}'}v_{J\setminus\set{j_0+2}}\eqdef0$ if $i'_j<0$ for some $j\in\cJ$.
\end{proposition}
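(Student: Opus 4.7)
The plan is to show that both sides of \eqref{General Eq relation 2 statement} lie in a common subrepresentation of $\pi$ of the form $I\bigbra{\sigma_{(J-1)^{\ss}},\sigma_{\un{m}^*}}$, invoke the multiplicity-one statement \eqref{General Eq p001 dim1} to conclude that they are proportional, and then determine the scalar by pushing both sides into the socle and applying Proposition \ref{General Prop vector simple} twice.

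First I would unwind the combinatorics arising from the hypothesis $j_0+1\in(J-1)^{\nss}$. This forces $j_0+2\in J$ and $j_0+1\notin J_{\rhobar}$, so in particular $\bigbra{(J\setminus\set{j_0+2})-1}^{\ss}=(J-1)^{\ss}$ and the two Serre weights $\sigma_{(J-1)^{\ss}}$ appearing as socles in Lemma \ref{General Lem p001}(i) for the triples $(\un{i},J)$ and $(\un{i}',J\setminus\set{j_0+2})$ are literally the same. The cosocles $\sigma_{\un{c}}$ of these two subrepresentations, computed by \eqref{General Eq p001 c}, differ only in the $(j_0+2)$-nd coordinate, and this discrepancy is exactly compensated by the passage $\un{i}\leadsto\un{i}'$ at position $j_0+2$. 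Applying Proposition \ref{General Prop Isigma0m} to each triple $(\un{i},J,J')$ and $(\un{i}',J\setminus\set{j_0+2},J'')$ together with the factor $Y_{j_0+1}^{\delta_{j_0+1\notin J}}$ on each side, I would verify that both sides of \eqref{General Eq relation 2 statement} lie in the same subrepresentation $I\bigbra{\sigma_{(J-1)^{\ss}},\sigma_{\un{m}^*}}$, with the common $\un{m}^*$ obtained from \eqref{General Eq mj} shifted by $-e_{j_0+1}$ in the $(j_0+1)$-coordinate when $j_0+1\notin J$.

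Since $I\bigbra{\sigma_{(J-1)^{\ss}},\sigma_{\un{m}^*}}$ is multiplicity-free by Lemma \ref{General Lem p001}(iii) and embeds uniquely (up to scalar) into $\pi$ by \eqref{General Eq p001 dim1}, the $H$-eigenspace for the common character of both sides of \eqref{General Eq relation 2 statement} is at most one-dimensional inside that subrepresentation. Thus the two sides must be scalar multiples of each other, with the convention that vanishing of the RHS (when $i'_{j_0+2}<0$) corresponds to an $\un{m}^*$ falling outside the admissible range in Lemma \ref{General Lem p001}(iii), in which case the LHS must vanish as well by the same argument. To pin down the scalar, I would multiply both sides by the product $\prod_{j+1\in J\Delta(J-1)^{\ss}}Y_{j}^{s^{(J-1)^{\ss}}_j-(\text{exponent already present})}\prod_{j+1\notin J\Delta(J-1)^{\ss}}Y_j^{p-1-(\text{exponent already present})}$, so that the total power of each $Y_j$ reaches the exponent appearing in \eqref{General Eq vector simple}. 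On the LHS, the $\un{Y}^{-\un{i}}$ cancels $\un{Y}^{\un{i}}$, and by Proposition \ref{General Prop vector simple} applied to $J$ we obtain $\mu_{J,(J-1)^{\ss}}v_{(J-1)^{\ss}}$ times an explicit residual power of the $Y_j$'s; on the RHS the same operation, now via Proposition \ref{General Prop vector simple} applied to $J\setminus\set{j_0+2}$, yields $\mu_{J\setminus\set{j_0+2},(J-1)^{\ss}}v_{(J-1)^{\ss}}$ times the same residual power of $Y_j$'s. Dividing yields the ratio asserted in \eqref{General Eq relation 2 statement}.

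The main obstacle is the bookkeeping in the second step: one must verify that after applying $Y_{j_0+1}^{\delta_{j_0+1\notin J}}$ the two vectors $\un{m}(\un{i},J,J')$ and $\un{m}(\un{i}',J\setminus\set{j_0+2},J'')$ coincide in every coordinate, and that the residual exponents of $Y_j$'s needed to reach $v_{(J-1)^{\ss}}$ match on both sides. The two case distinctions (whether $j_0+1\in J$ and whether $j_0+2\in(J-1)^{\ss}$) must be treated in parallel with the definitions of $J''=J'\Delta\set{j_0+1}$ and $i'_{j_0+2}=i_{j_0+2}-\delta_{j_0+1\notin J'}+\delta_{j_0+2\in(J-1)^{\ss}}$; once this is verified the identification is automatic and the proportionality constant comes out as stated.
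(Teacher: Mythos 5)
Your setup is sound and matches the paper's: the reduction $\bigbra{(J\setminus\set{j_0+2})-1}^{\ss}=(J-1)^{\ss}$, the verification that $\un{m}(\un{i},J,J')=\un{m}(\un{i}',J\setminus\set{j_0+2},J'')$, the matching of the degenerate vanishing cases via Proposition \ref{General Prop Isigma0m}(i), and the final normalization of the scalar by multiplying down to $v_{(J-1)^{\ss}}$ via Proposition \ref{General Prop vector simple} are all exactly the ingredients of Lemma \ref{General Lemma relation 2} and of the paper's proof.

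However, there is a genuine gap at the central step. You deduce that the two sides are proportional from the claim that ``the $H$-eigenspace for the common character \ldots is at most one-dimensional inside $I\bigbra{\sigma_{(J-1)^{\ss}},\sigma_{\un{m}^*}}$,'' justified by the multiplicity-freeness of Lemma \ref{General Lem p001}(iii) and by \eqref{General Eq p001 dim1}. Multiplicity-freeness as a $\GL_2(\OK)$-representation does not imply that $H$-eigencharacters occur with multiplicity one: the representation $\bang{\GL_2(\OK)\smat{p&0\\0&1}\un{Y}^{-\un{i}}v_J}$ is glued from the quotients $Q\bigbra{\chi_J^s\alpha^{\un{i}''},J_{\un{i}''}}$ for all $\un{0}\leq\un{i}''\leq\un{i}$, and a given $H$-character $\chi_J\alpha^{-\un{i}''}\alpha^{-\un{k}}$ can recur across different layers $\un{i}''$ (and different $\un{k}$ solving the relevant congruence modulo $q-1$). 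Establishing when the eigencharacter determines the vector up to scalar is precisely the hard content of the paper's argument: in the case $j_0+1\in J$ it requires the digit-by-digit analysis of \eqref{General Eq relation 2 mod q-1} together with the genericity bound \eqref{General Eq bound s} to rule out other solutions; and in the case $j_0+1\notin J$ the eigencharacter alone genuinely fails to pin down the vector, so the paper must impose the additional condition $Y_{j_0}^{s^J_{j_0}-2i_{j_0}}C=0$ (satisfied by both sides) and then run a separate uniqueness argument for eigenvectors killed by that power of $Y_{j_0}$, followed by two further reduction steps for general $J'$. Without an argument of this kind your proportionality claim is unsupported, and the rest of the proof (which only computes the ratio assuming proportionality) cannot close the gap.
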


Note that the assumption $j_0+1\in(J-1)^{\nss}$ implies $\bigbra{(J\setminus\set{j_0+2})-1}^{\ss}=(J-1)^{\ss}$, hence $\mu_{J\setminus\set{j_0+2},(J-1)^{\ss}}$ is defined in Proposition \ref{General Prop vector simple}. We claim that $\un{i}'\leq\un{f}-\un{e}^{(J\setminus\set{j_0+2})^{\sh}}$, which implies that $\un{Y}^{-\un{i}'}v_{J\setminus\set{j_0+2}}$ is well-defined by Proposition \ref{General Prop shift}. Indeed, if $j\neq j_0+2$ or $j=j_0+2\notin (J-1)^{\ss}$, then we have
\begin{equation*}
    i'_j\leq i_j\leq f-\delta_{j\in J^{\sh}}\leq f-\delta_{j\in(J\setminus\set{j_0+2})^{\sh}}.
\end{equation*}
If $j_0+2\in(J-1)^{\ss}$, then the assumption $j_0+1\in(J-1)^{\nss}$ implies $j_0+2\in J$ and thus $j_0+2\in J^{\sh}$, hence we have
\begin{equation*}
    i'_{j_0+2}\leq i_{j_0+2}+1\leq f-\delta_{j_0+2\in J^{\sh}}+1=f=f-\delta_{j_0+2\in(J\setminus\set{j_0+2})^{\sh}}.
\end{equation*}
We denote by $B_1$ (resp.\,$B_2$) the element on the LHS (resp.\,RHS) of (\ref{General Eq relation 2 statement}). In order to prove Proposition \ref{General Prop relation 2}, we need the following lemma, and we refer to \S\ref{General Sec app lemmas} for its proof (see Lemma \ref{General Lemma relation 2 appendix}).

\begin{lemma}\label{General Lemma relation 2}
    Keep the assumptions of Proposition \ref{General Prop relation 2}.
\begin{enumerate}
    \item 
    Let $\un{m}\eqdef\un{m}(\un{i},J,J')$ and $\un{m}'\eqdef\un{m}(\un{i}',J\setminus\set{j_0+2},J'')$ (see (\ref{General Eq mj})). Then we have $\un{m}=\un{m}'$ and $m_{j_0+1}=m'_{j_0+1}=0$.
    \item 
    We have (see (\ref{General Eq tJJ'}) for $t^J(J')$)
    \begin{equation}\label{General Eq relation 2 tJ}
    \begin{aligned}
        2i_j+t^J(J')_j&=2i'_j+t^{J\setminus\set{j_0+2}}(J'')_j~\text{if}~j\neq j_0+1;\\
        2i_{j_0+1}+t^J(J')_{j_0+1}&=r_{j_0+1}+1;\\
        2i'_{j_0+1}+t^{J\setminus\set{j_0+2}}(J'')_{j_0+1}&=p-1-r_{j_0+1}.\\
    \end{aligned}        
    \end{equation}
    \item 
    If $2i_j-\delta_{j\in J\Delta(J-1)^{\ss}}+\delta_{j-1\in J'}\geq0$ for all $j\in\cJ$, then $B_1,B_2$ are nonzero and have the same $H$-eigencharacter. 
\end{enumerate}
\end{lemma}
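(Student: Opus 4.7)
The plan is to verify each of the three parts by unpacking the definitions of $\un{m}$, $t^J(J')$ and $s^J_j$, and exploiting the very localised nature of the perturbations: $\un{i}'$ differs from $\un{i}$ only at index $j_0+2$; $J\setminus\set{j_0+2}$ differs from $J$ only at index $j_0+2$; and $J''=J'\Delta\set{j_0+1}$ differs from $J'$ only at index $j_0+1$. The hypothesis $j_0+1\in(J-1)^{\nss}$ will be used repeatedly, since it forces $j_0+2\in J$, $j_0+1\notin J_{\rhobar}$, $((J\setminus\set{j_0+2})-1)^{\ss}=(J-1)^{\ss}$, $j_0+1\notin J^{\sh}$, and $j_0+1\notin(J-1)^{\ss}$; together with $i_{j_0+1}=0$ and the hypothesis on $J'$ (equivalent to $\delta_{j_0\in J'}=\delta_{j_0+1\in J}$), it supplies all the cancellations that are needed below.

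For part (i), the equality $m_j=m_j'$ is immediate for $j\notin\set{j_0+1,j_0+2}$ since none of the quantities entering (\ref{General Eq mj}) are affected. At $j=j_0+1$, the sign $(-1)^{\delta_{j+1\notin J}}$ flips because $j_0+2\in J$ but $j_0+2\notin J\setminus\set{j_0+2}$, and a short computation using $i_{j_0+1}=0$, $\delta_{j_0\in J'}=\delta_{j_0+1\in J}$, and the fact that $j_0+1\in J\Delta(J-1)^{\ss}$ if and only if $j_0+1\in J$ shows that both $m_{j_0+1}$ and $m_{j_0+1}'$ vanish. At $j=j_0+2$, the formula for $i'_{j_0+2}$ has been tailored so that $2(i'_{j_0+2}-i_{j_0+2})$ exactly cancels the combined toggle $\delta_{j_0+2\in(J\setminus\set{j_0+2})\Delta(J-1)^{\ss}}=1-\delta_{j_0+2\in J\Delta(J-1)^{\ss}}$ and $\delta_{j_0+1\in J''}=1-\delta_{j_0+1\in J'}$. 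For part (ii), the two identities at $j_0+1$ follow by substituting $i_{j_0+1}=i'_{j_0+1}=0$, the values of $s^J_{j_0+1}$ and $s^{J\setminus\set{j_0+2}}_{j_0+1}$ dictated by (\ref{General Eq sJ}) (using $j_0+1\notin J_{\rhobar}$ and $j_0+2\in J$), and $\delta_{j_0\in J'}=\delta_{j_0\in J''}=\delta_{j_0+1\in J}$; a brief split on whether $j_0+1\in J$ yields $r_{j_0+1}+1$ and $p-1-r_{j_0+1}$ respectively. The first identity is immediate for $j\notin\set{j_0+1,j_0+2}$, and for $j=j_0+2$ it reduces to the relation $s^J_{j_0+2}-s^{J\setminus\set{j_0+2}}_{j_0+2}=1-2\delta_{j_0+2\in(J-1)^{\ss}}$, which I would verify by a case analysis on $j_0+3\in J$ and $j_0+2\in J_{\rhobar}$ (three relevant cases).

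For part (iii), under the nonnegativity hypothesis, Lemma \ref{General Lem p001}(i) and (iii) guarantee that $\sigma_{\un{m}}=\sigma_{\un{m}'}$ is a genuine constituent of the multiplicity-free representations $\bang{\GL_2(\OK)\smat{p&0\\0&1}(\un{Y}^{-\un{i}}v_J)}$ and $\bang{\GL_2(\OK)\smat{p&0\\0&1}(\un{Y}^{-\un{i}'}v_{J\setminus\set{j_0+2}})}$. I would first compute the $H$-eigencharacters of $B_1$ and $B_2$ by iterating Lemma \ref{General Lem Yj}(ii), noting that $\smat{p&0\\0&1}$ commutes with $H$ and hence preserves characters; parts (i) and (ii) then imply directly that the two resulting characters coincide, and equal the character of $\sigma_{\un{m}}^{I_1}$. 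To prove nonvanishing, I would show that $B_1$ projects to a nonzero element of the $I_1$-invariants of the cosocle $\sigma_{\un{m}}$ of $I(\sigma_{(J-1)^{\ss}},\sigma_{\un{m}})$. Concretely, I would identify $B$ with a specific basis element furnished by Lemma \ref{General Lem BP12}(iii)(b) applied to $\lambda=\lambda_J-\alpha^{\un{i}}$ (corresponding to the subquotient $\Ind_I^{\GL_2(\OK)}(\chi_J^s\alpha^{\un{i}})$ in the filtration of Remark \ref{General Rk Q123}), verify that the exponents $2i_j+t^J(J')_j$ satisfy the bounds of that lemma, and note that multiplication by $Y_{j_0+1}^{\delta_{j_0+1\notin J}}$ shifts the cosocle from $\sigma_{\un{c}}$ of Lemma \ref{General Lem p001}(i) to $\sigma_{\un{m}}$, adjusting only the $(j_0+1)$-coordinate.

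The main obstacle will be the final nonvanishing check in part (iii): one must pinpoint exactly which constituent of $\bang{\GL_2(\OK)\smat{p&0\\0&1}(\un{Y}^{-\un{i}}v_J)}$ the element $B_1$ generates, and confirm that the extra factor $Y_{j_0+1}^{\delta_{j_0+1\notin J}}$ does not push it into a proper subrepresentation. Once this is done, the equality of the $H$-characters in (iii) and the identities in (i) and (ii) follow as bookkeeping consequences of the formulas (\ref{General Eq mj}), (\ref{General Eq sJ}), (\ref{General Eq tJJ'}) by exploiting the localised nature of the perturbations.
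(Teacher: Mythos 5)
Your treatment of (i) and (ii) follows the same route as the paper: a direct unpacking of \eqref{General Eq mj}, \eqref{General Eq sJ} and \eqref{General Eq tJJ'}, with all the action concentrated at $j_0+1$ and $j_0+2$; your reduction of the $j=j_0+2$ case of (ii) to $s^J_{j_0+2}-s^{J\setminus\set{j_0+2}}_{j_0+2}=1-2\delta_{j_0+2\in(J-1)^{\ss}}$ is exactly the computation the paper performs (see Lemma \ref{General Lemma relation 2 appendix}(i),(ii)).

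For (iii) you diverge from the paper, and this is where your plan is still incomplete. The paper's device is to introduce a single explicit exponent vector $\un{c}$ (shown to be the same for both sides by (i),(ii), and nonnegative under the hypothesis) such that $\un{Y}^{\un{c}}B_1$ and $\un{Y}^{\un{c}}B_2$ each reconstitute \emph{exactly} the monomial of Proposition \ref{General Prop vector simple} applied to $J$ and to $J\setminus\set{j_0+2}$ respectively; both therefore equal $\mu_{J,(J-1)^{\ss}}v_{(J-1)^{\ss}}\neq0$. This kills both assertions at once: $B_1,B_2\neq0$ because a $Y$-multiple of each is nonzero, and the characters agree because multiplying by the common $\un{Y}^{\un{c}}$ twists both characters by the same $\alpha^{\un{c}}$ (Lemma \ref{General Lem Yj}(ii)). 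Your route has two points that are not yet ``bookkeeping''. First, the character equality does not follow \emph{directly} from (i),(ii): $B_1$ and $B_2$ are built from different starting characters $\chi_J\alpha^{-\un{i}}$ and $\chi_{J\setminus\set{j_0+2}}\alpha^{-\un{i}'}$, and the products run over $\cJ\setminus J'$ versus $\cJ\setminus J''$, which differ at $j_0+1$ where the exponents are $r_{j_0+1}+1$ and $p-1-r_{j_0+1}$; you must check that $\chi_J/\chi_{J\setminus\set{j_0+2}}$ together with $\alpha^{\un{i}'-\un{i}}$ absorbs the discrepancy $\alpha_{j_0+1}^{p-2-2r_{j_0+1}}$ (also, $B_1$ is in general not $I_1$-invariant, so its character need not be that of $\sigma_{\un{m}}^{I_1}$). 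Second, for nonvanishing your plan of locating $B$ inside the top subquotient $Q(\chi_J^s\alpha^{\un{i}},J_{\un{i}})$ via Lemma \ref{General Lem BP12}(iii)(b) needs the exponents $2i_j+t^J(J')_j$ (which carry $\delta_{j-1\in J'}$) to fall in the ranges indexed by a subset containing $J_{\un{i}}$, and then one must still verify that the extra factor $Y_{j_0+1}^{\delta_{j_0+1\notin J}}$ does not send the image to zero; you flag this as the main obstacle but do not resolve it. Both difficulties evaporate with the paper's $\un{Y}^{\un{c}}$ trick, which I would recommend adopting.
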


\begin{proof}[Proof of Proposition \ref{General Prop relation 2}]     
    As in the proof of Proposition \ref{General Prop relation 1}, the $H$-eigencharacters that occur in $\bang{\GL_2(\OK)\smat{p&0\\0&1}\un{Y}^{-\un{i}}v_J}$ are those in $Q\bigbra{\chi_J^s\alpha^{\un{i}''},J_{\un{i}''}}$ for $\un{0}\leq\un{i}''\leq\un{i}$ (where $J_{\un{i}''}\eqdef\set{j:j+1\in J\Delta(J-1)^{\ss},~i''_{j+1}=0}$), which are $\chi_J\alpha^{-\un{i}''}\alpha^{-\un{k}}$, where
    \begin{equation}\label{General Eq relation 2 bound}
    \begin{cases}
        0\leq k_j\leq p-2-(s^J_j-2i''_j)+\delta_{j-1\in J_0}&~\text{if}~j\in J_0\\
        p-1-(s^J_j-2i''_j)+\delta_{j-1\in J_0}\leq k_j\leq p-1&~\text{if}~j\notin J_0
    \end{cases}
    \end{equation}
    for $J_0\supseteq J_{\un{i}''}$. By Lemma \ref{General Lem BP12}(iii), unless $J_0=\emptyset$ and $k_j=p-1-(s^J_j-2i''_j)$ for all $j$, the $H$-eigencharacter in (\ref{General Eq relation 2 bound}) comes from the element $\un{Y}^{\un{k}}\smat{p&0\\0&1}\bigbra{\un{Y}^{-\un{i}''}v_J}\in \bigang{\!\GL_2(\OK)\smat{p&0\\0&1}\un{Y}^{-\un{i}''}v_J}$.
    
    Suppose that $2i_j-\delta_{j\in J\Delta(J-1)^{\ss}}+\delta_{j-1\in J'}<0$ for some $j$. By Lemma \ref{General Lemma relation 2}(i) and using $\bigbra{(J\setminus\set{j_0+2})-1}^{\ss}=(J-1)^{\ss}$, we have
    \begin{equation*}
        2i'_j-\delta_{j\in(J\setminus\set{j_0+2})\Delta((J\setminus\set{j_0+2})-1)^{\ss}}+\delta_{j-1\in J''}<0
    \end{equation*}
    for the same $j$. Then by Proposition \ref{General Prop Isigma0m}(i) applied to $(i,J,J')$ and $(i',J\setminus\set{j_0+2},J'')$ we deduce that $B_1=B_2=0$ (if $\un{i}'\ngeq\un{0}$ then $B_2=0$ by definition), which proves (\ref{General Eq relation 2 statement}). So in the rest of the proof we assume that $2i_j-\delta_{j\in J\Delta(J-1)^{\ss}}+\delta_{j-1\in J'}\geq0$ for all $j$, which implies that $$2i'_j-\delta_{j\in(J\setminus\set{j_0+2})\Delta((J\setminus\set{j_0+2})-1)^{\ss}}+\delta_{j-1\in J''}\geq0$$ for all $j$. In particular, this implies $\un{i}'\geq0$. Then by Proposition \ref{General Prop Isigma0m}(ii) applied to $(i,J,J')$ and $(i',J\setminus\set{j_0+2},J'')$ we deduce that $B_1,B_2\in I\bigbra{\sigma_{(J-1)^{\ss}},\sigma_{\un{m}}}=I\bigbra{\sigma_{(J-1)^{\ss}},\sigma_{\un{m}'}}$ (see Lemma \ref{General Lemma relation 2}(i)), which is a subrepresentation of $\bang{\GL_2(\OK)\smat{p&0\\0&1}\un{Y}^{-\un{i}}v_J}$ and of $\bigang{\!\GL_2(\OK)\smat{p&0\\0&1}\un{Y}^{-\un{i}'}v_{J\setminus\set{j_0+2}}}$.
    
    \hspace{\fill}    

    (i). We suppose that $j_0+1\in J$, hence $j_0\in J'$. In this case, we claim that it suffices to prove (\ref{General Eq relation 2 statement}) for $J'=\cJ$, that is (using (\ref{General Eq relation 2 tJ}))
    \begin{equation}\label{General Eq relation 2 J=all}
        \smat{p&0\\0&1}\bbra{\un{Y}^{-\un{i}}v_J}=\frac{\mu_{J,(J-1)^{\ss}}}{\mu_{J\setminus\set{j_0+2},(J-1)^{\ss}}}Y_{j_0+1}^{p-1-r_{j_0+1}}\smat{p&0\\0&1}\bbra{\un{Y}^{-\un{i}'}v_{J\setminus\set{j_0+2}}},
    \end{equation}
    where $i'_j=i_j$ if $j\neq j_0+2$ and $i'_{j_0+2}=i_{j_0+2}+\delta_{j_0+2\in(J-1)^{\ss}}$. Indeed, once (\ref{General Eq relation 2 J=all}) is proved, we multiply both sides of (\ref{General Eq relation 2 J=all}) by $\prod\nolimits_{j\notin J'}Y_j^{2i_j+t^J(J')_j}$. If $j_0+1\in J'$, then using (\ref{General Eq relation 2 tJ}) we obtain (\ref{General Eq relation 2 statement}) for $J'$. If $j_0+1\notin J'$, then using (\ref{General Eq relation 2 tJ}) together with Lemma \ref{General Lem Yj}(i) applied to $j=j_0+1$ we obtain (\ref{General Eq relation 2 statement}) for $J'$.
    
    Then we prove $(\ref{General Eq relation 2 J=all})$. Since $B_1,B_2\in I\bigbra{\sigma_{(J-1)^{\ss}},\sigma_{\un{m}}}\subseteq\bang{\GL_2(\OK)\smat{p&0\\0&1}\un{Y}^{-\un{i}}v_J}$ have common $H$-eigencharacter $\chi_J\alpha^{-\un{i}}$ (see Lemma \ref{General Lemma relation 2}(iii)), it suffices to show that the $H$-eigencharacter $\chi_J\alpha^{-\un{i}}$ only appears once in $\bang{\GL_2(\OK)\smat{p&0\\0&1}\un{Y}^{-\un{i}}v_J}$, which implies $B_1=B_2$ by Lemma \ref{General Lemma relation 2}(iii). Since $j_0+1\in J$, the assumptions $j_0+1\in(J-1)^{\nss}$ and $i_{j_0+1}=0$ imply that $j_0\in J_{\un{i}''}$ for all $\un{i}''$ as in (\ref{General Eq relation 2 bound}). In particular, we have $j_0\in J_0$. As in the proof of Proposition \ref{General Prop relation 1}, the equation (\ref{General Eq relation 1 mod q-1}) then becomes
    \begin{equation}\label{General Eq relation 2 mod q-1}
        -\sum\limits_{j=0}^{f-1}(i_j-i''_j)p^j\equiv\sum\limits_{j=0}^{f-1}k_jp^j~\mod~(q-1),
    \end{equation}
    which is congruent to $\sum\nolimits_{j=0}^{f-1}\bigbra{p-1-(i_j-i''_j)}p^j$ modulo $(q-1)$. If $i_j\neq i''_j$ for some $j$, then we must have $k_{j_0}=p-1-(i_{j_0}-i''_{j_0})$. Since $j_0\in J_0$, by (\ref{General Eq relation 2 bound}) we have
    \begin{equation*}
        k_{j_0}=p-1-(i_{j_0}-i''_{j_0})\leq p-2-(s^J_{j_0}-2i''_{j_0})+\delta_{j_0-1\in J_0}\leq p-1-(s^J_{j_0}-2i''_{j_0}).
    \end{equation*}
    Hence $s^J_{j_0}\leq i_{j_0}+i''_{j_0}\leq2(f-\delta_{j_0\in J^{\sh}})$, which contradicts (\ref{General Eq bound s}).
    Therefore, we must have $i_j=i''_j$ for all $j$ and the LHS of (\ref{General Eq relation 2 mod q-1}) equals $0$. Since $j_0\in J_0$, by (\ref{General Eq relation 2 bound}) and (\ref{General Eq bound s}) we have $k_{j_0}<p-1$. It follows from (\ref{General Eq relation 2 mod q-1}) that $k_j=0$ for all $j$. 

    \hspace{\fill}

    (ii) We suppose that $j_0+1\notin J$ (which implies $f\geq2$), hence $j_0\notin J'$. We prove (\ref{General Eq relation 2 statement}) by the following steps. 

    \hspace{\fill}
    
    \noindent\textbf{Step 1.} We prove (\ref{General Eq relation 2 statement}) for $J'=\cJ\setminus\set{j_0}$.
    
    Using (\ref{General Eq relation 2 tJ}), it is enough to prove that
    \begin{equation}\label{General Eq relation 2 minus j0}
        Y_{j_0}^{2i_{j_0}+p-s^J_{j_0}}\smat{p&0\\0&1}\bbra{\un{Y}^{-\un{i}}v_J}=\frac{\mu_{J,(J-1)^{\ss}}}{\mu_{J\setminus\set{j_0+2},(J-1)^{\ss}}}\bbbra{Y_{j_0}^{2i_{j_0}+p-s^J_{j_0}}Y_{j_0+1}^{p-1-r_{j_0+1}}}\smat{p&0\\0&1}\bbra{\un{Y}^{-\un{i}'}v_{J\setminus\set{j_0+2}}},
    \end{equation}
    where $i'_j=i_j$ if $j\neq j_0+2$ and $i'_{j_0+2}=i_{j_0+2}+\delta_{j_0+2\in(J-1)^{\ss}}$. Since $i_{j_0+1}=i'_{j_0+1}=0$, by Lemma \ref{General Lem Yj}(i) applied to $j=j_0$ and Proposition \ref{General Prop shift}, if we apply $Y_{j_0}^{s^J_{j_0}-2i_{j_0}}$ to either side of (\ref{General Eq relation 2 minus j0}) we get zero. Moreover, $B_1,B_2\in I\bigbra{\sigma_{(J-1)^{\ss}},\sigma_{\un{m}}}\subseteq\bang{\GL_2(\OK)\smat{p&0\\0&1}\un{Y}^{-\un{i}}v_J}$ have common $H$-eigencharacter $\chi\eqdef\chi_J\alpha^{-\un{i}}\alpha_{j_0}^{2i_{j_0}+p-s^J_{j_0}}$ (see Lemma \ref{General Lemma relation 2}(iii)). Hence it suffices to show that up to scalar there exists a unique $H$-eigenvector $C\in\bang{\GL_2(\OK)\smat{p&0\\0&1}\un{Y}^{-\un{i}}v_J}$ satisfying $Y_{j_0}^{s^J_{j_0}-2i_{j_0}}C=0$ with $H$-eigencharacter $\chi$, which implies $B_1=B_2$ by Lemma \ref{General Lemma relation 2}(iii).
    
    As in the proof of Proposition \ref{General Prop relation 1} (in the case $J'=\cJ\setminus\set{j_0}$ with the same definition of $\eta_j$), for each $\un{i}''$ such that $\un{0}\leq\un{i}''\leq\un{i}$, the equation $\chi=\chi_J\alpha^{-\un{i}''}\alpha^{\un{k}}$ has at most one solution for $\un{k}$ as in (\ref{General Eq relation 2 bound}), which is given by (see (\ref{General Eq relation 1 kj}) and since $j_0-1\in J'$)
    \begin{equation}\label{General Eq relation 2 kj}
    \begin{cases}
        k_{j_0}=i_{j_0}+i''_{j_0}+p-s^J_{j_0}+\eta_{j_0}\\
        k_j=\eta_j-(i_j-i''_j)&\text{if}~j\neq j_0.
    \end{cases}
    \end{equation}
    It follows from (\ref{General Eq relation 2 bound}) that $C$ is a linear combination of the elements $C'\eqdef\un{Y}^{\un{k}}\smat{p&0\\0&1}\bigbra{\un{Y}^{-\un{i}''}v_J}\in\bigang{\!\GL_2(\OK)\smat{p&0\\0&1}\un{Y}^{-\un{i}''}v_J}$ with distinct $\un{i}''$ such that $\un{0}\leq\un{i}''\leq\un{i}$ and $\un{k}$ as in (\ref{General Eq relation 2 kj}), each of which has nonzero image in the quotient $Q$ of $\bigang{\!\GL_2(\OK)\smat{p&0\\0&1}\un{Y}^{-\un{i}''}v_J}$ isomorphic to $Q\bigbra{\chi_J^s\alpha^{\un{i}''},J_{\un{i}''}}$ (see Lemma \ref{General Lem p001}(ii)).

    We claim that for $\un{i}''\neq\un{i}$, the element $Y_{j_0}^{s^J_{j_0}-2i_{j_0}}C'\in\bigang{\!\GL_2(\OK)\smat{p&0\\0&1}\un{Y}^{-\un{i}''}v_J}$ also has nonzero image in $Q\cong Q\bigbra{\chi_J^s\alpha^{\un{i}''},J_{\un{i}''}}$. Then we deduce from Lemma \ref{General Lem p001}(ii) that the coefficients of $C'$ with $\un{i}''\neq\un{i}$  in the linear combination for $C$ must be zero, which concludes the proof of (\ref{General Eq relation 2 minus j0}).

    Now we prove the claim. We let $J_0$ be the subset corresponding to the $H$-eigencharacter of $C'=\un{Y}^{\un{k}}\smat{p&0\\0&1}\bigbra{\un{Y}^{-\un{i}''}v_J}$ in (\ref{General Eq relation 2 bound}). Suppose that $j_0\in J_0$. Then by Claim 2 in the proof of Proposition \ref{General Prop relation 1}, we deduce from $j_0\in J_0$ and $j_0\notin J'$ that $j_0-1\notin J'$, which is a contradiction since $J'=\cJ\setminus\set{j_0}$. Hence we must have $j_0\notin J_0$. By the definition of $\eta_j$ in the case $J'=\cJ\setminus\set{j_0}$ (see (\ref{General Eq etaJ})), we have either $\eta_{j_0}=-1$ or $\eta_{j_0}=0$. Moreover, if $\eta_{j_0}=0$, then the definition of $\eta_j$ implies that $i''_j=i_j$ for all $j\neq j_0$, hence $i''_{j_0}<i_{j_0}$ since $\un{i}''\neq\un{i}$. In particular, in either case we deduce from (\ref{General Eq relation 2 kj}) that $k_{j_0}<2i_{j_0}+p-s^J_{j_0}$, hence $(s^J_{j_0}-2i_{j_0})+k_{j_0}\leq p-1$. Then using $j_0\notin J_0$, the $H$-eigencharacter of $Y_{j_0}^{s^J_{j_0}-2i_{j_0}}C'$ still appears in (\ref{General Eq relation 2 bound}) (with the corresponding $\un{i}''$ and $J_0$ unchanged), hence has nonzero image in $Q\cong Q\bigbra{\chi_J^s\alpha^{\un{i}''},J_{\un{i}''}}$.

    \hspace{\fill}
    
    \noindent\textbf{Step 2.} We prove (\ref{General Eq relation 2 statement}) for all $J'$ such that $j_0\notin J'$ and $j_0-1\in J'$. 
    
    We multiply both sides of (\ref{General Eq relation 2 minus j0}) by $Y_{j_0+1}\bigbbbra{\prod\nolimits_{j\notin J'\cup\set{j_0}}Y_j^{2i_j+t^J(J')_j}}$. Since $j_0-1\in J'$, we deduce that $t^J(J')_{j_0}$ is the same as in Step 1. If $j_0+1\in J'$, then using (\ref{General Eq relation 2 tJ}) we obtain (\ref{General Eq relation 2 statement}) for $J'$. If $j_0+1\notin J'$, then using (\ref{General Eq relation 2 tJ}) together with Lemma \ref{General Lem Yj}(i) applied to $j=j_0+1$ we obtain (\ref{General Eq relation 2 statement}) for $J'$.
    
    \hspace{\fill}
    
    \noindent\textbf{Step 3.} We prove (\ref{General Eq relation 2 statement}) for all $J'$ such that $j_0\notin J'$ and $j_0-1\notin J'$. 
    
    We multiply both sides of (\ref{General Eq relation 2 minus j0}) by $Y_{j_0+1}\bigbbbra{\prod\nolimits_{j\notin J'\cup\set{j_0}}Y_j^{2i_j+t^J(J')_j}}$. Similarly to Step 2 but using $j_0-1\notin J'$,  we get $Y_{j_0}B_1=Y_{j_0}B_2$. Moreover, $B_1,B_2\in I\bigbra{\sigma_{(J-1)^{\ss}},\sigma_{\un{m}}}\subseteq\bang{\GL_2(\OK)\smat{p&0\\0&1}\un{Y}^{-\un{i}}v_J}$ have $H$-eigencharacter $\chi\eqdef\chi_J\alpha_{j_0+1}\alpha^{-\un{i}}\bigbbbra{\prod\nolimits_{j\notin J'}\alpha_j^{2i_j+p-1-s^J_j+\delta_{j-1\in J'}}}$. Hence it suffices to show that there is no nonzero $H$-eigenvector $C\in I(\sigma_{(J-1)^{\ss}},\sigma_{\un{m}})$ with $H$-eigencharacter $\chi$ satisfying $Y_{j_0}C=0$, which implies $B_1-B_2=0$.

    The rest of the proof is similar to the one of Step 1, and is left as an exercise. Here the analogous assertion $j_0\notin J_0$ is guaranteed by the fact that $C\in I(\sigma_{(J-1)^{\ss}},\sigma_{\un{m}})$ and using Lemma \ref{General Lem change origin}.
\end{proof}

The following Proposition is a generalization of Proposition \ref{General Prop vector simple} and gives more relations between the vectors $v_J\in D_0(\rhobar)$.

\begin{proposition}\label{General Prop vector}
    Let $J,J'\subseteq\cJ$ such that $(J')^{\nss}\neq\cJ$ (i.e.\,$(J',J_{\rhobar})\neq(\cJ,\emptyset)$) and satisfying
    \begin{equation}\label{General Eq good pair}
    \begin{cases}
        (J-1)^{\ss}=(J')^{\ss}\\
        (J')^{\nss}\subseteq(J-1)^{\nss}\Delta(J'-1)^{\nss}.
    \end{cases}
    \end{equation}
    Then there exists a unique element $\mu_{J,J'}\in\FF\x$, such that
    \begin{equation}\label{General Eq Prop vector statement}
        \bbbra{\prod\limits_{j+1\in J\Delta J'}Y_j^{s^{J'}_j}\prod\limits_{j+1\notin J\Delta J'}Y_j^{p-1}}\smat{p&0\\0&1}\bbra{\un{Y}^{-\un{e}^{(J\cap J')^{\nss}}}v_J}=\mu_{J,J'}v_{J'},
    \end{equation}
    where $\un{Y}^{-\un{e}^{(J\cap J')^{\nss}}}v_J$ is defined in Proposition \ref{General Prop shift}. 
\end{proposition}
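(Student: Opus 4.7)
The strategy is to reduce Proposition~\ref{General Prop vector} to Proposition~\ref{General Prop vector simple} (the case $J' = (J-1)^{\ss}$) by repeated application of Proposition~\ref{General Prop relation 2}. Uniqueness of $\mu_{J,J'}$ is automatic: by Lemma~\ref{General Lem Diamond diagram}(ii), $v_{J'}$ is determined up to a scalar in $\FF^\times$ by its $H$-eigencharacter $\chi_{J'}$, so any nonzero $H$-eigenvector in $\pi^{I_1}$ with that character is a unique $\FF^\times$-multiple of $v_{J'}$. Hence it is enough to exhibit an identity of the form~(\ref{General Eq Prop vector statement}) with nonzero right-hand side, and $\mu_{J,J'}$ will then be whatever scalar falls out of the construction.

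I would argue by induction on a measure of how far $(J,J')$ is from the base case, for instance $|(J')^{\nss}|$ (together with a secondary induction on $|(J-1)^{\nss} \Delta (J'-1)^{\nss}|$). In the base case $(J')^{\nss} = \emptyset$, the first condition in~(\ref{General Eq good pair}) forces $J' = (J-1)^{\ss} \subseteq J_{\rhobar}$, so $(J \cap J')^{\nss} = \emptyset$ and $\un{Y}^{-\un{e}^{(J \cap J')^{\nss}}} v_J = v_J$; the prescribed $Y_j$-exponents in~(\ref{General Eq Prop vector statement}) then coincide with those in~(\ref{General Eq vector simple}), and Proposition~\ref{General Prop vector simple} yields the result with $\mu_{J,J'} \eqdef \mu_{J,(J-1)^{\ss}}$. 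For the inductive step, pick $j_0+1 \in (J')^{\nss}$; by~(\ref{General Eq good pair}) one has $j_0+1 \in (J-1)^{\nss} \Delta (J'-1)^{\nss}$, so $j_0+2$ lies in exactly one of $J$ and $J'$, with $j_0+1 \notin J_{\rhobar}$. Set $\widetilde J \eqdef J \Delta \{j_0+2\}$. The plan is to apply Proposition~\ref{General Prop relation 2} with shift $\un{i} \eqdef \un{e}^{(J \cap J')^{\nss}}$ and the auxiliary subset of $\cJ$ (denoted $J'$ in Proposition~\ref{General Prop relation 2}, distinct from our $J'$) chosen so that, after multiplication of both sides of~(\ref{General Eq relation 2 statement}) by a suitable monomial in the $Y_j$'s, the two sides become respectively the left-hand side of~(\ref{General Eq Prop vector statement}) at $(J, J')$ and its analog at some reduced pair $(\widetilde J, J'')$ to which the inductive hypothesis applies. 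The compatibility of the shift vector $\un{i}'$ output by Proposition~\ref{General Prop relation 2} with the required shift $\un{e}^{(\widetilde J \cap J'')^{\nss}}$ on the reduced side is built into the definition of $\un{i}'$ and the choice of $j_0$.

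The main obstacle is twofold. First, one must identify the correct auxiliary set in Proposition~\ref{General Prop relation 2} and the correct companion $J''$ so that the reduced pair $(\widetilde J, J'')$ still satisfies~(\ref{General Eq good pair}) and is strictly closer to the base case. Second, one must verify the combinatorial matching of $Y_j$-exponents on both sides: each expression $2 i_j + t^J(\cdot)_j$ appearing in Proposition~\ref{General Prop relation 2} must reproduce the prescribed exponent of $Y_j$ in~(\ref{General Eq Prop vector statement}), with the discrepancies at $j = j_0$ and $j = j_0+1$ absorbed by the prefactor $Y_{j_0+1}^{\delta_{j_0+1 \notin J}}$ from~(\ref{General Eq relation 2 statement}) and by the shift $\un{Y}^{-\un{e}^{(J \cap J')^{\nss}}}$. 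These checks rely on the same kind of identities among $s^J_j$, $s^{J'}_j$, and $p-1-s^J_j + \delta_{j-1 \in (\cdot)}$ that underlie the proof of Proposition~\ref{General Prop vector simple}, and which are collected in Appendix~\ref{General Sec app lemmas}. The bookkeeping is considerably more intricate than in the semisimple case because of the non-trivial shift $\un{e}^{(J \cap J')^{\nss}}$, but once the correct reduction $(J, J') \rightsquigarrow (\widetilde J, J'')$ and auxiliary set are pinned down, the remaining verifications are routine; this is the technical heart of the argument.
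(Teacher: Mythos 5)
Your overall strategy is genuinely different from the paper's: the paper proves the identity directly, by showing that the left-hand side $B$ is (a) nonzero with $H$-eigencharacter $\chi_{J'}$ (via the structure of the quotient $Q\bigbra{\chi_J^s\alpha^{\un{e}^{(J\cap J')^{\nss}}},J'''}$ and Lemma \ref{General Lem BP12}(iii)), (b) $K_1$-invariant (via Proposition \ref{General Prop Isigma0m}), and (c) killed by every $Y_{j_0}$ (via Proposition \ref{General Prop relation 1}), and then invokes multiplicity-freeness of $D_0(\rhobar)^{I_1}$. Proposition \ref{General Prop relation 2} enters only afterwards, in the lemma on the ratios $\mu_{J_1,J_3}/\mu_{J_2,J_3}$, and there the index being toggled lies \emph{outside} the target set $J_3$. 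Your base case is fine, and uniqueness is indeed automatic; the problem is the inductive step.

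The gap is combinatorial and, as far as I can see, fatal to the reduction as described. For $j_0+1\in(J')^{\nss}$ (so $j_0+1\notin J_{\rhobar}$), the second condition in (\ref{General Eq good pair}) is \emph{equivalent} to $j_0+2\in J\Delta J'$: exactly one of $j_0+2\in J$, $j_0+2\in J'$ holds. Proposition \ref{General Prop relation 2} replaces $J$ by $\wt J\eqdef J\Delta\set{j_0+2}$, i.e.\ it toggles precisely the membership of $j_0+2$ in $J$, so after the reduction one has $j_0+2\notin\wt J\Delta J'$ while still $j_0+1\in(J')^{\nss}$; hence $(\wt J,J')$ violates (\ref{General Eq good pair}) and the inductive hypothesis with the same target $J'$ is unavailable. (The first condition survives, since $(\wt J-1)^{\ss}=(J-1)^{\ss}$ as $j_0+1\notin J_{\rhobar}$; it is exactly the second that breaks.) Repairing the pair by also shrinking the target, say to $J''=J'\setminus\set{j_0+1}$, does not help: the inductive conclusion would then say that your left-hand side is a scalar multiple of $v_{J''}$, whose $H$-eigencharacter $\chi_{J''}$ differs from the character $\chi_{J'}$ carried by the left-hand side, so one would conclude the left-hand side vanishes rather than equals $\mu_{J,J'}v_{J'}$. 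There are further unaddressed hypotheses of Proposition \ref{General Prop relation 2}: it requires $i_{j_0+1}=0$, which fails for $\un{i}=\un{e}^{(J\cap J')^{\nss}}$ whenever $j_0+1\in J$ (the paper's Claim 3(iii) has to first apply $Y_{j_0}$ to lower the shift in this situation), and it requires $j_0+1\in(J-1)^{\nss}$, i.e.\ $j_0+2\in J$, so in your ``Case B'' ($j_0+2\in J'\setminus J$) you would have to run the proposition backwards starting from $J\cup\set{j_0+2}$, about which nothing is yet known. You would need to replace the reduction $(J,J')\rightsquigarrow(\wt J,J'')$ by the paper's direct three-step verification (or find a reduction that moves $J'$ towards $(J-1)^{\ss}$ rather than moving $J$).
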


\begin{proof}
    If $f=1$, then the assumption implies $J'=(J-1)^{\ss}$ and $(J\cap J')^{\nss}=\emptyset$, and the proposition is already proved in Proposition \ref{General Prop vector simple}. Hence in the rest of the proof we assume that $f\geq2$. We denote by $B$ the LHS of (\ref{General Eq Prop vector statement}).

    \hspace{\fill}

    \noindent\textbf{Claim 1.} The element $B$ is nonzero and has $H$-eigencharacter $\chi_{J'}$.
    
    \proof By Lemma \ref{General Lem p001}(ii), the representation $\bigang{\!\GL_2(\OK)\smat{p&0\\0&1}\un{Y}^{-\un{e}^{(J\cap J')^{\nss}}}v_J}$ has a quotient $Q$ isomorphic to $Q\bigbra{\chi_J^s\alpha^{\un{e}^{(J\cap J')^{\nss}}},J'''}$ with $J'''\eqdef\bigbra{\!\bbra{J\Delta(J-1)^{\ss}}\setminus(J\cap J')^{\nss}}-1$. By the proof of Lemma \ref{General Lem p001}, $Q$ has constituents $F\bigbra{\ft_{\lambda_J}(-\un{b})}$ with
    \begin{equation}\label{General Eq relation bj1}
        \max\bbra{\delta_{j\in J\Delta(J-1)^{\ss}},2\delta_{j\in(J\cap J')^{\nss}}}\leq b_j\leq2\delta_{j\in(J\cap J')^{\nss}}+1.
    \end{equation}
    
    We claim that $\sigma_{J'}$ is a constituent of $Q$ and corresponds to the subset $J''\eqdef(J\Delta J')-1$ for $\Ind_I^{\GL_2(\OK)}\bigbra{\chi_J^s\alpha^{\un{e}^{(J\cap J')^{\nss}}}}$ (see Lemma \ref{General Lem BP12}(i)). Indeed, by Lemma \ref{General Lem change origin}(iii), we have $\sigma_{J'}=F\bigbra{\ft_{\lambda_J}(-\un{b})}$ with 
    \begin{equation}\label{General Eq relation bj2}
        b_j=
    \begin{cases}
        \delta_{j\in J}+\delta_{j\in J'}(-1)^{\delta_{j+1\notin J\Delta J'}}&\text{if}~j\notin J_{\rhobar}\\
        \bbra{\delta_{j\in J}-\delta_{j\notin J'}}(-1)^{\delta_{j+1\in J}}&\text{if}~j\in J_{\rhobar}.
    \end{cases}
    \end{equation}
    We need to check that $b_j$ satisfies (\ref{General Eq relation bj1}). We assume that $j\notin J_{\rhobar}$, the case $j\in J_{\rhobar}$ being similar. By (\ref{General Eq good pair}), we have $j\in J'$ implies $j+1\in J\Delta J'$. Hence we have $\delta_{j\in J'}(-1)^{\delta_{j+1\notin J\Delta J'}}=\delta_{j\in J'}$, and from (\ref{General Eq relation bj1}) it suffices to show that
    \begin{equation*}
        \max\bbra{\delta_{j\in J},2\delta_{j\in J\cap J'}}\leq\delta_{j\in J}+\delta_{j\in J'}\leq2\delta_{j\in J\cap J'}+1,
    \end{equation*}
    which is easy. Then we prove the second assertion. By Lemma \ref{General Lem BP12}(i) applied to $\lambda=\lambda_J-\alpha^{\un{e}^{(J\cap J')^{\nss}}}$, it suffices to show that $b_j=2\delta_{j\in(J\cap J')^{\nss}}+1$ if and only if $j\in J\Delta J'$. Once again we assume that $j\notin J_{\rhobar}$, and the case $j\in J_{\rhobar}$ is similar. Then it suffices to show that $\delta_{j\in J}+\delta_{j\in J'}=2\delta_{j\in J\cap J'}+1$ if and only if $j\in J\Delta J'$, which is easy.
    
    By Lemma \ref{General Lem p-2-s and s}, for $j\in J''$ we have $\bigbra{p-2-(s^J_j-2\delta_{j\in(J\cap J')^{\nss}})}+\delta_{j-1\in J''}=s^{J'}_j$. Then by Lemma \ref{General Lem BP12}(iii) applied to $\lambda=\lambda_J-\alpha^{\un{e}^{(J\cap J')^{\nss}}}$, we deduce that the image of $B$ in $Q\cong Q\bigbra{\chi_J^s\alpha^{\un{e}^{(J\cap J')^{\nss}}},J'''}$ is a nonzero $I_1$-invariant of $\sigma_{J'}$. In particular, $B$ is nonzero and has $H$-eigencharacter $\chi_{J'}$.\qed

    \hspace{\fill}

    \noindent\textbf{Claim 2.} The element $B$ is $K_1$-invariant.
    
    \proof First we claim that $\un{m}\bigbra{\un{e}^{(J\cap J')^{\nss}},J,J''}=\un{a}^{J'}$ (see (\ref{General Eq mj}) for $\un{m}$ and (\ref{General Eq aJ}) for $\un{a}^{J'}$). Indeed, using Lemma \ref{General Lem m} it suffices to show that $\delta_{j\in J'}(-1)^{\delta_{j+1\notin J}}=a^{J'}_j$ for $j\in\cJ$. If $j\in J_{\rhobar}$, then the assumption $(J-1)^{\ss}=(J')^{\ss}$ implies that $j\in J-1$ if and only if $j\in J'$, hence we have $\delta_{j\in J'}(-1)^{\delta_{j+1\notin J}}=\delta_{j\in J'}(-1)^{\delta_{j\notin J'}}=\delta_{j\in J'}$, which equals $a^{J'}_j$ by (\ref{General Eq aJ}). If $j\notin J_{\rhobar}$, then $j\in J'$ implies $j+1\in J\Delta J'$ by the second formula of (\ref{General Eq good pair}), hence we have $\delta_{j\in J'}(-1)^{\delta_{j+1\notin J}}=\delta_{j\in J'}(-1)^{\delta_{j+1\in J'}}$, which equals $a^{J'}_j$ by (\ref{General Eq aJ}).
    
    By Proposition \ref{General Prop Isigma0m}(ii) applied to $\un{i}=\un{e}^{(J\cap J')^{\nss}}$ and with $J'$ there being $J''$, we deduce that 
    \begin{equation}\label{General Eq relation Isigma}
        Y_{j'}\bbbra{\scalebox{1}{$\prod$}_{j+1\notin J\Delta J'}Y_j^{2\delta_{j\in(J\cap J')^{\nss}}+t^J(J'')_j}}\smat{p&0\\0&1}\bbra{\un{Y}^{-\un{e}^{(J\cap J')^{\nss}}}v_J}\in I(\sigma_{(J-1)^{\ss}},\sigma_{J'})\subseteq D_{0,\sigma_{(J-1)^{\ss}}}(\rhobar)
    \end{equation}
    for all $j'\in\cJ$, where the last inclusion follows from the fact that $\sigma_{J'}$ is a constituent of $D_{0,\sigma_{(J-1)^{\ss}}}(\rhobar)$ (which follows from Lemma \ref{General Lem Diamond diagram}(iii) and (\ref{General Eq good pair})). Since $s^{J'}_j\geq1$ and $2\delta_{j\in(J\cap J')^{\nss}}+t^J(J'')_j\leq p-2$ for all $j$ by (\ref{General Eq bound s}), (\ref{General Eq bound tJ}) and $f\geq2$, multiplying (\ref{General Eq relation Isigma}) by a suitable power of $\un{Y}$ we deduce that $B\in D_{0,\sigma_{(J-1)^{\ss}}}(\rhobar)$, hence is $K_1$-invariant.\qed

    \hspace{\fill}

    \noindent\textbf{Claim 3.} We have $Y_{j_0}B=0$ for all $j_0\in\cJ$. 
    
    \proof (i). Suppose that $j_0+1\notin J\Delta J'$ and $j_0+1\notin (J\cap J')^{\nss}$. By Proposition \ref{General Prop shift}, we have $Y_{j_0+1}\bigbra{\un{Y}^{-\un{e}^{(J\cap J')^{\nss}}}v_J}=0$. Hence it  follows from Lemma \ref{General Lem Yj}(i) applied to $j=j_0$ that $Y_{j_0}B=0$.
    
    (ii). Suppose that $j_0+1\in J\Delta J'$, which equals $J\Delta\bigbra{(J')^{\ss}\Delta(J')^{\nss}}=\bigbra{J\Delta(J')^{\ss}}\Delta(J')^{\nss}$. Hence for each $j\in J\Delta J'$, we have either $j\in J\Delta(J')^{\ss}$, $j\notin(J')^{\nss}$ or $j\notin J\Delta(J')^{\ss}$, $j\in(J')^{\nss}$, and in the latter case we have $j+1\in J\Delta J'$ by (\ref{General Eq good pair}). In particular, since $(J')^{\nss}\neq\cJ$, there exists $0\leq w\leq f-1$ such that $j\notin J\Delta(J')^{\ss}$, $j\in(J')^{\nss}$ for $j=j_0+1,\ldots,j_0+w$ and $j_0+w+1\in J\Delta(J')^{\ss}$, $j_0+w+1\notin(J')^{\nss}$. 
    
    By (\ref{General Eq good pair}) we have $j_0+w+1\in J\Delta(J')^{\ss}=J\Delta(J-1)^{\ss}$. Then by proposition \ref{General Prop relation 1} applied to $\un{i}=\un{e}^{(J\cap J')^{\nss}}$, $j_0$ replaced by $j_0+w$ and $J'$ replaced by $J''-1$ with $J''\eqdef(J\Delta J')\setminus\set{j_0+1,\ldots,j_0+w+1}$, and possibly multiplying (\ref{General Eq relation 1 statement}) by $Y_{j_0+w+1}$, we have
    \begin{equation*}
        Y_{j_0+w+1}\bbbra{\scalebox{1}{$\prod$}_{j+1\notin J''}Y_j^{2\delta_{j\in(J\cap J')^{\nss}}+p-1-s^J_j+\delta_{j\in J''}}}\smat{p&0\\0&1}\bbra{\un{Y}^{-\un{e}^{(J\cap J')^{\nss}}}v_J}=0.
    \end{equation*}
    Since $2\delta_{j\in(J\cap J')^{\nss}}+p-1-s^J_j+\delta_{j\in J''}\leq p-1$ for all $j$ by (\ref{General Eq bound s}), to prove that $Y_{j_0}B=0$, it suffices (from the formula of $B$) to show that 
    \begin{equation*}
        s^{J'}_j+\delta_{j=j_0}=2\delta_{j\in(J\cap J')^{\nss}}+p-1-s^J_j+\delta_{j\in(J\Delta J')\setminus\set{j_0+1,\ldots,j_0+w+1}}+\delta_{j=j_0+w+1}
    \end{equation*}
    for $j+1\in(J\Delta J')\setminus J''$, that is $j=j_0,\ldots,j_0+w$. This follows from Lemma \ref{General Lem p-2-s and s} with $J,J'$ as above noting that $j=j_0+w+1$ and $j\in\set{j_0,\ldots,j_0+w}$ imply that $j=j_0$ and $w=f-1$. 
    
    (iii). Suppose that $j_0+1\in (J\cap J')^{\nss}$, then by Lemma \ref{General Lem Yj}(i) applied to $j=j_0$ and using $j_0+1\notin J\Delta J'$, we have 
    \begin{equation}\label{General Eq relation Yj0B}
        Y_{j_0}B=\bbbra{\scalebox{1}{$\prod$}_{j+1\in J\Delta J'}Y_j^{s^{J'}_j}\scalebox{1}{$\prod$}_{j+1\notin(J\Delta J')\cup\set{j_0+1}}Y_j^{p-1}}\smat{p&0\\0&1}\bbra{\un{Y}^{-\un{e}^{(J\cap J')^{\nss}\setminus\set{j_0+1}}}v_J}.
    \end{equation}
    As in (ii) (with the difference that $j_0+1\notin J\Delta J'$), there exists $1\leq w\leq f-1$ such that $j\notin J\Delta(J')^{\ss}$, $j\in(J')^{\nss}$ for $j=j_0+2,\ldots,j_0+w$ and $j_0+w+1\in J\Delta(J')^{\ss}$, $j_0+w+1\notin(J')^{\nss}$. The rest of the proof then follows exactly as in (ii) except that we take $\un{i}=\un{e}^{(J\cap J')^{\nss}\setminus\set{j_0+1}}$ and $J''=\bigbra{(J\Delta J')\setminus\set{j_0+2,\ldots,j_0+w+1}}\cup\set{j_0+1}$.\qed

    \hspace{\fill}

    From Claim 2 and Claim 3 we deduce that $B$ is $I_1$-invariant. Since $B\neq0$ has $H$-eigencharacter $\chi_{J'}$ by Claim 1 and since $D_0(\rhobar)^{I_1}$ is multiplicity-free by Lemma \ref{General Lem Diamond diagram}(ii), we conclude that $B$ is a scalar multiple of $v_{J'}$, which completes the proof.
\end{proof}

\begin{remark}\label{General Rk Delta and partial}
    For $J\subseteq\cJ$, we define the \textbf{right boundary} of $J$ by
    $\partial J\eqdef\sset{j\in J:j+1\notin J}.$
    Then the second formula in (\ref{General Eq good pair}) is equivalent to 
    \begin{equation*}
        (\partial J')^{\nss}\subseteq(J-1)^{\nss}\subseteq\bigbra{(J'\setminus\partial J')^c}^{\nss}.
    \end{equation*}
\end{remark}

If $J_{\rhobar}=\emptyset$, then we define $x_{\emptyset,\un{r}}\eqdef\mu_{\emptyset,\emptyset}\inv\un{Y}^{\un{p}-\un{1}-\un{r}}\smat{p&0\\0&1}v_{\emptyset}$ so that $\un{Y}^{\un{r}}x_{\emptyset,\un{r}}=v_{\emptyset}$ by (\ref{General Eq vector simple}) applied to $J=\emptyset$. This agrees with the definition of $x_{\emptyset,\un{r}}$ given in Theorem \ref{General Thm seq} below (see (\ref{General Eq explicit sequence})). Then we have the following complement of Proposition \ref{General Prop vector}, which together with Proposition \ref{General Prop vector} gives all possible relations between the vectors $v_J\in D_0(\rhobar)$.

\begin{proposition}\label{General Prop vector complement}
    Assume that $J_{\rhobar}=\emptyset$. Then for $\emptyset\neq J\subseteq\cJ$, there exists a unique element $\mu_{J,\cJ}\in\FF\x$ such that
    \begin{equation*}
        \bbbra{\prod\limits_{j+1\notin J}Y_j^{p-1-r_j}}\smat{p&0\\0&1}v_J=\mu_{J,\cJ}v_{\cJ}+\mu_{J,\emptyset}x_{\emptyset,\un{r}},
    \end{equation*}
    where $\mu_{J,\emptyset}$ is defined in Proposition \ref{General Prop vector simple}.
\end{proposition}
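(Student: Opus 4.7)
The approach adapts the strategy of Proposition \ref{General Prop vector}, with the added subtlety that when $J_{\rhobar}=\emptyset$ and $\emptyset\neq J\subseteq\cJ$, the pair $(J,\cJ)$ fails the second condition of \eqref{General Eq good pair}. Consequently the LHS of the claimed identity is not a scalar multiple of $v_{\cJ}$ alone but a linear combination of $v_{\cJ}$ and the extra vector $x_{\emptyset,\un{r}}$, which has the same $H$-eigencharacter as $v_{\cJ}$ but lies in a different $\GL_2(\OK)$-subrepresentation of $\pi$.

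First I verify that the LHS, $v_{\cJ}$, and $x_{\emptyset,\un{r}}$ share the common $H$-eigencharacter $\chi_{\cJ}=d^{\un{r}}$: using $\alpha^{\un{p}-\un{1}}=1$ as a character of $H$ together with \eqref{General Eq sJ} and \eqref{General Eq tJ}, the $a$-exponents of the LHS sum to a multiple of $q-1$ while the $d$-exponents sum to $\un{r}$ after telescoping. Next, the coefficient of $x_{\emptyset,\un{r}}$ is forced by applying $\un{Y}^{\un{r}}$: the LHS becomes $\mu_{J,\emptyset}v_{\emptyset}$ by Proposition \ref{General Prop vector simple} applied to $J$ (where $(J-1)^{\ss}=\emptyset$ implies $s^{\emptyset}_j=r_j$); the $v_{\cJ}$-term contributes $0$ since $v_{\cJ}\in\pi^{I_1}$ and $Y_j\in\fm_{I_1}$; and $\un{Y}^{\un{r}}x_{\emptyset,\un{r}}=v_{\emptyset}$ by construction. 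Setting
\begin{equation*}
B\eqdef\bbbra{\prod\limits_{j+1\notin J}Y_j^{p-1-r_j}}\smat{p&0\\0&1}v_J-\mu_{J,\emptyset}x_{\emptyset,\un{r}},
\end{equation*}
we obtain an $H$-eigenvector with eigencharacter $\chi_{\cJ}$ satisfying $\un{Y}^{\un{r}}B=0$.

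It remains to show that $B$ is $I_1$-invariant. By Lemma \ref{General Lem Diamond diagram}(ii), $v_{\cJ}$ is the unique (up to scalar) $I_1$-invariant in $D_0(\rhobar)$ with eigencharacter $\chi_{\cJ}$, so this will force $B=\mu_{J,\cJ}v_{\cJ}$ for a unique $\mu_{J,\cJ}\in\FF$. Following the pattern of Claims 2 and 3 in the proof of Proposition \ref{General Prop vector}, the $K_1$-invariance of $B$ is obtained by applying Proposition \ref{General Prop Isigma0m} to both summands and checking that they land in a common $I(\sigma_{\emptyset},\sigma_{\un{m}})\subseteq D_{0,\sigma_{\emptyset}}(\rhobar)$ for an appropriate $\un{m}$. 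The main obstacle is proving $Y_{j_0}B=0$ for every $j_0\in\cJ$: this requires a case analysis on whether $j_0+1\in J$, using Proposition \ref{General Prop relation 1} on the first summand combined with Proposition \ref{General Prop relation 2} to exhibit the cancelling contribution from $x_{\emptyset,\un{r}}$. Finally, the non-vanishing $\mu_{J,\cJ}\in\FF\x$ follows from the nonzero $v_{\cJ}$-component of $x_{\emptyset,\un{r}}$ inside the $\chi_{\cJ}$-eigenspace of $D_0(\rhobar)$, which can be verified by an explicit projection to a suitable quotient of $\pi$.
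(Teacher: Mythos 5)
Your opening moves are sound and match the paper: the eigencharacter check, and the determination of the $x_{\emptyset,\un{r}}$-coefficient by applying $\un{Y}^{\un{r}}$ and invoking Proposition \ref{General Prop vector simple} (together with $\un{Y}^{\un{r}}v_{\cJ}=0$) are exactly how the paper pins down $\mu'_{J,\emptyset}=\mu_{J,\emptyset}$. The problem is the central step. You reduce everything to showing that $B=\mathrm{LHS}-\mu_{J,\emptyset}x_{\emptyset,\un{r}}$ is $I_1$-invariant, i.e.\ that $Y_{j_0}(\mathrm{LHS})=\mu_{J,\emptyset}\,x_{\emptyset,\un{r}-e_{j_0}}$ for every $j_0$, and you assert this follows from a case analysis using Propositions \ref{General Prop relation 1} and \ref{General Prop relation 2}. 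This is a genuine gap. Proposition \ref{General Prop relation 1} only produces vanishing statements, and with $\un{i}=\un{0}$, $J'=J-1$ its hypothesis ($j_0\notin J'$ and $j_0+1\in J\Delta(J-1)^{\ss}=J$) is self-contradictory, so it cannot kill $Y_{j_0}(\mathrm{LHS})$ — consistently with the fact that this element is \emph{not} zero. Proposition \ref{General Prop relation 2} only compares $v_J$ with $v_{J\setminus\{j_0+2\}}$ under specific hypotheses on the exponents; to produce the $x_{\emptyset,\un{r}}$-term you would have to iterate it all the way down to $v_{\emptyset}$ while tracking the constants $\mu_{J,\emptyset}/\mu_{\emptyset,\emptyset}$, which is essentially the machinery of Theorem \ref{General Thm seq} — and that machinery uses the present proposition as an input (Lemma \ref{General Lem z}, case $J^{\nss}=\cJ$), so you would have to be careful to avoid circularity. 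The final nonvanishing $\mu_{J,\cJ}\in\FF\x$ is likewise only asserted.

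The paper's route avoids all of this. By Lemma \ref{General Lem p001}(i),(ii), $\bang{\GL_2(\OK)\smat{p&0\\0&1}v_J}\cong Q(\chi_J^s,J-1)$ with socle $\sigma_{\emptyset}$, and Lemma \ref{General Lem BP12}(iii)(b) applied to $\lambda_J$ shows that the LHS is a \emph{nonzero} element of the $I$-cosocle line of that copy of $\sigma_{\emptyset}\subseteq D_0(\rhobar)$. Separately, Lemma \ref{General Lem BP12}(iii)(a) applied to $\lambda_{\emptyset}$, via the identification $\Ind_I^{\GL_2(\OK)}(\chi_{\emptyset}^s)\cong\bang{\GL_2(\OK)\smat{p&0\\0&1}v_{\emptyset}}$ (under which $\phi$ goes to a multiple of $v_{\cJ}$, using $\chi_{\cJ}=\chi_{\emptyset}^s$), shows that this one-dimensional line is spanned by a combination of $v_{\cJ}$ and $x_{\emptyset,\un{r}}$ with \emph{both} coefficients nonzero. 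The existence of the expansion and the nonvanishing of $\mu_{J,\cJ}$ thus come for free, and only then is $\un{Y}^{\un{r}}$ applied to identify the second coefficient. If you want to salvage your approach, replace the $I_1$-invariance argument by the observation that Proposition \ref{General Prop Isigma0m} applied to $(\un{i},J,J')=(\un{0},J,J-1)$ already places the LHS in $I(\sigma_{\emptyset},\sigma_{\un{0}})=\sigma_{\emptyset}$; but you still need Lemma \ref{General Lem BP12}(iii)(a) to describe the relevant eigenline of $\sigma_{\emptyset}$ in terms of $v_{\cJ}$ and $x_{\emptyset,\un{r}}$, at which point you have reproduced the paper's proof.
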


\begin{proof}
    By Lemma \ref{General Lem p001}(ii) and its proof, the isomorphism $\Ind_I^{\GL_2(\OK)}(\chi_{\emptyset}^s)\cong\bang{\GL_2(\OK)\smat{p&0\\0&1}v_{\emptyset}}$ identifies the element $\phi\in\Ind_I^{\GL_2(\OK)}(\chi_{\emptyset}^s)$ (see \S\ref{General Sec ps}) with $\smat{0&1\\p&0}v_{\emptyset}$, which is a scalar multiple of $v_{\cJ}$ since $\chi_{\cJ}=\chi_{\emptyset}^s$ when $J_{\rhobar}=\emptyset$. Hence by Lemma \ref{General Lem BP12}(iii)(a) applied to $\lambda=\lambda_{\emptyset}$, any nonzero element in the $I$-cosocle of $\sigma_{\emptyset}$ is a linear combination of $v_{\cJ}$ and $x_{\emptyset,\un{r}}$ with nonzero coefficients.

    By Lemma \ref{General Lem p001}(i),(ii) and its proof, the representation $\bang{\GL_2(\OK)\smat{p&0\\0&1}v_J}\cong Q\bigbra{\chi_J^s,J-1}$ has socle $\sigma_{\emptyset}$, and identifies $\smat{0&1\\p&0}v_J$ with the image of $\phi\in\Ind_I^{\GL_2(\OK)}(\chi_{J}^s)$ in $Q\bigbra{\chi_J^s,J-1}$. Since $J\neq\emptyset$, we deduce from Lemma \ref{General Lem BP12}(iii)(b) applied to $\lambda=\lambda_J$ that the element $B\eqdef\bigbbbra{\prod\nolimits_{j+1\notin J}Y_j^{p-1-r_j}}\smat{p&0\\0&1}v_J$ is nonzero and lies in the $I$-cosocle of $\sigma_{\emptyset}$, hence $B=\mu_{J,\cJ}v_{\cJ}+\mu'_{J,\emptyset}x_{\emptyset,\un{r}}$ for some $\mu_{J,\cJ},\mu'_{J,\emptyset}\in\FF\x$ by the previous paragraph. Finally, by applying $\un{Y}^{\un{r}}$ to $B$ and using $\un{Y}^{\un{r}}v_{\cJ}=0$ since $v_{\cJ}$ is $I_1$-invariant, we deduce from Proposition \ref{General Prop vector simple} (with $J_{\rhobar}=J=\emptyset$) that $\mu'_{J,\emptyset}=\mu_{J,\emptyset}$.
\end{proof}

\begin{lemma}
    Let $J_1,J_2,J_3,J_4\subseteq\cJ$ such that the pairs $(J_1,J_3),(J_1,J_4),(J_2,J_3),(J_2,J_4)$ satisfy the assumptions of either Proposition \ref{General Prop vector} or Proposition \ref{General Prop vector complement} (here we say that $(J,J')$ satisfies the assumption of Proposition \ref{General Prop vector complement} if $J_{\rhobar}=\emptyset$, $J\neq\emptyset$ and $J'=\cJ$). Then we have
    \begin{equation}\label{General Eq ratio}
        \frac{\mu_{J_1,J_3}}{\mu_{J_1,J_4}}=\frac{\mu_{J_2,J_3}}{\mu_{J_2,J_4}},
    \end{equation}
    where each term of (\ref{General Eq ratio}) is defined in either Proposition \ref{General Prop vector} or Proposition \ref{General Prop vector complement}.
\end{lemma}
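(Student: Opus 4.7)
The plan is to show that for any $J$ with $(J,J')$ and $(J,J'')$ both valid pairs, the ratio $\mu_{J,J'}/\mu_{J,J''}$ depends only on $J'$ and $J''$. The lemma then follows by applying this separately with $J=J_1$ and with $J=J_2$.

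To achieve this, I would first make $\mu_{J,J'}$ explicit. By Claim~1 of the proof of Proposition~\ref{General Prop vector}, the element $B_{J,J'}$ on the LHS of~\eqref{General Eq Prop vector statement} is identified, under the quotient map $\Ind_I^{\GL_2(\OK)}(\chi_J^s\alpha^{\un{e}^{(J\cap J')^{\nss}}})\onto Q\bigbra{\chi_J^s\alpha^{\un{e}^{(J\cap J')^{\nss}}},J'''}$, with the image of an explicit basis element in the $I_1$-invariant line of the socle $\sigma_{J'}$ (the maximal exponents being determined by Lemma~\ref{General Lem BP12}(iii)(b) and the identification $\smat{0&1\\1&0}\phi\leftrightarrow\smat{p&0\\0&1}\bbra{\un{Y}^{-\un{e}^{(J\cap J')^{\nss}}}v_J}$). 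On the other hand, $v_{J'}$ is the unique (up to our fixed scalar) $I_1$-eigenvector of character $\chi_{J'}$ in $\pi^{K_1}$; pulling it back through Frobenius reciprocity, its expression in the basis of Lemma~\ref{General Lem BP12}(ii) involves the explicit constant $x$ from Lemma~\ref{General Lem BP12}(iii)(a) for the appropriate weight. Comparing these two descriptions on the line $\FF v_{J'}$ yields an explicit formula for $\mu_{J,J'}$.

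The key claim to verify is that this formula factorizes as $\mu_{J,J'}=f(J)g(J')$, so that $\mu_{J,J'}/\mu_{J,J''}=g(J')/g(J'')$ is $J$-independent. I would check this using Lemma~\ref{General Lem p-2-s and s} to simplify the exponents $s^{J'}_j$ against the factorial denominators $(\lambda_{1,j}-\lambda_{2,j})!$, and Lemma~\ref{General Lem change origin} to convert between the relevant Serre weight parameterizations. The constraints $(J_i-1)^{\ss}=J_k^{\ss}$ (for $i\in\{1,2\}$ and $k\in\{3,4\}$) imposed by~\eqref{General Eq good pair} are crucial here, as they force $J_3^{\ss}=J_4^{\ss}$ and align the weights so that the $J$-dependent contributions to $\mu_{J,J'}$ and $\mu_{J,J''}$ coincide. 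The complementary case involving Proposition~\ref{General Prop vector complement} is handled analogously, by projecting the defining relation onto the $v_{\cJ}$ and $x_{\emptyset,\un{r}}$ components and using Proposition~\ref{General Prop vector simple} for the second coefficient.

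The main obstacle will be the combinatorial bookkeeping. The shift $\un{e}^{(J\cap J')^{\nss}}$ couples $J$ with $J'$ inside the weight $\lambda_J-\alpha^{\un{e}^{(J\cap J')^{\nss}}}$, and this weight enters both the sign $(-1)^{\|\un{\lambda}_1\|+f-1}$ and the factorial denominators of Lemma~\ref{General Lem BP12}(iii)(a). One must carefully untangle how the semisimple and non-semisimple parts of $J$, $J'$, $J-1$, $J'-1$ contribute to these terms. If a clean factorization $\mu_{J,J'}=f(J)g(J')$ is elusive to verify directly, a fallback is to compose the relation for $(J,J')$ with Proposition~\ref{General Prop vector simple} applied to $(J,(J-1)^{\ss})$ and the analogous relation from $((J-1)^{\ss},J')$ (which is always a valid pair), reducing the general comparison to a chain through the semisimple reference pair where the explicit formula of Proposition~\ref{General Prop vector simple} applies.
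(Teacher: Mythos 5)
Your proposal does not go through as written; the central mechanism of the paper's proof is missing. The constant $\mu_{J,J'}$ is \emph{not} computable from the combinatorics of $\Ind_I^{\GL_2(\OK)}\bigbra{\chi_J^s\alpha^{\un{e}^{(J\cap J')^{\nss}}}}$ alone. What Lemma \ref{General Lem BP12}(iii) gives is that the element $B$ on the LHS of (\ref{General Eq Prop vector statement}) maps to a specific nonzero $I_1$-invariant of the copy of $\sigma_{J'}$ sitting inside $\bang{\GL_2(\OK)\smat{p&0\\0&1}\un{Y}^{-\un{e}^{(J\cap J')^{\nss}}}v_J}\subseteq\pi$; but the scalar relating that $I_1$-invariant to the globally fixed vector $v_{J'}\in D_0(\rhobar)$ is exactly the unknown $\mu_{J,J'}$: it records how the induced representation attached to $v_J$ is glued into $D_0(\rhobar)$, and Frobenius reciprocity pins this gluing down only up to scalar. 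So ``comparing the two descriptions on the line $\FF v_{J'}$'' yields a tautology, not a formula, and the proposed factorization $\mu_{J,J'}=f(J)g(J')$ has nothing to be checked against. (Indeed the lemma is \emph{equivalent} to such a factorization on each $\ss$-class, so any direct verification must secretly use new input.)

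The input actually needed is a relation, inside $\pi$, between the subrepresentations generated by $\smat{p&0\\0&1}v_{J_1}$ and $\smat{p&0\\0&1}v_{J_2}$ for the two different first arguments; this is Proposition \ref{General Prop relation 2}, which your proposal never invokes. The paper first reduces to $J_4=J_4^{\ss}$ and, via a chain argument on (\ref{General Eq good pair}), to the case $(J_1-1)=(J_2-1)\sqcup\set{j_0+1}$; it then applies Proposition \ref{General Prop relation 2} to rewrite $\smat{p&0\\0&1}\bra{\un{Y}^{-\un{e}^{(J_1\cap J_3)^{\nss}}}v_{J_1}}$ as $\frac{\mu_{J_1,J_4}}{\mu_{J_2,J_4}}Y_{j_0+1}^{p-1-r_{j_0+1}}\smat{p&0\\0&1}\bra{\un{Y}^{-\un{i}'}v_{J_2}}$ and feeds this into the defining relation (\ref{General Eq Prop vector statement}) for $(J_2,J_3)$, giving $\mu_{J_1,J_3}=(\mu_{J_1,J_4}/\mu_{J_2,J_4})\mu_{J_2,J_3}$ directly. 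Your fallback also fails at a concrete point: $((J-1)^{\ss},J')$ is generally \emph{not} a valid pair for Proposition \ref{General Prop vector}, since $\bigbra{(J-1)^{\ss}-1}^{\ss}\neq(J')^{\ss}=(J-1)^{\ss}$ in general (already when $\rhobar$ is semisimple the valid pairs are exactly $(J,J-1)$, so $(J-1,J-1)$ is valid only for shift-invariant $J$); and composing two such relations would require applying $\smat{p&0\\0&1}$ twice, which leaves the framework of these propositions. The one part of your argument that matches the paper is the treatment of the $J_{\rhobar}=\emptyset$, $J_4=\cJ$ case via Proposition \ref{General Prop vector complement} and the one-dimensionality of the $I$-cosocle of $\sigma_{\emptyset}$.
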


\begin{proof}
    First we suppose that $J_{\rhobar}=\emptyset$ and $J_4=\cJ$. If $J_3=\cJ$, then (\ref{General Eq ratio}) is clear. If $J_3\neq\cJ$, then by the proof of Proposition \ref{General Prop vector complement} and using that the $I$-cosocle of $\sigma_{\emptyset}$ has dimension $1$ over $\FF$, the ratio $\mu_{J,\cJ}/\mu_{J,\emptyset}$ does not depend on $J$, hence we can replace $J_4=\cJ$ by $J_4=\emptyset$.

    From now on, we assume that all the pairs $(J_1,J_3),(J_1,J_4),(J_2,J_3),(J_2,J_4)$ satisfy the assumptions of Proposition \ref{General Prop vector}. In particular, we have $(J_1-1)^{\ss}=(J_2-1)^{\ss}=J_3^{\ss}=J_4^{\ss}$. Using that $\mu_{J_i,J_3}/\mu_{J_i,J_4}=\bra{\mu_{J_1,J_3}/\mu_{J_i,J_4^{\ss}}}/\bra{\mu_{J_i,J_4}/\mu_{J_i,J_4^{\ss}}}$
    for $i=1,2$ with each term defined in Proposition \ref{General Prop vector}, we may assume that $J_4=J_4^{\ss}\subseteq J_{\rhobar}$.

    Then using Remark \ref{General Rk Delta and partial}, the assumption (\ref{General Eq good pair}) for the pairs $(J_1,J_3),(J_1,J_4),(J_2,J_3),(J_2,J_4)$ is equivalent to 
    \begin{equation}\label{General Eq ratio region J1J2}
        (\partial J_3)^{\nss}\sqcup J_4\subseteq J_i-1\subseteq\bigbra{(J_3\setminus\partial J_3)^c}^{\nss}\sqcup J_4
    \end{equation}
    for $i=1,2$. By choosing a sequence $J^0,J^1,\ldots,J^r\subseteq\cJ$ for some $r\geq0$ such that $J^0=J_1$, $J^r=J_2$, $|J^i\Delta J^{i-1}|=1$ for $1\leq i\leq r$ and
    \begin{equation*}
        (\partial J_3)^{\nss}\sqcup J_4\subseteq J^i-1\subseteq\bigbra{(J_3\setminus\partial J_3)^c}^{\nss}\sqcup J_4
    \end{equation*}
    for $0\leq i\leq r$, it suffices to prove the proposition with $J_1,J_2$ as in (\ref{General Eq ratio region J1J2}) such that $(J_1-1)=(J_2-1)\sqcup\set{j_0+1}$ for some $j_0\in\cJ$. In particular, we have $j_0+1\in(J_1-1)^{\nss}$ (since $(J_1-1)^{\ss}=(J_2-1)^{\ss}=J_4$) and $j_0+1\notin J_3$. 

    \hspace{\fill}

    (i). Suppose that $j_0\in J_1-1$. By Proposition \ref{General Prop vector} applied to $(J_1,J_3)$, we have
    \begin{equation}\label{General Eq ratio J1}
        \bbbra{\scalebox{1}{$\prod$}_{j+1\in J_1\Delta J_3}Y_j^{s^{J_3}_j}\scalebox{1}{$\prod$}_{j+1\notin J_1\Delta J_3}Y_j^{p-1}}\smat{p&0\\0&1}\bbra{\un{Y}^{-\un{e}^{(J_1\cap J_3)^{\nss}}}v_{J_1}}=\mu_{J_1,J_3}v_{J_3}.
    \end{equation}
    Since $j_0+1\in J_1$ and $j_0+1\notin(J_1\cap J_3)^{\nss}$ (since $j_0+1\notin J_3$), by Proposition \ref{General Prop relation 2} applied to $(\un{i},J,J')=\bigbra{\un{e}^{(J_1\cap J_3)^{\nss}},J_1,\cJ}$ with $j_0$ as above together with (\ref{General Eq relation 2 tJ}) (and note that $(J_1-1)^{\ss}=J_4$), we have
    \begin{equation}\label{General Eq ratio J1-J2}
        \smat{p&0\\0&1}\bbra{\un{Y}^{-\un{e}^{(J_1\cap J_3)^{\nss}}}v_{J_1}}=\frac{\mu_{J_1,J_4}}{\mu_{J_2,J_4}}Y_{j_0+1}^{p-1-r_{j_0+1}}\smat{p&0\\0&1}\bbra{\un{Y}^{-\un{i}'}v_{J_2}}
    \end{equation}
    with
    \begin{multline}\label{General Eq ratio i'}
        \un{i}'\eqdef\un{e}^{(J_1\cap J_3)^{\nss}}+\delta_{j_0+2\in J_4}e_{j_0+2}\\
        =\bigbra{\un{e}^{(J_2\cap J_3)^{\nss}}+\delta_{j_0+2\in J_3^{\nss}}e_{j_0+2}}+\delta_{j_0+2\in J_3^{\ss}}e_{j_0+2}=\un{e}^{(J_2\cap J_3)^{\nss}}+\delta_{j_0+2\in J_3}e_{j_0+2},
    \end{multline}
    where the second equality uses $J_1\setminus J_2=\set{j_0+2}$  and $J_3^{\ss}=J_4$. We assume that $j_0+2\in J_3$, the case $j_0+2\notin J_3$ being similar. Since $j_0+1\notin J_3$, we have $s^{J_3}_{j_0+1}=p-2-r_{j_0+1}$ by (\ref{General Eq sJ}). Combining (\ref{General Eq ratio J1}) and (\ref{General Eq ratio J1-J2}), we have
    \begin{align*}
        \mu_{J_1,J_3}v_{J_3}&=\frac{\mu_{J_1,J_4}}{\mu_{J_2,J_4}}\bbbra{\scalebox{1}{$\prod$}_{j+1\in J_1\Delta J_3}Y_j^{s^{J_3}_j}\scalebox{1}{$\prod$}_{j+1\notin J_1\Delta J_3}Y_j^{p-1}}Y_{j_0+1}^{p-1-r_{j_0+1}}\smat{p&0\\0&1}\bbra{\un{Y}^{-\un{i}'}v_{J_2}}\\
        &=\frac{\mu_{J_1,J_4}}{\mu_{J_2,J_4}}\bbbra{\scalebox{1}{$\prod$}_{j+1\in J_2\Delta J_3}Y_j^{s^{J_3}_j}\scalebox{1}{$\prod$}_{j+1\notin J_2\Delta J_3}Y_j^{p-1}}Y_{j_0+1}^{r_{j_0+1}}Y_{j_0+1}^{p-1-r_{j_0+1}}\smat{p&0\\0&1}\bbra{\un{Y}^{-\un{i}'}v_{J_2}}\\
        &=\frac{\mu_{J_1,J_4}}{\mu_{J_2,J_4}}\bbbra{\scalebox{1}{$\prod$}_{j+1\in J_2\Delta J_3}Y_j^{s^{J_3}_j}\scalebox{1}{$\prod$}_{j+1\notin J_2\Delta J_3}Y_j^{p-1}}\smat{p&0\\0&1}\bbra{\un{Y}^{-\un{e}^{(J_2\cap J_3)^{\nss}}}v_{J_2}}\\
        &=\frac{\mu_{J_1,J_4}}{\mu_{J_2,J_4}}\mu_{J_2,J_3}v_{J_3},
    \end{align*}
    where the second equality uses $j_0+2\in J_1$, $j_0+2\notin J_2$, $j_0+2\in J_3$ (hence $j_0+2\notin J_1\Delta J_3$ and $j_0+2\in J_2\Delta J_3$) and $s^{J_3}_{j_0+1}=p-2-r_{j_0+1}$, the third equality follows from Lemma \ref{General Lem Yj}(i) applied to $j=j_0+1$ and (\ref{General Eq ratio i'}) using $j_0+2\in J_3$, and the last equality follows from Proposition \ref{General Prop vector} applied to $(J_2,J_3)$. Therefore, we have $\mu_{J_1,J_3}=(\mu_{J_1,J_4}/\mu_{J_2,J_4})\mu_{J_2,J_3}$, which completes the proof.
    
    (ii). Suppose that $j_0\notin J_1-1$ (which implies $f\geq2$). Similar to (i), by Proposition \ref{General Prop relation 2} applied to $(\un{i},J,J')=\bigbra{\un{e}^{(J_1\cap J_3)^{\nss}},J_1,\cJ\setminus\set{j_0}}$ with $j_0$ as above together with (\ref{General Eq relation 2 tJ}), we have
    \begin{multline*}
        \bbbra{Y_{j_0+1}Y_{j_0}^{2\delta_{j_0\in(J_1\cap J_3)^{\nss}}+p-s^{J_1}_{j_0}}}\smat{p&0\\0&1}\bbra{\un{Y}^{-\un{i}}v_J}\\
        =\frac{\mu_{J_1,J_4}}{\mu_{J_2,J_4}}\bbbra{Y_{j_0+1}Y_{j_0}^{2\delta_{j_0\in(J_1\cap J_3)^{\nss}}+p-s^{J_1}_{j_0}}}Y_{j_0+1}^{p-1-r_{j_0+1}}\smat{p&0\\0&1}\bbra{\un{Y}^{-\un{i}'}v_{J\setminus\set{j_0+2}}},
    \end{multline*}
    where $\un{i}'=\un{e}^{(J_2\cap J_3)^{\nss}}+\delta_{j_0+2\in J_3}e_{j_0+2}$. We claim that $\bigbbbra{\prod\nolimits_{j+1\in J_1\Delta J_3}Y_j^{s^{J_3}_j}\prod\nolimits_{j+1\notin J_1\Delta J_3}Y_j^{p-1}}$ is a multiple of $Y_{j_0+1}Y_{j_0}^{2\delta_{j_0\in(J_1\cap J_3)^{\nss}}+p-s^{J_1}_{j_0}}$. Indeed, since $s^{J_3}_{j_0+1}\geq1$ and $2\delta_{j_0\in(J_1\cap J_3)^{\nss}}+p-s^{J_1}_{j_0}\leq p-1$ by (\ref{General Eq bound s}), the claim follows from the fact that $j_0+1\notin J_1$, $j_0+1\notin J_3$ (hence $j_0+1\notin J_1\Delta J_3$) and $j_0+1\neq j_0$. Once we have the claim, we can argue exactly as in (i) to conclude the proof. 
\end{proof}

To end this section, we extend the definition of $\mu_{J,J'}$ to all $J,J'\subseteq\cJ$ such that $(J-1)^{\ss}=(J')^{\ss}$ as follows:
\begin{equation}\label{General Eq general mu}
    \begin{cases}
        \mu_{J,J'}\eqdef\dfrac{\mu_{((J')^{\ss}\sqcup(\partial J')^{\nss})+1,J'}\mu_{J,(J')^{\ss}}}{\mu_{((J')^{\ss}\sqcup(\partial J')^{\nss})+1,(J')^{\ss}}}&\text{if}~(J_{\rhobar},J')\neq(\emptyset,\cJ)\\
        \mu_{\emptyset,\cJ}\eqdef\dfrac{\mu_{\emptyset,\emptyset}\mu_{\cJ,\cJ}}{\mu_{J,\emptyset}}&\text{if}~J_{\rhobar}=\emptyset
    \end{cases}
\end{equation}
(and $\mu_{J,\cJ}$ as in Proposition \ref{General Prop vector complement} if $J_{\rhobar}=\emptyset$ and $J\neq\emptyset$), where each term on the RHS of (\ref{General Eq general mu}) are defined in either Proposition \ref{General Prop vector} or Proposition \ref{General Prop vector complement}. Then the equation (\ref{General Eq ratio}) holds for arbitrary $J_1,J_2,J_3,J_4\subseteq\cJ$ such that $(J_1-1)^{\ss}=(J_2-1)^{\ss}=J_3^{\ss}=J_4^{\ss}$. In particular, for $J,J'$ such that $(J-1)^{\ss}=(J'-1)^{\ss}$, the quantity $\mu_{J,J''}/\mu_{J',J''}$ does not depend on $J''$ for $(J'')^{\ss}=(J-1)^{\ss}$ and we denote it by $\mu_{J,*}/\mu_{J',*}$. Similarly, for $J,J'$ such that $J^{\ss}=(J')^{\ss}$, the quantity $\mu_{J'',J}/\mu_{J'',J'}$ does not depend on $J''$ for $(J''-1)^{\ss}=J^{\ss}$ and we denote it by $\mu_{*,J}/\mu_{*,J'}$.

\section{Projective systems in \texorpdfstring{$\pi$}.}\label{General Sec xJi}

In this section, we define certain projective systems $x_{J,\un{i}}$ of elements of $\pi$ indexed by $J\subseteq\cJ$ and $\un{i}\in\ZZ^f$, see Theorem \ref{General Thm seq}. They will give rise to a basis of the $A$-module $D_A(\pi)$. The definition of these elements is much more involved than in the semisimple case (compared with \cite[(104)]{BHHMS3}).

\begin{definition}\label{General Def rcepi}
    Let $J\subseteq\cJ$. 
    \begin{enumerate}
    \item 
    We define $\un{r}^J\in\ZZ^f$ by 
    \begin{equation}\label{General Eq rJ}
        r^J_j\eqdef
    \begin{cases}
        0&\text{if}~j\notin J,\ j+1\notin J\\
        -1&\text{if}~j\in J,\ j+1\notin J\\
        r_j+1&\text{if}~j\notin J,\ j+1\in J\\
        r_j&\text{if}~j\in J,\ j+1\in J.
    \end{cases}
    \end{equation}
    \item 
    We define $\un{c}^{J}\in\ZZ^f$ by
    \begin{equation}\label{General Eq cJ}
        c^J_j\eqdef
    \begin{cases}
        p-1&\text{if}~j\notin J,\ j+1\notin J\\
        r_j+1&\text{if}~j\in J,\ j+1\notin J\\
        p-2-r_j&\text{if}~j\notin J,\ j+1\in J\\
        0&\text{if}~j\in J,\ j+1\in J.
    \end{cases}
    \end{equation}
    \item 
    We define $\varepsilon_J\in\set{\pm1}$ by (see Remark \ref{General Rk Delta and partial} for $\partial J$)
    \begin{equation}\label{General Eq epsilonJ}
        \varepsilon_J\eqdef
    \begin{cases}
        (-1)^{f-1}&\text{if}~J_{\rhobar}=\emptyset,~J=\cJ\\
        (-1)^{|(J\setminus\partial J)^{\nss}|}&\text{otherwise}.
    \end{cases}
    \end{equation}
    \end{enumerate}    
\end{definition}

\begin{remark}
\begin{enumerate}
    \item
    By definition, for all $J\subseteq\cJ$ and $j\in\cJ$ we have
    \begin{align}
        r^{J}_j&=\delta_{j+1\in J}(r_j+1)-\delta_{j\in J};\label{General Eq Lem rJ}\\
        c^J_j&=\delta_{j\notin J}(p-2-r_j)+\delta_{j+1\notin J}(r_j+1).\label{General Eq Lem cJ}
    \end{align}
    \item 
    The definition of $\un{c}^J$ is a variant of \cite[(95)]{BHHMS3} (where $\rhobar$ was assumed to be semisimple). Also, by (\ref{General Eq genericity}) we have $\un{0}\leq\un{c}^J\leq\un{p}-\un{1}$.
\end{enumerate}
\end{remark}

\begin{theorem}\label{General Thm seq}
    There exists a unique family of elements $\sset{x_{J,\un{i}}:J\subseteq\cJ,\un{i}\in\ZZ^f}$ of $\pi$ satisfying the following properties:
    \begin{enumerate}
    \item
    For each $J\subseteq\cJ$, we have $x_{J,\un{f}}=\un{Y}^{-(\un{f}-\un{e}^{J^{\sh}})}v_J$ (defined in Proposition \ref{General Prop shift}).
    \item 
    For each $J\subseteq\cJ$, $\un{i}\in\ZZ^f$ and $\un{k}\in\NNN^f$, we have $\un{Y}^{\un{k}}x_{J,\un{i}}=x_{J,\un{i}-\un{k}}$.
    \item 
    For each $J\subseteq\cJ$ and $\un{i}\in\ZZ^f$, we have (see Proposition \ref{General Prop vector} and (\ref{General Eq general mu}) for $\mu_{J+1,J'}$)
    \begin{equation*}
        \smat{p&0\\0&1}x_{J+1,\un{i}}=\sum\limits_{J^{\ss}\subseteq J'\subseteq J}\varepsilon_{J'}\mu_{J+1,J'}x_{J',p\delta(\un{i})+\un{c}^J+\un{r}^{J\setminus J'}}.
    \end{equation*}
    \end{enumerate}
\end{theorem}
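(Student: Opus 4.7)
The plan is to establish uniqueness first, then existence, working throughout by a double induction on $(|J^{\nss}|,\max_j i_j)$ and exploiting the relations assembled in \S\ref{General Sec relation}.

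For uniqueness, I observe that conditions (i) and (ii) force $x_{J,\un{i}}=\un{Y}^{\un{f}-\un{i}}\un{Y}^{-(\un{f}-\un{e}^{J^{\sh}})}v_J$ for $\un{i}\leq\un{f}$, where the right-hand side is well-defined by Proposition \ref{General Prop shift}. For indices that exceed this range, I isolate the $J'=J$ term of (iii) to obtain
\[\varepsilon_J\mu_{J+1,J}\,x_{J,\,p\delta(\un{i})+\un{c}^J}=\smat{p&0\\0&1}x_{J+1,\un{i}}-\sum_{J^{\ss}\subseteq J'\subsetneq J}\varepsilon_{J'}\mu_{J+1,J'}x_{J',\,p\delta(\un{i})+\un{c}^J+\un{r}^{J\setminus J'}}.\]
Running the outer induction on $|J^{\nss}|$ (whose base case $J\subseteq J_{\rhobar}$ reduces to a single term on the right) and the inner induction on $\max_j i_j$, while using (ii) to bridge to indices not of the form $p\delta(\un{i})+\un{c}^J$, shows that $x_{J,\un{i}}$ is determined by data at strictly smaller stages; hence the family is unique.

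For existence, I would \emph{define} $x_{J,\un{i}}\eqdef\un{Y}^{\un{f}-\un{i}}\un{Y}^{-(\un{f}-\un{e}^{J^{\sh}})}v_J$ on the initial range $\un{i}\leq\un{f}$, where (i) and (ii) hold by construction, and then extend by the same induction: at each stage, I adopt the displayed identity above as the definition of $x_{J,\,p\delta(\un{i})+\un{c}^J}$, with the right-hand side already built from elements of $\pi$ produced at earlier stages (the $J'\subsetneq J$ terms by the outer induction, and $x_{J+1,\un{i}}$ by the inner induction), and propagate to neighboring indices via (ii). It then remains to verify that the \emph{full} relation (iii)---not merely its $J'=J$ component---and the \emph{full} force of (ii) continue to hold for the extended family.

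The hard part will be this consistency verification: the same element $x_{J,\un{i}}$ may be reached via several distinct inductive routes---for instance, via different pairs $(\un{i}_0,J_1)$ with $p\delta(\un{i}_0)+\un{c}^{J_1-1}+\un{r}^{(J_1-1)\setminus J}=\un{i}$, or by interchanging the order in which (ii) and (iii) are applied---and all the resulting identities in $\pi$ must agree. After multiplying by suitable monomials in $\un{Y}$ and commuting past $\smat{p&0\\0&1}$ via Lemma \ref{General Lem Yj}(i), these identities reduce to the relations recorded in Propositions \ref{General Prop vector simple}, \ref{General Prop vector} and \ref{General Prop vector complement}, together with the vanishing statements of Propositions \ref{General Prop relation 1} and \ref{General Prop relation 2}. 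The combinatorial data in Definition \ref{General Def rcepi} and the normalization of the scalars $\mu_{J,J'}$ in (\ref{General Eq general mu}) are engineered precisely so that these reductions work out, with the signs $\varepsilon_{J'}$ accounting for the $|(J\setminus\partial J)^{\nss}|$ parity shifts that arise when commuting several powers of $\smat{p&0\\0&1}$ through products of $Y_j$'s. The detailed case analysis will almost certainly occupy the remainder of \S\ref{General Sec xJi} and the supporting Appendix \ref{General Sec app vanish} advertised in Step 1 of the introduction.
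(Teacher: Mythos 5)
Your outline coincides with the paper's strategy (define $x_{J,\un{i}}$ for $\un{i}\leq\un{f}$ via Proposition \ref{General Prop shift}, extend by isolating the $J'=J$ term of (iii) as in (\ref{General Eq Seq def}), then check compatibility), but as written it has a structural flaw and omits the theorem's actual content. The flaw: your outer induction parameter $|J^{\nss}|$ does not make the recursion well-founded. The recursive definition of $x_{J,\,p\delta(\un{i})+\un{c}^J-\un{\ell}}$ consumes $x_{J+1,\un{i}}$, which you propose to cover by the \emph{inner} induction on $\max_j i_j$; that only works if $J+1$ sits at the same or an earlier outer stage as $J$. But $J^{\nss}=J\setminus J_{\rhobar}$ is not shift-invariant: for instance $f=2$, $J_{\rhobar}=\{0\}$, $J=\{0\}$ gives $|J^{\nss}|=0$ while $|(J+1)^{\nss}|=1$, so already your ``base case'' $J\subseteq J_{\rhobar}$ requires elements $x_{J+1,\un{i}'}$ living at a strictly later outer stage that have not yet been constructed. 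The correct outer parameter is $|J|$ (shift-invariant, so $x_{J+1,\un{i}'}$ is handled by the inner induction, while the terms $x_{J',\cdot}$ with $J^{\ss}\subseteq J'\subsetneqq J$ have $|J'|<|J|$ and fall under the outer induction).

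More seriously, the consistency verification you defer is the proof. Since the decomposition $\un{i}=p\delta(\un{i}')+\un{c}^J-\un{\ell}$ with $\un{0}\leq\un{\ell}\leq\un{p}-\un{1}$ is unique, the recursive definition is single-valued; the genuine issue is the single compatibility at the interface $\un{i}=\un{f}$, namely the identity $z_J-w_J=\varepsilon_J\mu_{J+1,J}\un{Y}^{-(\un{f}-\un{e}^{J^{\sh}})}v_J$ of (\ref{General Eq zw}), to which everything else reduces by applying powers of $\un{Y}$ (Lemma \ref{General Lem Seq reduction}). The paper establishes this by showing $z_J,w_J\in D_0(\rhobar)$ (Lemma \ref{General Lem lie in D0}), computing the effect of $Y_{j_0}^{f+1-\delta_{j_0\in J^{\sh}}}$ and of $\un{Y}^{\un{f}-\un{e}^{J^{\sh}}}$ on $z_J$ and $w_J$ separately (Lemmas \ref{General Lem z} and \ref{General Lem w}, which in turn rest on Propositions \ref{General Prop relation 1}, \ref{General Prop relation 2}, \ref{General Prop vector}, \ref{General Prop vector complement} and the vanishing results of Appendix \ref{General Sec app vanish}), and then invoking the \emph{uniqueness} clause of Proposition \ref{General Prop shift} to identify $z_J-w_J$ with the required multiple of $\un{Y}^{-(\un{f}-\un{e}^{J^{\sh}})}v_J$. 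Your proposal names plausible ingredients but supplies neither the reduction to a single identity nor the comparison mechanism (the characterization of $\un{Y}^{-\un{i}}v_J$ by the two conditions of Proposition \ref{General Prop shift}); asserting that the constants are ``engineered precisely so that these reductions work out'' is not an argument. As it stands this is a plan rather than a proof.
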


\begin{remark}
    The extra term $\un{e}^{J^{\sh}}$ in Theorem \ref{General Thm seq}(i) has the advantage that the constants $\un{c}^J$ and $\un{r}^J$ in Theorem \ref{General Thm seq}(iii) work for arbitrary $J_{\rhobar}$.
\end{remark}

\begin{remark}
    Let $0\leq k\leq f$. Assume that Theorem \ref{General Thm seq} is true for $|J|\leq k$. Then by Theorem \ref{General Thm seq}(ii),(iii), for all $|J|\leq k$, $\un{i}\in\ZZ^f$ and $\un{\ell}\in\NNN^f$ we have 
    \begin{equation}\label{General Eq Seq p001}
        \un{Y}^{\un{\ell}}\smat{p&0\\0&1}x_{J+1,\un{i}}=\sum\limits_{J^{\ss}\subseteq J'\subseteq J}\varepsilon_{J'}\mu_{J+1,J'}x_{J',p\delta(\un{i})+\un{c}^J+\un{r}^{J\setminus J'}-\un{\ell}}.
    \end{equation}
    Moreover, the LHS and each term of the summation in (\ref{General Eq Seq p001}) are $H$-eigenvectors with common $H$-eigencharacter $\chi_J\alpha^{\un{e}^{J^{\sh}}+\un{\ell}-\un{i}}$, see the proof of Corollary \ref{General Cor xJi}(ii) below.
\end{remark}

\begin{proof}[Proof of Theorem \ref{General Thm seq}]
We define the elements $x_{J,\un{i}}\in\pi$ by increasing induction on $\abs{J}$ and on $\max_j{i_j}$. For each $J\subseteq\cJ$ and $\un{i}\leq\un{f}$, we define (see Proposition \ref{General Prop shift})
\begin{equation}\label{General Eq seq small}
    x_{J,\un{i}}\eqdef
    \begin{cases}
        \un{Y}^{-(\un{i}-\un{e}^{J^{\sh}})}v_J&\text{if}~\un{i}\geq\un{e}^{J^{\sh}}\\
        0&\text{otherwise}.
    \end{cases}
\end{equation}

Then we let $\abs{J}=k$ for $0\leq k\leq f$ and $\max_j{i_j}=m>f$. Assume that $x_{J,\un{i}}$ is defined for $\abs{J}\leq k-1$ and all $\un{i}\in\ZZ^f$, and for $\abs{J}=k$ and $\max_j{i_j}\leq m-1$. We write $\un{i}=p\delta(\un{i}')+\un{c}^{J}-\un{\ell}$ for the unique $\un{i}',\un{\ell}\in\ZZ^f$ such that $\un{0}\leq\un{\ell}\leq\un{p}-\un{1}$. Then we claim that $\max_j{i'_j}<\max_j{i_j}=m$. Indeed, for each $j$ we have 
\begin{equation}\label{General Eq max1<max2}
    i'_{j+1}=\bbra{i_j-c^{J}_j+\ell_j}/p\leq\bigbra{m-0+(p-1)}/p<m/p+1<m,
\end{equation}
where the last inequality uses $m>f\geq1$. Then we define $x_{J,\un{i}}$ by the formula
\begin{equation}\label{General Eq Seq def}
    \varepsilon_J\mu_{J+1,J}x_{J,\un{i}}\eqdef\un{Y}^{\un{\ell}}\smat{p&0\\0&1}x_{J+1,\un{i}'}-\sum\limits_{J^{\ss}\subseteq J'\subsetneqq J}\varepsilon_{J'}\mu_{J+1,J'}x_{J',\un{i}+\un{r}^{J\setminus J'}},
\end{equation}
where each term on the RHS of (\ref{General Eq Seq def}) is defined by the induction hypothesis (hence a priori (\ref{General Eq Seq def}) holds for all $J\subseteq\cJ$ and $\un{i}\in\ZZ^f$ such that $\max_ji_j>f$).

\begin{lemma}\label{General Lem Seq reduction}
    Let $0\leq k\leq f$. Assume that Theorem \ref{General Thm seq} is true for $\abs{J}\leq k-1$. If (\ref{General Eq Seq def}) holds for $\abs{J}=k$ and $\un{i}=\un{f}$, then Theorem \ref{General Thm seq} is true for $\abs{J}=k$.
\end{lemma}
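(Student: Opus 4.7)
The plan is to deduce Theorem \ref{General Thm seq} for $|J|=k$ in three stages, using the hypothesis at the single value $\un{i}=\un{f}$ as a bootstrap. First, property (i) for $|J|=k$ at $\un{i}=\un{f}$ is exactly the explicit formula (\ref{General Eq seq small}); and property (ii) for $|J|=k$ and $\un{i}\leq\un{f}$ reduces directly to the characterizing properties of $\un{Y}^{-(\un{i}-\un{e}^{J^{\sh}})}v_J$ from Proposition \ref{General Prop shift}(i),(ii), together with the vanishing convention in (\ref{General Eq seq small}) covering the boundary coordinates where $\un{Y}^{\un{k}}x_{J,\un{i}}$ becomes zero.

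The core step is to extend the assumed relation (\ref{General Eq Seq def}) at $\un{i}=\un{f}$ to arbitrary $\un{i}\leq\un{f}$. Writing $\un{f}=p\delta(\un{i}'_{\un{f}})+\un{c}^J-\un{\ell}_{\un{f}}$ for the unique decomposition appearing in the definition, a quick inspection of (\ref{General Eq cJ}) and (\ref{General Eq genericity}) shows $\max_j(\un{i}'_{\un{f}})_j\leq 1$. I would then multiply the hypothesis by $\un{Y}^{\un{k}}$ for $\un{k}\in\NNN^f$. The LHS becomes $\varepsilon_J\mu_{J+1,J}x_{J,\un{f}-\un{k}}$ by the first stage; each term $\un{Y}^{\un{k}}x_{J',\un{f}+\un{r}^{J\setminus J'}}$ in the sum over $J'\subsetneq J$ equals $x_{J',\un{f}-\un{k}+\un{r}^{J\setminus J'}}$ by property (ii) for $|J'|<|J|$ (the induction hypothesis on $|J|$); and the remaining term $\un{Y}^{\un{k}+\un{\ell}_{\un{f}}}\smat{p&0\\0&1}x_{J+1,\un{i}'_{\un{f}}}$ is rewritten by iteratively replacing each excess $Y_j^p\smat{p&0\\0&1}$ with $\smat{p&0\\0&1}Y_{j+1}$ via Lemma \ref{General Lem Yj}(i), absorbing the excess into the argument, and this only invokes property (ii) for $J+1$ at arguments with max at most $1$, already covered by the first stage. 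The upshot is precisely (\ref{General Eq Seq def}) at $\un{i}=\un{f}-\un{k}$; as $\un{k}$ ranges over $\NNN^f$, this yields (\ref{General Eq Seq def}) for every $\un{i}\leq\un{f}$.

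In the final stage I would prove property (ii) for $\max_j i_j>f$ by secondary induction on $\max_j i_j$ (starting from $f+1$), and then deduce property (iii). Writing $\un{i}=p\delta(\un{i}')+\un{c}^J-\un{\ell}$ and applying $Y_j$ to the defining equation (\ref{General Eq Seq def}) of $x_{J,\un{i}}$, the sub-case $\ell_j\leq p-2$ absorbs $Y_j$ into $\un{\ell}$ and yields (\ref{General Eq Seq def}) at $\un{i}-e_j$ directly; the sub-case $\ell_j=p-1$ uses Lemma \ref{General Lem Yj}(i) to replace $Y_j^p\smat{p&0\\0&1}$ by $\smat{p&0\\0&1}Y_{j+1}$ and then invokes $Y_{j+1}x_{J+1,\un{i}'}=x_{J+1,\un{i}'-e_{j+1}}$, valid by the secondary induction since $\max_j i'_j<\max_j i_j$ by (\ref{General Eq max1<max2}). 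When $\max_j(\un{i}-e_j)_j>f$ the resulting expression matches $\varepsilon_J\mu_{J+1,J}x_{J,\un{i}-e_j}$ by the definition; when $\max_j(\un{i}-e_j)_j\leq f$, it matches instead by the extended identity produced in the second stage. Property (iii) is the $\un{\ell}=\un{0}$ instance of (\ref{General Eq Seq p001}), which holds by (\ref{General Eq Seq def}) either as the definition (when $\max_j(p\delta(\un{i})+\un{c}^J)_j>f$) or as the propagated identity otherwise. The main obstacle throughout is the careful bookkeeping at the boundary $\max_j i_j=f$ between the explicit (\ref{General Eq seq small}) and inductive (\ref{General Eq Seq def}) definitions; the single compatibility check at $\un{i}=\un{f}$ supplied by the hypothesis is precisely what glues the two definitions together on their overlap.
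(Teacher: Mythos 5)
Your proposal is correct and follows essentially the same route as the paper: establish (ii) for $\un{i}\leq\un{f}$ from (\ref{General Eq seq small}) and Proposition \ref{General Prop shift}, propagate (\ref{General Eq Seq def}) from $\un{i}=\un{f}$ to all $\un{i}\leq\un{f}$ by multiplying by $\un{Y}^{\un{k}}$ and commuting powers of $Y_j$ past $\smat{p&0\\0&1}$ via Lemma \ref{General Lem Yj}(i) (the paper's (\ref{General Eq Seq reduction}), using that the decomposition of $\un{f}$ has $\un{i}'=\un{e}^{J\cap(J+1)}$), and then run a secondary induction on $\max_j i_j$ for (ii), with (iii) obtained as the $\un{\ell}=\un{0}$ case. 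The only cosmetic difference is that you apply $Y_j$ one coordinate at a time in the last stage where the paper applies $\un{Y}^{\un{k}}$ in one step; the bookkeeping is identical.
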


\begin{proof}
    By (\ref{General Eq seq small}) and Proposition \ref{General Prop shift}, Theorem \ref{General Thm seq}(ii) is true for all $J\subseteq\cJ$ and $\un{i}\leq\un{f}$. Then we let $J\subseteq\cJ$ such that $|J|=k$. We define $\un{c}^{\prime J}\in\ZZ^f$ by 
    \begin{equation}\label{General Eq c'J}
        c^{\prime J}_j\eqdef
    \begin{cases}
        p-1-f&\text{if}~j\notin J,\ j+1\notin J\\
        r_j+1-f&\text{if}~j\in J,\ j+1\notin J\\
        p-2-r_j-f&\text{if}~j\notin J,\ j+1\in J\\
        p-f&\text{if}~j\in J,\ j+1\in J.
    \end{cases}
    \end{equation}
    In particular, by (\ref{General Eq genericity}) we have $\un{0}\leq\un{c}^{\prime J}\leq\un{p}-\un{1}$, and by \eqref{General Eq c'J} and \eqref{General Eq cJ} we have
    \begin{equation}\label{General Eq cJ c'J}
        \un{f}=p\delta\bigbra{\un{e}^{J\cap(J+1)}}+\un{c}^J-\un{c}^{\prime J}.
    \end{equation}
    
    Since (\ref{General Eq Seq def}) holds for $J$ as above and $\un{i}=\un{f}$ by assumption, using (\ref{General Eq cJ c'J}) we have
    \begin{equation}\label{General Eq seq def i=f}
        \varepsilon_J\mu_{J+1,J}x_{J,\un{f}}=\un{Y}^{\un{c}^{\prime J}}\smat{p&0\\0&1}x_{J+1,\un{e}^{J\cap(J+1)}}-\sum\limits_{J^{\ss}\subseteq J'\subsetneqq J}\varepsilon_{J'}\mu_{J+1,J'}x_{J',\un{f}+\un{r}^{J\setminus J'}}.
    \end{equation}
    For each $\un{i}\leq\un{f}$, we write $\un{i}=p\delta(\un{i}')+\un{c}^{J}-\un{\ell}$ as in (\ref{General Eq Seq def}). In particular, comparing with (\ref{General Eq cJ c'J}) we have  $\un{i}'\leq\un{e}^{J\cap(J+1)}$. By Lemma \ref{General Lem Yj}(i) and Theorem \ref{General Thm seq}(ii) (applied with $(J,\un{i},\un{k})$ there replaced by $\bigbra{J+1,\un{e}^{J\cap(J+1)},\un{e}^{J\cap(J+1)}-\un{i}'}$ and using $\un{e}^{J\cap(J+1)}\leq\un{f}$) we deduce that 
    \begin{equation}\label{General Eq Seq reduction}
        \un{Y}^{\un{f}-\un{i}}\bbbra{\un{Y}^{\un{c}^{\prime J}}\smat{p&0\\0&1}x_{J+1,\un{e}^{J\cap(J+1)}}}=\un{Y}^{\un{\ell}}\smat{p&0\\0&1}x_{J+1,\un{i}'}.
    \end{equation}
    Since Theorem \ref{General Thm seq}(ii) is true for $|J|\leq k-1$, by applying $\un{Y}^{\un{f}-\un{i}}$ to (\ref{General Eq seq def i=f}) and using (\ref{General Eq Seq reduction}) we deduce that (\ref{General Eq Seq def}) is true for $J$ as above and $\un{i}\leq\un{f}$, hence for all $\un{i}\in\ZZ^f$ by definition. 

    Then we use increasing induction on $\max_j{i_j}$ to prove that Theorem \ref{General Thm seq}(ii) is true (for $J$ as above, which satisfies $|J|=k$). We already know that Theorem \ref{General Thm seq}(ii) is true for $\un{i}\leq\un{f}$. Then for each $\un{i}\in\ZZ^f$ and $\un{k}\in\NNN^f$, if we write $\un{i}-\un{k}=p\delta(\un{i}'')+\un{c}^{J}-\un{\ell}'$ for the unique $\un{i}'',\un{\ell}'\in\ZZ^f$ such that $\un{0}\leq\un{\ell}'\leq\un{p}-\un{1}$, then we have
    \begin{align*}
        \un{Y}^{\un{k}}\varepsilon_J\mu_{J+1,J}x_{J,\un{i}}&=\un{Y}^{\un{k}}\bbbra{\un{Y}^{\un{\ell}}\smat{p&0\\0&1}x_{J+1,\un{i}'}-\sum\limits_{J^{\ss}\subseteq J'\subsetneqq J}\varepsilon_{J'}\mu_{J+1,J'}x_{J',\un{i}+\un{r}^{J\setminus J'}}}\\
        &=\un{Y}^{\un{\ell}'}\smat{p&0\\0&1}x_{J+1,\un{i}''}-\sum\limits_{J^{\ss}\subseteq J'\subsetneqq J}\varepsilon_{J'}\mu_{J+1,J'}x_{J',\un{i}-\un{k}+\un{r}^{J\setminus J'}}\\
        &=\varepsilon_J\mu_{J+1,J}x_{J,\un{i}-\un{k}},
    \end{align*}
    where the first and the third equality follow from (\ref{General Eq Seq def}), and the second equality follows in a similar way as (\ref{General Eq Seq reduction}) using the induction hypothesis. This shows that $\un{Y}^{\un{k}}x_{J,\un{i}}=x_{J,\un{i}-\un{k}}$.
    
    Finally, by taking $\un{i}$ such that $\un{\ell}=\un{0}$ in (\ref{General Eq Seq def}), we conclude that Theorem \ref{General Thm seq}(iii) is true (for $J$ as above). 
\end{proof}

Assume that Theorem \ref{General Thm seq} is true for $\abs{J}\leq k-1$. By Lemma \ref{General Lem Seq reduction}, it suffices to prove that (\ref{General Eq Seq def}) is true for $\abs{J}=k$ and $\un{i}=\un{f}$. For $J\subseteq\cJ$, we denote (see (\ref{General Eq c'J}) for $\un{c}^{\prime J}$)
\begin{equation}\label{General Eq def zw}
\begin{aligned}
     z_J&\eqdef\un{Y}^{\un{c}^{\prime J}}\smat{p&0\\0&1}x_{J+1,\un{e}^{J\cap(J+1)}}=\un{Y}^{\un{c}^{\prime J}}\smat{p&0\\0&1}\bbra{\un{Y}^{-\un{e}^{(J\cap(J+1))^{\nss}}}v_{J+1}}\in\pi;\\
     w_J&\eqdef\sum\limits_{J^{\ss}\subseteq J'\subsetneqq J}\varepsilon_{J'}\mu_{J+1,J'}x_{J',\un{f}+\un{r}^{J\setminus J'}}\in\pi.
\end{aligned}
\end{equation}
Then by (\ref{General Eq seq def i=f}), it is equivalent to proving that for $|J|=k$ we have
\begin{equation}\label{General Eq zw}
    z_J-w_J=\varepsilon_J\mu_{J+1,J}x_{J,\un{f}}=\varepsilon_J\mu_{J+1,J}\un{Y}^{-(\un{f}-\un{e}^{J^{\sh}})}v_{J}.
\end{equation}
The proof of (\ref{General Eq zw}) consists of the following three lemmas, where we need to use some results of Appendix \ref{General Sec app vanish}.

\begin{lemma}\label{General Lem lie in D0}
    Let $0\leq k\leq f$. Assume that Theorem \ref{General Thm seq} is true for $\abs{J}\leq k-1$. Then for $\abs{J}\leq k$, we have $z_J,w_J\in D_0(\rhobar)$.
\end{lemma}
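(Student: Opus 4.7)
The proof handles $z_J$ and $w_J$ separately; both arguments reduce to the fine structure of the $\GL_2(\OK)$-subrepresentations of $\pi$ generated by $\smat{p&0\\0&1}\un{Y}^{-\un{i}}v_{J''}$, combined with the explicit knowledge of $D_0(\rhobar)$ from Lemma~\ref{General Lem Diamond diagram}.

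For $z_J$, I would apply Lemma~\ref{General Lem p001} with $(J,\un{i})$ there replaced by $(J+1,\un{e}^{(J\cap(J+1))^{\nss}})$, identifying the subrepresentation $\bigang{\GL_2(\OK)\smat{p&0\\0&1}(\un{Y}^{-\un{e}^{(J\cap(J+1))^{\nss}}}v_{J+1})}\subseteq\pi$ as multiplicity-free with socle $\sigma_{((J+1)-1)^{\ss}}=\sigma_{J^{\ss}}$. Using Lemma~\ref{General Lem Yj}(ii) and the formula (\ref{General Eq c'J}) for $\un{c}^{\prime J}$, I would compute the $H$-eigencharacter of $z_J=\un{Y}^{\un{c}^{\prime J}}\smat{p&0\\0&1}(\un{Y}^{-\un{e}^{(J\cap(J+1))^{\nss}}}v_{J+1})$ and pin down, via Lemma~\ref{General Lem p001}(ii)(iii) and its parametrization of $H$-characters, the unique $\un{m}\in\ZZ^f$ with $z_J\in I(\sigma_{J^{\ss}},\sigma_{\un{m}})$. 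A combinatorial check using Lemma~\ref{General Lem change origin} and the ranges in (\ref{General Eq bound s}) verifies that all constituents $\sigma_{\un{b}}$ of $I(\sigma_{J^{\ss}},\sigma_{\un{m}})$ lie in $\JH(D_{0,\sigma_{J^{\ss}}}(\rhobar))$ (Lemma~\ref{General Lem Diamond diagram}(i)). Combined with (\ref{General Eq p001 dim1}) and $\soc_{\GL_2(\OK)}\pi=\bigoplus_{\sigma\in W(\rhobar)}\sigma$, the inclusion $I(\sigma_{J^{\ss}},\sigma_{\un{m}})\into\pi$ factors through $D_0(\rhobar)=\pi^{K_1}$, so $z_J\in D_0(\rhobar)$.

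For $w_J$, each summand $x_{J',\un{f}+\un{r}^{J\setminus J'}}$ with $J^{\ss}\subseteq J'\subsetneqq J$ satisfies $\max_j(\un{f}+\un{r}^{J\setminus J'})_j>f$: since $J\setminus J'\neq\emptyset$, picking $j_1\in J\setminus J'$ and using (\ref{General Eq Lem rJ}) yields $r^{J\setminus J'}_{j_1-1}\in\{r_{j_1-1},r_{j_1-1}+1\}$, both strictly positive by (\ref{General Eq genericity}). Hence by the induction hypothesis, $x_{J',\un{f}+\un{r}^{J\setminus J'}}$ is defined via the recursion (\ref{General Eq Seq def}) applied to $J'$, producing a leading term $\un{Y}^{\un{\ell}'}\smat{p&0\\0&1}x_{J'+1,\un{i}''}$ and an error summing over $J''\subsetneqq J'$. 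Iterating this unfolding, I would rewrite $w_J$ as a linear combination of $z$-type expressions (each in $D_0(\rhobar)$ by the argument above) together with lower-order corrections that cancel thanks to the multiplicative identities (\ref{General Eq ratio}) and (\ref{General Eq general mu}) among the $\mu_{*,*}$, combined with the vanishing statements to be established in Appendix~\ref{General Sec app vanish}. The $z_J$ step is mechanical once the correct $\un{m}$ is identified; the substantive difficulty is the combinatorial bookkeeping in the $w_J$ unfolding, tracking the nested sums over $J''\subsetneqq J'\subsetneqq J$ and verifying that the scalar coefficients $\varepsilon_{J'}\mu_{J+1,J'}$ reorganize via the $\mu$-ratio identities so that every contribution outside $D_0(\rhobar)$ is annihilated by the appendix vanishings.
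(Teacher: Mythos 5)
Your treatment of $z_J$ follows the paper's route: the paper invokes Proposition \ref{General Prop Isigma0m}(ii) applied to $(\un{e}^{(J\cap(J+1))^{\nss}},J+1,J\Delta(J-1))$, computes $\un{m}$ via Lemma \ref{General Lem m} (getting $m_j=\delta_{j\in J}$, so the target is $I(\sigma_{J^{\ss}},\sigma_{\un{e}^J})\subseteq D_{0,\sigma_{J^{\ss}}}(\rhobar)$ by Corollary \ref{General Cor structure of D0}), and then checks the divisibility $c^{\prime J}_j\geq\delta_{j\notin J''}(2i_j+t^{J+1}(J'')_j)+\delta_{J''=\emptyset}$ via Lemma \ref{General Lem r and c}(iv). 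One caution: knowing the $H$-eigencharacter of $z_J$ does \emph{not} by itself pin down membership in $I(\sigma_{J^{\ss}},\sigma_{\un{m}})$, since the same $H$-character can occur in several graded pieces of $\bang{\GL_2(\OK)\smat{p&0\\0&1}\un{Y}^{-\un{i}}v_{J+1}}$; the actual mechanism is the vanishing-in-quotients argument packaged in Proposition \ref{General Prop Isigma0m}, which rests on Remark \ref{General Rk relation 1 general}. So the step you call ``mechanical'' is exactly where the content sits.

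For $w_J$ your picture is off in a way that matters. No cancellation occurs and neither the $\mu$-ratio identities \eqref{General Eq ratio} nor the vanishing results of Appendix \ref{General Sec app vanish} are used here: the paper proves, by increasing induction on $|J'|$, that \emph{each individual summand} $x_{J',\un{f}+\un{r}^{J\setminus J'}}$ lies in $D_0(\rhobar)$. The single unfolding you describe is the right move, but it should be read as follows: the leading term is $\un{Y}^{\un{c}}\smat{p&0\\0&1}x_{J'+1,\un{e}^{(J'+1)\cap J}}$, which equals $\sum_{J^{\ss}\subseteq J''\subseteq J'}\varepsilon_{J''}\mu_{J'+1,J''}x_{J'',\un{f}+\un{r}^{J\setminus J''}}$ by \eqref{General Eq Seq p001} (legitimate since $|J'|\leq k-1$, i.e.\ by the hypothesis of the lemma, and using Lemma \ref{General Lem r and c}(ii) to see that the shifts recombine to $\un{r}^{J\setminus J''}$); the terms with $J''\subsetneqq J'$ are in $D_0(\rhobar)$ by the induction on $|J'|$, and the leading term itself is a $z$-type element \emph{for the pair} $(J',J)$, not $(J,J)$. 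This last point is a genuine gap in ``each in $D_0(\rhobar)$ by the argument above'': the relevant $\un{m}=\un{m}(\un{e}^{((J'+1)\cap J)^{\nss}},J'+1,J'\Delta(J-1))$ has $m_j=\delta_{j\in J}(-1)^{\delta_{j\notin J'}}$, which can equal $-1$, so the containment $I(\sigma_{(J')^{\ss}},\sigma_{\un{m}})\subseteq D_{0,\sigma_{(J')^{\ss}}}(\rhobar)$ requires a separate case-by-case comparison with Corollary \ref{General Cor structure of D0}; it is not the same computation as for $z_J$.
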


\begin{proof}
    (i). First we prove that $z_J\in D_0(\rhobar)$. Let $\un{i}\eqdef\un{e}^{(J\cap(J+1))^{\nss}}$, $J''_1\eqdef J\Delta(J-1)$ and $\un{m}\eqdef\un{m}(\un{i},J+1,J''_1)$ (see (\ref{General Eq mj})). By Lemma \ref{General Lem m} applied with $(J,J')$ there replaced by $(J+1,J)$, we have $m_j=\delta_{j\in J}(-1)^{\delta_{j\notin J}}=\delta_{j\in J}$ for all $j\in\cJ$. Then by Proposition \ref{General Prop Isigma0m}(ii) applied to $(\un{i},J+1,J''_1)$, we have for all $j'\in\cJ$
    \begin{equation*}
        Y_{j'}^{\delta_{J''_1=\emptyset}}\bbbra{\scalebox{1}{$\prod$}_{j\notin J''_1}Y_j^{2i_j+t^{J+1}(J''_1)_j}}\smat{p&0\\0&1}\bbra{\un{Y}^{-\un{i}}v_{J+1}}\in I\bigbra{\sigma_{J^{\ss}},\sigma_{\un{e}^J}}=I\bigbra{\sigma_{J^{\ss}},\sigma_{\un{e}^{J^{\ss}}+\un{e}^{J^{\nss}}}}\subseteq D_{0,\sigma_{J^{\ss}}}(\rhobar),
    \end{equation*}
    where the last inclusion follows from Corollary \ref{General Cor structure of D0}. To prove that $z_J\in D_0(\rhobar)$, it suffices to show that for all $j\in\cJ$ we have
    \begin{equation*}
        c^{\prime J}_j\geq\delta_{j\notin J''_1}\bigbra{2i_j+t^{J+1}(J''_1)_j}+\delta_{J''_1=\emptyset}.
    \end{equation*}
    This is a consequence of Lemma \ref{General Lem r and c}(iv) applied with $J'=J$ and $\un{\delta}=\un{0}$, and (\ref{General Eq cJ c'J}).

    \hspace{\fill}
    
    (ii). Next we prove that $w_J\in D_0(\rhobar)$. To do this, we prove by increasing induction on $|J'|$ that $x_{J',\un{f}+\un{r}^{J\setminus J'}}\in D_0(\rhobar)$ for each $J'$ such that $J^{\ss}\subseteq J'\subsetneqq J$ (which implies $(J')^{\ss}=J^{\ss}$). Let $\un{i}\eqdef\un{e}^{(J'+1)\cap J}-\un{e}^{(J'+1)^{\sh}}=\un{e}^{((J'+1)\cap J)^{\nss}}$ (using $(J')^{\ss}=J^{\ss}$), $J''_2\eqdef J'\Delta(J-1)$ and $\un{m}\eqdef\un{m}(\un{i},J'+1,J''_2)$ (see (\ref{General Eq mj})). By Proposition \ref{General Prop Isigma0m}(ii) applied to $(\un{i},J'+1,J''_2)$, we have for all $j'\in\cJ$
    \begin{equation}\label{General Eq wJ in D0}
        Y_{j'}^{\delta_{J''_2=\emptyset}}\bbbra{\scalebox{1}{$\prod$}_{j\notin J''_2}Y_j^{2i_j+t^{J'+1}(J''_2)_j}}\smat{p&0\\0&1}\bbra{\un{Y}^{-\un{i}}v_{J'+1}}\in I\bigbra{\sigma_{(J')^{\ss}},\sigma_{\un{m}}}.
    \end{equation}
    By Lemma \ref{General Lem m} applied with $(J,J')$ there replaced by $(J'+1,J)$, we have $m_j=\delta_{j\in J}(-1)^{\delta_{j\notin J'}}$ for all $j\in\cJ$. Then a case-by-case examination using $J^{\ss}\subseteq J'\subseteq J$ shows that $m_j-\delta_{j\in(J')^{\ss}}$ equals $0$ if $j\in J_{\rhobar}$ and equals $\delta_{j\in J}(-1)^{\delta_{j\notin J'}}$ if $j\notin J_{\rhobar}$. Hence by Corollary \ref{General Cor structure of D0} we deduce that $I\bigbra{\sigma_{(J')^{\ss}},\sigma_{\un{m}}}\subseteq D_{0,\sigma_{(J')^{\ss}}}(\rhobar)$.

    We let $\un{c}\in\ZZ^f$ be as in Lemma \ref{General Lem r and c}(iv). On one hand, since $\un{Y}^{-\un{i}}v_{J'+1}=x_{J'+1,\un{e}^{(J'+1)\cap J}}$ by (\ref{General Eq seq small}), multiplying (\ref{General Eq wJ in D0}) by a suitable power of $\un{Y}$ and using Lemma \ref{General Lem r and c}(iv) (with $\un{\delta}=\un{0}$) we deduce that 
    \begin{equation*}
        \un{Y}^{\un{c}}\smat{p&0\\0&1}x_{J'+1,\un{e}^{(J'+1)\cap J}}\in D_0(\rhobar).
    \end{equation*} 
    On the other hand, since $\abs{J'}\leq k-1$ by assumption, by (\ref{General Eq Seq p001}) applied to $J'$ and using $(J')^{\ss}=J^{\ss}$ we have
    \begin{align*}
        \un{Y}^{\un{c}}\smat{p&0\\0&1}x_{J'+1,\un{e}^{(J'+1)\cap J}}&=\sum\limits_{J^{\ss}\subseteq J''\subseteq J'}\varepsilon_{J''}\mu_{J'+1,J''}x_{J'',\bigbra{p\delta(\un{e}^{(J'+1)\cap J})+\un{c}^{J'}+\un{r}^{J'\setminus J''}-\un{c}}}\\
        &=\sum\limits_{J^{\ss}\subseteq J''\subseteq J'}\varepsilon_{J''}\mu_{J'+1,J''}x_{J'',\bigbra{\un{r}^{J'\setminus J''}+\un{f}+\un{r}^{J\setminus J'}}}\\
        &=\sum\limits_{J^{\ss}\subseteq J''\subseteq J'}\varepsilon_{J''}\mu_{J'+1,J''}x_{J'',\un{f}+\un{r}^{J\setminus J''}},
    \end{align*}
    where the second equality follows from the definition of $\un{c}$ and the last equality follows from Lemma \ref{General Lem r and c}(ii). By the induction hypothesis, we have $x_{J'',\un{f}+\un{r}^{J\setminus J''}}\in D_0(\rhobar)$ for all $J^{\ss}\subseteq J''\subsetneqq J'$. It follows that $x_{J',\un{f}+\un{r}^{J\setminus J'}}\in D_0(\rhobar)$, which completes the proof.
\end{proof}

\begin{lemma}\label{General Lem z}
    Let $0\leq k\leq f$. Assume that Theorem \ref{General Thm seq} is true for $|J|\leq k-1$. Then for $|J|\leq k$, we have (see Remark \ref{General Rk Delta and partial} for $\partial J$)
    \begin{align*}
        Y_{j_0}^{f+1-\delta_{j_0\in J^{\sh}}}z_J&=
    \begin{cases}
        \sum\limits_{J^{\ss}\subseteq J'\subseteq J\setminus\set{j_0+1}}\varepsilon_{J'}\mu_{J+1,J'}x_{J',\bigbra{\un{r}^{J\setminus J'}+\un{f}-(f+1-\delta_{j_0\in J^{\sh}})e_{j_0}}}&\text{if}~j_0+1\in J^{\nss}\\
        0&\text{if}~j_0+1\notin J^{\nss};
    \end{cases}\\
        \un{Y}^{\un{f}-\un{e}^{J^{\sh}}}z_J&=
    \begin{cases}
        \mu_{J+1,J}v_{J},&\text{if}~J^{\nss}=(\partial J)^{\nss}\\
        0,&\text{if}~J^{\nss}\neq(\partial J)^{\nss}~\text{and}~J^{\nss}\neq\cJ\\
        \mu_{J,J}v_J+\mu_{J,\emptyset}x_{\emptyset,\un{r}},&\text{if}~J^{\nss}=\cJ.
    \end{cases}
    \end{align*}
\end{lemma}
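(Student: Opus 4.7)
The strategy is to unfold $z_J=\un{Y}^{\un{c}^{\prime J}}\smat{p&0\\0&1}\bra{\un{Y}^{-\un{e}^{(J\cap(J+1))^{\nss}}}v_{J+1}}$ and perform a case analysis on the $\un{Y}$-exponents that result when the stated powers of $Y$ are applied, using Lemma \ref{General Lem Yj}(i) (which converts $Y_j^p$ to $Y_{j+1}$ through $\smat{p&0\\0&1}$) together with the main propositions of \S\ref{General Sec relation}. A direct computation shows that for the second formula the exponent of $Y_j$ in $\un{Y}^{\un{f}-\un{e}^{J^{\sh}}+\un{c}^{\prime J}}$ always lies in $\set{r_j+1,\,p-2-r_j,\,p-1,\,p}$, and equals $p$ precisely when $j\in J^{\nss}$ and $j+1\in J$.

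For the second formula, in Case A ($J^{\nss}=(\partial J)^{\nss}$) no exponent reaches $p$, and these exponents coincide exactly with those appearing in Proposition \ref{General Prop vector} applied to $(J,J')=(J+1,J)$; by Remark \ref{General Rk Delta and partial} the hypothesis \eqref{General Eq good pair} for this pair is equivalent to $J^{\nss}=(\partial J)^{\nss}$, and the proposition directly yields $\mu_{J+1,J}v_J$. In Case C ($J_{\rhobar}=\emptyset$, $J=\cJ$) every exponent equals $p$, every $Y_j^p$ pulls through, and what remains is (up to a $Y$-monomial) the left-hand side of Proposition \ref{General Prop vector complement} applied to $J=\cJ$; combining with the normalization $x_{\emptyset,\un{r}}=\mu_{\emptyset,\emptyset}^{-1}\un{Y}^{\un{p}-\un{1}-\un{r}}\smat{p&0\\0&1}v_\emptyset$ and the definition \eqref{General Eq general mu} of $\mu_{\cJ,\cJ}$ yields the claimed two-term expression. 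In Case B, I pick $j$ with $j\in J^{\nss}$ and $j+1\in J$ so the $Y_j$-exponent is $p$; after pulling through, the internal $Y_{j+1}$ kills $\un{Y}^{-\un{e}^{(J\cap(J+1))^{\nss}}}v_{J+1}$ whenever $j+1\in J_{\rhobar}$, and otherwise one iterates along a maximal consecutive non-semisimple block (using Proposition \ref{General Prop relation 2} to trade $v_{J+1}$ for $v_{J+1\setminus\set{j+2}}$ where necessary) until annihilation is reached, proving vanishing.

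For the first formula I set $A_{j_0}\eqdef(f+1-\delta_{j_0\in J^{\sh}})+c^{\prime J}_{j_0}$ and tabulate its value on $(j_0\in J,\,j_0+1\in J,\,j_0\in J_{\rhobar})$. The vanishing half $j_0+1\notin J^{\nss}$ splits into the sub-cases $A_{j_0}\ge p$ (handled exactly as in Case B above: pull $Y_{j_0}^p$ through and note the internal $Y_{j_0+1}$ kills the vector since $j_0+1\notin(J\cap(J+1))^{\nss}$) and $A_{j_0}<p$ (handled by Remark \ref{General Rk relation 1 general}, which shows that the candidate $H$-eigencharacter does not occur in the relevant quotient of a principal series). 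For the nonvanishing half $j_0+1\in J^{\nss}$, the $Y_{j_0}^p$ first pulls through; Proposition \ref{General Prop relation 2} (with its $J$ equal to our $J+1$ and its $j_0$ equal to ours) then converts the expression into one of the form $\un{Y}^?\smat{p&0\\0&1}\bra{\un{Y}^{-\un{i}'}v_{(J\setminus\set{j_0+1})+1}}$. Since $|J\setminus\set{j_0+1}|=k-1$, the inductive hypothesis lets me apply Theorem \ref{General Thm seq}(iii) to $J\setminus\set{j_0+1}$ and expand this into a sum over $J^{\ss}\subseteq J'\subseteq J\setminus\set{j_0+1}$.

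The main obstacle will be the combinatorial bookkeeping in this last step: one must match the $\un{c}^{J\setminus\set{j_0+1}}$, the shift $\un{r}^{(J\setminus\set{j_0+1})\setminus J'}$ and the constants $\varepsilon_{J'}\mu_{J+1\setminus\set{j_0+2},J'}$ produced by Proposition \ref{General Prop relation 2} to the prescribed terms $\varepsilon_{J'}\mu_{J+1,J'}x_{J',\un{r}^{J\setminus J'}+\un{f}-(f+1-\delta_{j_0\in J^{\sh}})e_{j_0}}$ on the right-hand side. This relies on arithmetic identities between $\un{r}^J,\un{c}^J,\un{c}^{\prime J}$ of the kind collected in Appendix \ref{General Sec app lemmas}, together with the proportionality \eqref{General Eq ratio} which gives that $\mu_{J+1\setminus\set{j_0+2},J'}/\mu_{J+1,J'}$ is independent of $J'$.
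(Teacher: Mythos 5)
Your overall architecture matches the paper's: unfold $z_J=\un{Y}^{\un{c}^{\prime J}}\smat{p&0\\0&1}\bra{\un{Y}^{-\un{e}^{(J\cap(J+1))^{\nss}}}v_{J+1}}$, tabulate the exponents, and dispatch cases via Propositions \ref{General Prop shift}, \ref{General Prop relation 1}, \ref{General Prop relation 2}, \ref{General Prop vector}, \ref{General Prop vector complement}. Your exponent table is correct, your Cases A and C of the second formula are exactly the paper's (including the observation that \eqref{General Eq good pair} for $(J+1,J)$ is equivalent to $(J\cap(J-1))^{\nss}=\emptyset$), and your treatment of the first formula — the dichotomy $A_{j_0}\ge p$ versus $A_{j_0}<p$ for the vanishing half, and Proposition \ref{General Prop relation 2} followed by the inductive expansion \eqref{General Eq Seq p001} for $J\setminus\set{j_0+1}$ and the constant-matching via Appendix \ref{General Sec app lemmas} and \eqref{General Eq ratio} for the nonvanishing half — is the paper's proof (cases (a)--(e)).

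There is one genuine gap: Case B of the second formula. The paper isolates this as Lemma \ref{General Lem lem for zw} and proves it by a \emph{single} application of Proposition \ref{General Prop relation 1} to a carefully chosen interval of $J^{\nss}$ meeting $(J\setminus\partial J)^{\nss}$, splitting on whether the right boundary $j_0$ of that interval has $j_0+1\in J^{\ss}$ or $j_0+1\notin J$. Your proposal to ``iterate Proposition \ref{General Prop relation 2} along the block until annihilation is reached'' does not by itself yield vanishing: that proposition only equates two expressions, both generically nonzero, so it can relocate the problem but never kill it. Your first sub-case (some $j$ in the block has $j+1\in J_{\rhobar}$, so the pulled-through $Y_{j+1}$ annihilates the vector by Proposition \ref{General Prop shift}(i)) is fine and corresponds to $x_{J+1,\un{e}^{J^{\sh}+1}}=0$ in the paper. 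But when the block ends with $b+1\notin J$, the surviving exponent at $b$ is $r_b+1<p$, nothing more pulls through, and the required vanishing must come from Proposition \ref{General Prop relation 1} applied at $j_0=b$ (where $b+1\in(J+1)\Delta J^{\ss}$ and the relevant index vanishes) — exactly what Lemma \ref{General Lem lem for zw}(a) does. You need to supply this step. A second, minor point: in the nonvanishing half of the first formula with $j_0\notin J$ (the paper's case (d)), the exponent is $(f+1)+c^{\prime J}_{j_0}=p-1-r_{j_0}<p$, so no $Y_{j_0}^p$ pulls through there; the rewriting is done by Proposition \ref{General Prop relation 2} together with Lemma \ref{General Lem Yj}(i) applied at $j_0+1$, not at $j_0$.
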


\begin{proof}
    (i) We prove the first equality. Using the decomposition
    \begin{equation}\label{General Eq decomposition of big J}
        \cJ=\bigbra{J^c\cap(J+1)^c}\sqcup\bigbra{\bigbra{(J+1)\Delta J^{\ss}}\cup J^{\nss}}\sqcup(J+1)^{\sh},
    \end{equation}
    we separate the proof into the following five cases. 

    \hspace{\fill}
    
    (a). Suppose that $j_0\notin J$ and $j_0+1\notin J$, which implies $j_0\notin J^{\sh}$. Since $j_0+1\notin \bigbra{J\cap(J+1)}^{\nss}$, by Lemma \ref{General Lem Yj}(i) applied to $j_0$ and Proposition \ref{General Prop shift} we have 
    \begin{equation}\label{General Eq Lem z 2}
        Y_{j_0}^p\smat{p&0\\0&1}\bbra{\un{Y}^{-\un{e}^{(J\cap(J+1))^{\nss}}}v_{J+1}}=0.
    \end{equation}
    Since $c^{\prime J}_{j_0}=p-1-f$ by (\ref{General Eq c'J}), we deduce from (\ref{General Eq def zw}) and (\ref{General Eq Lem z 2}) that $Y_{j_0}^{f+1}z_J=0$.

    \hspace{\fill}

    (b). Suppose that $j_0+1\in (J+1)^{\sh}$, which implies $j_0+1\notin \bigbra{J\cap(J+1)}^{\nss}$. In particular, the equality (\ref{General Eq Lem z 2}) still holds as in (a). 
    Since $c^{\prime J}_{j_0}=p-f$ by (\ref{General Eq c'J}), we deduce from (\ref{General Eq def zw}) and (\ref{General Eq Lem z 2}) that $Y_{j_0}^{f+1-\delta_{j_0\in J^{\sh}}}z_J=0$.

    \hspace{\fill}
    
    (c). Suppose that $j_0+1\in(J+1)\Delta J^{\ss}$ and $j_0+1\notin J^{\nss}$, which implies $f\geq2$ and $j_0\in J\Delta(J-1)$. In particular, we have $j_0\notin J^{\sh}$. By (\ref{General Eq sJ}) we have
    \begin{align*}
        s^{J+1}_j-2\delta_{j\in(J\cap(J+1))^{\nss}}&=
    \begin{cases}
        \bigbra{r_j+\delta_{j-1\in J}}-0&\text{if}~j\notin J\\
        \bigbra{p-1-r_j-\delta_{j-1\notin J}-2\delta_{j\in(J+1)^{\ss}}}-2\delta_{j\in(J+1)^{\nss}}&\text{if}~j\in J
    \end{cases}\\
    &=
    \begin{cases}
        r_j+\delta_{j-1\in J\Delta(J-1)}&\text{if}~j\notin J\\
        p-3-r_j+\delta_{j-1\in J\Delta(J-1)}&\text{if}~j\in J.
    \end{cases}
    \end{align*}
    We let $\un{i}\eqdef\un{e}^{(J\cap(J+1))^{\nss}}$ and $J''\eqdef\bigbra{J\Delta(J-1)}\setminus\set{j_0}$. In particular, we have $i_{j_0+1}=0$. Then by (\ref{General Eq tJJ'}), for $j\neq j_0$ we have
    \begin{equation}\label{General Eq Lem z 4}
    \begin{aligned}
        \delta_{j\notin J''}\bigbra{2i_j+t^{J+1}(J'')_j}&=\delta_{j\notin J''}\bigbra{2\delta_{j\in(J\cap(J+1))^{\nss}}+p-1-s^{J+1}_j+\delta_{j-1\in J''}}\\
        &=
    \begin{cases}
        p-1-r_j-\delta_{j=j_0+1}&\text{if}~j\notin J,~j+1\notin J\\
        r_j+2-\delta_{j=j_0+1}&\text{if}~j\in J,~j+1\in J\\
        0&\text{otherwise},
    \end{cases}
    \end{aligned}    
    \end{equation}
    and $2i_{j_0}+t^{J+1}(J'')_{j_0}$ equals $p-1-r_j$ if $j_0\notin J$ (which implies $j_0+1\in J$), and equals $r_j+2$ if $j_0\in J$ (which implies $j_0+1\notin J$). In particular, by (\ref{General Eq c'J}) we have $2i_{j_0}+t^{J+1}(J'')_{j_0}=f+1+c^{\prime J}_{j_0}$.
    
    By Proposition \ref{General Prop relation 1} applied to $(\un{i},J+1,J'')$ with $j_0$ as above, taking $j'=j_0+1$ in (\ref{General Eq relation 1 statement}) when $J''=\emptyset$ and multiplying $Y_{j_0+1}^{\delta_{J''\neq\emptyset}}$ when $j_0+1\notin J'$, we deduce that 
    \begin{equation}\label{General Eq Lem z 3}
        \bbbra{Y_{j_0}^{f+1+c^{\prime J}_{j_0}}\scalebox{1}{$\prod$}_{j\notin J,j+1\notin J}Y_j^{p-1-r_j}\scalebox{1}{$\prod$}_{j\in J,j+1\in J}Y_j^{r_j+2}}\smat{p&0\\0&1}\bbra{\un{Y}^{-\un{i}}v_{J+1}}=0.
    \end{equation}
    Comparing (\ref{General Eq def zw}) and (\ref{General Eq Lem z 3}), to prove $Y_{j_0}^{f+1}z_J=0$, it is enough to show that 
    \begin{equation}\label{General Eq Lem z 6}
        c^{\prime J}_{j}\geq
    \begin{cases}
        p-1-r_j&\text{if}~j\notin J,~j+1\notin J\\
        r_j+2&\text{if}~j\in J,~j+1\in J,
    \end{cases}
    \end{equation}
    which follows directly from (\ref{General Eq c'J}) and (\ref{General Eq genericity}).

    \hspace{\fill}

    (d). Suppose that $j_0\notin J$ and $j_0+1\in J^{\nss}$, which implies $f\geq2$ and $j_0\in J\Delta(J-1)$. In particular, we have $j_0\notin J^{\sh}$. We let $\un{i}\eqdef\un{e}^{(J\cap(J+1))^{\nss}}$ and $J''\eqdef\bigbra{J\Delta(J-1)}\setminus\set{j_0}$. In particular, we have $i_{j_0+1}=0$. As in (c), the equality (\ref{General Eq Lem z 4}) still holds and we have $2i_{j_0}+t^{J+1}(J'')_{j_0}=f+1+c^{\prime J}_{j_0}$. We denote
    \begin{equation*}
        Z\eqdef\prod\limits_{j+1\notin J,j+2\notin J}Y_j^{p-1-r_j}\prod\limits_{j+1\in J,j+2\in J}Y_j^{r_j+2}\in\FF\ddbra{N_0}.
    \end{equation*}
    Then by Proposition \ref{General Prop relation 2} applied to $(\un{i},J+1,J'')$ with $j_0$ as above, using (\ref{General Eq relation 2 tJ}) and together with Lemma \ref{General Lem Yj}(i) applied to $j_0+1$ if moreover $j_0+1\notin J''$, we have
    \begin{multline}\label{General Eq Lem z 5}
        \bbbra{Y_{j_0+1}Y_{j_0}^{f+1+c^{\prime J}_{j_0}}Z}\smat{p&0\\0&1}\bbra{\un{Y}^{-\un{i}}v_{J+1}}\\
        =\frac{\mu_{J+1,*}}{\mu_{(J+1)\setminus\set{j_0+2},*}}Y_{j_0+1}^{p-1-r_{j_0+1}}\bbbra{Y_{j_0+1}Y_{j_0}^{f+1+c^{\prime J}_{j_0}}Z}\smat{p&0\\0&1}\bbra{\un{Y}^{-\un{i}'}v_{(J+1)\setminus\set{j_0+2}}},
    \end{multline}
    where $\un{i}'\eqdef\un{i}+\delta_{j_0+2\in J^{\ss}}e_{j_0+2}$. Then using (\ref{General Eq Lem z 6}) together with $c^{\prime J}_{j_0+1}\geq1$ by (\ref{General Eq c'J}) and (\ref{General Eq genericity}), we deduce from (\ref{General Eq def zw}) and (\ref{General Eq Lem z 5}) that
    \begin{equation}\label{General Eq Lem z 7}
        Y_{j_0}^{f+1}z_J=\frac{\mu_{J+1,*}}{\mu_{(J+1)\setminus\set{j_0+2},*}}\bbbra{Y_{j_0}^{f+1}Y_{j_0+1}^{p-1-r_{j_0+1}}\un{Y}^{\un{c}^{\prime J}}}\smat{p&0\\0&1}\bbra{\un{Y}^{-\un{i}'}v_{(J+1)\setminus\set{j_0+2}}}.
    \end{equation}
    Since we have
    \begin{equation*}
        \un{i}'+\un{e}^{((J+1)\setminus\set{j_0+2})^{\sh}}=\un{i}+\un{e}^{(J+1)^{\sh}}=\un{e}^{J\cap(J+1)},
    \end{equation*}
    by (\ref{General Eq seq small}) we have $\un{Y}^{-\un{i}'}v_{(J+1)\setminus\set{j_0+2}}=x_{(J+1)\setminus\set{j_0+2},\un{e}^{J\cap(J+1)}}$. Then by (\ref{General Eq Seq p001}) applied to $J\setminus\set{j_0+1}$ and $\un{i}=\un{e}^{J\cap(J+1)}$, and using $j_0+1\notin J^{\ss}$, we deduce from (\ref{General Eq Lem z 7}) that
    \begin{align*}
        Y_{j_0}^{f+1}z_J\!=\!\frac{\mu_{J+1,*}}{\mu_{(J+1)\setminus\set{j_0+2},*}}\!\bbbra{\sum\limits_{J^{\ss}\subseteq J'\subseteq J\setminus\set{j_0+1}}\!\mu_{(J+1)\setminus\set{j_0+2},J'}x_{J',\un{c}(J')}}\!=\!\sum\limits_{J^{\ss}\subseteq J'\subseteq J\setminus\set{j_0+1}}\!\mu_{J+1,J'}x_{J',\un{c}(J')}
    \end{align*}
    with $\un{c}(J')\in\ZZ^f$ such that 
    \begin{align*}
        c(J')_j&=p\delta_{j+1\in J\cap(J+1)}+c^{J\setminus\set{j_0+1}}_j+r^{(J\setminus\set{j_0+1})\setminus J'}_j-c^{\prime J}_j-(p-1-r_{j_0+1})\delta_{j=j_0+1}-(f+1)\delta_{j=j_0}\\
        &=c^{J\setminus\set{j_0+1}}_j+r^{(J\setminus\set{j_0+1})\setminus J'}_j-c^{J}_j-(p-1-r_{j_0+1})\delta_{j=j_0+1}+f-(f+1)\delta_{j=j_0}\\
        &=r^{(J\setminus\set{j_0+1})\setminus J'}_j+r^{\set{j_0+1}}_j+f-(f+1)\delta_{j=j_0}\\
        &=r^{J\setminus J'}_j+f-(f+1)\delta_{j=j_0},
    \end{align*}
    where the second equality follows from \eqref{General Eq cJ c'J}, the third equality follows from Lemma \ref{General Lem r and c}(v) applied to $J'=J\setminus\set{j_0+1}$, and the last equality follows from Lemma \ref{General Lem r and c}(ii). This proves the desired formula.

    \hspace{\fill}

    (e). Suppose that $j_0\in J$ and $j_0+1\in J^{\nss}$. The proof is similar to (d), except that we take $\un{i}\eqdef\un{e}^{(J\cap(J+1))^{\nss}\setminus\set{j_0+1}}$ and $J''\eqdef\cJ$.

    \hspace{\fill}

    (ii). We prove the second equality. By (\ref{General Eq def zw}) we have
    \begin{align}
        \un{Y}^{\un{f}-\un{e}^{J^{\sh}}}z_J&=\un{Y}^{\un{f}-\un{e}^{J^{\sh}}+\un{c}^{\prime J}}\smat{p&0\\0&1}x_{J+1,\un{e}^{J\cap(J+1)}}\notag\\
        &=\un{Y}^{\un{c}^J+(p-1)\un{e}^{J^{\sh}}}\smat{p&0\\0&1}\bbra{\un{Y}^{\un{e}^{(J\cap(J+1))^{\nss}+1}}x_{J+1,\un{e}^{J\cap(J+1)}}}\label{General Eq Lem z 9}\\
        &=\un{Y}^{\un{c}^J+(p-1)\un{e}^{J^{\sh}}}\smat{p&0\\0&1}x_{J+1,\un{e}^{(J^{\sh}+1)}},\label{General Eq Lem z 10}
    \end{align}    
    where the second equality follows from \eqref{General Eq cJ c'J} and Lemma \ref{General Lem Yj}(i), and the last equality follows from (\ref{General Eq seq small}) and the equality $\bigbra{J\cap(J+1)}\setminus\bigbra{\bra{J\cap(J-1)}^{\nss}+1}=J^{\sh}+1$.

    Suppose that $J^{\nss}=(\partial J)^{\nss}$, or equivalently $\bigbra{J\cap(J-1)}^{\nss}=\emptyset$. Then using (\ref{General Eq sJ}) and (\ref{General Eq cJ}), a case-by-case examination shows that
    \begin{equation*}
        \un{c}^J+(p-1)\un{e}^{J^{\sh}}=
    \begin{cases}
        s^J_{j}&\text{if}~j\in J\Delta(J-1)\\
        p-1&\text{if}~j\notin J\Delta(J-1).
    \end{cases}
    \end{equation*}
    We also have $x_{J+1,\un{e}^{J\cap(J+1)}}=\un{Y}^{-\un{e}^{(J\cap(J+1))^{\nss}}}v_{J+1}$ by (\ref{General Eq seq small}). Then by Proposition \ref{General Prop vector} applied to $(J+1,J)$, we deduce from (\ref{General Eq Lem z 9}) that $\un{Y}^{\un{f}-\un{e}^{J^{\sh}}}z_J=\mu_{J+1,J}v_J$.

    Suppose that $J^{\nss}\neq(\partial J)^{\nss}$ and $J^{\nss}\neq\cJ$, which implies $J^{\ss}\sqcup(\partial J)^{\nss}\subsetneqq J$. Then by Lemma \ref{General Lem lem for zw} applied to $(J,J)$ and using $\un{r}^{\emptyset}=\un{0}$ by (\ref{General Eq rJ}), we deduce from (\ref{General Eq Lem z 10}) that $\un{Y}^{\un{f}-\un{e}^{J^{\sh}}}z_J=0.$

    Suppose that $J^{\nss}=\cJ$, or equivalently $(J,J_{\rhobar})=(\cJ,\emptyset)$, which implies $J+1=J$ and $J^{\sh}=\emptyset$. Then by Proposition \ref{General Prop vector complement} applied to $\cJ$ and using $\un{c}^{\cJ}=\un{0}$ by (\ref{General Eq cJ}), we deduce from (\ref{General Eq Lem z 10}) that $\un{Y}^{\un{f}-\un{e}^{J^{\sh}}}z_J=\mu_{J,J}v_J+\mu_{J,\emptyset}x_{\emptyset,\un{r}}$.
\end{proof}

\begin{lemma}\label{General Lem w}
    Let $0\leq k\leq f$. Assume that Theorem \ref{General Thm seq} is true for $|J|\leq k-1$. Then for $|J|\leq k$, we have 
    \begin{align*}
        Y_{j_0}^{f+1-\delta_{j_0\in J^{\sh}}}w_J&=
    \begin{cases}
        \sum\limits_{J^{\ss}\subseteq J'\subseteq J\setminus\set{j_0+1}}\varepsilon_{J'}\mu_{J+1,J'}x_{J',\bigbra{\un{r}^{J\setminus J'}+\un{f}-(f+1-\delta_{j_0\in J^{\sh}})e_{j_0}}}&\text{if}~j_0+1\in J^{\nss}\\
        0&\text{if}~j_0+1\notin J^{\nss};
    \end{cases}\\
        \un{Y}^{\un{f}-\un{e}^{J^{\sh}}}w_J&=
    \begin{cases}
        0&\text{if}~J^{\nss}=(\partial J)^{\nss}\\
        (-1)^{|(J\setminus\partial J)^{\nss}|+1}\mu_{J+1,J}v_{J}&\text{if}~J^{\nss}\neq(\partial J)^{\nss}~\text{and}~J^{\nss}\neq\cJ\\
        \bigbra{1+(-1)^f}\mu_{J,J}v_J+\mu_{J,\emptyset}x_{\emptyset,\un{r}}&\text{if}~J^{\nss}=\cJ.
    \end{cases}
    \end{align*}
\end{lemma}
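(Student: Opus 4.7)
The plan is to treat the two formulas in parallel by computing each via a termwise application of the appropriate operator to
\begin{equation*}
    w_J=\sum_{J^{\ss}\subseteq J'\subsetneqq J}\varepsilon_{J'}\mu_{J+1,J'}x_{J',\un{f}+\un{r}^{J\setminus J'}},
\end{equation*}
and then regrouping. Since every $J'$ appearing in $w_J$ satisfies $|J'|\leq k-1$, Theorem \ref{General Thm seq}(ii),(iii) are available for such $J'$ by the induction hypothesis used in Lemma \ref{General Lem Seq reduction}. Part (ii) immediately pushes the operator through the sum, converting $Y_{j_0}^{f+1-\delta_{j_0\in J^{\sh}}}w_J$ (respectively $\un{Y}^{\un{f}-\un{e}^{J^{\sh}}}w_J$) into $\sum_{J'}\varepsilon_{J'}\mu_{J+1,J'}x_{J',\un{f}+\un{r}^{J\setminus J'}-\un{k}}$, where $\un{k}$ is the exponent of $\un{Y}$ that was applied.

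For the first formula, I would then partition the sum according to whether $j_0+1\in J'$ or not and analyse the two pieces using Theorem \ref{General Thm seq}(iii) (inductively for $J'$) exactly as in the proof of Lemma \ref{General Lem lie in D0}(ii). When $j_0+1\notin J^{\nss}$, two sub-cases arise: if $j_0+1\notin J$, the $j_0$-coordinate of $\un{r}^{J\setminus J'}$ equals $-\delta_{j_0\in J\setminus J'}$, so the shifted index is non-positive in that slot and an application of Proposition \ref{General Prop shift} via the inductive expression forces each term to vanish; if $j_0+1\in J^{\ss}$, then the constraint $J^{\ss}\subseteq J'$ forces $j_0+1\in J'$ for every index, and the terms cancel pairwise under the involution $J'\leftrightarrow J'\Delta\{j_0+2\}$, the cancellation being dictated by the ratio identity \eqref{General Eq ratio} together with Proposition \ref{General Prop relation 2}. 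When $j_0+1\in J^{\nss}$, both sub-sums appear; the sub-sum over $J'\ni j_0+1$ is again shown to vanish using Proposition \ref{General Prop relation 2} applied to $J'+1$, leaving precisely the restricted sum asserted. The second formula follows by the same strategy: the three cases $J^{\nss}=(\partial J)^{\nss}$, $J^{\nss}\neq(\partial J)^{\nss}$ with $J^{\nss}\neq\cJ$, and $J^{\nss}=\cJ$ mirror the three cases of Lemma \ref{General Lem z}(ii), with vanishing furnished by Lemma \ref{General Lem lem for zw} (from Appendix \ref{General Sec app lemmas}), a single surviving term $(-1)^{|(J\setminus\partial J)^{\nss}|+1}\mu_{J+1,J}v_J$ coming from Proposition \ref{General Prop vector} in the intermediate case, and the extra contribution $\mu_{J,\emptyset}x_{\emptyset,\un{r}}$ in the case $J=\cJ$ coming from Proposition \ref{General Prop vector complement}. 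The sign $\varepsilon_J$ of Definition \ref{General Def rcepi}(iii) has been calibrated precisely to produce the asserted coefficients, including the factor $1+(-1)^f$ when $J=\cJ$ via $\varepsilon_{\cJ}=(-1)^{f-1}$.

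The main obstacle is the combinatorial bookkeeping in the cancellation step. One must verify that the signs $\varepsilon_{J'}$ together with the ratios among the constants $\mu_{J+1,J'}$, governed by the transitivity relation \eqref{General Eq ratio} and the extension \eqref{General Eq general mu}, conspire so that all unwanted contributions cancel exactly. In particular, the pairing $J'\leftrightarrow J'\Delta\{j_0+1\}$ (respectively $J'\Delta\{j_0+2\}$) must shift the index $\un{f}+\un{r}^{J\setminus J'}$ and the scalar $\varepsilon_{J'}\mu_{J+1,J'}$ in compatible ways; the required shift identities are precisely those collected in Appendix \ref{General Sec app lemmas}, of the same flavour as Lemma \ref{General Lemma relation 2} and the identities among $\un{c}^J$, $\un{r}^J$, $\un{c}(J')$ already exploited in Lemma \ref{General Lem z}. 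Edge cases, notably when $j_0+1\in(J+1)^{\sh}$ or $(J_{\rhobar},J)=(\emptyset,\cJ)$, require the separate treatment that is already anticipated by the special definition of $\varepsilon_J$ for $J=\cJ$ and the distinction made between $J^{\ss}$ and $J^{\nss}$ throughout.
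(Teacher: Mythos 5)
Your opening move --- pushing $Y_{j_0}^{f+1-\delta_{j_0\in J^{\sh}}}$ (resp.\ $\un{Y}^{\un{f}-\un{e}^{J^{\sh}}}$) through the sum defining $w_J$ term by term via Theorem \ref{General Thm seq}(ii), which is legitimate since every $J'$ occurring satisfies $|J'|\leq k-1$ --- is exactly the paper's first step. After that, however, your vanishing mechanisms do not work, and the actual inputs are missing. For the first formula the paper shows that \emph{each individual term} $x_{J',\un{r}^{J\setminus J'}+\un{f}-(f+1-\delta_{j_0\in J^{\sh}})e_{j_0}}$ with $j_0+1\notin J\setminus J'$ vanishes, by Corollary \ref{General Cor w 0}, which in turn rests on the deep vanishing result Proposition \ref{General Prop more 0}. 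Your substitute arguments fail: the index $\un{r}^{J\setminus J'}+\un{f}-(f+1-\delta_{j_0\in J^{\sh}})e_{j_0}$ has coordinates as large as $r_j+1+f>f$ away from $j_0$, so it is not $\leq\un{f}$ and a negative $j_0$-coordinate does not force vanishing via \eqref{General Eq seq small} or Proposition \ref{General Prop shift} (nor does Proposition \ref{General Prop degree+finiteness} apply, since $\norm{\un{i}}\approx f^2$; Lemma \ref{General Lem simple 0} would require $j_0,j_0+1\notin J'$, which need not hold). Likewise, no pairwise cancellation under $J'\leftrightarrow J'\Delta\set{j_0+2}$ occurs --- that involution does not even preserve the index set $\set{J':J^{\ss}\subseteq J'\subsetneqq J}$ --- the terms simply vanish one by one.

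For the second formula the missing ingredient is Proposition \ref{General Prop xJ' xJ}(i), which evaluates each term: $\varepsilon_{J'}x_{J',\un{r}^{J\setminus J'}+\un{e}^{J^{\sh}}}$ equals $0$ when $J'\nsupseteq J^{\ss}\sqcup(\partial J)^{\nss}$ and equals $(-1)^{|(J'\setminus\partial J)^{\nss}|}(\mu_{*,J}/\mu_{*,J'})v_J$ otherwise; the asserted coefficient then drops out of the alternating binomial sum $\sum_{i=0}^{m-1}(-1)^i\binom{m}{i}=(-1)^{m+1}$ as in \eqref{General Eq xJ'xJ 7} (with the extra $x_{\emptyset,\un{r}}$-term handled by Proposition \ref{General Prop xJ' xJ}(ii) when $J^{\nss}=\cJ$). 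You cite only the ingredients used \emph{inside the proofs} of that proposition and of Lemma \ref{General Lem z} (Lemma \ref{General Lem lem for zw}, Propositions \ref{General Prop vector} and \ref{General Prop vector complement}), not the evaluation statement itself, and you do not identify the alternating-sum computation that produces $(-1)^{|(J\setminus\partial J)^{\nss}|+1}$. So the skeleton is right but both halves of the argument lack the lemmas that actually make them go through.
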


\begin{proof}
    (i). We prove the first equality. By definition and Theorem \ref{General Thm seq}(ii) (applied to $J'$ such that $J^{\ss}\subseteq J'\subsetneqq J$, which implies $|J'|\leq k-1$), we have
    \begin{equation*}
        Y_{j_0}^{f+1-\delta_{j_0\in J^{\sh}}}w_J=\sum\limits_{J^{\ss}\subseteq J'\subsetneqq J}\varepsilon_{J'}\mu_{J+1,J'}x_{J',\bigbra{\un{r}^{J\setminus J'}+\un{f}-(f+1-\delta_{j_0\in J^{\sh}})e_{j_0}}}.
    \end{equation*}
    By Corollary \ref{General Cor w 0}, we have $x_{J',\bigbra{\un{r}^{J\setminus J'}+\un{f}-(f+1-\delta_{j_0\in J^{\sh}})e_{j_0}}}=0$ if $J^{\ss}\subseteq J'\subseteq J$ and $j_0+1\notin J\setminus J'$. Then we easily conclude.

    \hspace{\fill}

    (ii). We prove the second equality. We assume that $J^{\nss}\neq\cJ$, the case $J^{\nss}=\cJ$ being similar. Then by Theorem \ref{General Thm seq}(ii) (applied to $J'$ such that $J^{\ss}\subseteq J'\subsetneqq J$) and Proposition \ref{General Prop xJ' xJ}(i) we have
    \begin{align*}
        \un{Y}^{\un{f}-\un{e}^{J^{\sh}}}w_J&=\sum\limits_{J^{\ss}\subseteq J'\subsetneqq J}\varepsilon_{J'}\mu_{J+1,J'}x_{J',\un{r}^{J\setminus J'}+\un{e}^{J^{\sh}}}=\bbbra{\sum\limits_{J^{\ss}\sqcup(\partial J)^{\nss}\subseteq J'\subsetneqq J}(-1)^{\abs{(J'\setminus\partial J)^{\nss}}}}\mu_{J+1,J}v_{J}\\
        &=
        \begin{cases}
            0&\text{if}~J^{\nss}=(\partial J)^{\nss}\\
            (-1)^{\abs{(J\setminus\partial J)^{\nss}}+1}\mu_{J+1,J}v_J&\text{if}~J^{\nss}\neq(\partial J)^{\nss},
        \end{cases}
    \end{align*}
    where the last equality follows as in (\ref{General Eq xJ'xJ 7}).
\end{proof}

In particular, by Lemma \ref{General Lem z} and Lemma \ref{General Lem w} with a case-by-case examination, we have
\begin{align*}
    Y_j^{f+1-\delta_{j\in J^{\sh}}}(z_J-w_J)&=0~\forall\,j\in\cJ;\\
    \un{Y}^{\un{f}-\un{e}^{J^{\sh}}}(z_J-w_J)&=\varepsilon_Jv_J.
\end{align*}
Then (\ref{General Eq zw}) is a consequence of Lemma \ref{General Lem lie in D0} and Proposition \ref{General Prop shift}. This proves the existence of the family $\set{x_{J,\un{i}}:J\subseteq\cJ,\un{i}\in\ZZ^f}$. Finally, the uniqueness is clear from the construction. This completes the proof of Theorem \ref{General Thm seq}. 
\end{proof}

\begin{corollary}\label{General Cor xJi}
    Let $J\subseteq\cJ$ and $\un{i}\in\ZZ^f$. Then $H$ acts on $x_{J,\un{i}}$ (possibly zero) by the character $\chi'_J\alpha^{-\un{i}}$, where $\chi'_J\eqdef\chi_J\alpha^{\un{e}^{J^{\sh}}}$ (see \S\ref{General Sec sw} for $\chi_J$).
\end{corollary}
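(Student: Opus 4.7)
The plan is to prove the statement by a nested induction, mirroring the construction of $x_{J,\un{i}}$ in Theorem~\ref{General Thm seq}: an outer induction on $\abs{J}$, and for each fixed $\abs{J}=k$, an inner induction on $\max_j i_j$. For the base case $\un{i}\leq\un{f}$, the element $x_{J,\un{i}}$ is either zero (trivially an $H$-eigenvector for any character) or equals $\un{Y}^{-(\un{i}-\un{e}^{J^{\sh}})}v_J$. Since $v_J$ has $H$-character $\chi_J$ by definition, Lemma~\ref{General Lem Yj}(ii) applied iteratively yields the $H$-character $\chi_J\alpha^{-(\un{i}-\un{e}^{J^{\sh}})}=\chi'_J\alpha^{-\un{i}}$ in the second case.

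For the inductive step, we invoke the defining identity (\ref{General Eq Seq def}). Writing $\un{i}=p\delta(\un{i}')+\un{c}^J-\un{\ell}$ with $\un{0}\leq\un{\ell}\leq\un{p}-\un{1}$, the element $\varepsilon_J\mu_{J+1,J}x_{J,\un{i}}$ equals $\un{Y}^{\un{\ell}}\smat{p&0\\0&1}x_{J+1,\un{i}'}$ minus the terms $\varepsilon_{J'}\mu_{J+1,J'}x_{J',\un{i}+\un{r}^{J\setminus J'}}$ for $J^{\ss}\subseteq J'\subsetneqq J$. By the inner induction hypothesis (applied to $x_{J+1,\un{i}'}$, where $\max_j i'_j<\max_j i_j$ by (\ref{General Eq max1<max2})) and the outer one (applied to the terms with $\abs{J'}<\abs{J}$), each summand is an $H$-eigenvector. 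Since $\smat{p&0\\0&1}$ commutes with $H$ and Lemma~\ref{General Lem Yj}(ii) shifts the $H$-character by $\alpha^{\un{\ell}}$, the first summand has character $\chi'_{J+1}\alpha^{\un{\ell}-\un{i}'}$; the elementary congruence $\sum_j p\cdot i'_{j+1}p^j \equiv \sum_j i'_j p^j \pmod{q-1}$ (i.e.\ $\alpha^{p\delta(\un{i}')}=\alpha^{\un{i}'}$ as characters of $H$) simplifies this to $\chi'_{J+1}\alpha^{\un{c}^J-\un{i}}$. The remaining terms carry the character $\chi'_{J'}\alpha^{-\un{i}-\un{r}^{J\setminus J'}}$.

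Consistency --- i.e., that $x_{J,\un{i}}$ is an $H$-eigenvector with character $\chi'_J\alpha^{-\un{i}}$ --- then reduces to the two identities $\chi'_{J+1}\alpha^{\un{c}^J}=\chi'_J$ and $\chi'_{J'}\alpha^{-\un{r}^{J\setminus J'}}=\chi'_J$ for each $J^{\ss}\subseteq J'\subseteq J$ (the case $J'=J$ being trivial since $\un{r}^{\emptyset}=\un{0}$). Unwinding $\chi'_J=\chi_{\lambda_J}\alpha^{\un{e}^{J^{\sh}}}$ with $\lambda_J=(\un{s}^J+\un{t}^J,\un{t}^J)$, each identity becomes an equality of $a$- and $d$-exponents upon evaluation at $\smat{[a]&0\\0&[d]}$, hence a pair of congruences modulo $q-1$ among components of $\un{s}^J,\un{t}^J,\un{e}^{J^{\sh}},\un{c}^J,\un{r}^{J\setminus J'}$. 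The main obstacle is precisely this case-by-case verification --- splitting on whether $j$ and $j+1$ lie in $J$, $J'$, and $J_{\rhobar}$, and appealing to the explicit formulas \eqref{General Eq sJ}, \eqref{General Eq tJ}, \eqref{General Eq rJ}, \eqref{General Eq cJ} --- but it is purely combinatorial and involves no further representation-theoretic input.
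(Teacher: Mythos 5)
Your proof is correct and follows essentially the same route as the paper: the same nested induction on $\abs{J}$ and $\max_j i_j$, the base case via Proposition \ref{General Prop shift} and Lemma \ref{General Lem Yj}(ii), and the inductive step via (\ref{General Eq Seq def}) together with the two character identities $\chi'_{J+1}\alpha^{\un{c}^J}=\chi'_J$ and $\chi'_{J'}\alpha^{-\un{r}^{J\setminus J'}}=\chi'_J$. The only cosmetic difference is that the paper delegates those identities to Lemma \ref{General Lem r and c}(i),(ii),(iii) in the appendix rather than verifying them inline.
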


\begin{proof}
    We prove the result by increasing induction on $|J|$ and $\max_ji_j$. If $\un{i}\leq\un{f}$, then the claim follows from (\ref{General Eq seq small}) and Proposition \ref{General Prop shift}. Next we assume that $\max_ji_j>f$ and write $\un{i}=p\delta(\un{i}')+\un{c}^{J}-\un{\ell}$ for the unique $\un{i}',\un{\ell}\in\ZZ^f$ such that $\un{0}\leq\un{\ell}\leq\un{p}-\un{1}$. In particular, we have $\max_ji'_j<\max_ji_j$ (see \ref{General Eq max1<max2}). By the induction hypothesis and Lemma \ref{General Lem Yj}(ii), $H$ acts on $\un{Y}^{\un{\ell}}\smat{p&0\\0&1}x_{J+1,\un{i}'}$ by the character 
    \begin{equation*}
        \chi'_{J+1}\alpha^{-\un{i}'+\un{\ell}}=\chi'_{J+1}\alpha^{-p\delta(\un{i}')+\un{\ell}}=\chi'_J\alpha^{\un{r}^J-\un{r}^{J+1}-p\delta(\un{i}')+\un{\ell}}=\chi'_J\alpha^{-\un{c}^J-p\delta(\un{i}')+\un{\ell}}=\chi'_J\alpha^{-\un{i}},
    \end{equation*}
    where the second equality follows from Lemma \ref{General Lem r and c}(i) and the third equality follows from Lemma \ref{General Lem r and c}(iii). By the induction hypothesis, for each $J'$ such that $J^{\ss}\subseteq J'\subsetneqq J$,  $H$ acts on $x_{J',\un{i}+\un{r}^{J\setminus J'}}$ by the character 
    \begin{equation*}
        \chi'_{J'}\alpha^{-\un{i}-\un{r}^{J\setminus J'}}=\chi'_{J}\alpha^{\un{r}^J-\un{r}^{J'}-\un{i}-\un{r}^{J\setminus J'}}=\chi'_J\alpha^{-\un{i}},
    \end{equation*}
    where the first equality follows from Lemma \ref{General Lem r and c}(i) and the second equality follows from Lemma \ref{General Lem r and c}(ii). Hence we deduce from (\ref{General Eq Seq def}) that $H$ acts on $x_{J,\un{i}}$ by the character $\chi'_J\alpha^{-\un{i}}$.
\end{proof}

\section{The finiteness condition}\label{General Sec finiteness}

In this section, we prove the crucial finiteness condition for the family of elements $(x_{J,\un{i}})_{J,\un{i}}$ of Theorem \ref{General Thm seq} to give rise to a basis of $D_A(\pi)$. The main result is Theorem \ref{General Thm finiteness}. The proof is by induction on $\un{i}$, the base case being a consequence of the vanishing result: Proposition \ref{General Prop more 0}.

\begin{lemma}\label{General Lem lem for finiteness induction}
    Let $J\subseteq\cJ$ and $\un{i}\in\ZZ^f$. Suppose that $x_{J',\un{i}}=0$ for all $|J'|\leq|J|$. Then we have $x_{J,p\delta(\un{i})+\un{c}^{J}}=0$.
\end{lemma}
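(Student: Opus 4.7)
The plan is to proceed by strong induction on $|J|$. The key tool is Theorem \ref{General Thm seq}(iii) applied at $(J,\un{i})$:
\[\smat{p&0\\0&1}x_{J+1,\un{i}}=\sum_{J^{\ss}\subseteq J'\subseteq J}\varepsilon_{J'}\mu_{J+1,J'}\,x_{J',p\delta(\un{i})+\un{c}^J+\un{r}^{J\setminus J'}}.\]
Since $|J+1|=|J|$, the hypothesis forces $x_{J+1,\un{i}}=0$, hence the left-hand side vanishes. The term $J'=J$ on the right is exactly $\varepsilon_J\mu_{J+1,J}\,x_{J,p\delta(\un{i})+\un{c}^J}$ (using $\un{r}^{\emptyset}=\un{0}$), so it suffices to show that each term with $J^{\ss}\subseteq J'\subsetneqq J$ vanishes. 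The base case $J=\emptyset$ is immediate, as the sum collapses to the single $J'=\emptyset$ term and the assumption $x_{\emptyset,\un{i}}=0$ directly yields $x_{\emptyset,p\delta(\un{i})+\un{c}^{\emptyset}}=0$.

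For any $J'$ with $J^{\ss}\subseteq J'\subsetneqq J$, one has $|J'|<|J|$, and the lemma's hypothesis automatically restricts to all $J'''$ with $|J'''|\leq|J'|$; the inductive hypothesis applied to $(J',\un{i})$ therefore yields $x_{J',p\delta(\un{i})+\un{c}^{J'}}=0$. The only remaining step is the coordinate-wise inequality $\un{c}^J+\un{r}^{J\setminus J'}\leq\un{c}^{J'}$: once this is granted, setting $\un{k}\eqdef\un{c}^{J'}-\un{c}^J-\un{r}^{J\setminus J'}\in\NNN^f$ and applying Theorem \ref{General Thm seq}(ii) gives
\[x_{J',p\delta(\un{i})+\un{c}^J+\un{r}^{J\setminus J'}}=\un{Y}^{\un{k}}\,x_{J',p\delta(\un{i})+\un{c}^{J'}}=0.\]
Plugging back into the displayed identity collapses it to $\varepsilon_J\mu_{J+1,J}\,x_{J,p\delta(\un{i})+\un{c}^J}=0$, whence the conclusion, since $\varepsilon_J\mu_{J+1,J}\in\FF^{\times}$.

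The combinatorial claim $\un{c}^{J'}\geq\un{c}^J+\un{r}^{J\setminus J'}$ is the only real calculation. I would verify it by a case analysis on the four values of $(\delta_{j\in J},\delta_{j+1\in J})$, further refined by the position of $j$ and $j+1$ relative to $K\eqdef J\setminus J'$. Using the definitions (\ref{General Eq cJ}) and (\ref{General Eq rJ}), one finds the clean identity
\[\un{c}^{J'}_j-\un{c}^J_j-\un{r}^{J\setminus J'}_j=(p-1-r_j)\,\delta_{j\in K},\]
which is nonnegative (in fact positive on $K$) by the genericity assumption (\ref{General Eq genericity}). I do not anticipate any serious obstacle here: the argument rests entirely on recognising this closed form, after which the induction runs cleanly, with the main care being to apply the inductive hypothesis at the same $\un{i}$ as in the statement.
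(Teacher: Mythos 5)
Your proposal is correct and follows essentially the same route as the paper: induction on $|J|$, applying Theorem \ref{General Thm seq}(iii) to the vanishing element $x_{J+1,\un{i}}$, killing the terms with $J'\subsetneqq J$ via the inductive hypothesis combined with the inequality $\un{c}^{J'}\geq\un{c}^J+\un{r}^{J\setminus J'}$ and Theorem \ref{General Thm seq}(ii). The closed-form identity you isolate, $c^{J'}_j-c^J_j-r^{J\setminus J'}_j=(p-1-r_j)\delta_{j\in J\setminus J'}$, is exactly Lemma \ref{General Lem r and c}(v) of the appendix, so no gap remains.
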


\begin{proof}
    We use increasing induction on $|J|$. By Theorem \ref{General Thm seq}(iii) we have
    \begin{equation*}
        0=\smat{p&0\\0&1}x_{J+1,\un{i}}=\sum\limits_{J^{\ss}\subseteq J'\subseteq J}\varepsilon_{J'}\mu_{J+1,J'}x_{J',p\delta(\un{i})+\un{c}^{J}+\un{r}^{J\setminus J'}}.
    \end{equation*}
    By the induction hypothesis, for all $J'\subsetneqq J$ we have $x_{J',p\delta(\un{i})+\un{c}^{J'}}=0$.
    Since $\un{c}^{J'}\geq\un{c}^{J}+\un{r}^{J\setminus J'}$ by Lemma \ref{General Lem r and c}(v), we deduce from Theorem \ref{General Thm seq}(ii) that $x_{J',p\delta(\un{i})+\un{c}^{J}+\un{r}^{J\setminus J'}}=0$ for all $J'\subsetneqq J$. Hence we conclude that $x_{J,p\delta(\un{i})+\un{c}^{J}}=0$.
\end{proof}

\begin{proposition}\label{General Prop degree+finiteness}
    Let $\un{i}\in\ZZ^f$ satisfying
    \begin{enumerate}
    \item
    $\norm{\un{i}}\leq f$;
    \item 
    $i_{j}\leq-1$ for some $j\in\cJ$.
    \end{enumerate}
    Then we have $x_{J,\un{i}}=0$ for all $J\subseteq\cJ$.
\end{proposition}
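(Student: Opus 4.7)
The plan is to proceed by strong induction on $M := \max_k i_k$, with base case $\un{i} \leq \un{f}$. In the base case, the explicit formula (\ref{General Eq seq small}) gives $x_{J,\un{i}} = \un{Y}^{-(\un{i}-\un{e}^{J^{\sh}})} v_J$ if $\un{i} \geq \un{e}^{J^{\sh}}$ and $0$ otherwise; since $i_{j_0} \leq -1 < 0 \leq \delta_{j_0 \in J^{\sh}} = (\un{e}^{J^{\sh}})_{j_0}$, the condition $\un{i} \geq \un{e}^{J^{\sh}}$ fails and hence $x_{J,\un{i}} = 0$ for every $J$. Note this covers every $J$ uniformly, which is essential for the induction to feed back into itself.

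For the inductive step with $M > f$, I would write $\un{i} = p\delta(\un{i}') + \un{c}^J - \un{\ell}$ with unique $\un{i}',\un{\ell} \in \ZZ^f$ such that $\un{0} \leq \un{\ell} \leq \un{p}-\un{1}$, and use the defining recursion (\ref{General Eq Seq def}):
\[
\varepsilon_J \mu_{J+1,J}\, x_{J,\un{i}} \;=\; \un{Y}^{\un{\ell}} \smat{p&0\\0&1} x_{J+1,\un{i}'} \;-\; \sum_{J^{\ss} \subseteq J' \subsetneqq J} \varepsilon_{J'} \mu_{J+1,J'}\, x_{J',\,\un{i} + \un{r}^{J\setminus J'}}.
\]
One checks directly that $\norm{\un{i}'} \leq f$ (from $p\norm{\un{i}'} = \norm{\un{i}} - \norm{\un{c}^J} + \norm{\un{\ell}} \leq f + f(p-1) = fp$) and that $\max_k i'_k \leq (M+p-1)/p < M$ (valid since $M \geq f+1 \geq 2$). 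The critical further step is to verify condition (ii) for $\un{i}'$: since $p\, i'_{j_0+1} = i_{j_0} - c^J_{j_0} + \ell_{j_0}$, in the case $c^J_{j_0} = p-1$ (equivalently $j_0, j_0+1 \notin J$) we immediately get $i'_{j_0+1} \leq -1$; the other cases require a case analysis based on (\ref{General Eq cJ}) together with the genericity bounds $r_j \in [2f+1, p-3-2f]$, potentially combined with applying (\ref{General Eq Seq p001}) with a different starting $J$ to move the problematic coordinate into the favorable position.

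Once $\un{i}'$ is known to satisfy conditions (i) and (ii), the induction hypothesis applied to $\un{i}'$ gives $x_{J+1,\un{i}'} = 0$, which kills the first term of the recursion. The main obstacle is then the sum of correction terms $x_{J',\,\un{i}+\un{r}^{J\setminus J'}}$ for $J^{\ss}\subseteq J' \subsetneqq J$: each such summand has $\norm{\un{i} + \un{r}^{J\setminus J'}} \geq \norm{\un{i}} + (2f+1)|J\setminus J'| > f$, so condition (i) fails for the indices appearing there and the induction hypothesis does \emph{not} apply term by term. Rather than showing each summand vanishes individually, the entire linear combination must cancel. I expect this to follow by combining Lemma \ref{General Lem lem for finiteness induction} (applied to the shifted indices $p\delta(\un{i}')+\un{c}^{J'}$ for $J' \subsetneqq J$, using $\un{c}^{J'} \geq \un{c}^J + \un{r}^{J\setminus J'}$ from Lemma \ref{General Lem r and c}(v) and Theorem \ref{General Thm seq}(ii)) with the compatibility identities among the $\mu_{J+1,J'}$ established in \S\ref{General Sec relation}. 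This cancellation mechanism, which parallels the one built into the very definition of $x_{J,\un{i}}$, is the delicate heart of the argument and will be the main obstacle in executing the plan.
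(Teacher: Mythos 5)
Your base case and the reduction $\un{i}=p\delta(\un{i}')+\un{c}^J-\un{\ell}$ match the paper, and your treatment of the correction terms is essentially what Lemma~\ref{General Lem lem for finiteness induction} does (though note the mechanism there is that each summand $x_{J',\un{i}+\un{r}^{J\setminus J'}}$ vanishes \emph{individually}, via $x_{J',p\delta(\un{i}')+\un{c}^{J'}}=0$ and $\un{c}^{J'}\geq\un{c}^J+\un{r}^{J\setminus J'}$, not that the linear combination cancels through identities among the $\mu_{J+1,J'}$). The genuine gap is the step you flag as ``the critical further step'': condition (ii) for $\un{i}'$ simply fails in general, and no choice of auxiliary recursion can rescue it. The set $J$ is fixed by the statement, so you must run (\ref{General Eq Seq def}) with that $J$; if, say, $j_0,j_0+1\in J$ then $c^J_{j_0}=0$ by (\ref{General Eq cJ}), and $i_{j_0}=-m'$ with $1\leq m'\leq p-1$ forces $\ell_{j_0}=m'$ and $i'_{j_0+1}=0$ --- the negative coordinate is absorbed. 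A concrete instance: $f=2$, $\un{i}=(-1,3)$, $J=\cJ$ gives $\un{i}'=(1,0)\geq\un{0}$, so the inductive hypothesis gives nothing, the leading term $\un{Y}^{\un{\ell}}\smat{p&0\\0&1}x_{J+1,\un{i}'}$ does not vanish term by term, and the argument stalls. Your suggestion of ``applying (\ref{General Eq Seq p001}) with a different starting $J$'' is not available, since that identity computes $\smat{p&0\\0&1}x_{J+1,\un{i}}$ only in terms of the fixed $J$.

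The paper resolves exactly this case ($\un{i}'\geq\un{0}$) by a completely different input: it invokes the vanishing result Proposition~\ref{General Prop more 0}, producing an index $\un{a}^{J_0}(\un{n})-e_{j_0+1}\geq\un{i}$ (see Definition~\ref{General Def aJn}) at which $x_{J_0,\cdot}$ is already known to vanish, and then concludes by Theorem~\ref{General Thm seq}(ii). That proposition is not a formal consequence of the recursion; it rests on the representation-theoretic relations of \S\ref{General Sec relation} (Propositions~\ref{General Prop relation 1} and~\ref{General Prop relation 2}) inside $D_0(\rhobar)$ and its translates. So the dichotomy you need in the inductive step is: if some $i'_j<0$, proceed as you propose via Lemma~\ref{General Lem lem for finiteness induction}; if $\un{i}'\geq\un{0}$, you must import Proposition~\ref{General Prop more 0} (or an equivalent vanishing statement), and without it the proof does not close.
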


\begin{proof}
    If $f=1$, then we conclude using (\ref{General Eq seq small}). From now on we assume that $f\geq2$. We use increasing induction on $\max_ji_j$. If $\un{i}\leq\un{f}$, then the lemma follows from (\ref{General Eq seq small}). Then we assume that $\max_ji_j>f$ and let $J_0\subseteq\cJ$. We write $\un{i}=p\delta(\un{i}')+\un{c}^{J_0}-\un{\ell}$ for the unique $\un{i}',\un{\ell}\in\ZZ^f$ such that $\un{0}\leq\un{\ell}\leq\un{p}-\un{1}$. In particular, we have $\max_j{i'_j}<\max_j{i_j}$ (see (\ref{General Eq max1<max2})). Since $\un{c}^{J_0}\geq\un{0}$ by (\ref{General Eq cJ}), we also have
    \begin{equation*}
        \norm{\un{i}'}=\bbra{\norm{\un{i}}-\norm{\un{c}^{J_0}}+\norm{\un{\ell}}}/p\leq(f-0+(p-1)f)/p=f.
    \end{equation*}
    
    Suppose that $i'_{j}<0$ for some $j$. Then by the induction hypothesis, we have $x_{J,\un{i}'}=0$ for all $J\subseteq\cJ$ (in particular, for all $|J|\leq|J_0|$). Since $p\delta(\un{i}')+\un{c}^{J_0}\geq\un{i}$, we deduce from Lemma \ref{General Lem lem for finiteness induction} and Theorem \ref{General Thm seq}(ii) that $x_{J_0,\un{i}}=0$.

    Suppose that $i'_j\geq0$ for all $j$, which implies $i_j\geq-(p-1)$ for all $j$. Since $\norm{i}\leq f$, we deduce that $i_j\leq(f-1)(p-1)+f$ for all $j$. We write $\min_ji_j=-m'$ with $1\leq m'\leq p-1$ and fix $j_0\in\cJ$ such that $i_{j_0}=-m'$. Then we let $\un{n}\in\ZZ^f$ with $n_{j_0+1}=0$, $n_j=m'$ for $j\neq j_0+1$ if $1\leq m'\leq2f-2$, and $n_j=2f-1$ for $j\neq j_0+1$ if $2f-1\leq m'\leq p-1$. In particular, $\un{n}$ satisfies the conditions in Definition \ref{General Def aJn}(ii) for $J_0$ and $j_0$. By Proposition \ref{General Prop more 0} applied to $J_0$ and $j_0$ we have $x_{J_0,\un{a}^{J_0}(\un{n})-e_{j_0+1}}=0$ (see Definition \ref{General Def aJn} for $\un{a}^{J_0}(\un{n})$). Then the result follows from Theorem \ref{General Thm seq}(ii) and the Claim below.

    \hspace{\fill}

    \noindent\textbf{Claim.} We have $\un{a}^{J_0}(\un{n})-e_{j_0+1}\geq\un{i}$.

    \proof We assume that $1\leq m'\leq 2f-2$, the case $2f-1\leq m'\leq p-1$ being similar. Since $\norm{\un{i}}\leq f$ and $i_j\geq-m'$ for all $j$, we deduce that $i_j\leq(f-1)m'+f$ for all $j$. Hence it suffices to show that 
    \begin{equation*}
    \begin{cases}
        a^{J_0}(\un{n})_{j_0}\geq-m'\\
        a^{J_0}(\un{n})_j-\delta_{j=j_0+1}\geq(f-1)m'+f&\text{if}~j\neq j_0.
    \end{cases}
    \end{equation*}
    
    By Definition \ref{General Def aJn}(ii), we have $a^{J_0}(\un{n})_{j_0}=0\geq-m'$ if $j_0\in J_0^{\sh}$, and $a^{J_0}(\un{n})_{j_0}=t^{J_0}_{j_0}(0)-n_{j_0}=-m'$ (since $n_{j_0+1}=0$) if $j_0\notin J_0^{\sh}$. For $j\neq j_0$, by Definition \ref{General Def aJn}(i),(ii) we have
    \begin{align*}
        a^{J_0}(\un{n})_j-\delta_{j=j_0+1}&=t^{J_0}_j(m')-n_j-\delta_{j=j_0+1}\\
        &=p[m'/2]+\delta_{2\nmid m'}\bigbra{\delta_{j+1\notin J}(r_j+1)+\delta_{j+1\in J}(p-1-r_j)}-(n_j+\delta_{j=j_0+1})\\
        &\geq(4f+4)[m'/2]+\delta_{2\nmid m'}(2f+2)-1\\
        &=(2f+2)m'-1=fm'+fm'+(2m'-1)>(f-1)m'+f,
    \end{align*}
    where the first inequality uses (\ref{General Eq genericity}) and $p\geq 4f+4$, and the last inequality uses $m'\geq1$. Here for $x\in\RR$, we denote by $[x]$ the largest integer which is smaller than or equal to $x$. This proves the claim.
\end{proof}

\begin{corollary}\label{General Cor degree}
    Let $J\subseteq\cJ$, $\un{i}\in\ZZ^f$ and $\un{k}\in\NNN^f$.
    \begin{enumerate}
    \item 
    If $\norm{\un{k}}>\norm{\un{i}}-|J^{\sh}|$, then we have $\un{Y}^{\un{k}}x_{J,\un{i}}=0$.
    \item 
    If $\norm{\un{k}}=\norm{\un{i}}-|J^{\sh}|$ and $\un{Y}^{\un{k}}x_{J,\un{i}}\neq0$, then we have $\un{k}=\un{i}-\un{e}^{J^{\sh}}$.
    \end{enumerate}
\end{corollary}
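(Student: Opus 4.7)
The plan is to reduce both statements to a common analysis of $x_{J,\un{i}'}$ where $\un{i}' \eqdef \un{i} - \un{k}$, using Theorem~\ref{General Thm seq}(ii), which gives $\un{Y}^{\un{k}}x_{J,\un{i}} = x_{J,\un{i}-\un{k}}$. In part~(i) the hypothesis is $\|\un{i}'\| < |J^{\sh}|$, and in part~(ii) the hypothesis is $\|\un{i}'\| = |J^{\sh}|$ together with $x_{J,\un{i}'} \neq 0$. In both cases, the key dichotomy is whether some component $i'_j$ is negative.

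First, if $i'_j \leq -1$ for some $j$, then since $\|\un{i}'\| \leq |J^{\sh}| \leq f$, Proposition~\ref{General Prop degree+finiteness} applies directly and yields $x_{J,\un{i}'} = 0$. This immediately disposes of the corresponding sub-case of~(i), and in~(ii) rules out this situation entirely (contradicting $x_{J,\un{i}'} \neq 0$).

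Second, if $i'_j \geq 0$ for all $j$, then $i'_j \leq \|\un{i}'\| \leq |J^{\sh}| \leq f$, so $\un{i}' \leq \un{f}$ and the base-case formula~(\ref{General Eq seq small}) applies: $x_{J,\un{i}'} = \un{Y}^{-(\un{i}'-\un{e}^{J^{\sh}})}v_J$ if $\un{i}' \geq \un{e}^{J^{\sh}}$ and $x_{J,\un{i}'} = 0$ otherwise. For part~(i), $\|\un{i}'\| < |J^{\sh}|$ forces $\un{i}' \not\geq \un{e}^{J^{\sh}}$ by a pigeonhole argument on the coordinates in $J^{\sh}$, so $x_{J,\un{i}'} = 0$. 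For part~(ii), the non-vanishing assumption forces $\un{i}' \geq \un{e}^{J^{\sh}}$, and combined with the equality $\|\un{i}'\| = |J^{\sh}| = \|\un{e}^{J^{\sh}}\|$ this pins down $\un{i}' = \un{e}^{J^{\sh}}$, i.e.\ $\un{k} = \un{i} - \un{e}^{J^{\sh}}$.

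There is no real obstacle here; the heavy lifting has already been done in Proposition~\ref{General Prop degree+finiteness} (via the vanishing results of the appendix). The only subtle point is ensuring that the all-non-negative case reduces to the base formula~(\ref{General Eq seq small}), which follows from the elementary observation that non-negativity together with $\|\un{i}'\| \leq f$ bounds each coordinate by $f$.
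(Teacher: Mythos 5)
Your proof is correct and follows essentially the same route as the paper: reduce to $x_{J,\un{i}-\un{k}}$ via Theorem \ref{General Thm seq}(ii), note $\norm{\un{i}-\un{k}}\leq f$, and split according to whether $\un{i}-\un{k}\geq\un{0}$, handling the negative case by Proposition \ref{General Prop degree+finiteness} and the non-negative case by the base formula (\ref{General Eq seq small}). The only difference is that you spell out the pigeonhole step for part (i), which the paper leaves implicit.
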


\begin{proof}
    By Theorem \ref{General Thm seq}(ii) we have $\un{Y}^{\un{k}}x_{J,\un{i}}=x_{J,\un{\ell}}$ with $\un{\ell}\eqdef\un{i}-\un{k}$. In both cases, we have $\norm{\un{\ell}}\leq f$ since $\abs{J^{\sh}}\leq f$. If $\un{\ell}\geq\un{0}$, then we have $\un{\ell}\leq\un{f}$, and the result follows from (\ref{General Eq seq small}). If $\un{\ell}\ngeq\un{0}$, then the result follows from Proposition \ref{General Prop degree+finiteness}.
\end{proof}

\begin{proposition}\label{General Prop finiteness}
    Let $m\in\NNN$ and $\un{i}\in\ZZ^f$ satisfying
    \begin{enumerate}
    \item
    $\norm{\un{i}}\leq p^m+f-1$;
    \item 
    $i_{j}\leq-p^m$ for some $j\in\cJ$.
    \end{enumerate}
    Then we have $x_{J,\un{i}}=0$ for all $J\subseteq\cJ$.
\end{proposition}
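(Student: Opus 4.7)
The plan is to induct on $m$, with the base case $m=0$ being exactly Proposition~\ref{General Prop degree+finiteness}. For the inductive step I fix $m\geq 1$, assume the proposition holds at level $m-1$, pick any $J_0\subseteq\cJ$ together with $\un{i}\in\ZZ^f$ satisfying (i), (ii), and aim to show $x_{J_0,\un{i}}=0$.

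If $\un{i}\leq\un{f}$ then (ii) gives $i_{j}\leq -p^m<0$ for some $j$, hence $\un{i}\not\geq\un{e}^{J_0^{\sh}}$, and (\ref{General Eq seq small}) already gives $x_{J_0,\un{i}}=0$ directly. Otherwise $\max_j i_j>f$, and I would apply the same division-with-remainder decomposition as in the proof of Theorem~\ref{General Thm seq}: write $\un{i}=p\delta(\un{i}')+\un{c}^{J_0}-\un{\ell}$ for the unique $\un{i}',\un{\ell}\in\ZZ^f$ with $\un{0}\leq\un{\ell}\leq\un{p}-\un{1}$. The heart of the argument is the verification that $\un{i}'$ itself satisfies (i), (ii), but one level lower. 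From $\norm{\un{i}}=p\norm{\un{i}'}+\norm{\un{c}^{J_0}}-\norm{\un{\ell}}$ together with $\norm{\un{c}^{J_0}}\geq 0$ and $\norm{\un{\ell}}\leq f(p-1)$ one obtains
\[
    \norm{\un{i}'}\leq\frac{p^m+f-1+f(p-1)}{p}=p^{m-1}+f-\frac{1}{p},
\]
and since $\norm{\un{i}'}\in\ZZ$ this forces $\norm{\un{i}'}\leq p^{m-1}+f-1$. For (ii), pick $j_*$ with $i_{j_*}\leq -p^m$ and use $p\,i'_{j_*+1}=i_{j_*}-c^{J_0}_{j_*}+\ell_{j_*}\leq -p^m+(p-1)$; hence $i'_{j_*+1}\leq -p^{m-1}+1-1/p$, and integrality again yields $i'_{j_*+1}\leq -p^{m-1}$.

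The inductive hypothesis applied to $\un{i}'$ then gives $x_{J,\un{i}'}=0$ for every $J\subseteq\cJ$, in particular for all $J$ with $\abs{J}\leq\abs{J_0}$. Lemma~\ref{General Lem lem for finiteness induction} therefore yields $x_{J_0,p\delta(\un{i}')+\un{c}^{J_0}}=0$, and applying $\un{Y}^{\un{\ell}}$ via Theorem~\ref{General Thm seq}(ii) gives $x_{J_0,\un{i}}=x_{J_0,p\delta(\un{i}')+\un{c}^{J_0}-\un{\ell}}=0$, closing the induction. I do not anticipate a real obstacle: the whole point is simply that the reduction $\un{i}\mapsto\un{i}'$ encoded in the very definition of $x_{J,\un{i}}$ in Theorem~\ref{General Thm seq} divides sizes by $p$, which matches exactly how the hypotheses (i), (ii) scale when $m$ drops by one; the only external inputs are Proposition~\ref{General Prop degree+finiteness} for the base case and Lemma~\ref{General Lem lem for finiteness induction} to transfer the vanishing from $\un{i}'$ back to $\un{i}$.
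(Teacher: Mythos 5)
Your proof is correct and follows essentially the same route as the paper: induction on $m$ with Proposition \ref{General Prop degree+finiteness} as base case, the decomposition $\un{i}=p\delta(\un{i}')+\un{c}^{J_0}-\un{\ell}$, the two scaling estimates showing $\un{i}'$ satisfies (i),(ii) at level $m-1$, and then Lemma \ref{General Lem lem for finiteness induction} together with Theorem \ref{General Thm seq}(ii) to descend back to $\un{i}$. The only difference is your separate treatment of the case $\un{i}\leq\un{f}$, which is harmless but not needed since the decomposition argument applies uniformly.
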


\begin{proof}
    We prove the result by increasing induction on $m$. For $m=0$, this is exactly Proposition \ref{General Prop degree+finiteness}. Then we let $m\geq1$ and fix $J\subseteq\cJ$. We write $\un{i}=p\delta(\un{i}')+\un{c}^{J}-\un{\ell}$ for the unique $\un{i}',\un{\ell}\in\ZZ^f$ such that $\un{0}\leq\un{\ell}\leq\un{p}-\un{1}$ and fix $j_0\in\cJ$ such that $i_{j_0}\leq-p^m$. Since $\un{c}^J\geq\un{0}$ by (\ref{General Eq cJ}), we have
    \begin{equation*}
        \norm{\un{i}'}=\bbra{\norm{\un{i}}-\norm{\un{c}^{J+1}}+\norm{\un{\ell}}}/p\leq\bigbra{(p^m+f-1)-0+(p-1)f}/p=p^{m-1}+f-1/p,
    \end{equation*}
    which implies $\norm{\un{i}'}\leq p^{m-1}+f-1$. We also have 
    \begin{equation*}
        i'_{j_0+1}=\bbra{i_{j_0}-c^{J+1}_{j_0}+\ell_{j_0}}/p\leq\bigbra{-p^m-0+(p-1)}/p=-p^{m-1}+(p-1)/p,
    \end{equation*}
    which implies $i'_{j_0+1}\leq-p^{m-1}$. By the induction hypothesis, we have $x_{J',\un{i}'}=0$ for all $J'\subseteq\cJ$. Since $p\delta(\un{i}')+\un{c}^{J}\geq\un{i}$, we conclude from Lemma \ref{General Lem lem for finiteness induction} and Theorem \ref{General Thm seq}(ii) that $x_{J,\un{i}}=0$.
\end{proof}

\begin{theorem}[Finiteness condition]\label{General Thm finiteness}
    For $J\subseteq\cJ$ and $M\in\ZZ$, the set $\sset{\un{i}\in\ZZ^f:x_{J,\un{i}}\neq0,\norm{\un{i}}=M}$ is finite.
\end{theorem}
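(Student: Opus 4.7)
The result follows quickly from the preceding Proposition \ref{General Prop finiteness}, which provides both a lower bound on coordinates and a bound on the norm under which $x_{J,\un{i}}$ must vanish. The plan is to fix $M$, pick $m$ large enough to activate Proposition \ref{General Prop finiteness} for all $\un{i}$ of norm $M$, deduce a uniform lower bound on each coordinate, and then convert this (together with the constraint $\norm{\un{i}}=M$) into a uniform upper bound on each coordinate.

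Concretely, I would fix $J\subseteq\cJ$ and $M\in\ZZ$, and then choose $m\in\NNN$ large enough that $M\leq p^m+f-1$ (for instance, any $m$ with $p^m\geq\max(M-f+1,\,1)$ works). For any $\un{i}\in\ZZ^f$ with $\norm{\un{i}}=M$, the hypothesis $\norm{\un{i}}\leq p^m+f-1$ of Proposition \ref{General Prop finiteness} is satisfied; therefore, if $i_j\leq -p^m$ for some $j\in\cJ$, we would conclude $x_{J,\un{i}}=0$. Contrapositively, if $x_{J,\un{i}}\neq 0$, then
\begin{equation*}
    i_j\geq -p^m+1\quad\forall\,j\in\cJ.
\end{equation*}

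Combining this lower bound with the equality $\sum_{j'\in\cJ}i_{j'}=M$, for each $j\in\cJ$ we get
\begin{equation*}
    i_j = M-\sum_{j'\neq j}i_{j'}\leq M-(f-1)(-p^m+1)=M+(f-1)(p^m-1).
\end{equation*}
Thus every $\un{i}\in\ZZ^f$ with $\norm{\un{i}}=M$ and $x_{J,\un{i}}\neq 0$ lies in the finite box
\begin{equation*}
    \prod\nolimits_{j\in\cJ}\bigl[-p^m+1,\ M+(f-1)(p^m-1)\bigr],
\end{equation*}
which proves the theorem. There is no real obstacle here: once Proposition \ref{General Prop finiteness} is in hand, the deduction is purely a bookkeeping argument showing that a lower bound on each coordinate combined with a fixed sum forces an upper bound on each coordinate. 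The content of the theorem lies entirely in Proposition \ref{General Prop finiteness} (and ultimately in Proposition \ref{General Prop more 0} and the relations among the $x_{J,\un{i}}$), not in this final step.
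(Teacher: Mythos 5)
Your proof is correct and is essentially identical to the paper's: choose $m$ with $p^m+f-1\geq M$, apply Proposition \ref{General Prop finiteness} to get the lower bound $i_j>-p^m$ on every coordinate, and combine with $\norm{\un{i}}=M$ to confine $\un{i}$ to a finite box. The only difference is that you have written out the resulting upper bound explicitly, which the paper leaves implicit.
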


\begin{proof}
    We choose $m$ large enough such that $p^m+f-1\geq M$. If $i_{j_0}\leq-p^m$ for some $j_0$, then by Proposition \ref{General Prop finiteness} we have $x_{J,\un{i}}=0$. Otherwise, we have $i_j>-p^m$ for all $j$. Together with the restriction $\norm{\un{i}}=M$, this set is finite.
\end{proof}

\section{An explicit basis of \texorpdfstring{$\Hom_A(D_A(\pi),A)$}.}\label{General Sec basis}

In this section, we construct an explicit basis of $\Hom_A(D_A(\pi),A)$. In particular, we prove that $D_A(\pi)$ has rank $2^f$, see Theorem \ref{General Thm rank 2^f}. As a corollary, we finish the proof of Theorem \ref{General Thm main1}.

First we recall the definition of the ring $A$ and the $A$-module $D_A(\pi)$. We let $\fm_{N_0}$ be the maximal ideal of $\FF\ddbra{N_0}$. Then we have $\FF\ddbra{N_0}=\FF\ddbra{Y_0,\ldots,Y_{f-1}}$ and $\fm_{N_0}=(Y_0,\ldots,Y_{f-1})$. Consider the multiplicative subset $S\eqdef\sset{(Y_0\cdots Y_{f-1})^n:n\geq0}$ of $\FF\ddbra{N_0}$. Then $A\eqdef\wh{\FF\ddbra{N_0}_S}$ is the completion of the localization $\FF\ddbra{N_0}_S$ with respect to the $\fm_{N_0}$-adic filtration 
\begin{equation*}
    F_n\bbra{\FF\ddbra{N_0}_S}=\bigcup\limits_{k\geq0}\frac{1}{(Y_0\cdots Y_{f-1})^k}\fm_{N_0}^{kf-n},
\end{equation*}
where $\fm_{N_0}^m\eqdef\FF\ddbra{N_0}$ if $m\leq0$. We denote by $F_nA$ ($n\in\ZZ$) the induced filtration on $A$ and endow $A$ with the associated topology (\cite[\S1.3]{LvO96}). There is an $\FF$-linear action of $\OK\x$ on $\FF\ddbra{N_0}$ given by multiplication on $N_0\cong\OK$, and an $\FF$-linear Frobenius $\varphi$ on $\FF\ddbra{N_0}$ given by multiplication by $p$ on $N_0\cong\OK$. They extend canonically by continuity to commuting continuous $\FF$-linear actions of $\varphi$ and $\OK\x$ on $A$ which satisfies (for each $j\in\cJ$)
\begin{equation}\label{General Eq action on Yj}
\begin{aligned}
    \varphi(Y_j)&=Y_{j-1}^p;\\
    [a](Y_j)&=a^{p^j}Y_j\ \forall\,a\in\Fq\x.
\end{aligned}    
\end{equation}

We let $\pi^{\vee}$ be the $\FF$-linear dual of $\pi$, which is a finitely generated $\FF\ddbra{I_1}$-module and is endowed with the $\fm_{I_1}$-adic topology, where $\fm_{I_1}$ is the maximal ideal of $\FF\ddbra{I_1}$. We define $D_A(\pi)$ to be the completion of $\FF\ddbra{N_0}_S\otimes_{\FF\ddbra{N_0}}\pi^{\vee}$ with respect to the tensor product topology. The $\OK\x$-action on $\pi^{\vee}$ given by $f\mapsto f\circ\smat{a&0\\0&1}$ (for $a\in\OK\x$) extends by continuity to $D_A(\pi)$, and the $\psi$-action on $\pi^{\vee}$ given by $f\mapsto f\circ\smat{p&0\\0&1}$ induces a continuous $A$-linear map $\beta:D_A(\pi)\to A\otimes_{\varphi,A}D_A(\pi)$. Moreover, $D_A(\pi)$ is a finite free $A$-module by \cite[Remark~3.3.2.6(ii)]{BHHMS2}, \cite[Cor.~3.1.2.9]{BHHMS2} and \cite[Remark.~2.6.2]{BHHMS3}.

As in \cite[(87)]{BHHMS3}, there exists an injective $A$-linear map
\begin{equation}\label{General Eq mu*}
    \mu_{*}:\Hom_A(D_A(\pi),A)\into\Hom_{\FF}^{\cont}(D_A(\pi),\FF).
\end{equation}
By \cite[Prop.~3.2.3]{BHHMS3}, $\Hom_{\FF}^{\cont}(D_A(\pi),\FF)$ is identified with the set of sequences $(x_k)_{k\geq0}$ with $x_k\in\pi$ and
\begin{enumerate}
    \item 
    $\un{Y}^{\un{1}}x_k=x_{k-1}$ for all $k\geq1$;
    \item 
    there exists $d\in\ZZ$ such that $x_k\in\pi[\fm_{I_1}^{fk+d+1}]$ for all $k\geq0$ (where $\pi[\fm_{I_1}^j]\eqdef0$ if $j\leq0$),
\end{enumerate}
and the $A$-module structure on $\Hom_{\FF}^{\cont}(D_A(\pi),\FF)$ is given as follows: for $a\in A$ and $x=(x_k)_{k\geq0}\in\Hom_{\FF}^{\cont}(D_A(\pi),\FF)$, we have $a(x)=(y_k)_{k\geq0}$ with
\begin{equation}\label{General Eq formula A-module on seq}
    y_k=(\un{Y}^{\un{\ell}-\un{k}}a)x_{\ell}
\end{equation}
for $\ell\gg_k0$. See \cite[Remark~3.8.2]{BHHMS3} for the explanation of (\ref{General Eq formula A-module on seq}).

\hspace{\fill}

For $v\in\pi$, we define $\deg(v)\eqdef\min\set{n\geq-1:v\in\pi[\fm_{I_1}^{n+1}]}\in\ZZ_{\geq-1}$ as in \cite[\S3.5]{BHHMS3}. The following proposition is a generalization of \cite[Prop.~3.5.1]{BHHMS3} (where $\rhobar$ was assumed to be semisimple), and we refer to \S\ref{General Sec app degree} for its proof (see Proposition \ref{General Prop degree appendix}).

\begin{proposition}\label{General Prop degree}
    For $J\subseteq\cJ$ and $\un{i}\in\ZZ^f$ such that $\un{i}\geq\un{e}^{J^{\sh}}$, we have $\deg\bigbra{x_{J,\un{i}}}=\norm{\un{i}}-\abs{J^{\sh}}$.
\end{proposition}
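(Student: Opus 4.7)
The lower bound $\deg(x_{J,\un{i}}) \geq \norm{\un{i}} - |J^{\sh}|$ is immediate from Theorem \ref{General Thm seq}(ii): we have $\un{Y}^{\un{i}-\un{e}^{J^{\sh}}} x_{J,\un{i}} = x_{J,\un{e}^{J^{\sh}}} = v_J \neq 0$, and since each $Y_j$ lies in $\fm_{I_1}$, this shows that $\fm_{I_1}^{\norm{\un{i}}-|J^{\sh}|} \cdot x_{J,\un{i}} \neq 0$. The substance of the proposition is therefore the matching upper bound, and I plan to prove it by induction on $\max_j i_j$.

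The base case is $\un{e}^{J^{\sh}} \leq \un{i} \leq \un{f}$, in which $x_{J,\un{i}}$ lies in $D_0(\rhobar) = \pi^{K_1}$. Here Proposition \ref{General Prop shift} describes the $I$-subrepresentation generated by $x_{J,\un{i}}$ explicitly as the unique indecomposable $I/K_1$-representation with socle $\chi_J$ and cosocle $\chi_J \alpha^{-(\un{i}-\un{e}^{J^{\sh}})}$, in which $x_{J,\un{i}}$ sits in the cosocle as an $H$-eigenvector (see Corollary \ref{General Cor xJi}). The $\fm_{I_1}$-filtration of this explicit finite-dimensional representation can be read off directly, yielding $\deg(x_{J,\un{i}}) = \norm{\un{i}}-|J^{\sh}|$ inside $D_0(\rhobar)$. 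Condition (ii) is then used to transfer this computation from $D_0(\rhobar)$ into $\pi$: the multiplicity-one property on $\pi[\fm_{I_1}^3]$, together with Corollary \ref{General Cor xJi} pinning down the $H$-eigencharacter of $x_{J,\un{i}}$, rules out the appearance of additional $H$-eigenvectors of the same character in deeper $\fm_{I_1}$-filtration steps of $\pi$ that could spoil the degree.

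For the inductive step $\max_j i_j > f$, the defining relation \textup{(\ref{General Eq Seq def})} expresses $x_{J,\un{i}}$ as (up to the scalar $\varepsilon_J \mu_{J+1,J}$) the element $\un{Y}^{\un{\ell}}\smat{p&0\\0&1} x_{J+1,\un{i}'}$ minus a sum over $J^{\ss} \subseteq J' \subsetneqq J$ of terms $x_{J', \un{i}+\un{r}^{J\setminus J'}}$, and one has $\max_j i'_j < \max_j i_j$ by \textup{(\ref{General Eq max1<max2})}. An outer induction on $|J|$ dispatches the lower-$|J'|$ error terms, so the core task is to bound the degree of $\un{Y}^{\un{\ell}}\smat{p&0\\0&1} x_{J+1,\un{i}'}$. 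For this one exploits that conjugation by $\smat{p&0\\0&1}$ sends the lower unipotent $\smat{1&0\\p\OK&1}$ and the diagonal part $H \cap I_1$ into deeper congruence subgroups, so that the operator $\smat{p&0\\0&1}$ multiplies $\fm_{I_1}$-degrees roughly by $p$ in a controlled fashion; combining this with the inductive hypothesis applied to $x_{J+1,\un{i}'}$, and with Corollary \ref{General Cor degree} which supplies the $\fm_{N_0}$-annihilator for free, then produces the required upper bound.

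The main obstacle is precisely the gap between Corollary \ref{General Cor degree}, which only asserts $\un{Y}^{\un{k}} x_{J,\un{i}} = 0$ for $\norm{\un{k}} > \norm{\un{i}}-|J^{\sh}|$ (equivalently, $\fm_{N_0}^{\norm{\un{i}}-|J^{\sh}|+1} \cdot x_{J,\un{i}} = 0$), and the stronger $\fm_{I_1}^{\norm{\un{i}}-|J^{\sh}|+1} \cdot x_{J,\un{i}} = 0$ required here. The extra generators of $\fm_{I_1}$ beyond the $Y_j$'s (corresponding to the lower unipotent and the diagonal) could a priori produce nonzero contributions of the correct $H$-eigencharacter that are invisible to the $Y_j$-action alone, and the multiplicity-one hypothesis (ii) is exactly what rules this out by a careful bookkeeping of $H$-eigencharacters against the known structure of $\pi[\fm_{I_1}^3]$; propagating this control through the recursion is expected to be the most delicate part of Appendix \ref{General Sec app degree}.
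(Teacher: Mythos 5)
Your lower bound and your base case are essentially fine (for $\un{i}\leq\un f$ the element lies in $\pi^{K_1}$, so the operators $Z_j=\sum_{\lambda}\lambda^{-p^j}\smat{1&0\\p[\lambda]&1}$ kill it outright, and Corollary \ref{General Cor degree} then pins the degree down; no separate ``transfer'' from $D_0(\rhobar)$ to $\pi$ is needed, since the degree only depends on the $I_1$-subrepresentation generated by the vector). The genuine gap is in your inductive step. The direct estimate you propose --- inductive hypothesis on $x_{J+1,\un{i}'}$ plus ``$\smat{p&0\\0&1}$ multiplies degrees by roughly $p$'' (i.e.\ $\deg\smat{p&0\\0&1}v\leq p\deg(v)+(p-1)f$) --- yields only
$\deg\bigl(\un{Y}^{\un\ell}\smat{p&0\\0&1}x_{J+1,\un{i}'}\bigr)\leq\norm{\un i}-\norm{\un c^{J}}+(p-1)f$,
and since $\norm{\un c^{J}}$ can be as small as $0$ (e.g.\ $J=\cJ$), this is off from the target by up to $(p-1)f$ and the induction does not close. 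You correctly identify this as the main obstacle, but the resolution you sketch --- bookkeeping of $H$-eigencharacters against $\pi[\fm_{I_1}^3]$ --- cannot work as stated: condition (ii) only constrains multiplicities in $\pi[\fm_{I_1}^3]$, whereas the degrees at stake are unbounded, and for large $n$ there is no multiplicity-one control in $\pi[\fm_{I_1}^n]$ to book against.

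The paper uses condition (ii) differently and only once, upstream: it implies (via the proof of \cite[Cor.~5.3.5]{BHHMS1}) that $\gr(\pi)$ is a module over the commutative ring $R=\gr(\FF\ddbra{I_1/Z_1})/(y_jz_j,z_jy_j)$. The two consequences exploited are: (a) if $\deg(x_{J,\un i})>\norm{\un i}-\abs{J^{\sh}}$ then some $z_{j_0}$ must act nontrivially on $\gr(x_{J,\un i})$, and since $Y_{j_0}x_{J,\un i+e_{j_0}}=x_{J,\un i}$ while $z_{j_0}y_{j_0}=0$ in $R$, the degree of $x_{J,\un i+e_{j_0}}$ must \emph{jump by at least $2$} (Lemma \ref{General Lem degree xJi}(ii)); and (b) the weak a priori bound $\deg(x_{J,\un i})\leq\norm{\un i}+(p-1)f$ --- which is exactly what your direct induction produces --- holds uniformly for $\max_j i_j\leq pm$. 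Iterating the jump (a) about $(p-1)m$ times makes the degree exceed $\norm{\un i}+(p-1)m-\abs{J^{\sh}}$, contradicting (b) once $m\geq f+1$. This bootstrapping-by-contradiction, converting the weak bound you can prove into the sharp one, is the missing idea; without it (or a substitute) your plan stalls exactly where you say it is ``delicate''.
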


For $J\subseteq\cJ$, we define the sequence $x_J=(x_{J,k})_{k\geq0}$ by $x_{J,k}\eqdef x_{J,\un{k}}$, which is defined in Theorem \ref{General Thm seq}. Since 
$x_{J,k}\in\pi\bigbbbra{\fm_{I_1}^{kf-\abs{J^{\sh}}+1}}$ for all $k\geq0$ by Proposition \ref{General Prop degree}, we have $x_J\in\Hom_{\FF}^{\cont}(D_A(\pi),\FF)$. Then we have the following generalization of \cite[Thm.~3.7.1]{BHHMS3} (where $\rhobar$ was assumed to be semisimple).

\begin{theorem}\label{General Thm rank 2^f}
    The sequences $\sset{x_J:J\subseteq\cJ}$ are contained in the image of the injection
    \begin{equation*}
        \mu_{*}:\Hom_A(D_A(\pi),A)\to\Hom_{\FF}^{\cont}(D_A(\pi),\FF)
    \end{equation*}
    and form an $A$-basis of $\Hom_A(D_A(\pi),A)$. In particular, $D_A(\pi)$ is a free $A$-module of rank $2^f$.
\end{theorem}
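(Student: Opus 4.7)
The strategy is to adapt the approach of \cite[Thm.~3.7.1]{BHHMS3} from the semisimple setting, using the sequences $x_J$ and the properties established in the preceding sections. First I would verify that each $x_J=(x_{J,k})_{k\geq0}$ genuinely lies in $\Hom_{\FF}^{\cont}(D_A(\pi),\FF)$ via the description recalled after (\ref{General Eq mu*}): condition (i) is Theorem \ref{General Thm seq}(ii) applied with $\un{k}=\un{1}$, and condition (ii) with $d=-|J^{\sh}|$ is exactly Proposition \ref{General Prop degree}.

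The central and most delicate step is to show that each $x_J$ lies in the image of the injection $\mu_*$. Following the description in \cite[\S3.7]{BHHMS3}, this amounts to extending the formal $\FF\ddbra{N_0}_S$-action on $x_J$ continuously to the full ring $A$. Writing a general $a\in A$ as an infinite sum of monomials in the $Y_j^{\pm1}$, the convergence of the limit formula $y_k=(\un{Y}^{\un{\ell}-\un{k}}a)x_{J,\ell}$ in (\ref{General Eq formula A-module on seq}) must be controlled as $\ell\to\infty$. This is where Theorem \ref{General Thm finiteness} is essential: for fixed $k$, only finitely many monomials in $a$ give a nonzero contribution, since for each $M$ only finitely many $x_{J,\un{m}}$ with $\norm{\un{m}}=M$ are nonzero. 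Theorem \ref{General Thm seq}(ii) then ensures that the resulting sequence $(y_k)$ satisfies the shift relation, and Proposition \ref{General Prop degree} controls its degree growth, so that $a\cdot x_J$ genuinely lies in $\mu_*\bigbra{\Hom_A(D_A(\pi),A)}$.

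The remaining two steps are more routine. $A$-linear independence of $\set{x_J:J\subseteq\cJ}$ follows by decomposing a putative relation $\sum_Ja_Jx_J=0$ into $H$-isotypic pieces, using Corollary \ref{General Cor xJi} (which gives $x_{J,\un{i}}$ the $H$-character $\chi'_J\alpha^{-\un{i}}$) together with the genericity hypothesis (\ref{General Eq genericity}) and Lemma \ref{General Lem Diamond diagram}(ii), which suffice to isolate each $a_J$. Combined with the upper bound $\rank_AD_A(\pi)\leq2^f$ recalled in the introduction (from \cite[Thm.~3.3.2.1]{BHHMS2} and localization), we conclude that the $2^f$ sequences $\set{x_J}$ form an $A$-basis of $\Hom_A(D_A(\pi),A)$ and that $D_A(\pi)$ is free of rank exactly $2^f$. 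The hardest part will be the image step: the convergence argument for arbitrary $a\in A$ depends essentially on the finiteness theorem, and must interact carefully with the inductive relations encoded by Theorem \ref{General Thm seq}(iii).
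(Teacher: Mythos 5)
The first two steps of your plan (membership in $\Hom_{\FF}^{\cont}(D_A(\pi),\FF)$ via Theorem \ref{General Thm seq}(ii) and Proposition \ref{General Prop degree}, and membership in the image of $\mu_*$ via Theorem \ref{General Thm finiteness} following \cite[Thm.~3.7.1]{BHHMS3}) match the paper. The gap is in your final step. From ``the $x_J$ are $A$-linearly independent'' together with $\rank_AD_A(\pi)\leq2^f$ you cannot conclude that the $x_J$ form a basis: $2^f$ linearly independent elements of a finite free $A$-module of rank $2^f$ generate a full-rank free submodule, but need not generate the whole module (already $p\in\ZZ$ is independent without generating $\ZZ$, and $A$ is not a field for $f\geq2$). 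What is missing is the \emph{generation} statement, and that is precisely what the paper's argument supplies: after replacing $x_J$ by the unit multiple $x'_J=\un{Y}^{-\un{e}^{J^{\sh}}}x_J$, one passes to the associated graded modules, uses the multiplicity-freeness of $\pi^{I_1}=D_0(\rhobar)^{I_1}$ to produce the surjection (\ref{General Eq basis surjection}) onto $\gr D_A(\pi)$ with generators $v_J^*$, and computes $\ang{\gr(x'_J),v_{J'}^*}=\delta_{J=J'}\un{y}^{-\un{1}}$ via Corollary \ref{General Cor degree} and Proposition \ref{General Prop degree}. The invertibility of this diagonal pairing matrix over $\gr A$ forces (\ref{General Eq basis surjection}) to be an isomorphism and gives both independence and spanning at once; one then lifts to $A$ by completeness.

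Your proposed independence argument by $H$-isotypic decomposition is also problematic on its own terms: all the characters $\chi'_J$ share the same central character, hence lie in a single orbit under twisting by the $\alpha^{\un{i}}$, so multiplying $x_J$ by the monomials occurring in an arbitrary $a_J\in A$ produces $H$-eigencharacters that collide across different $J$. The $H$-action therefore cannot isolate the coefficients $a_J$; the separation in the paper comes instead from pairing against the dual vectors $v_{J'}^*$ and from the vanishing statement $\un{Y}^{\un{k}}x_{J,\un{i}}=0$ unless $\un{k}=\un{i}-\un{e}^{J^{\sh}}$ in top degree (Corollary \ref{General Cor degree}(ii)), not from character theory.
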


\begin{proof}
    We follow closely the proof of \cite[Thm.~3.7.1]{BHHMS3} and use without comment the notation of \loc.

    First, the proof of \loc\,using Theorem \ref{General Thm finiteness} shows that each sequence $x_J\in\Hom_{\FF}^{\cont}(D_A(\pi),\FF)$ comes from an element of $\Hom_A(D_A(\pi),A)$, and we still denote it by $x_J$.

    For each $J\subseteq\cJ$, we define another sequence $x'_J=\bigbra{x'_{J,k}}_{k\geq0}$ by $x'_{J,k}\eqdef x_{J,\un{k}+\un{e}^{J^{\sh}}}$. In particular, we have $x'_{J,0}=v_J$ by (\ref{General Eq seq small}). By (\ref{General Eq formula A-module on seq}) we have $x'_J=\un{Y}^{-\un{e}^{J^{\sh}}}x_J$, which implies that $x'_J\in\Hom_{\FF}^{\cont}(D_A(\pi),\FF)$ and comes from an element of $\Hom_A(D_A(\pi),A)$ (recall $\un{Y}^{-\un{e}^{J^{\sh}}}\in A$), and we still denote it by $x'_J$.

    Since $\un{Y}^{-\un{e}^{J^{\sh}}}$ is invertible in $A$, to prove that $\set{x_J,J\subseteq\cJ}$ form an $A$-basis of $\Hom_A(D_A(\pi),A)$, it suffices to show that $\set{x'_J,J\subseteq\cJ}$ form an $A$-basis of $\Hom_A(D_A(\pi),A)$. As in the proof of \cite[Thm.~3.7.1]{BHHMS3}, it suffices to show that the elements $\set{\gr(x'_J):J\subseteq\cJ}$ form a $\gr A$-basis of $\Hom_{\gr A}(\gr D_A(\pi),\gr A)$.
    
    Since $\pi^{I_1}=D_0(\rhobar)^{I_1}$ is multiplicity-free by Lemma \ref{General Lem Diamond diagram}(ii) (see the assumptions on $\pi$ above Theorem \ref{General Thm main1}), there exist unique $I$-eigenvectors $v_J^*\in(\pi^{I_1})^{\vee}=\gr_0(\pi^{\vee})$ for $J\subseteq\cJ$ such that $\ang{v_J,v_{J'}^*}=\delta_{J=J'}$. As in the proof of \cite[Lemma~3.7.2]{BHHMS3}, we know that $\gr D_A(\pi)$ is a free $\gr A$-module, and that there exists a surjection of $\gr A$-modules
    \begin{equation}\label{General Eq basis surjection}
        \bigoplus\limits_{J\subseteq\cJ}\gr A\onto\gr D_A(\pi),
    \end{equation}
    sending the standard basis element indexed by $J$ on the left to the image of $v_J^*$ in $\gr D_A(\pi)$ (still denoted $v_J^*$). To complete the proof, it is enough to show that $\ang{\gr(x'_J),v_{J'}^*}=\delta_{J=J'}\un{y}^{-\un{1}}$ in $\gr A$ for all $J,J'\subseteq\cJ$, which implies that the surjection (\ref{General Eq basis surjection}) is an isomorphism. The argument here is completely analogous to that of \cite[Thm.~3.7.1]{BHHMS3}, using Corollary \ref{General Cor degree} and Proposition \ref{General Prop degree}. 
\end{proof}

\hspace{\fill}

By definition, $D_A(\pi)$ is a finite free $(\psi,\OK\x)$-module over $A$ in the sense of \cite[Def.~3.1.2.1]{BHHMS2}. Then the construction of \cite[\S3.2]{BHHMS3} makes $\Hom_A(D_A(\pi),A)(1)$ a $(\varphi,\OK\x)$-module, which is \'etale if and only if $\beta$ as in (\ref{General Eq beta}) is an isomorphism. Here for $D$ a $(\varphi,\OK\x)$-module over $A$, we write $D(1)$ to be $D$ with the action of $\varphi$ unchanged and the action of $a\in\OK\x$ multiplied by $N_{\Fq/\Fp}(\ovl{a})$. Then by \cite[Lemma~3.8.1(ii)]{BHHMS3} and \cite[(114)]{BHHMS3}, under the injection (\ref{General Eq mu*}) the actions of $\varphi$ and $\OK\x$ on $\Hom_A(D_A(\pi),A)(1)$ can be expressed in terms of sequences as follows: 
\begin{enumerate}
    \item 
    for $k\geq0$ and $p\ell\geq k$, we have
    \begin{equation}\label{General Eq formula phi-action}
        \bra{\varphi(x_J)}_k=(-1)^{f-1}\un{Y}^{p\un{\ell}-\un{k}}\smat{p&0\\0&1}x_{J,\ell};
    \end{equation}
    \item
    for $a\in\OK\x$, $k\geq0$ and $\ell\gg_k0$, we have
    \begin{equation}\label{General Eq formula a-action}
        \bra{a(x_J)}_k=\frac{a(\un{Y}^{\un{\ell}})}{\un{Y}^{\un{k}}}\smat{a&0\\0&1}x_{J,\ell}.
    \end{equation}
\end{enumerate}

We denote by $\Mat(\varphi)$ and $\Mat(a)$ ($a\in\OK\x$) the matrices of the actions of $\varphi$ and $\OK\x$ on $\Hom_A(D_A(\pi),A)(1)$ with respect to the basis $\sset{x_J:J\subseteq\cJ}$ of Theorem \ref{General Thm rank 2^f}, whose rows and columns are indexed by the subsets of $\cJ$. For $J,J'\subseteq\cJ$ such that $(J-1)^{\ss}=(J')^{\ss}$, we let 
\begin{equation}\label{General Eq gamma}
    \gamma_{J,J'}\eqdef(-1)^{f-1}\varepsilon_{J'}\mu_{J,J'},
\end{equation}
where $\varepsilon_{J'}$ is defined in (\ref{General Eq epsilonJ}) and $\mu_{J,J'}$ is defined in (\ref{General Eq general mu}). Then by definition and the sentence after (\ref{General Eq general mu}), for $J_1,J_2,J_3,J_4\subseteq\cJ$ such that $(J_1-1)^{\ss}=(J_2-1)^{\ss}=J_3^{\ss}=J_4^{\ss}$ we have $\gamma_{J_1,J_3}/\gamma_{J_1,J_4}=\gamma_{J_2,J_3}/\gamma_{J_2,J_4}$. We define $\gamma_{*,J}/\gamma_{*,J'}$ for $J^{\ss}=(J')^{\ss}$ in a similar way as $\mu_{*,J}/\mu_{*,J'}$.

We give a preliminary result on the actions of $\varphi$ and $\OK\x$ on $\Hom_A(D_A(\pi),A)(1)$ which suffices to finish the proof of Theorem \ref{General Thm main1}. We refer to Appendix \ref{General Sec app okx} for a more detailed study. 

\begin{proposition}\label{General Prop phi-OK* action}
    \begin{enumerate}
    \item 
    We have (see (\ref{General Eq cJ}) for $\un{c}^J$ and (\ref{General Eq rJ}) for $\un{r}^{J\setminus J'}$)
    \begin{equation*}
        \Mat(\varphi)_{J',J+1}=
    \begin{cases}
        \gamma_{J+1,J'}\un{Y}^{-(\un{c}^{J}+\un{r}^{J\setminus J'})}&\text{if}~J^{\ss}\subseteq J'\subseteq J\\
        0&\text{otherwise}.
    \end{cases}
    \end{equation*}
    \item 
    For $a\in[\Fq\x]$, $\Mat(a)$ is a diagonal matrix with $\Mat(a)_{J,J}=\ovl{a}^{\un{r}^{J^c}}$.
    \end{enumerate}
\end{proposition}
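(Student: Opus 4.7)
The strategy is to extract both matrices directly from the sequence-level expressions (\ref{General Eq formula phi-action}) and (\ref{General Eq formula a-action}) and match them against the $A$-module structure on sequences recalled in (\ref{General Eq formula A-module on seq}).

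For part (i), I apply (\ref{General Eq formula phi-action}) with $J$ replaced by $J+1$. Substituting Theorem \ref{General Thm seq}(iii) for $\smat{p&0\\0&1}x_{J+1,\un{\ell}}$ (using $\delta(\un{\ell})=\un{\ell}$ for constant vectors), and pulling $\un{Y}^{p\un{\ell}-\un{k}}$ through the resulting terms via Theorem \ref{General Thm seq}(ii) (with nonnegative powers thanks to $p\un{\ell}\geq\un{k}$), I get
\begin{equation*}
\bra{\varphi(x_{J+1})}_k=(-1)^{f-1}\sum_{J^{\ss}\subseteq J'\subseteq J}\varepsilon_{J'}\mu_{J+1,J'}\,x_{J',\un{k}+\un{c}^J+\un{r}^{J\setminus J'}}.
\end{equation*}
Recognizing $x_{J',\un{k}+\un{c}^J+\un{r}^{J\setminus J'}}$ as the $k$-th term of $\un{Y}^{-(\un{c}^J+\un{r}^{J\setminus J'})}\cdot x_{J'}$ through (\ref{General Eq formula A-module on seq}), and invoking the definition (\ref{General Eq gamma}) of $\gamma_{J+1,J'}$, the right-hand side equals the $k$-th term of $\sum_{J^{\ss}\subseteq J'\subseteq J}\gamma_{J+1,J'}\un{Y}^{-(\un{c}^J+\un{r}^{J\setminus J'})}x_{J'}$. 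Since $\mu_{*}$ is injective and the $x_{J'}$ form an $A$-basis (Theorem \ref{General Thm rank 2^f}), the claimed matrix coefficients read off at once, with all other entries vanishing because the constraint $J^{\ss}\subseteq J'\subseteq J$ is built into Theorem \ref{General Thm seq}(iii).

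For part (ii) with $a=[\ovl{a}]$, $\ovl{a}\in\Fq\x$, formula (\ref{General Eq action on Yj}) gives $a(\un{Y}^{\un{\ell}})=\ovl{a}^{\un{\ell}}\un{Y}^{\un{\ell}}$, while $\smat{a&0\\0&1}\in H$ acts on $x_{J,\un{\ell}}$ by $\chi'_J\alpha^{-\un{\ell}}\bbra{\smat{a&0\\0&1}}=\ovl{a}^{\un{s}^J+\un{t}^J+\un{e}^{J^{\sh}}-\un{\ell}}$ thanks to Corollary \ref{General Cor xJi} and $\chi_{\lambda}\smat{a&0\\0&1}=\ovl{a}^{\un{\lambda}_1}$ applied to $\lambda_J=(\un{s}^J+\un{t}^J,\un{t}^J)$. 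Substituting into (\ref{General Eq formula a-action}), the $\ovl{a}^{\pm\un{\ell}}$ factors cancel and Theorem \ref{General Thm seq}(ii) reduces $\un{Y}^{\un{\ell}-\un{k}}x_{J,\un{\ell}}$ to $x_{J,\un{k}}$ for $\ell\gg_k0$, so $a(x_J)=\ovl{a}^{\un{s}^J+\un{t}^J+\un{e}^{J^{\sh}}}x_J$; in particular $\Mat(a)$ is diagonal. It remains to verify the character identity $\ovl{a}^{\un{s}^J+\un{t}^J+\un{e}^{J^{\sh}}}=\ovl{a}^{\un{r}^{J^c}}$, i.e.\,the congruence $\sum_j(s^J_j+t^J_j+\delta_{j\in J^{\sh}})p^j\equiv\sum_jr^{J^c}_jp^j\pmod{q-1}$. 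A case-by-case check of (\ref{General Eq sJ}), (\ref{General Eq tJ}), (\ref{General Eq rJ}) shows that the difference vector satisfies $d_j=-\delta_{j\in J}+p\,\delta_{j+1\in J}$; reindexing the second sum $j\mapsto j+1$ and using $p^f\equiv 1\pmod{q-1}$ to absorb the wraparound at $j=f-1$, one obtains $\sum_jd_jp^j\equiv -\sum_{j\in J}p^j+\sum_{j\in J}p^j\equiv 0\pmod{q-1}$.

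The main obstacle is this final modular identity: although elementary, it relies on the happy collapse of the four cases of $(j\in J,j+1\in J)$ for the quantity $s^J_j+t^J_j+\delta_{j\in J^{\sh}}$ into just two cases distinguished by $j+1\in J$ or not (in particular the two subcases of $j,j+1\in J$ both yielding $p-1$ thanks to the $\delta_{j\in J^{\sh}}$ correction), and on the telescoping at the boundary $j=f-1$. Part (i) is otherwise mechanical once one is attentive to the index shift $J\mapsto J+1$ in the Frobenius formula and to the sign convention embedded in (\ref{General Eq gamma}).
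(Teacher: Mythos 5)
Your proposal is correct and follows essentially the same route as the paper: part (i) is the same computation via (\ref{General Eq formula phi-action}), Theorem \ref{General Thm seq}(ii),(iii) and (\ref{General Eq formula A-module on seq}), and part (ii) is the same use of Corollary \ref{General Cor xJi} and (\ref{General Eq action on Yj}). The only cosmetic difference is in (ii): where the paper cites Lemma \ref{General Lem r and c}(i),(ii) to identify $\chi'_J\bigbra{\smat{a&0\\0&1}}\ovl{a}^{-\un{i}}$ with $\ovl{a}^{\un{r}^{J^c}-\un{i}}$, you reprove that congruence mod $q-1$ by a direct case-by-case check, which is correct and amounts to the same content.
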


\begin{proof}
    (i). Let $J\subseteq\cJ$. For $k\geq0$ and $p\ell\geq k$, by (\ref{General Eq formula phi-action}) and (\ref{General Eq Seq p001}) we have
    \begin{align*}
        \bra{\varphi(x_{J+1})}_k&=(-1)^{f-1}\un{Y}^{p\un{\ell}-\un{k}}\smat{p&0\\0&1}x_{J+1,\un{\ell}}=\sum\limits_{J^{\ss}\subseteq J'\subseteq J}(-1)^{f-1}\varepsilon_{J'}\mu_{J+1,J'}x_{J',\bigbra{p\un{\ell}+\un{c}^{J}+\un{r}^{J\setminus J'}-(p\un{\ell}-\un{k})}}\\
        &=\sum\limits_{J^{\ss}\subseteq J'\subseteq J}\gamma_{J+1,J'}x_{J',\un{c}^{J}+\un{r}^{J\setminus J'}+\un{k}}.
    \end{align*}
    Then using (\ref{General Eq formula A-module on seq}) one easily checks that 
    \begin{equation*}
        \varphi(x_{J+1})=\sum\limits_{J^{\ss}\subseteq J'\subseteq J}\gamma_{J+1,J'}\un{Y}^{-(\un{c}^{J}+\un{r}^{J\setminus J'})}(x_{J'}),
    \end{equation*}
    which proves (i).

    \hspace{\fill}

    (ii). Let $J\subseteq\cJ$ and $a\in[\Fq\x]$. By Corollary \ref{General Cor xJi}(ii) we have
    \begin{equation*}
        \smat{a&0\\0&1}x_{J,\un{i}}=\chi'_J\bigbra{\!\smat{a&0\\0&1}\!}\ovl{a}^{-\un{i}}x_{J,\un{i}}=\ovl{a}^{\un{r}-\un{r}^{J}-\un{i}}x_{J,\un{i}}=\ovl{a}^{\un{r}^{\cJ}-\un{r}^{J}-\un{i}}x_{J,\un{i}}=\ovl{a}^{\un{r}^{J^c}-\un{i}}x_{J,\un{i}},
    \end{equation*}
    where the second equality follows from Lemma \ref{General Lem r and c}(i), the third equality follows from (\ref{General Eq rJ}), and the last equality follows from Lemma \ref{General Lem r and c}(ii). Then for $k\geq0$, by (\ref{General Eq formula a-action}) we have
    \begin{equation*}
        \bra{a(x_J)}_k=\frac{a\bigbra{\un{Y}^{\un{k}}}}{\un{Y}^{\un{k}}}\smat{a&0\\0&1}x_{J,k}=\ovl{a}^{\un{k}}\bbra{\ovl{a}^{\un{r}^{J^c}-\un{k}}x_{J,k}}=\ovl{a}^{\un{r}^{J^c}}x_{J,k},
    \end{equation*}
    where the second equality follows from (\ref{General Eq action on Yj}). Using (\ref{General Eq formula A-module on seq}), we conclude that $a(x_J)=\ovl{a}^{\un{r}^{J^c}}(x_J)$, which proves (ii).
\end{proof}

\begin{proof}[Proof of Theorem \ref{General Thm main1}]
    By Theorem \ref{General Thm rank 2^f}, $\pi$ is in $\cC$ and $D_A(\pi)$ has rank $2^f$. Moreover, by Proposition \ref{General Prop phi-OK* action}(i) we have $\Mat(\varphi)\in\GL_{2^f}(A)$, hence $\Hom_A(D_A(\pi),A)$ is an \'etale $(\varphi,\OK\x)$-module over $A$, which implies that $\beta$ as in (\ref{General Eq beta}) is an isomorphism.
\end{proof}

\section{On the subrepresentations of \texorpdfstring{$\pi$}.}\label{General Sec subrep}

In this section, we finish the proof of Theorem \ref{General Thm main2}, see Theorem \ref{General Thm rank sub}. This theorem is crucially needed to prove that $\pi$ is of finite length in the non-semisimple case in \cite{BHHMS4}. As a corollary, we prove that $\pi$ is generated by $D_0(\rhobar)$ under the assumption that $\pi^{\vee}$ is essentially self-dual of grade $2f$ in the sense of \cite[(176)]{BHHMS2} (which is satisfied for those $\pi$ coming from the cohomology of towers of Shimura curves by \cite[Thm.~8.2]{HW22}), see Corollary \ref{General Cor finite generation}, which gives another proof of \cite[Thm.~1.6]{HW22} (but under a stronger genericity condition).

\begin{lemma}\label{General Lem pi1^K1}
    Let $\pi_1$ be a subrepresentation of $\pi$. Then there exists a set $S$ of subsets of $\cJ$ which is stable under $J\mapsto J-1$, and is moreover stable under taking subsets if $J_{\rhobar}\neq\cJ$, such that
    \begin{equation}\label{General Eq pi1^K1 statement}
    \begin{aligned}
        \JH(\pi_1^{K_1})\cap W(\rhobar^{\ss})&=\sset{\sigma_{\un{e}^J}:J\in S};\\
        \JH(\pi_1^{K_1})&=\sset{\sigma_{\un{b}}\in\JH\bigbra{D_0(\rhobar)}:\set{j:b_j\geq1}\in S},
    \end{aligned}
    \end{equation}
    where $\rhobar^{\ss}$ is the semisimplification of $\rhobar$, $\sigma_{\un{b}}$ and $\un{e}^J$ are defined in \S\ref{General Sec sw}, and see Lemma \ref{General Lem Diamond diagram}(i) for $\JH\bigbra{D_0(\rhobar)}$.
\end{lemma}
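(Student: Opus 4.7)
I would define $S := \{J \subseteq \cJ : \sigma_{\un{e}^J} \in \JH(\pi_1^{K_1})\}$; the first equality of (\ref{General Eq pi1^K1 statement}) is then immediate from $W(\rhobar^{\ss}) = \{\sigma_{\un{e}^J} : J \subseteq \cJ\}$, which is (\ref{General Eq SW of rhobar}) applied to $\rhobar^{\ss}$ (whose $J_{\rhobar^{\ss}} = \cJ$). The substance of the proof thus lies in the second equality and the two closure properties of $S$.

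I would first exploit the direct sum decomposition $D_0(\rhobar) = \bigoplus_{J' \subseteq J_{\rhobar}} D_{0,\sigma_{J'}}(\rhobar)$ of Lemma \ref{General Lem Diamond diagram}(i). Since $D_0(\rhobar)$ is multiplicity-free and the summands have pairwise disjoint JH factors (each $\sigma_{\un{b}} \in \JH(D_0(\rhobar))$ lies in the unique summand $D_{0,\sigma_{J'}}(\rhobar)$ with $J' = \tilde{J}(\un{b}) \cap J_{\rhobar}$, where $\tilde{J}(\un{b}) := \{j : b_j \geq 1\}$), a standard splitting argument---using that any morphism between subquotients of distinct summands vanishes, which forces the map $W \to p_{J'}(W)$ to land in $W \cap D_{0,\sigma_{J'}}(\rhobar)$---yields $\pi_1^{K_1} = \bigoplus_{J'} V_{J'}$ with $V_{J'} := \pi_1^{K_1} \cap D_{0,\sigma_{J'}}(\rhobar)$. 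Using Corollary \ref{General Cor structure of D0} together with Lemma \ref{General Lem p001}(iii), each nonzero $V_{J'}$ is then expressed as $\sum_{J'' \in U_{J'}} I(\sigma_{J'}, \sigma_{\un{e}^{J'}+\un{\varepsilon}^{J''}})$ for a downward-closed subset $U_{J'} \subseteq \{J'' : J'' \cap J_{\rhobar} = J'\}$ (downward-closure being automatic from the inclusions $I(\sigma_{J'}, \sigma_{\un{e}^{J'}+\un{\varepsilon}^{J'''}}) \subseteq I(\sigma_{J'}, \sigma_{\un{e}^{J'}+\un{\varepsilon}^{J''}})$ when $J''' \subseteq J''$). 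A short count based on Lemma \ref{General Lem p001}(iii) shows that for each $\tilde{J} \supseteq J'$ with $\tilde{J} \cap J_{\rhobar} = J'$, all the $\sigma_{\un{b}} \in \JH(D_{0,\sigma_{J'}}(\rhobar))$ with $\tilde{J}(\un{b}) = \tilde{J}$ lie inside the single block $I(\sigma_{J'}, \sigma_{\un{e}^{J'}+\un{\varepsilon}^{\tilde{J}}})$, so that $\sigma_{\un{b}} \in \JH(V_{J'})$ iff $\tilde{J}(\un{b}) \in U_{J'}$. Setting $S = \bigsqcup_{J'} U_{J'}$ (consistent with the original definition, since $\tilde{J}(\un{e}^J) = J$) then yields the second equality.

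For the closure properties, the strategy is to transfer $I_1$-invariants across the $\smat{p&0\\0&1}$-action combined with multiplication by monomials in the $Y_j$'s, via the relations of \S\ref{General Sec relation}. For closure under $J \mapsto J-1$: given $J \in S$, the structural description above places $I(\sigma_{J \cap J_{\rhobar}}, \sigma_{\un{e}^{J \cap J_{\rhobar}}+\un{\varepsilon}^J}) \subseteq \pi_1$, and via the identifications of Lemma \ref{General Lem p001}(ii) this subrepresentation contains a shifted vector of the form $\un{Y}^{-\un{i}}v_{J_0}$ (from Proposition \ref{General Prop shift}), to which Proposition \ref{General Prop vector simple}---or more generally Proposition \ref{General Prop vector}---can be applied to produce $v_{J-1} \in \pi_1$, yielding $J - 1 \in S$. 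For closure under subsets when $J_{\rhobar} \neq \cJ$: the good-pair relations of Proposition \ref{General Prop vector} (or Proposition \ref{General Prop vector complement} in the edge case $J_{\rhobar} = \emptyset$) precisely enable producing $v_{J'} \in \pi_1$ for strictly smaller $J' \subsetneq J$, whenever the good-pair condition (\ref{General Eq good pair}) holds. The main technical obstacle is the combinatorial bookkeeping in these closure steps: one must chain together multiple good-pair relations to reach an arbitrary $J' \subseteq J$, verifying compatibility at each stage using (\ref{General Eq good pair}); this process essentially exploits the non-semisimple structure of $\rhobar$ (namely $J_{\rhobar}^c \neq \emptyset$), as otherwise the good-pair condition degenerates and the shift cannot produce genuinely new invariants.
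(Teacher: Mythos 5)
Your definition of $S$ and the treatment of the first equality agree with the paper's proof, and the splitting $\pi_1^{K_1}=\bigoplus_{J'}V_{J'}$ along the decomposition of Lemma \ref{General Lem Diamond diagram}(i) is correct. The gap is in your derivation of the second equality. You assert that each nonzero $V_{J'}$ is a sum of the \emph{maximal} blocks $I\bigbra{\sigma_{J'},\sigma_{\un{e}^{J'}+\un{\varepsilon}^{J''}}}$ over a downward-closed set $U_{J'}$, calling the downward-closure ``automatic from the inclusions $I\bigbra{\sigma_{J'},\sigma_{\un{e}^{J'}+\un{\varepsilon}^{J'''}}}\subseteq I\bigbra{\sigma_{J'},\sigma_{\un{e}^{J'}+\un{\varepsilon}^{J''}}}$ for $J'''\subseteq J''$''. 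That inclusion is false: for $j\in J''\setminus J'''$ (necessarily $j\notin J_{\rhobar}$, so $\delta_{j\in J'}=0$), Lemma \ref{General Lem p001}(iii) gives constituents $\sigma_{\un{b}}$ of the right-hand block with $b_j\in\set{0,1}$, whereas the left-hand block has cosocle with $b_j=\varepsilon^{J'''}_j=-1$. More seriously, no block-decomposition of $V_{J'}$ can follow from the structure of $D_0(\rhobar)$ alone: a subrepresentation of a multiplicity-free module that contains one constituent $\sigma_{\un{b}}$ with $\set{j:b_j\geq1}=\tilde{J}$ need not contain the other constituents with the same $\tilde{J}$ (e.g.\ $V_{J'}$ could a priori be a single non-maximal block $I(\sigma_{J'},\sigma_{\un{b}})$). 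So your equivalence ``$\sigma_{\un{b}}\in\JH(V_{J'})$ iff $\tilde{J}(\un{b})\in U_{J'}$'' is unproved, and the inclusion $\supseteq$ in the second formula of \eqref{General Eq pi1^K1 statement} is precisely where the real work lies.

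What is needed — and what the paper proves as its Claim 1, by induction on $|J|$ — is the stronger statement that a single constituent $\sigma_{\un{e}^J}\in\JH(\pi_1^{K_1})$ forces the \emph{entire} blocks $I\bigbra{\sigma_{\un{e}^{J^{\ss}}},\sigma_{\un{e}^{J^{\ss}}+\un{\varepsilon}^{J}}}$ and $I\bigbra{\sigma_{\un{e}^{(J-1)^{\ss}}},\sigma_{\un{e}^{(J-1)^{\ss}}+\un{\varepsilon}^{J-1}}}$ into $\pi_1^{K_1}$. This is obtained by using the induction hypothesis for the proper subsets $J^{\ss}\subseteq J'\subsetneqq J$ to produce $\un{Y}^{-\un{1}}v_J\in\pi_1^{K_1}$ (Proposition \ref{General Prop shift}), then applying $\smat{p&0\\0&1}$ and Lemma \ref{General Lem p001}(i),(iii) to land back in $D_0(\rhobar)$ with the full block for $J-1$; iterating $f$ times recovers the block for $J$ itself. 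Your closure arguments via Propositions \ref{General Prop vector simple} and \ref{General Prop vector} only produce the individual vectors $v_{J-1}$, $v_{J'}$, which suffices for the closure properties of $S$ but not for the block containment that the second equality requires; and the closure properties alone do not imply that equality. (Incidentally, once the block containment is available, closure under subsets is simpler than your chaining of good pairs: after shifting so that the removed index lies outside $J_{\rhobar}$, the smaller weight is already a constituent of the block, with no further use of the relations of \S\ref{General Sec relation}.) The proof therefore needs to be reorganized around this stronger inductive claim.
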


\begin{proof}
    We recall from Corollary \ref{General Cor structure of D0} that for each $J\subseteq\cJ$ we have $\un{\varepsilon}^J\in\set{\pm1}^f$ with $\varepsilon^J_j=(-1)^{\delta_{j\notin J}}$. We also recall from (\ref{General Eq aJ}) that $\sigma_{J}=\sigma_{\un{e}^J}$ for $J\subseteq J_{\rhobar}$.
    
    \hspace{\fill}
    
    \noindent \textbf{Claim 1.} If $\sigma_{\un{e}^J}\in\JH\bra{\pi_1^{K_1}}$ for some $J\subseteq\cJ$, then $\pi_1^{K_1}$ contains $I\bigbra{\sigma_{\un{e}^{(J-1)^{\ss}}},\sigma_{\un{e}^{(J-1)^{\ss}}+\un{\varepsilon}^{J-1}}}$ and $I\bigbra{\sigma_{\un{e}^{J^{\ss}}},\sigma_{\un{e}^{J^{\ss}}+\un{\varepsilon}^J}}$ (see Lemma \ref{General Lem p001}(iii) for the notation). 

    \proof We prove the claim by increasing induction on $|J|$. Fix $J\subseteq\cJ$ and assume that $\sigma_{\un{e}^J}\in\JH\bra{\pi_1^{K_1}}$. Since $\pi_1^{K_1}$ is a $\GL_2(\OK)$-subrepresentation of $\pi^{K_1}=D_0(\rhobar)$ and $\sigma_{\un{e}^J}\in\JH\bigbra{D_{0,\sigma_{J^{\ss}}}(\rhobar)}$ by Lemma \ref{General Lem Diamond diagram}(i), we deduce from Corollary \ref{General Cor structure of D0} that $\pi_1^{K_1}$ contains $I\bigbra{\sigma_{\un{e}^{J^{\ss}}},\sigma_{\un{e}^J}}$, which is a subrepresentation of $I\bigbra{\sigma_{\un{e}^{J^{\ss}}},\sigma_{\un{e}^{J^{\ss}}+\un{\varepsilon}^J}}$ by Lemma \ref{General Lem p001}(iii). In particular, for each $J'\subseteq\cJ$ such that $J^{\ss}\subseteq J'\subsetneqq J$, we have $\sigma_{\un{e}^{J'}}\in\JH\bra{\pi_1^{K_1}}$. Then by the induction hypothesis and using $(J')^{\ss}=J^{\ss}$, we deduce that $\pi_1^{K_1}$ contains $I\bigbra{\sigma_{\un{e}^{J^{\ss}}},\sigma_{\un{e}^{J^{\ss}}+\varepsilon^{J'}}}$. In particular, by Lemma \ref{General Lem p001}(iii), $\JH\bra{\pi_1^{K_1}}$ contains all $\sigma_{\un{b}}$ with
    \begin{equation*}
    \begin{cases}
        b_j=\delta_{j\in J^{\ss}}&\text{if}~j\notin J^{\nss}\\
        b_j\in\set{0,-1}&\text{if}~j\in J^{\nss},~j\notin J'\\
        b_j\in\set{0,1}&\text{if}~j\in J^{\nss},~j\in J'.
    \end{cases}    
    \end{equation*}
    By varying $J'$ such that $J^{\ss}\subseteq J'\subsetneqq J$ and using $\sigma_{\un{e}^J}\in\JH\bra{\pi_1^{K_1}}$, we deduce that $\JH\bra{\pi_1^{K_1}}$ contains $\sigma_{\un{b}}$ with
    \begin{equation*}
    \begin{cases}
        b_j=\delta_{j\in J^{\ss}}&\text{if}~j\notin J^{\nss}\\
        b_j\in\set{-1,0,1}&\text{if}~j\in J^{\nss}.
    \end{cases}    
    \end{equation*}
    Hence we have $\un{Y}^{-\un{1}}v_J\in\pi_1^{K_1}$ by Proposition \ref{General Prop shift}. 

    By Lemma \ref{General Lem p001}(i),(iii), we have
    \begin{equation}\label{General Eq pi1^K1 claim 3}
        \bang{\GL_2(\OK)\smat{p&0\\0&1}\un{Y}^{-\un{1}}v_J}=I\bigbra{\sigma_{\un{e}^{(J-1)^{\ss}}},\sigma_{\un{e}^{(J-1)^{\ss}}+\un{c}}}\subseteq\pi_1
    \end{equation}
    with $c_j=(-1)^{\delta_{j+1\notin J}}\bigbra{3-\delta_{j\in J\Delta(J-1)^{\ss}}}$. Since $\varepsilon^{J-1}_j=(-1)^{\delta_{j+1\notin J}}$ and $3-\delta_{j\in J\Delta(J-1)^{\ss}}\geq1$, we deduce from (\ref{General Eq pi1^K1 claim 3}) and Corollary \ref{General Cor structure of D0} that
    \begin{equation}\label{General Eq pi1^K1 claim 1}
        I\bigbra{\sigma_{\un{e}^{(J-1)^{\ss}}},\sigma_{\un{e}^{(J-1)^{\ss}}+\un{\varepsilon}^{J-1}}}\subseteq \pi_1\cap\pi^{K_1}=\pi_1^{K_1},
    \end{equation}
    which proves the first part of the claim.

    By Lemma \ref{General Lem p001}(iii) and (\ref{General Eq pi1^K1 claim 1}), we have $\sigma_{\un{e}^{J-1}}\in\JH(\pi_1^{K_1})$. Continuing the above process with $J$ replaced with $J-1$ and so on, we deduce that $\pi_1^{K_1}$ contains $I\bigbra{\sigma_{\un{e}^{(J-i)^{\ss}}},\sigma_{\un{e}^{(J-i)^{\ss}}+\un{\varepsilon}^{J-i}}}$ for all $i\geq0$. In particular, the second part of the claim follows by taking $i=f$ since $J-f=J$.\qed

    \hspace{\fill}

    \noindent \textbf{Claim 2.} Suppose that $J_{\rhobar}\neq\cJ$. If $\sigma_{\un{e}^J}\in\JH\bra{\pi_1^{K_1}}$ for some $J\subseteq\cJ$, then $\sigma_{\un{e}^{J'}}\in\JH\bra{\pi_1^{K_1}}$ for all $J'\subseteq J$.

    \proof Without loss of generality, we may assume that $|J\setminus J'|=1$ and write $J\setminus J'=\set{j_0}$ for some $j_0\in J$. Since $J_{\rhobar}\neq\cJ$, by replacing $(J,J',j_0)$ with $(J+i,J'+i,j_0+i)$ for some $0\leq i\leq f-1$ using Claim 1, we may assume that $j_0\notin J_{\rhobar}$, which implies $J^{\ss}\subseteq J'\subsetneqq J$. Then we have $\sigma_{\un{e}^{J'}}\in\JH\bra{\pi_1^{K_1}}$ by the first paragraph of the proof of Claim 1.\qed

    \hspace{\fill}

    We let $S\eqdef\bigset{J\subseteq\cJ:\sigma_{\un{e}^J}\in\JH\bra{\pi_1^{K_1}}}$. Then by Claim 1 and Claim 2, $S$ is stable under $J\mapsto J-1$, and is moreover stable under taking subsets if $J_{\rhobar}\neq\cJ$. By (\ref{General Eq SW of rhobar}), we have $W(\rhobar^{\ss})=\bigset{\sigma_{\un{e}^J}:J\subseteq\cJ}$, hence the first formula of (\ref{General Eq pi1^K1 statement}) follows from the definition of $S$. 

    Then by Claim 1, we have
    \begin{equation}\label{General Eq pi1^K1 inclusion}
        \pi'\eqdef\sum\limits_{J\in S}I\bigbra{\sigma_{\un{e}^{J^{\ss}}},\sigma_{\un{e}^{J^{\ss}}+\un{\varepsilon}^J}}\subseteq\pi_1^{K_1}\subseteq\pi^{K_1}=D_0(\rhobar).
    \end{equation}
    Since $\JH(\pi')=\bigset{\sigma_{\un{b}}\in\JH\bigbra{D_0(\rhobar)}:\set{j:b_j\geq1}\in S}$ by Lemma \ref{General Lem p001}(iii), to prove the second formula of (\ref{General Eq pi1^K1 statement}), it suffices to show that the first inclusion in \eqref{General Eq pi1^K1 inclusion} is an equality. 

    Suppose on the contrary that the first inclusion in \eqref{General Eq pi1^K1 inclusion} is strict, then there exists $\sigma_{\un{b}}\in\pi_1^{K_1}\subseteq D_0(\rhobar)$ such that $J_0\eqdef\set{j:b_j\geq1}\notin S$. By Corollary \ref{General Cor structure of D0}, $\pi_1^{K_1}$ must contain $I\bigbra{\sigma_{\un{e}^{J_0^{\ss}}},\sigma_{\un{b}}}$, which contains $\sigma_{\un{e}^{J_0}}$ as a constituent by Lemma \ref{General Lem p001}(iii). This contradicts the definition of $S$.
\end{proof}

\begin{theorem}\label{General Thm rank sub}
    Let $\pi_1$ be a subrepresentation of $\pi$. Then we have 
    \begin{equation*}
        \rank_AD_A(\pi_1)=\babs{\JH\bra{\pi_1^{K_1}}\cap W(\rhobar^{\ss})}.
    \end{equation*}
\end{theorem}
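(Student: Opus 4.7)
The approach is to adapt the explicit basis construction of Theorem \ref{General Thm rank 2^f} to the subrepresentation $\pi_1$. First, by Lemma \ref{General Lem pi1^K1}, there is a set $S$ of subsets of $\cJ$, stable under $J\mapsto J-1$ and additionally under taking subsets when $J_{\rhobar}\neq\cJ$, such that $\abs{\JH(\pi_1^{K_1})\cap W(\rhobar^{\ss})}=\abs{S}$. Since $\cC$ is abelian and $D_A$ is exact and contravariant on $\cC$, both $\pi_1$ and $\pi/\pi_1$ lie in $\cC$, and we obtain a short exact sequence of finite free $A$-modules $0\to D_A(\pi/\pi_1)\to D_A(\pi)\to D_A(\pi_1)\to 0$. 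In particular $D_A(\pi_1)$ is a quotient of $D_A(\pi)$ of rank at most $2^f$.

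For the lower bound $\rank_A D_A(\pi_1)\geq\abs{S}$, I would construct for each $J\in S$ an element $y_J\in\Hom_A(D_A(\pi_1),A)$ by running the inductive construction of Theorem \ref{General Thm seq} with all indices restricted to $S$. The closure properties of $S$ ensure that every auxiliary index appearing in the relations of Theorem \ref{General Thm seq}(iii) (namely $J+1$ and the subsets $J^{\ss}\subseteq J'\subseteq J$) again lies in $S$ whenever $J$ does, so the recursion is well-defined formally. The non-trivial point is that the resulting sequences lie in $\pi_1$; for the base case, this is guaranteed by the refined description of $\pi_1^{K_1}$ furnished by Claim 1 of the proof of Lemma \ref{General Lem pi1^K1}, which exhibits $I(\sigma_{\un{e}^{J^{\ss}}},\sigma_{\un{e}^{J^{\ss}}+\un{\varepsilon}^J})\subseteq\pi_1^{K_1}$ for every $J\in S$, and inductively via the formulas of \S\ref{General Sec xJi}. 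Linear independence of $\{y_J\}_{J\in S}$ then follows by the same graded argument as in the proof of Theorem \ref{General Thm rank 2^f}, using the distinct $H$-eigencharacters provided by Corollary \ref{General Cor xJi}.

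For the upper bound, I would apply \cite[Thm.~3.3.2.1]{BHHMS2} together with the localization argument sketched after Theorem \ref{General Thm main1} in the introduction; combining this with the characterization of $\JH(\pi_1^{K_1})$ from Lemma \ref{General Lem pi1^K1} yields $\rank_A D_A(\pi_1)\leq\abs{S}$. The main obstacle lies in the non-semisimple case of the lower bound: the subtlety is that $v_J\in\pi^{I_1}$ lies in the constituent $\sigma_J=\sigma_{\un{a}^J}$ of $D_0(\rhobar)$ (see \eqref{General Eq aJ}), which in general differs from $\sigma_{\un{e}^J}$ whose membership in $\JH(\pi_1^{K_1})$ characterizes $J\in S$. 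Carefully matching the $\GL_2(\OK)$-subrepresentations generated by the shifted vectors $\un{Y}^{-\un{i}}v_J$ (governed by Proposition \ref{General Prop shift}) against the structural inclusions furnished by Lemma \ref{General Lem pi1^K1} is what is needed to guarantee that each $y_J$ is genuinely well-defined in $\Hom_A(D_A(\pi_1),A)$.
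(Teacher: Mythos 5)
Your lower bound is essentially the paper's first step: one shows $x_{J,\un{i}}\in\pi_1$ for all $J\in S$ and $\un{i}\in\ZZ^f$ (base case from the explicit description $\pi_1^{K_1}=\sum_{J\in S}I\bigbra{\sigma_{\un{e}^{J^{\ss}}},\sigma_{\un{e}^{J^{\ss}}+\un{\varepsilon}^J}}$ together with Proposition \ref{General Prop shift}, inductive step from \eqref{General Eq Seq def} and the closure properties of $S$), then invokes Proposition \ref{General Prop degree} and Theorem \ref{General Thm finiteness} to get $x_J\in\Hom_A(D_A(\pi_1),A)$; there is no need to build new sequences $y_J$, and linear independence is automatic since the $x_J$ are part of a basis of $\Hom_A(D_A(\pi),A)\supseteq\Hom_A(D_A(\pi_1),A)$. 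This half of your argument is fine.

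The genuine gap is the upper bound. The localization argument via \cite[Thm.~3.3.2.1]{BHHMS2} controls $\gr D_A(\pi_1)$ through the $I$-characters surviving in $\pi_1^{I_1}$, i.e.\ through which $v_J$ lie in $\pi_1$. But $v_J$ sits in the constituent $\sigma_{\un{a}^J}$ of $D_{0,\sigma_{J^{\ss}}}(\rhobar)$, and the $\GL_2(\OK)$-subrepresentation it generates has top constituent $\sigma_{\un{b}}$ with $\set{j:b_j\geq1}=J^{\ss}\sqcup(\partial J)^{\nss}$, which is in general a \emph{proper} subset of $J$; so in the non-semisimple case one can have $v_J\in\pi_1$ for $J\notin S$ (whenever $J^{\ss}\sqcup(\partial J)^{\nss}\in S$), and the count of surviving characters strictly exceeds $\abs{S}$. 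Thus localization only yields a weaker bound, and the sharp inequality $\rank_AD_A(\pi_1)\leq\abs{S}$ must be proved differently. The paper does this by contradiction: given $\sum_i a_{J_i}x_{J_i}\in\Hom_A(D_A(\pi_1),A)$ with all $J_i\notin S$, it selects a maximal $J_0$ minimizing $\deg(a_{J_i})-|\partial J_i|$, normalizes $a_{J_0}$ to have leading term $\un{Y}^{-\un{e}^{J_0\setminus\partial J_0}}$, introduces the subrepresentation $V\subseteq D_0(\rhobar)$ of constituents $\sigma_{\un{b}}$ with $\set{j:b_j\geq1}\nsupseteq J_0$ (which contains $\pi_1^{K_1}$), and proves a three-part claim locating $x_{J,\un{i}}$ inside or outside $V$ according to $\norm{\un{i}}$; the zeroth term of the sequence then lands in $D_0(\rhobar)\setminus V$ while it must lie in $\pi_1^{K_1}\subseteq V$. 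None of this is recoverable from the ingredients you cite, so as written your proof of the inequality $\leq\abs{S}$ does not go through.
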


\begin{proof}
    Recall from Theorem \ref{General Thm rank 2^f} that $\Hom_A(D_A(\pi),A)$ has rank $2^f$ with $A$-basis $\set{x_J:J\subseteq\cJ}$, where $x_J$ is defined before Theorem \ref{General Thm rank 2^f}. Let $S$ be the set of subsets of $\cJ$ in Lemma \ref{General Lem pi1^K1}. It suffices to show that $\set{x_J:J\in S}$ form an $A$-basis of $\Hom_A(D_A(\pi_1),A)\into\Hom_A(D_A(\pi),A)$.

    \hspace{\fill}

    First we prove that $x_J$ is an element of $\Hom_A(D_A(\pi_1),A)$ for all $J\in S$. By Proposition \ref{General Prop shift}, for all $J\in S$ and $\un{0}\leq\un{i}\leq\un{f}-\un{e}^{J^{\sh}}$, the element $\un{Y}^{-\un{i}}v_J\in D_0(\rhobar)$ lies in the subrepresentation of $D_0(\rhobar)$ with constituents $\sigma_{\un{b}}$ for $\un{b}$ as in (\ref{General Eq range of b}). Hence we have $\un{Y}^{-\un{i}}v_J\in\pi_1^{K_1}$ for all $J\in S$ and $\un{0}\leq\un{i}\leq\un{f}-\un{e}^{J^{\sh}}$ by the second equality in \eqref{General Eq pi1^K1 statement}, which implies that $x_{J,\un{i}}\in\pi_1$ for all $J\in S$ and $\un{i}\leq\un{f}$ by (\ref{General Eq seq small}). Since $S$ is stable under $J\mapsto J-1$, and is moreover stable under taking subsets if $J_{\rhobar}\neq\cJ$, using (\ref{General Eq Seq def}), an increasing induction on $|J|$ and on $\max_ji_j$ shows that $x_{J,i}\in\pi_1$ for all $J\in S$ and $\un{i}\in\ZZ^f$ (if $J_{\rhobar}=\cJ$ then we have $J^{\ss}=J$ for all $J\subseteq\cJ$ and we only use increasing induction on $\max_ji_j$). By the definition of $x_J$ and Proposition \ref{General Prop degree}, we have $x_J\in\Hom_{\FF}^{\cont}(D_A(\pi_1),\FF)$ for all $J\in S$. Then as in the proof of \ref{General Thm rank 2^f}, we deduce from Theorem \ref{General Thm finiteness} that $x_J\in\Hom_A(D_A(\pi_1),A)$ for all $J\in S$.

    \hspace{\fill}

    Next we prove that any element of $\Hom_A(D_A(\pi_1),A)$ is an $A$-linear combination of $x_J$ for $J\in S$. Suppose on the contrary that $\sum\nolimits_{i=1}^ma_{J_i} x_{J_i}\in\Hom_A(D_A(\pi_1),A)$ for $J_i\notin S$ distinct and $a_{J_i}\in A\setminus\set{0}$. We let $J_0$ be a maximal (under inclusion) element among those $J_i$ such that $\deg(a_{J_i})-|\partial J_i|$ is minimal for $1\leq i\leq m$ (see the proof of Lemma \ref{General Lem theta map}(iii) for the definition of the degree and see Remark \ref{General Rk Delta and partial} for $\partial J_i$). Up to rescaling $\sum\nolimits_{i=1}^ma_{J_i}x_{J_i}$ by a suitable $\lambda\un{Y}^{\un{s}}\in A\x$ with $\lambda\in\FF\x$ and $\un{s}\in\ZZ^f$, we may assume that 
    \begin{equation}\label{General Eq rank sub 3}
        a_{J_0}=\un{Y}^{-\un{e}^{J_0\setminus\partial J_0}}+\bigbra{\text{terms of degree}\geq-|J_0\setminus\partial J_0|~\text{and not in}~\FF\un{Y}^{-\un{e}^{J_0\setminus\partial J_0}}},
    \end{equation}
    which has degree $-|J_0\setminus\partial J_0|$, hence we have $\deg(a_{J_0})-|\partial J_0|=-|J_0|$. Then by the assumption on $J_0$, we have
    \begin{equation}\label{General Eq rank sub 4}
    \begin{cases}
        \deg(a_{J_i})\geq-|J_0|+|\partial J_i|+1=-\bigbra{|J_i\setminus\partial J_i|-|J_i\setminus J_0|-1}&\text{if}~J_i\supsetneqq J_0\\
        \deg(a_{J_i})\geq-|J_0|+|\partial J_i|\geq-f&\text{if}~J_i\nsupseteq J_0.
    \end{cases}
    \end{equation}
    
    We define the following $\GL_2(\OK)$-subrepresentation of $D_0(\rhobar)$ (see Corollary \ref{General Cor structure of D0}):
    \begin{equation*}
        V\eqdef
    \begin{cases}
        \sum\limits_{J\nsupseteq J_0}I\bigbra{\sigma_{\un{e}^{J^{\ss}}},\sigma_{\un{e}^{J^{\ss}}+\un{\varepsilon}^J}}&\text{if}~J_{\rhobar}\neq\cJ\\
        \sum\limits_{J\neq J_0}I\bigbra{\sigma_{\un{e}^{J^{\ss}}},\sigma_{\un{e}^{J^{\ss}}+\un{\varepsilon}^J}}=\bigoplus\limits_{J\neq J_0}D_{0,\sigma_{J}}(\rhobar)&\text{if}~J_{\rhobar}=\cJ,
    \end{cases}
    \end{equation*}
    By Lemma \ref{General Lem p001}(iii), $V$ has constituents $\sigma_{\un{b}}$ with $\sset{j:b_j\geq1}\nsupseteq J_0$ if $J_{\rhobar}\neq\cJ$, and $\sset{j:b_j\geq1}\neq J_0$ if $J_{\rhobar}=\cJ$. In particular, since $J_0\notin S$ and $S$ is stable under taking subsets if $J_{\rhobar}\neq\cJ$, we deduce from the second equality in (\ref{General Eq pi1^K1 statement}) that $\pi_1^{K_1}\subseteq V$. 
    
    \hspace{\fill}
    
    \noindent\textbf{Claim.} We have the following properties:
    \begin{enumerate}
    \item
    If $J\nsupseteq J_0$ and $\norm{\un{i}}\leq f$, then $x_{J,\un{i}}\in V$.
    \item 
    If $J\supsetneqq J_0$ and $\norm{\un{i}}\leq|J\setminus\partial J|-|J\setminus J_0|-1$, then $x_{J,\un{i}}\in V$.
    \item
    If $J=J_0$ and $\norm{\un{i}}\leq|J_0\setminus\partial J_0|$, then we have $x_{J,\un{i}}\in V$ if $\un{i}\neq\un{e}^{J_0\setminus\partial J_0}$, and $x_{J,\un{e}^{J_0\setminus\partial J_0}}\in D_0(\rhobar)\setminus V$.
    \end{enumerate}
    
    \proof (i). By (\ref{General Eq seq small}), we may assume that $\un{i}\geq\un{e}^{J^{\sh}}$, in which case we have $x_{J,i}=\un{Y}^{-\un{i}'}v_J\in D_0(\rhobar)$ with $\un{i}'\eqdef\un{i}-\un{e}^{J^{\sh}}$, which is defined in Proposition \ref{General Prop shift} since $\un{0}\leq\un{i}'\leq\un{f}-\un{e}^{J^{\sh}}$. We let $\sigma_{\un{b}}\in\JH\bigbra{D_0(\rhobar)}$ be an arbitrary constituent of the $\GL_2(\OK)$-subrepresentation of $D_0(\rhobar)$ generated by $x_{J,\un{i}}$. Then by \eqref{General Eq range of b} we have
    \begin{equation}\label{General Eq rank sub 1}
    \begin{aligned}
        \sset{j:b_j\geq1}&\subseteq J^{\ss}\sqcup\sset{j\in J^{\nss}:i'_j>0}\sqcup\sset{j:j\in(\partial J)^{\nss}:i'_j=0}\\
        &=J\setminus\sset{j\in(J\setminus\partial J)^{\nss}:i'_j=0}\subseteq J.
    \end{aligned}
    \end{equation}
    In particular, if $J\nsupseteq J_0$, then we deduce from (\ref{General Eq rank sub 1}) that $\set{j:b_j\geq1}\nsupseteq J_0$, hence $\sigma_{\un{b}}$ is a constituent of $V$, which proves that $x_{J,\un{i}}\in V$.

    (ii). By (\ref{General Eq rank sub 1}), we also have
    \begin{equation}\label{General Eq rank sub 2}
        \bigabs{\set{j:b_j\geq1}}\leq|J|-\babs{\sset{j\in(J\setminus\partial J)^{\nss}:i'_j=0}}\leq|J|-\bigbra{\babs{(J\setminus\partial J)^{\nss}}-\norm{\un{i}'}},
    \end{equation}
    where the second inequality becomes an equality if and only if $\un{i}'=\un{e}^{J_1}$ for some $J_1\subseteq(J\setminus\partial J)^{\nss}$, which is equivalent to $\un{i}=\un{e}^{J_2}$ for some $J^{\sh}\subseteq J_2\subseteq J\setminus\partial J$. If $\norm{\un{i}}\leq|J\setminus\partial J|-|J\setminus J_0|-1$, which implies $\norm{\un{i}'}\leq\bigabs{(J\setminus\partial J)^{\nss}}-|J\setminus J_0|-1$, then we deduce from (\ref{General Eq rank sub 2}) that $\bigabs{\set{j:b_j\geq1}}\leq|J_0|-1$, which implies $\set{j:b_j\geq1}\nsupseteq J_0$, hence $\sigma_{\un{b}}$ is a constituent of $V$, which proves that $x_{J,\un{i}}\in V$.

    (iii). Suppose that $J=J_0$ and $\norm{\un{i}}\leq|J_0\setminus\partial J_0|$. Then by \eqref{General Eq rank sub 2} we have 
    \begin{equation*}
        \bigabs{\set{j:b_j\geq1}}\leq\abs{J_0}-\bigbra{\abs{(J_0\setminus\partial J_0)^{\nss}}-\norm{\un{i}'}}=\abs{J_0}-\bigbra{\abs{J_0\setminus\partial J_0}-\norm{\un{i}}}\leq|J_0|,
    \end{equation*}
    and at least one inequality is strict if $\un{i}\neq\un{e}^{J_0\setminus\partial J_0}$, in which case we have $\set{j:b_j\geq1}\nsupseteq J_0$, hence $\sigma_{\un{b}}$ is a constituent of $V$. This proves  $x_{J_0,\un{i}}\in V$ if $\norm{\un{i}}\leq|J_0\setminus\partial J_0|$ and $\un{i}\neq\un{e}^{J_0\setminus\partial J_0}$. Finally, the $\GL_2(\OK)$-subrepresentation of $D_0(\rhobar)$ generated by $\un{Y}^{-\un{e}^{(J_0\setminus\partial J_0)^{\nss}}}v_{J_0}$ has $\sigma_{\un{e}^{J_0}}$ as a constituent by (\ref{General Eq range of b}), hence it follows from (\ref{General Eq seq small}) and the description of the constituents of $V$ that $x_{J_0,\un{e}^{J_0\setminus\partial J_0}}=\un{Y}^{-\un{e}^{(J_0\setminus\partial J_0)^{\nss}}}v_{J_0}\in D_0(\rhobar)\setminus V$.\qed

    \hspace{\fill}

    Using (\ref{General Eq mu*}), we identify $\sum\nolimits_{i=1}^ma_{J_i}x_{J_i}\in\Hom_A\bra{D_A(\pi_1),A}$ as $(z_k)_{k\geq0}\in\Hom_{\FF}^{\cont}\bra{D_A(\pi_1),\FF}$, which is a sequence of elements of $\pi_1$. By writing each $a_{J_i}\in A$ as an infinite sum of monomials in $\un{Y}$ together with (\ref{General Eq rank sub 3}) and (\ref{General Eq rank sub 4}), we deduce from (\ref{General Eq formula A-module on seq}), \cite[Remark~3.8.2]{BHHMS3} and the definition of $x_{J_i}$ that the zeroth term $z_0$ of the sequence $\sum\nolimits_{i=1}^ma_{J_i}x_{J_i}$ is a linear combination of $x_{J,\un{i}}$ satisfying the assumptions of (i),(ii),(iii) of the claim above and with exactly one of the terms equals $x_{J_0,\un{e}^{J_0\setminus\partial J_0}}$, hence is an element of $D_0(\rhobar)\setminus V$. By definition, we also have $z_0\in\pi_1$, hence $z_0\in\pi_1\cap D_0(\rhobar)=\pi_1^{K_1}\subseteq V$, which is a contradiction.
\end{proof}

\begin{corollary}\label{General Cor finite generation}
    Assume moreover that $\pi^{\vee}$ is essentially self-dual of grade $2f$ in the sense of \cite[(176)]{BHHMS2}. Then as a $\GL_2(K)$-representation, $\pi$ is generated by $D_0(\rhobar)$.
\end{corollary}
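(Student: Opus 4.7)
The plan is to set $\pi_1\subseteq\pi$ to be the $\GL_2(K)$-subrepresentation generated by $D_0(\rhobar)\cong\pi^{K_1}$, and to show that $\pi_1=\pi$ by matching the $A$-ranks of $D_A(\pi_1)$ and $D_A(\pi)$ via Theorem \ref{General Thm rank sub}, then invoking essential self-duality to promote this equality to an equality of representations. The first observation is immediate: since $D_0(\rhobar)\subseteq\pi_1\subseteq\pi$, the chain $D_0(\rhobar)\subseteq\pi_1^{K_1}\subseteq\pi^{K_1}=D_0(\rhobar)$ forces $\pi_1^{K_1}=D_0(\rhobar)$.

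Next I would compute the intersection $\JH(\pi_1^{K_1})\cap W(\rhobar^{\ss})$. Applying (\ref{General Eq SW of rhobar}) to $\rhobar^{\ss}$ (for which $J_{\rhobar^{\ss}}=\cJ$) gives $W(\rhobar^{\ss})=\sset{\sigma_{\un{e}^J}:J\subseteq\cJ}$, which has cardinality $2^f$. By Lemma \ref{General Lem Diamond diagram}(i), every $\sigma_{\un{e}^J}$ lies in $\JH(D_0(\rhobar))$, since the entries of $\un{e}^J$ belong to $\set{0,1}$ and hence to the allowed range whether or not $j\in J_{\rhobar}$. Therefore Theorem \ref{General Thm rank sub} yields $\rank_AD_A(\pi_1)=2^f$, which matches $\rank_AD_A(\pi)=2^f$ from Theorem \ref{General Thm main1}.

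Setting $\pi_2\eqdef\pi/\pi_1$, the abelian category $\cC$ is closed under subquotients, so $\pi_1,\pi_2\in\cC$, and the exactness of $D_A$ produces a short exact sequence of finite free $A$-modules
\begin{equation*}
    0\to D_A(\pi_1)\to D_A(\pi)\to D_A(\pi_2)\to0.
\end{equation*}
Comparing ranks forces $\rank_AD_A(\pi_2)=0$, and since $D_A(\pi_2)$ is free over $A$ we conclude $D_A(\pi_2)=0$.

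The main obstacle is the final step: concluding $\pi_2=0$ from $D_A(\pi_2)=0$ together with the essential self-duality hypothesis. The dual $\pi_2^{\vee}$ embeds as a submodule of $\pi^{\vee}$. The content of essential self-duality of grade $2f$ in the sense of \cite[(176)]{BHHMS2} is in particular that $\pi^{\vee}$ is pure (Cohen--Macaulay) of grade $2f$ over $\FF\ddbra{I_1/Z_1}$, so every nonzero submodule of $\pi^{\vee}$ again has grade exactly $2f$. On the other hand, the vanishing $D_A(\pi_2)=0$ is a $\psi$-theoretic reflection of $\pi_2^{\vee}$ having strictly smaller Gelfand--Kirillov dimension, equivalently strictly larger grade, than $\pi^{\vee}$. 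Purity therefore forces $\pi_2^{\vee}=0$, i.e.\,$\pi_1=\pi$. The technical heart of the corollary thus lies in citing (or extracting from \cite[\S3.3]{BHHMS2}) the exact implication ``essentially self-dual of grade $2f$'' $\Longrightarrow$ ``every submodule of $\pi^{\vee}$ with trivial $D_A$ vanishes''; once this is pinned down, the rest of the argument is a clean combination of Theorem \ref{General Thm rank sub} with the rank count above.
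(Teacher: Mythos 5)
Your steps 1--3 are correct and coincide with the paper's strategy: the paper also reduces to showing that $D_A$ (phrased there through the functor $D_{\xi}^{\vee}$ and the multiplicity $\fm_{\fp_0}$ of \cite[\S3.3.5]{BHHMS2}) vanishes on $\pi_2\eqdef\pi/\pi_1$ and is nonzero on every nonzero quotient of $\pi$. (A small point of bookkeeping: $D_A$ is built from $\pi^{\vee}$, so it is contravariant in $\pi$ and the exact sequence runs $0\to D_A(\pi_2)\to D_A(\pi)\to D_A(\pi_1)\to0$; this does not affect the rank count.)

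The gap is in your final step. Essential self-duality does give that $\pi^{\vee}$ is Cohen--Macaulay of grade $2f$, hence that every nonzero submodule of $\pi^{\vee}$ --- in particular $\pi_2^{\vee}$ --- has grade exactly $2f$, i.e.\ Gelfand--Kirillov dimension $f$. But $D_A(\pi_2)=0$ does \emph{not} say that $\pi_2^{\vee}$ has dimension $<f$. The relevant graded ring is $R\cong\FF[y_j,z_j]/(y_jz_j;\,0\leq j\leq f-1)$, whose spectrum has $2^f$ top-dimensional components, each of dimension $f$; the vanishing of $D_A(\pi_2)$ (equivalently $\fm_{\fp_0}(\gr\pi_2^{\vee})=0$) only says that the characteristic cycle of $\pi_2^{\vee}$ misses the single component $\set{z_0=\cdots=z_{f-1}=0}$, the one on which $Y_0\cdots Y_{f-1}$ is invertible. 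A pure module of dimension $f$ supported entirely on the other components would satisfy your two conditions simultaneously, so purity of the grade cannot by itself force $\pi_2=0$. What is actually needed is the functorial content of essential self-duality: the duality $E^{2f}$ carries $\pi_2^{\vee}\subseteq\pi^{\vee}$ to (a twist of) the dual of a nonzero \emph{sub}representation $\pi_3\subseteq\pi$ while preserving characteristic cycles, hence preserving $\fm_{\fp_0}$; Theorem \ref{General Thm rank sub} applied to $\pi_3$ (whose $K_1$-invariants are nonzero and therefore meet $W(\rhobar^{\ss})$) gives $\fm_{\fp_0}(\pi_3^{\vee})=\rank_AD_A(\pi_3)>0$, whence $D_A(\pi_2)\neq0$ if $\pi_2\neq0$. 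This transfer from subrepresentations to quotients is exactly \cite[Prop.~3.3.5.3(iii)]{BHHMS2} (together with the identification $\dim_{\FF\dbra{X}}D_{\xi}^{\vee}(\pi')=\fm_{\fp_0}(\pi^{\prime\vee})$, which itself uses Theorem \ref{General Thm main1}); it requires Theorem \ref{General Thm rank sub} for subrepresentations as an input and is strictly stronger than Cohen--Macaulayness.
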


\begin{proof}
    We use the notation of \cite[Prop.~3.3.5.3]{BHHMS2}. By Theorem \ref{General Thm main1} and the proof of \cite[Prop.~3.3.5.3(i)]{BHHMS2}, we deduce that $\dim_{\FF\dbra{X}}D_{\xi}^{\vee}(\pi')=\fm_{\fp_0}(\pi^{\prime\vee})$ for any subquotient $\pi'$ of $\pi$. By Theorem \ref{General Thm rank sub} and \cite[Remark~3.3.5.4(ii)]{BHHMS2}, we deduce that $D_{\xi}^{\vee}(\pi')\neq0$ for $\pi'$ a subrepresentation of $\pi$. Then the proof of \cite[Prop.~3.3.5.3(iii)]{BHHMS2} shows that $D_{\xi}^{\vee}(\pi')\neq0$ for $\pi'$ a quotient of $\pi$, see \cite[Remark~3.3.5.4(i)]{BHHMS2}. Then we can conclude as in the proof of \cite[Thm.~3.3.5.5]{BHHMS2}.
\end{proof}

\appendix
\numberwithin{equation}{section}
\section*{Appendix}
\section{Some vanishing results}\label{General Sec app vanish}

In this appendix, we give a careful study of the elements $x_{J,\un{i}}$ defined in \S\ref{General Sec xJi} for small $\un{i}$. In particular, we use the results of \S\ref{General Sec relation} to prove that many of these elements are zero. The main results are Proposition \ref{General Prop more 0}, Corollary \ref{General Cor w 0} and Proposition \ref{General Prop xJ' xJ}. The results of this appendix are needed in the proof of Theorem \ref{General Thm seq}, and to prove the finiteness results in \S\ref{General Sec finiteness}.

We refer to Definition \ref{General Def rcepi} for $\un{r}^J,\un{c}^J\in\ZZ^f$ and $\varepsilon_J\in\set{\pm1}$, and refer to (\ref{General Eq seq small}) and (\ref{General Eq Seq def}) for the definition of $x_{J,\un{i}}\in\pi$.

\begin{lemma}\label{General Lem simple 0}
    Let $0\leq k\leq f$. Assume that Theorem \ref{General Thm seq} is true for $\abs{J}\leq k$. Let $J\subseteq\cJ$ with $|J|\leq k$, $j_0\in\cJ$ and $\un{i}\in\ZZ^f$. Suppose that $j_0\notin J$, $j_0+1\notin J$ and $i_{j_0}<0$. Then we have $x_{J,\un{i}}=0$.
\end{lemma}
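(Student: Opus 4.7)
The plan is to prove the lemma by a double induction: an outer induction on $|J|$ (with $|J|\leq k$) and, for each fixed $|J|$, an inner induction on $\max_{j\in\cJ}i_j$. For the base of the inner induction, $\max_j i_j\leq f$, the element $x_{J,\un{i}}$ is given directly by (\ref{General Eq seq small}); since $j_0\notin J$ we have $j_0\notin J^{\sh}$, so $(\un{e}^{J^{\sh}})_{j_0}=0$, and the assumption $i_{j_0}<0$ forces $\un{i}\ngeq\un{e}^{J^{\sh}}$, whence $x_{J,\un{i}}=0$.

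For the inductive step, I would assume $\max_j i_j>f$ and write $\un{i}=p\delta(\un{i}')+\un{c}^J-\un{\ell}$ with $\un{0}\leq\un{\ell}\leq\un{p}-\un{1}$, so that $x_{J,\un{i}}$ is given by (\ref{General Eq Seq def}). The key numerical observation I will use is that $j_0\notin J$ and $j_0+1\notin J$ imply $c^J_{j_0}=p-1$ by (\ref{General Eq cJ}), whence the identity $i_{j_0}=pi'_{j_0+1}+(p-1)-\ell_{j_0}$ combined with $i_{j_0}<0$ and $0\leq\ell_{j_0}\leq p-1$ forces $i'_{j_0+1}<0$. The task is then to show that each term on the RHS of (\ref{General Eq Seq def}) vanishes.

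For the first term $\un{Y}^{\un{\ell}}\smat{p&0\\0&1}x_{J+1,\un{i}'}$, I will apply the inner induction hypothesis to the triple $(J+1,\un{i}',j_0+1)$: we have $|J+1|=|J|$; the conditions $j_0\notin J$ and $j_0+1\notin J$ translate to $j_0+1\notin J+1$ and $j_0+2\notin J+1$; the negativity $i'_{j_0+1}<0$ has just been established; and $\max_j i'_j<\max_j i_j$ holds by (\ref{General Eq max1<max2}). For each summand $x_{J',\un{i}+\un{r}^{J\setminus J'}}$ with $J^{\ss}\subseteq J'\subsetneqq J$, I will invoke the outer induction hypothesis (on $|J|$) for the triple $(J',\un{i}+\un{r}^{J\setminus J'},j_0)$: indeed $|J'|<|J|$; the inclusion $J'\subseteq J$ together with $j_0,j_0+1\notin J$ gives $j_0,j_0+1\notin J'$; and since $j_0,j_0+1\notin J\setminus J'$, formula (\ref{General Eq rJ}) yields $r^{J\setminus J'}_{j_0}=0$, so $(\un{i}+\un{r}^{J\setminus J'})_{j_0}=i_{j_0}<0$.

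The main subtlety forcing the double induction is that the shift $\un{r}^{J\setminus J'}$ appearing in the sum over $J^{\ss}\subseteq J'\subsetneqq J$ can be positive in certain coordinates (namely when $j+1\in J\setminus J'$), so $\max_j(\un{i}+\un{r}^{J\setminus J'})_j$ need not be bounded by $\max_j i_j$, and an induction on the maximum coordinate alone would not close. The saving point is that $J'\subsetneqq J$ is strictly smaller in cardinality, which is precisely what allows the outer induction on $|J|$ to dispose of these shifted terms with no constraint on the size of their index.
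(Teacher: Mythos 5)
Your proof is correct and follows essentially the same route as the paper's: a lexicographic induction on $(|J|,\max_j i_j)$, with the base case read off from (\ref{General Eq seq small}), the observation $c^J_{j_0}=p-1$ forcing $i'_{j_0+1}<0$ for the $\varphi$-term, and $r^{J\setminus J'}_{j_0}=0$ handling the lower terms via the induction on $|J|$. (Your use of $\un{c}^J$ rather than the paper's $\un{c}^{J+1}$ is the version consistent with (\ref{General Eq Seq def}) and with the hypothesis $j_0,j_0+1\notin J$.)
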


\begin{proof}
    We prove the result by increasing induction on $|J|\leq k$ and on $\max_ji_j$ (the base case being $|J|=-1$, which is automatic). We let $J\subseteq\cJ$ such that $|J|\leq k$ and $\un{i}\in\ZZ^f$. If $\un{i}\leq\un{f}$, then the lemma follows directly from (\ref{General Eq seq small}). If $\max_ji_j>f$, then we write $\un{i}=p\delta(\un{i}')+\un{c}^{J+1}-\un{\ell}$ for the unique $\un{i}',\un{\ell}\in\ZZ^f$ such that $\un{0}\leq\un{\ell}\leq\un{p}-\un{1}$. In particular, we have $\max_j{i'_j}<\max_j{i_j}$ (see (\ref{General Eq max1<max2})). Since $i_{j_0}<0$ and $c^{J+1}_{j_0}=p-1$ by (\ref{General Eq cJ}), we have $i'_{j_0+1}<0$, hence by the induction hypothesis on $\max_j{i_j}$ we deduce that $x_{J+1,\un{i}'}=0$. For each $J'\subsetneqq J$, by (\ref{General Eq rJ}) we have $i_{j_0}+r^{J\setminus J'}_{j_0}=i_{j_0}<0$, hence by the induction hypothesis on $|J|$ we deduce that $x_{J',\un{i}+\un{r}^{J\setminus J'}}=0$ for $J'\subsetneqq J$. Then by (\ref{General Eq Seq def}) we conclude that $x_{J,\un{i}}=0$.
\end{proof}

To prove more vanishing results, we need some variants of \cite[Lemma~3.4.2]{BHHMS3} and \cite[Lemma~3.4.4]{BHHMS3} (where $\rhobar$ was assumed to be semisimple). Since there could be overlaps between different $\GL_2(\OK)$-subrepresentations $\bang{\GL_2(\OK)\smat{p&0\\0&1}\un{Y}^{-\un{i}}v_J}$ of $\pi$ (see Proposition \ref{General Prop relation 2}), we need to be more precise about the region where the elements $x_{J,\un{i}}$ vanish. This motivates the following somewhat technical definition.

\begin{definition}\label{General Def aJn}
    Let $J\subseteq\cJ$.
    \begin{enumerate}
    \item 
    Let $j\in\cJ$ and $x\in\ZZ$. We write $x=2n+\delta$ with $n\in\ZZ$ and $\delta\in\set{0,1}$. Then we define $t^J_j(x)\eqdef np+\delta\bbra{\delta_{j+1\notin J}(r_j+1)+\delta_{j+1\in J}(p-1-r_j)}$.
    \item 
    Let $\un{n}\in\ZZ^f$. Suppose that there exists $j_0\in\cJ$ such that
    \begin{enumerate}
    \item
    $n_{j_0+1}=0$;
    \item 
    $1\leq n_j\leq 2f-\delta_{j\in J}$ if $j\neq j_0+1$,
    \end{enumerate}
    then we define  $\un{a}^J(\un{n})\in\ZZ^f$ by
    \begin{equation*}
        a^J(\un{n})_j\eqdef 
    \begin{cases}
        t^J_{j_0}(n_{j_0+1})=0&\text{if}~j=j_0~\text{and}~j_0\in J^{\sh}\\
        t^J_j(n_{j+1})-n_j&\text{otherwise}.
    \end{cases}
    \end{equation*}
    \end{enumerate}
\end{definition}

\begin{lemma}\label{General Lem a=a+r}
    Let $J\subseteq\cJ$, $\un{n}\in\ZZ^f$ and $j_0\in\cJ$ as in Definition \ref{General Def aJn}(ii). Let $J'\subseteq J$ such that $j_0+1\notin J\setminus J'$. Suppose that either $j_0\notin J^{\sh}$ or $J^{\ss}\cup\set{j_0+1}\subseteq J'$, then we have (see \S\ref{General Sec sw} for $\un{e}^{J\setminus J'}$)
    \begin{equation*}
        \un{a}^{J}(\un{n})+\un{r}^{J\setminus J'}=\un{a}^{J'}\bigbra{\un{n}+\un{e}^{J\setminus J'}}.
    \end{equation*}
\end{lemma}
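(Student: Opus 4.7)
The proof reduces to a coordinate-by-coordinate verification. Fix $j \in \cJ$; the goal is to check
\[
a^{J}(\un{n})_j + r^{J\setminus J'}_j = a^{J'}(\un{n}+\un{e}^{J\setminus J'})_j.
\]
Before starting the case analysis I would first check that $\un{n}' \eqdef \un{n}+\un{e}^{J\setminus J'}$ together with the same $j_0$ still satisfies the conditions of Definition~\ref{General Def aJn}(ii) for $J'$. The equality $n'_{j_0+1}=0$ is immediate from the hypothesis $j_0+1\notin J\setminus J'$, and the inequalities $1\leq n'_j\leq 2f-\delta_{j\in J'}$ follow from $\un{0}\leq \un{e}^{J\setminus J'}\leq\un{1}$ together with $\delta_{j\in J} - \delta_{j\in J\setminus J'} = \delta_{j\in J'}$ when $j\in J$, and $\delta_{j\in J\setminus J'}=0$ otherwise.

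The main work is then a split into three cases. \emph{Case 1:} $j\neq j_0$ and $j+1\notin J\setminus J'$, so $\delta_{j+1\in J}=\delta_{j+1\in J'}$ and $n'_{j+1}=n_{j+1}$. In this case $t^J_j(n_{j+1})=t^{J'}_j(n'_{j+1})$, and the identity reduces to $r^{J\setminus J'}_j = -\delta_{j\in J\setminus J'}$, which is exactly (\ref{General Eq Lem rJ}) since $\delta_{j+1\in J\setminus J'}=0$. \emph{Case 2:} $j\neq j_0$ and $j+1\in J\setminus J'$. Here I would write $n_{j+1}=2m+\varepsilon$ with $\varepsilon\in\{0,1\}$ and compute $t^{J'}_j(n_{j+1}+1)-t^J_j(n_{j+1})$ directly in the two subcases $\varepsilon=0$ and $\varepsilon=1$, using $\delta_{j+1\in J}=1$, $\delta_{j+1\in J'}=0$; both subcases yield the value $r_j+1$. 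Combined with $r^{J\setminus J'}_j = (r_j+1)-\delta_{j\in J\setminus J'}$ (from (\ref{General Eq Lem rJ})) and the shift $n'_j=n_j+\delta_{j\in J\setminus J'}$, this gives the identity.

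\emph{Case 3:} $j=j_0$. By hypothesis $n_{j_0+1}=n'_{j_0+1}=0$ and $\delta_{j_0+1\in J}=\delta_{j_0+1\in J'}$, so $t^J_{j_0}(0)=t^{J'}_{j_0}(0)=0$. The two sub-cases of Definition~\ref{General Def aJn}(ii) need to be matched carefully. If $j_0\notin J^{\sh}$ then automatically $j_0\notin (J')^{\sh}$ (as $(J')^{\sh}\subseteq J^{\sh}$, which follows from $J'\subseteq J$), and both sides are computed by the generic formula: LHS $=-n_{j_0}+r^{J\setminus J'}_{j_0}$, RHS $=-n_{j_0}-\delta_{j_0\in J\setminus J'}$, and these agree by (\ref{General Eq Lem rJ}). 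If instead $j_0\in J^{\sh}$, the hypothesis $J^{\ss}\cup\{j_0+1\}\subseteq J'$ forces $j_0\in J'$ and $j_0+1\in J'$, hence $j_0\in(J')^{\sh}$; in particular $j_0,j_0+1\notin J\setminus J'$, so $r^{J\setminus J'}_{j_0}=0$ by (\ref{General Eq Lem rJ}) and both sides equal $0$ by the special clause in Definition~\ref{General Def aJn}(ii).

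There is no real conceptual obstacle; the only point requiring care is Case 3 with $j_0\in J^{\sh}$, where the hypothesis $J^{\ss}\cup\{j_0+1\}\subseteq J'$ is used precisely to transfer the "special" clause from $J$ to $J'$. The rest is a direct unwinding of the definitions (\ref{General Eq rJ}) and Definition~\ref{General Def aJn}.
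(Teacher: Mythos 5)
Your proof is correct and follows essentially the same route as the paper's: both reduce to a coordinate-wise verification, check that $\un{n}+\un{e}^{J\setminus J'}$ still satisfies Definition \ref{General Def aJn}(ii) for $J'$, establish the key identity $t^{J'}_j(n_{j+1}+\delta_{j+1\in J\setminus J'})=t^J_j(n_{j+1})+\delta_{j+1\in J\setminus J'}(r_j+1)$ by a parity split on $n_{j+1}$, and treat the coordinate $j_0$ with $j_0\in J^{\sh}$ separately, where the hypothesis $J^{\ss}\cup\set{j_0+1}\subseteq J'$ transfers the special clause from $J$ to $J'$ and forces $r^{J\setminus J'}_{j_0}=0$. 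The only difference is organizational (you split on whether $j+1\in J\setminus J'$ rather than folding both cases into one formula), which changes nothing of substance.
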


\begin{proof}
    Since $j_0+1\notin J\setminus J'$, we have $\bigbra{\un{n}+\un{e}^{J\setminus J'}}_{j_0+1}=n_{j_0+1}+\delta_{j_0+1\in J\setminus J'}=0$. By definition, we also have for $j\neq j_0+1$
    \begin{equation*}
        1\leq n_j\leq n_j+\delta_{j\in J\setminus J'}\leq(2f-\delta_{j\in J})+\delta_{j\in J\setminus J'}=2f-\delta_{j\in J'}.
    \end{equation*}
    Hence $\un{a}^{J'}\bigbra{\un{n}+\un{e}^{J\setminus J'}}$ is well-defined.

    First we suppose that $j_0\notin J^{\sh}$. We need to prove that for each $j\in\cJ$ we have
    \begin{equation*}
        t^{J}_j(n_{j+1})-n_j+r^{J\setminus J'}_j=t^{J'}_j\bigbra{n_{j+1}+\delta_{j+1\in J\setminus J'}}-(n_j+\delta_{j\in J\setminus J'}).
    \end{equation*}
    Since $r^{J\setminus J'}_j=\delta_{j+1\in J\setminus J'}(r_j+1)-\delta_{j\in J\setminus J'}$ by (\ref{General Eq rJ}) (see (\ref{General Eq Lem rJ}) below), it suffices to show that for each $j\in\cJ$ we have
    \begin{equation}\label{General Eq t=t+r}
        t^{J}_j(n_{j+1})+\delta_{j+1\in J\setminus J'}(r_j+1)=t^{J'}_j\bigbra{n_{j+1}+\delta_{j+1\in J\setminus J'}}.
    \end{equation}
    We fix $j\in\cJ$ and write $n_{j+1}=2n+\delta$ with $n\in\ZZ$ and $\delta\in\set{0,1}$. If $\delta=0$, then we have (as $\delta_{j+1\in J\setminus J'}\delta_{j+1\in J'}=0$)
    \begin{align*}
        t^{J'}_j\bigbra{n_{j+1}+\delta_{j+1\in J\setminus J'}}&=np+\delta_{j+1\in J\setminus J'}\bigbra{\delta_{j+1\notin J'}(r_j+1)+\delta_{j+1\in J'}(p-1-r_j)}\\
        &=np+\delta_{j+1\in J\setminus J'}(r_j+1)\\
        &=t^J_j(n_{j+1})+\delta_{j+1\in J\setminus J'}(r_j+1).
    \end{align*}
    If $\delta=1$, then we have
    \begin{align*}
    &\begin{aligned}
        t^J_j(n_{j+1})+\delta_{j+1\in J\setminus J'}(r_j+1)&=np+\delta_{j+1\notin J}(r_j+1)+\delta_{j+1\in J}(p-1-r_j)+\delta_{j+1\in J\setminus J'}(r_j+1)\\
        &=np+\delta_{j+1\notin J'}(r_j+1)+\delta_{j+1\in J}(p-1-r_j);
    \end{aligned}\\
    &\begin{aligned}
        t^{J'}_j\bigbra{n_{j+1}+\delta_{j+1\in J\setminus J'}}&=np+\delta_{j+1\notin J'}(r_j+1)+\delta_{j+1\in J'}(p-1-r_j)\\
        &\hspace{1.5cm}+\delta_{j+1\in J\setminus J'}\bigbra{\delta_{j+1\notin J'}(p-1-r_j)+\delta_{j+1\in J'}(r_j+1)}\\
        &=np+\delta_{j+1\notin J'}(r_j+1)+\delta_{j+1\in J'}(p-1-r_j)+\delta_{j+1\in J\setminus J'}(p-1-r_j)\\
        &=np+\delta_{j+1\notin J'}(r_j+1)+\delta_{j+1\in J}(p-1-r_j).
    \end{aligned}   
    \end{align*}
    
    Then it remains to show that $a^{J}(\un{n})_{j_0}+r^{J\setminus J'}_{j_0}=a^{J'}\bigbra{\un{n}+\un{e}^{J\setminus J'}}_{j_0}$ when $j_0\in J^{\sh}$ and $J^{\ss}\cup\set{j_0+1}\subseteq J'\subseteq J$. By assumption we have $j_0\in(J')^{\sh}$, hence by definition we have $a^J(\un{n})_{j_0}=\un{a}^{J'}\bigbra{\un{n}+\un{e}^{J\setminus J'}}_{j_0}=0$. By assumption we also have $j_0,j_0+1\in J'$, hence $j_0,j_0+1\notin J\setminus J'$ and $r^{J\setminus J'}_{j_0}=0$ by (\ref{General Eq rJ}). This completes the proof.
\end{proof}

\begin{proposition}\label{General Prop more 0}
    Let $0\leq k\leq f$. Assume that Theorem \ref{General Thm seq} is true for $|J|\leq k$. Let $J\subseteq\cJ$ with $|J|\leq k$. Let $\un{n}\in\ZZ^f$ and $j_0\in\cJ$ be as in Definition \ref{General Def aJn}(ii). Then we have $x_{J,\un{a}^J(\un{n})-e_{j_0+1}}=0$.
\end{proposition}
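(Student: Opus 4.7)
The plan is to prove $x_{J, \un{a}^J(\un{n}) - e_{j_0+1}} = 0$ by increasing induction on $|J|$, combining the recursive definition of $x_{J,\un{i}}$ from Theorem \ref{General Thm seq}(iii) with the vanishing relations from Proposition \ref{General Prop relation 1} (and Proposition \ref{General Prop relation 2} for the overlapping cases specific to the non-semisimple setting).

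The base case $|J|=0$ (so $J=\emptyset$) follows directly from Lemma \ref{General Lem simple 0}: since $j_0,j_0+1\notin\emptyset$ and $(\un{a}^{\emptyset}(\un{n}))_{j_0} = t^{\emptyset}_{j_0}(0) - n_{j_0} = -n_{j_0} \leq -1$, that lemma yields the vanishing. For the inductive step, I would first set $\un{i}_0 \eqdef \un{e}^{(J\cap(J+1))^{\nss}}$, so $x_{J+1,\un{i}_0} = \un{Y}^{-\un{i}_0}v_{J+1}$ by \eqref{General Eq seq small}. Applying Theorem \ref{General Thm seq}(iii) (valid for $J$ since $|J|\leq k$) and multiplying through by a suitable monomial $\un{Y}^{\un{M}}$ yields
\begin{equation*}
    \un{Y}^{\un{M}}\smat{p&0\\0&1}(\un{Y}^{-\un{i}_0}v_{J+1}) = \sum_{J^{\ss}\subseteq J'\subseteq J}\varepsilon_{J'}\mu_{J+1,J'}\,x_{J',\,p\delta(\un{i}_0)+\un{c}^J+\un{r}^{J\setminus J'}-\un{M}}.
\end{equation*}

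The key combinatorial point is to choose $\un{M}$ of the form $\prod_{j\notin J''}Y_j^{2(i_0)_j+t^{J+1}(J'')_j}$ for an appropriate $J''\subseteq\cJ$ with $j_0\notin J''$, so that (a) Proposition \ref{General Prop relation 1} applies (with $(J,\un{i},J')_{\text{Prop}} = (J+1,\un{i}_0,J'')$ and the same $j_0$), forcing the LHS to vanish, and (b) the index of the $J'=J$ summand on the RHS equals exactly $\un{a}^J(\un{n})-e_{j_0+1}$. Verifying (b) reduces to a coordinate-by-coordinate comparison between the exponents entering $\un{c}^J$ (Definition \ref{General Def rcepi}) and those entering $t^J_j(n_{j+1})$ (Definition \ref{General Def aJn}), split by the parity of $n_{j+1}$ and the membership of $j,j+1$ in $J$. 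Once $\un{M}$ is matched, Lemma \ref{General Lem a=a+r} identifies each index $\un{a}^J(\un{n})+\un{r}^{J\setminus J'}$ appearing in the sum with $\un{a}^{J'}(\un{n}+\un{e}^{J\setminus J'})$ and certifies that $(J',\un{n}+\un{e}^{J\setminus J'},j_0)$ still satisfies Definition \ref{General Def aJn}(ii). Thus the outer inductive hypothesis kills every $J'\subsetneqq J$ term, and the identity collapses to $\varepsilon_J\mu_{J+1,J}\,x_{J,\un{a}^J(\un{n})-e_{j_0+1}} = 0$, which gives the desired vanishing.

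The main obstacle will be the combinatorial matching of exponents in step (b): the definition of $t^J_j(x)$ is tailored precisely to make the matching succeed, but compatibility with $\un{c}^J$, the shift $p\delta(\un{i}_0)$, and the subsequent shift $\un{r}^{J\setminus J'}$ requires bookkeeping in several sub-cases. A second source of difficulty is handling the edge configurations in which the hypotheses of Lemma \ref{General Lem a=a+r} fail, namely $j_0+1 \in J\setminus J'$ or $j_0\in J^{\sh}$ with $\{j_0+1\}\not\subseteq J'$; in these cases one must replace Proposition \ref{General Prop relation 1} by Proposition \ref{General Prop relation 2}, using the overlap identity it provides to absorb the extra contributions coming from the non-semisimple part $(J-1)^{\nss}$ of $\rhobar$ before completing the induction.
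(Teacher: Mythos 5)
Your overall architecture is the right one (induction on $|J|$, a case split on where $j_0+1$ sits relative to $J$, killing the left-hand side via Proposition \ref{General Prop relation 1} or \ref{General Prop relation 2}, expanding via Theorem \ref{General Thm seq}(iii), re-indexing with Lemma \ref{General Lem a=a+r}, and invoking the induction hypothesis), but there is a genuine gap in your choice of starting element: you fix $\un{i}_0=\un{e}^{(J\cap(J+1))^{\nss}}$, independently of $\un{n}$. The target index $\un{a}^J(\un{n})$ has $j$-th coordinate roughly $\lfloor n_{j+1}/2\rfloor p$, which can be as large as $fp$ since Definition \ref{General Def aJn}(ii) only bounds $n_j\leq 2f-\delta_{j\in J}$. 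On the other hand, the $J'=J$ term coming out of Theorem \ref{General Thm seq}(iii) applied to $x_{J+1,\un{i}_0}$ has index $p\delta(\un{i}_0)+\un{c}^J-\un{M}$ with $(i_0)_j\in\{0,1\}$, hence each coordinate is at most about $p+c^J_j$, and multiplying by $\un{Y}^{\un{M}}$ only pushes it lower. Concretely, for $f=2$, $J=\cJ$, $J_{\rhobar}=\emptyset$, $j_0=0$, $n_0=3$: the target coordinate is $a^J(\un{n})_1=2p-1-r_1$, while your starting index at coordinate $1$ is $p+c^J_1=p$, which is strictly smaller by genericity. Since vanishing only propagates \emph{downward} under Theorem \ref{General Thm seq}(ii), proving $x_{J,\un{b}}=0$ for some $\un{b}$ with a coordinate strictly below the target says nothing about $x_{J,\un{a}^J(\un{n})-e_{j_0+1}}$. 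Your requirements (a) and (b) on $\un{M}$ are therefore incompatible whenever $\un{n}$ has an entry $\geq 2$: (a) caps $M_j$ by roughly $p+1$, while (b) would force $M_j=p(i_0)_{j+1}+c^J_j-a^J(\un{n})_j+\delta_{j=j_0+1}$, which is negative.

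The fix, which is what the paper does, is to let the shift depend on $\un{n}$: define $\un{i}$ and $J'$ as the unique pair with $n_j=2i_j-\delta_{j\in J+1}+\delta_{j\notin J}+\delta_{j-1\in J'}$ for all $j$ (so $i_{j+1}\approx\lceil n_{j+1}/2\rceil$ and $\un{i}\leq\un{f}$, which is exactly why Proposition \ref{General Prop shift} is stated for shifts up to $\un{f}-\un{e}^{J^{\sh}}$ and why Definition \ref{General Def aJn}(ii) bounds $n_j$ by $2f-\delta_{j\in J}$), set $\un{i}'=\un{i}-\un{e}^{(J+1)^{\sh}}$, and apply Proposition \ref{General Prop relation 1} or \ref{General Prop relation 2} to $(\un{i}',J+1,J')$. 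Then the resulting index $c_j=pi_{j+1}+c^J_j-\delta_{j\notin J'}\bigbra{2i'_j+t^{J+1}(J')_j}$ satisfies $c_j\geq a^J(\un{n})_j$ (up to the $-e_{j_0+1}$ correction), so one can multiply down by a genuine nonnegative power of $\un{Y}$ before invoking Lemma \ref{General Lem a=a+r} and the induction hypothesis. You should also add the case $j_0+1\in(J+1)^{\sh}$, where neither relation proposition is used: there $i'_{j_0+1}=-1$, so $x_{J+1,\un{i}}=0$ outright by \eqref{General Eq seq small} and the relation from \eqref{General Eq Seq p001} already has vanishing left-hand side.
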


\begin{proof}
    If $f=1$, then we have $a^J(0)-1=-1$, and the proposition follows directly from (\ref{General Eq seq small}). Hence in the rest of the proof we assume that $f\geq2$, and we prove the result by increasing induction on $|J|$. We let $\un{i}\in\ZZ^f$ and $J'\subseteq\cJ$ be the unique pair such that 
    \begin{equation}\label{General Eq more 0 n1}
        n_j=2i_j-\delta_{j\in J+1}+\delta_{j\notin J}+\delta_{j-1\in J'}
    \end{equation}
    for all $j\in\cJ$. In particular, we have $\un{i}\leq\un{f}$ since $n_j\leq2f-\delta_{j\in J}$ for all $j\in\cJ$. 

    \hspace{\fill}

    \noindent\textbf{Claim 1.} We let $\un{i}'\eqdef\un{i}-\un{e}^{(J+1)^{\sh}}$. Then we have for all $j\in\cJ$
    \begin{equation}\label{General Eq more 0 n2}
        n_j=2i'_j-\delta_{j\in(J+1)\Delta J^{\ss}}+\delta_{j\notin J^{\nss}}+\delta_{j-1\in J'}.
    \end{equation}
    Indeed, this follows from (\ref{General Eq more 0 n1}) and the following computation:
    \begin{equation*}
    \begin{aligned}
        &-2\delta_{j\in(J+1)^{\sh}}-\delta_{j\in(J+1)\Delta J^{\ss}}+\delta_{j\notin J^{\nss}}\\
        &\hspace{1.5cm}=-2\delta_{j\in J+1}\delta_{j\in J^{\ss}}-\bigbra{\delta_{j\in J+1}+\delta_{j\in J^{\ss}}-2\delta_{j\in J+1}\delta_{j\in J^{\ss}}}+\bigbra{\delta_{j\notin J}-\delta_{j\in J^{\ss}}}\\
        &\hspace{1.5cm}=-\delta_{j\in J+1}+\delta_{j\notin J}.
    \end{aligned}
    \end{equation*}

    \hspace{\fill}
    
    \noindent\textbf{Claim 2}: We let $\un{c}\in\ZZ^f$ such that (see (\ref{General Eq tJJ'}) for $t^{J+1}(J')_j$) 
    \begin{equation}\label{General Eq more 0 c}
        c_j=pi_{j+1}+c^J_j-\delta_{j\notin J'}\bigbra{2i'_j+t^{J+1}(J')_j}.
    \end{equation}
    If either $j_0\notin J^{\sh}$ or $j_0\in J'$, then we have
    \begin{equation}\label{General Eq more 0 claim statement}
    \begin{cases}
        c_j\geq a^J(\un{n})_j-1&\text{if}~j=j_0+1,~j_0+1\in J'~\text{and}~j_0+1\notin J\\
        c_j\geq a^J(\un{n})_j&\text{otherwise}.
    \end{cases}
    \end{equation}
    
    \proof Indeed, by (\ref{General Eq sJ}) and a case-by-case examination we have
    \begin{equation}\label{General Eq more 0 claim 1}
    \begin{aligned}
        2i'_j+t^{J+1}(J')_j&=2i_j+p-1-\bigbra{s^{J+1}_j+2\delta_{j\in(J+1)^{\sh}}}+\delta_{j-1\in J'}\\
        &=2i_j+\delta_{j\notin J}(p-1-r_j)+\delta_{j\in J}(r_j+1)-\delta_{j\in J+1}+\delta_{j-1\in J'}.
    \end{aligned}
    \end{equation}
    If $j\in J'$, then by definition and a case-by-case examination we have
    \begin{equation}\label{General Eq more 0 claim 2}
    \begin{aligned}
        t^J_j(n_{j+1})&=t^J_j\bigbra{2i_{j+1}-\delta_{j\in J}+\delta_{j+1\notin J}+1}=t^J_j\bigbra{2i_{j+1}+\delta_{j\notin J}+\delta_{j+1\notin J}}\\
        &=pi_{j+1}+\delta_{j\notin J}(p-1-r_j)+\delta_{j+1\notin J}(r_j+1).
    \end{aligned}    
    \end{equation}
    Combining (\ref{General Eq Lem cJ}), (\ref{General Eq more 0 claim 1}) and (\ref{General Eq more 0 claim 2}) we deduce that
    \begin{equation*}
        c_j-a^J(\un{n})_j=n_j-\delta_{j\notin J}\geq-\delta_{j=j_0+1,j_0+1\notin J},
    \end{equation*}
    unless when $j=j_0$ and $j_0\in J^{\sh}$, in which case we have $c_{j_0}-a^J(\un{n})_{j_0}=-\delta_{j_0\notin J}=0$. If $j\notin J'$, then by assumption we have either $j\neq j_0$ or $j_0\notin J^{\sh}$. By definition and a case-by-case examination we have
    \begin{equation}\label{General Eq more 0 claim 3}
        t^J_j(n_{j+1})=t^J_j\bigbra{2i_{j+1}-\delta_{j\in J}+\delta_{j+1\notin J}}
        =pi_{j+1}-\delta_{j\in J}(r_j+1)+\delta_{j+1\notin J}(r_j+1).    
    \end{equation}
    Combining (\ref{General Eq Lem cJ}), (\ref{General Eq more 0 n1}), (\ref{General Eq more 0 claim 1}) and (\ref{General Eq more 0 claim 3}) we deduce that $c_j=a^J(\un{n})_j$.\qed

    \hspace{\fill}
    
    Using the decomposition
    (\ref{General Eq decomposition of big J}), we separate the proof into the following four cases.

    \hspace{\fill}

    (a). Suppose that $j_0\notin J$ and $j_0+1\notin J$ (which implies $j_0\notin J^{\sh}$). Since $n_{j_0+1}=0$ and $n_{j_0}>0$ by assumption (recall that $f\geq2$), we have $\bigbra{\un{a}^J(\un{n})-e_{j_0+1}}_{j_0}=-n_{j_0}-0<0$. Then we deduce from Lemma \ref{General Lem simple 0} that $x_{J,\un{a}^J(\un{n})-e_{j_0+1}}=0$.

    \hspace{\fill}
    
    (b). Suppose that $j_0+1\in(J+1)\Delta J^{\ss}$ and $j_0+1\notin J^{\nss}$ (which implies $j_0\notin J^{\sh}$). Using (\ref{General Eq more 0 n2}), we deduce from $n_{j_0+1}=0$ that $i'_{j_0+1}=0$ and $j_0\notin J'$, and deduce from $\un{i}\leq\un{f}$ that $\un{0}\leq\un{i}'\leq\un{f}-\un{e}^{(J+1)^{\sh}}$. By Proposition \ref{General Prop relation 1} applied to $\bigbra{\un{i}',J+1,J'}$ with $j_0$ as above, we have
    \begin{equation}\label{General Eq more 0 case b 1}
        Y_{j_0+1}^{\delta_{J'=\emptyset}}\bbbra{\scalebox{1}{$\prod$}_{j\notin J'}Y_j^{2i'_j+t^{J+1}(J')_j}}\smat{p&0\\0&1}\bbra{\un{Y}^{-\un{i}'}v_{J+1}}=0.
    \end{equation}
    Multiplying (\ref{General Eq more 0 case b 1}) by $Y_{j_0+1}^{\delta_{j_0+1\notin J'}}$ when $J'\neq\emptyset$ and using (\ref{General Eq seq small}), we deduce that
    \begin{equation*}
        Y_{j_0+1}^{\delta_{j_0+1\notin J'}}\bbbra{\scalebox{1}{$\prod$}_{j\notin J'}Y_j^{2i'_j+t^{J+1}(J')_j}}\smat{p&0\\0&1}x_{J+1,\un{i}}=0.
    \end{equation*}
    Then by (\ref{General Eq Seq p001}) we have (see (\ref{General Eq more 0 c}) for \un{c})
    \begin{equation}\label{General Eq more 0 case b 2}
        \sum\limits_{J^{\ss}\subseteq J_1\subseteq J}\varepsilon_{J_1}\mu_{J+1,J_1}x_{J_1,\bigbra{\un{c}+\un{r}^{J\setminus J_1}-\delta_{j_0+1\notin J'e_{j_0+1}}}}=0.
    \end{equation}
    By (\ref{General Eq more 0 claim statement}) we have $\un{c}\geq\un{a}^J(\un{n})-\delta_{j_0+1\in J'\setminus J}e_{j_0+1}\geq\un{a}^J(\un{n})-\delta_{j_0+1\in J'}e_{j_0+1}$. Moreover, for each $J_1\subseteq\cJ$ such that $J^{\ss}\subseteq J_1\subseteq J$, we have $j_0+1\notin J\setminus J_1$ (since $j_0+1\notin J^{\nss}$), hence by Lemma \ref{General Lem a=a+r} (recall that $j_0\notin J^{\sh}$) we have $\un{a}^{J}(\un{n})+\un{r}^{J\setminus J_1}=\un{a}^{J_1}\bigbra{\un{n}+\un{e}^{J\setminus J_1}}$. In particular, multiplying (\ref{General Eq more 0 case b 2}) by a suitable power of $\un{Y}$ and using Theorem \ref{General Thm seq}(ii) (applied to $J_1$ such that $J^{\ss}\subseteq J_1\subseteq J$) we deduce that
    \begin{equation*}
        \sum\limits_{J^{\ss}\subseteq J_1\subseteq J}\varepsilon_{J_1}\mu_{J+1,J_1}x_{J_1,\un{a}^{J_1}(\un{n}+\un{e}^{J\setminus J_1})-e_{j_0+1}}=0.
    \end{equation*}
    By the induction hypothesis, we have $x_{J_1,\un{a}^{J_1}(\un{n}+\un{e}^{J\setminus J_1})-e_{j_0+1}}=0$ for all $J^{\ss}\subseteq J_1\subsetneqq J$, hence we conclude that $x_{J,\un{a}^J(\un{n})-e_{j_0+1}}=0$.

    \hspace{\fill}
    
    (c). Suppose that $j_0+1\in J^{\nss}$. Using (\ref{General Eq more 0 n2}), we deduce from $n_{j_0+1}=0$ that $i'_{j_0+1}=0$, and $j_0\in J'$ if and only if $j_0+1\in J+1$. Then we deduce from (\ref{General Eq more 0 n2}) and $\un{i}\leq\un{f}$ that $\un{0}\leq\un{i}'\leq\un{f}-\un{e}^{(J+1)^{\sh}}$. By Proposition \ref{General Prop relation 2} applied to $\bigbra{\un{i}',J+1,J'}$ with $j_0$ as above, we have (see the end of \S\ref{General Sec relation} for $\mu_{J_1,*}/\mu_{J_2,*}$)
    \begin{multline*}
        Y_{j_0+1}^{\delta_{j_0\notin J}}\bbbra{\scalebox{1}{$\prod$}_{j\notin J'}Y_j^{2i'_j+t^{J+1}(J')_j}}\smat{p&0\\0&1}\bbra{\un{Y}^{-\un{i}'}v_{J+1}}\\
        =\frac{\mu_{J+1,*}}{\mu_{(J+1)\setminus\set{j_0+2},*}}Y_{j_0+1}^{\delta_{j_0\notin J}}\bbbra{\scalebox{1}{$\prod$}_{j\notin J''}Y_j^{2i''_j+t^{(J+1)\setminus\set{j_0+2}}(J'')_j}}\smat{p&0\\0&1}\bbra{\un{Y}^{-\un{i}''}v_{(J+1)\setminus\set{j_0+2}}},
    \end{multline*}
    where $\un{i}''\eqdef\un{i}'-\delta_{j_0+1\in J'}e_{j_0+2}+\delta_{j_0+2\in J^{\ss}}e_{j_0+2}$ and $J''\eqdef J'\Delta\set{j_0+1}$. As in (b), using (\ref{General Eq seq small}), (\ref{General Eq Seq p001}) and $j_0+1\notin J^{\ss}$, we deduce that
    \begin{multline}\label{General Eq more 0 case c 1}
        \sum\limits_{J^{\ss}\subseteq J_1\subseteq J}\varepsilon_{J_1}\mu_{J+1,J_1}x_{J_1,\bigbra{\un{c}+\un{r}^{J\setminus J_1}-\delta_{j_0\notin J}e_{j_0+1}}}\\
        =\sum\limits_{J^{\ss}\subseteq J_2\subseteq J\setminus\set{j_0+1}}\varepsilon_{J_2}\mu_{J+1,J_2}x_{J_2,\bigbra{\un{c}'+\un{r}^{(J\setminus\set{j_0+1})\setminus J_2}-\delta_{j_0\notin J}e_{j_0+1}}},
    \end{multline}
    where $\un{c}$ is defined in (\ref{General Eq more 0 c}) and $c'_j\eqdef pi'''_{j+1}+c^{J\setminus\set{j_0+1}}_j-\delta_{j\notin J''}\bigbra{2i''_j+t^{(J+1)\setminus\set{j_0+2}}(J'')_j}$ 
    with 
    \begin{multline*}
        \un{i}'''\eqdef\un{i}''+\un{e}^{((J+1)\setminus\set{j_0+2})^{\sh}}=\un{i}''+\un{e}^{(J+1)^{\sh}}-\delta_{j_0+2\in J^{\ss}}e_{j_0+2}\\
        =\un{i}'+\un{e}^{(J+1)^{\sh}}-\delta_{j_0+1\in J'}e_{j_0+2}=\un{i}-\delta_{j_0+1\in J'}e_{j_0+2}.
    \end{multline*}

    \hspace{\fill}

    \noindent\textbf{Claim 3}: We have $\un{c}'=\un{c}+\un{r}^{\set{j_0+1}}$. 
    
    \proof By (\ref{General Eq relation 2 tJ}) we have $c_j=c'_j$ for $j\neq j_0$ and $j\neq j_0+1$. If $j=j_0$, then by (\ref{General Eq relation 2 tJ}) and (\ref{General Eq cJ}) we deduce that $c'_{j_0}-c_{j_0}=c^{J\setminus\set{j_0+1}}_{j_0}-c^J_{j_0}=r_{j_0}+1$. If $j=j_0+1$, we assume that $j_0+2\in J$, the case $j_0+2\notin J$ being similar. Then by (\ref{General Eq relation 2 tJ}) and (\ref{General Eq cJ}) we deduce that
    \begin{align*}
        c'_{j_0+1}&=p\bigbra{i_{j_0+2}-\delta_{j_0+1\notin J'}}+(p-2-r_{j_0+1})-\delta_{j_0+1\in J'}(p-1-r_{j_0+1})\\
        &=pi_{j_0+2}-\delta_{j_0+1\notin J'}(r_{j_0+1}+1)-(\delta_{j_0+1\notin J'}-1+\delta_{j_0+1\in J'})(p-1-r_{j_0+1})-1\\
        &=pi_{j_0+2}-\delta_{j_0+1\notin J'}(r_{j_0+1}+1)-1=c_{j_0+1}-1.
    \end{align*}
    The claim then follows from (\ref{General Eq rJ}).\qed

    \hspace{\fill}
    
    By Lemma \ref{General Lem r and c}(ii), for each $J_2\subseteq J\setminus\set{j_0+1}$ we have $\un{r}^{\set{j_0+1}}+\un{r}^{(J\setminus\set{j_0+1})\setminus J_2}=\un{r}^{J\setminus J_2}$, hence the RHS of (\ref{General Eq more 0 case c 1}) cancels with the terms in the LHS of (\ref{General Eq more 0 case c 1}) for the $J_1$ such that $j_0+1\notin J_1$. Since $j_0+1\in J$, and $j_0\in J'$ whenever $j_0\in J^{\sh}$, by (\ref{General Eq more 0 claim statement}) we have $\un{c}\geq\un{a}^J(\un{n})$. Moreover, for each $J_1\subseteq\cJ$ such that $J^{\ss}\cup\set{j_0+1}\subseteq J_1\subseteq J$, by Lemma \ref{General Lem a=a+r} we have $\un{a}^J(\un{n})+\un{r}^{J\setminus J_1}=\un{a}^{J_1}\bigbra{\un{n}+\un{e}^{J\setminus J_1}}$. Then multiplying (\ref{General Eq more 0 case c 1}) by a suitable power of $\un{Y}$ and using Theorem \ref{General Thm seq}(ii) we deduce that
    \begin{equation*}
        \sum\limits_{J^{\ss}\cup\set{j_0+1}\subseteq J_1\subseteq J}\varepsilon_{J_1}\mu_{J+1,J_1}x_{J_1,\un{a}^{J_1}(\un{n}+\un{e}^{J\setminus J_1})-e_{j_0+1}}=0.
    \end{equation*}
    By the induction hypothesis, we have $x_{J_1,\un{a}^{J_1}(\un{n}+\un{e}^{J\setminus J_1})-e_{j_0+1}}=0$ for all $J^{\ss}\cup\set{j_0+1}\subseteq J_1\subsetneqq J$, hence we conclude that $x_{J,\un{a}^J(\un{n})-e_{j_0+1}}=0$.

    \hspace{\fill}
    
    (d). Suppose that $j_0+1\in(J+1)^{\sh}$. By (\ref{General Eq more 0 n2}) we have $i'_{j_0+1}=-1$ and $j_0\in J'$, hence $x_{J+1,\un{i}}=0$ by (\ref{General Eq seq small}). Then by (\ref{General Eq Seq p001}), we have
    \begin{equation}\label{General Eq more 0 case d 1}
        0=\smat{p&0\\0&1}x_{J+1,\un{i}}=\sum\limits_{J^{\ss}\subseteq J_1\subseteq J}\varepsilon_{J_1}\mu_{J+1,J_1}x_{J_1,p\delta(\un{i})+\un{c}^J+\un{r}^{J\setminus J_1}}.
    \end{equation}
    By (\ref{General Eq more 0 claim statement}) we have $p\delta(\un{i})+\un{c}^J\geq\un{c}\geq\un{a}^{J}(\un{n})-e_{j_0+1}$. Moreover, for each $J_1\subseteq\cJ$ such that $J^{\ss}\subseteq J_1\subseteq J$, by assumption we have $j_0+1\in(J+1)^{\sh}\subseteq J^{\ss}\subseteq J_1$. Then as in (c), we deduce from (\ref{General Eq more 0 case d 1}), Lemma \ref{General Lem a=a+r} and Theorem \ref{General Thm seq}(ii) that
    \begin{equation*}
        \sum\limits_{J^{\ss}\subseteq J_1\subseteq J}\varepsilon_{J_1}\mu_{J+1,J_1}x_{J_1,\un{a}^{J_1}(\un{n}+\un{e}^{J\setminus J_1})-e_{j_0+1}}=0.
    \end{equation*}
    By the induction hypothesis, we have $x_{J_1,\un{a}^{J_1}(\un{n}+\un{e}^{J\setminus J_1})-e_{j_0+1}}=0$ for all $J^{\ss}\subseteq J_1\subsetneqq J$, hence we conclude that $x_{J,\un{a}^J(\un{n})-e_{j_0+1}}=0$.
\end{proof}

\begin{corollary}\label{General Cor w 0}
    Let $0\leq k\leq f$. Suppose that Theorem \ref{General Thm seq} is true for $|J|\leq k-1$. Let $J,J'\subseteq\cJ$ with $|J|\leq k$ and $J^{\ss}\subseteq J'\subsetneqq J\subseteq\cJ$. Let $j_0\in\cJ$ such that $j_0+1\notin J\setminus J'$. Then we have $x_{J',\bigbra{\un{r}^{J\setminus J'}+\un{f}-(f+1-\delta_{j_0\in J^{\sh}})e_{j_0}}}=0$.
\end{corollary}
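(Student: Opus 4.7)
The plan is to apply Proposition \ref{General Prop more 0} to $J'$. Since $|J'| \leq |J|-1 \leq k-1$, Theorem \ref{General Thm seq} holds for $J'$ by hypothesis, so Proposition \ref{General Prop more 0} is applicable. I will exhibit $\un{n} \in \ZZ^f$ together with a distinguished index $j_1 \in \cJ$ satisfying Definition \ref{General Def aJn}(ii) relative to $J'$, such that
\begin{equation*}
    \un{a}^{J'}(\un{n}) - e_{j_1+1} = \un{r}^{J\setminus J'} + \un{f} - (f+1-\delta_{j_0\in J^{\sh}})e_{j_0}.
\end{equation*}

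I will choose $j_1 = j_0$, so $n_{j_0+1} = 0$, and use Lemma \ref{General Lem a=a+r} to transfer the problem from $J'$ to $J$. The hypothesis of Lemma \ref{General Lem a=a+r} splits into two conditions: first, $j_0+1 \notin J \setminus J'$, which is given; second, either $j_0 \notin J^{\sh}$ or $J^{\ss} \cup \{j_0+1\} \subseteq J'$. I verify the second condition by case analysis: if $j_0 \in J^{\sh}$, then $j_0 \in J^{\ss} \subseteq J'$ and $j_0+1 \in J$ (by definition of $J^{\sh}$), so $j_0+1 \notin J\setminus J'$ forces $j_0+1 \in J'$, giving $J^{\ss} \cup \{j_0+1\} \subseteq J'$. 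Setting $\un{n}_0 := \un{n} - \un{e}^{J\setminus J'}$, Lemma \ref{General Lem a=a+r} then yields $\un{a}^{J'}(\un{n}) = \un{a}^J(\un{n}_0) + \un{r}^{J\setminus J'}$, so the problem reduces to constructing $\un{n}_0$ satisfying Definition \ref{General Def aJn}(ii) for $J$ with distinguished index $j_0$, such that
\begin{equation*}
    \un{a}^J(\un{n}_0) = \un{f} - (f+1-\delta_{j_0\in J^{\sh}})e_{j_0} + e_{j_0+1}.
\end{equation*}

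The main obstacle is the explicit coordinate-by-coordinate construction of $\un{n}_0$ and the verification of the admissibility bounds $1 \leq n_{0,j} \leq 2f - \delta_{j\in J}$ for $j \neq j_0+1$. The construction proceeds case-by-case using the discrete structure of $t^J_j(\cdot)$ given by Definition \ref{General Def aJn}(i), together with the genericity bounds $2f+1 \leq r_j \leq p-3-2f$ from \eqref{General Eq genericity}, which ensure that the required values of $t^J_j(n_{0,j+1}) - n_{0,j}$ can be realized within the admissible range. Crucially, the shift by $\un{e}^{J\setminus J'}$ transfers the admissibility bounds from $J$ to $J'$ without violation because $J\setminus J' \subseteq J^{\nss}$ (so for $j \in J\setminus J'$ we have $j \notin J'$ and hence $\delta_{j\in J'} = 0 < \delta_{j\in J}$).

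Once $\un{n}_0$ and thus $\un{n} := \un{n}_0 + \un{e}^{J\setminus J'}$ are constructed, Proposition \ref{General Prop more 0} applied to $(J',\un{n},j_0)$ yields the vanishing $x_{J',\un{a}^{J'}(\un{n})-e_{j_0+1}} = 0$, which by construction equals $x_{J',\un{r}^{J\setminus J'}+\un{f}-(f+1-\delta_{j_0\in J^{\sh}})e_{j_0}} = 0$.
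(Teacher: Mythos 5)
Your starting point is the same as the paper's (apply Proposition \ref{General Prop more 0} to $J'$ with the distinguished index $j_0$), but there is a genuine gap: the exact-equality construction you defer to a ``case-by-case'' argument is in fact impossible in general. After your reduction via Lemma \ref{General Lem a=a+r}, you need $a^J(\un{n}_0)_j=t^J_j(n_{0,j+1})-n_{0,j}=f$ for every $j\neq j_0,j_0+1$, with $1\leq n_{0,j}\leq 2f-\delta_{j\in J}$ and $1\leq n_{0,j+1}\leq 2f-\delta_{j+1\in J}$. This forces $t^J_j(n_{0,j+1})\leq 3f$. But by Definition \ref{General Def aJn}(i), for $n_{0,j+1}$ even and positive one has $t^J_j(n_{0,j+1})\geq p\geq 4f+4$, and for $n_{0,j+1}$ odd one has $t^J_j(n_{0,j+1})\geq\min(r_j+1,p-1-r_j)\geq 2f+2$, with no upper bound of the form $3f$ available: the genericity condition \eqref{General Eq genericity} only constrains $r_j$ to the interval $[2f+1,p-3-2f]$, so $\min(r_j+1,p-1-r_j)$ can greatly exceed $3f$. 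Hence no admissible $\un{n}_0$ hits the value $f$ at these coordinates, and your final step ``which by construction equals'' cannot be carried out.

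The missing idea is that one does not need equality. The paper chooses a specific $\un{n}$ (namely $n_{j_0+1}=0$, $n_{j_0}=1+\delta_{j_0\in J\setminus J'}$, $n_j=2$ otherwise), obtains $x_{J',\un{a}^{J'}(\un{n})-e_{j_0+1}}=0$ from Proposition \ref{General Prop more 0}, and then only proves the componentwise \emph{inequality} $\un{a}^{J'}(\un{n})-e_{j_0+1}\geq\un{r}^{J\setminus J'}+\un{f}-(f+1-\delta_{j_0\in J^{\sh}})e_{j_0}$. Vanishing then propagates downward by Theorem \ref{General Thm seq}(ii): applying $\un{Y}^{\un{k}}$ with $\un{k}$ the (nonnegative) difference of the two indices turns the known zero into the desired one. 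Your verification that the hypotheses of Lemma \ref{General Lem a=a+r} hold (the case analysis on $j_0\in J^{\sh}$) is correct, and that lemma is indeed how the paper handles the $j_0$-coordinate inside its Claim; but without the descent step via Theorem \ref{General Thm seq}(ii) the argument does not close.
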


\begin{proof}
    If $f=1$, then the assumption is never satisfied. Hence in the rest of the proof we assume that $f\geq2$. We let $\un{n}\in\ZZ^f$ such that $n_{j_0+1}=0$, $n_{j_0}=1+\delta_{j_0\in J\setminus J'}$ and $n_j=2$ for $j\neq j_0,j_0+1$. In particular, $\un{n}$ satisfies the conditions in Definition \ref{General Def aJn}(ii) for $J'$ and $j_0$. Since $|J'|\leq k-1$ by assumption, we deduce from Proposition \ref{General Prop more 0} applied to $J'$ and $j_0$ that $x_{J',\un{a}^{J'}(\un{n})-e_{j_0+1}}=0$. Then the result follows Theorem \ref{General Thm seq}(ii) (applied to $J'$) and the Claim below.

    \hspace{\fill}

    \noindent\textbf{Claim.} We have
    \begin{equation*}
        \un{a}^{J'}(\un{n})-e_{j_0+1}\geq\un{r}^{J\setminus J'}+\un{f}-\bigbra{f+1-\delta_{j_0\in J^{\sh}}}e_{j_0}.
    \end{equation*}
    
    \proof If either $j\neq j_0,j_0-1$, or $j=j_0-1$ and $j_0\in J\setminus J'$, then we have $n_{j+1}=2$. Hence
    \begin{equation*}
    \begin{aligned}
        a^{J'}(\un{n})_j-\delta_{j=j_0+1}&=t^{J'}_j(2)-n_j-\delta_{j=j_0+1}\\
        &\geq p-2-1\geq(p-2-2f)+f\geq r_j+1+f\geq r^{J\setminus J'}_j+f,
    \end{aligned}       
    \end{equation*}
    where the third inequality follows from (\ref{General Eq genericity}) and the last inequality follows from (\ref{General Eq rJ}). 
    
    If $j=j_0-1$ and $j_0\notin J\setminus J'$, then we have $n_{j_0}=1$, and $r^{J\setminus J'}_{j_0}\leq0$ by (\ref{General Eq rJ}). Hence
    \begin{align*}
        a^{J'}(\un{n})_{j_0-1}-\delta_{j_0-1=j_0+1}&=t^{J'}_{j_0-1}(1)-n_{j_0-1}-\delta_{j_0-1=j_0+1}\\
        &\geq\bigbra{\delta_{j_0\notin J'}(r_{j_0-1}+1)+\delta_{j_0\in J'}(p-1-r_{j_0-1})}-2-1\\
        &\geq(2f+2)-3\geq f\geq r^{J\setminus J'}_{j_0-1}+f,
    \end{align*}
    where the second inequality follows from (\ref{General Eq genericity}). 
    
    Finally, we let $j=j_0$. Since $j_0+1\notin J\setminus J'$, by (\ref{General Eq rJ}) we have $r^{J\setminus J'}_{j_0}=-\delta_{j_0\in J\setminus J'}$. If $j_0\notin J^{\sh}$, then we have (using $n_{j_0+1}=0$)
    \begin{equation*}
        a^{J'}(\un{n})_{j_0}-\delta_{j_0=j_0+1}=t^{J'}_{j_0}(0)-n_{j_0}=-n_{j_0}=-\delta_{j_0\in J\setminus J'}-1=r^{J\setminus J'}_{j_0}+f-(f+1).
    \end{equation*}
    If $j_0\in J^{\sh}$, then $J^{\sh}\subseteq J^{\ss}\subseteq J'$ implies $j_0\in J'$, hence $j_0\notin J\setminus J'$, which implies $r^{J\setminus J'}_{j_0}=0$. Then we have 
    \begin{equation*}
        a^{J'}(\un{n})_{j_0}-\delta_{j_0=j_0+1}=0=r^{J\setminus J'}_{j_0}+f-f,
    \end{equation*}
    which completes the proof.
\end{proof}

\begin{lemma}\label{General Lem lem for zw}
    Let $J,J'\subseteq\cJ$ such that $J^{\ss}\sqcup(\partial J)^{\nss}\subsetneqq J'\subseteq J$ (see Remark \ref{General Rk Delta and partial} for $\partial J$) and $J^{\nss}\neq\cJ$. Then we have
    \begin{equation}\label{General Eq Y^c-r}
        \un{Y}^{\un{c}^{J'}+(p-1)\un{e}^{J^{\sh}}-\un{r}^{J\setminus J'}}\smat{p&0\\0&1}x_{J'+1,\un{e}^{(J^{\sh}+1)}}=0.
    \end{equation}
\end{lemma}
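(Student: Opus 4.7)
The plan is to split the proof into three steps: a degeneracy reduction, an application of Proposition \ref{General Prop relation 1}, and a coordinatewise exponent comparison.

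First, I would observe that if $(J'+1)^{\sh}\not\subseteq J^{\sh}+1$, then $\un{e}^{(J^{\sh}+1)}\not\geq\un{e}^{(J'+1)^{\sh}}$, so (\ref{General Eq seq small}) forces $x_{J'+1,\un{e}^{(J^{\sh}+1)}}=0$ and (\ref{General Eq Y^c-r}) is trivial. Importantly, this case already disposes of the ``problematic'' witnesses coming from the hypothesis: the strict inclusion $J^{\ss}\sqcup(\partial J)^{\nss}\subsetneq J'\subseteq J$ produces some $j_0\in J'$ with $j_0\in J^{\nss}$ and $j_0+1\in J$; and if any such $j_0$ satisfies $j_0+1\in J_{\rhobar}$ (so $j_0+1\in J^{\ss}\subseteq J'$), then $j_0+1\in(J'+1)^{\sh}$ while $j_0\notin J^{\sh}$ (because $j_0\in J^{\nss}$), giving $(J'+1)^{\sh}\not\subseteq J^{\sh}+1$.

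Second, with the remaining case $(J'+1)^{\sh}\subseteq J^{\sh}+1$ in hand, I would write $x_{J'+1,\un{e}^{(J^{\sh}+1)}}=\un{Y}^{-\un{i}_0}v_{J'+1}$ with $\un{i}_0\eqdef\un{e}^{(J^{\sh}+1)\setminus(J'+1)^{\sh}}$, and choose a witness $j_0\in J'$ satisfying $j_0\in J^{\nss}$, $j_0+1\in J$ and $j_0+1\notin J_{\rhobar}$ (available by the previous paragraph, since any witness with $j_0+1\in J_{\rhobar}$ would have already landed us in the trivial case). This $j_0$ then satisfies $(\un{i}_0)_{j_0+1}=0$ and $j_0+1\in(J'+1)\Delta(J')^{\ss}$, so Proposition \ref{General Prop relation 1} (applied with its $J$, $\un{i}$, $j_0$ being $J'+1$, $\un{i}_0$, and $j_0$ respectively) yields, for a suitable $J''\subseteq\cJ$ with $j_0\notin J''$,
\begin{equation*}
    \bbbra{Y_{j'}^{\delta_{J''=\emptyset}}\scalebox{1}{$\prod$}_{j\notin J''}Y_j^{2(\un{i}_0)_j+t^{J'+1}(J'')_j}}\smat{p&0\\0&1}\bbra{\un{Y}^{-\un{i}_0}v_{J'+1}}=0.
\end{equation*}

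Third, I would verify that $\un{c}^{J'}+(p-1)\un{e}^{J^{\sh}}-\un{r}^{J\setminus J'}$ coordinatewise dominates the exponent vector of the displayed product; this would conclude the proof by realizing the left-hand side of (\ref{General Eq Y^c-r}) as a $\un{Y}$-multiple of a zero element. The hard part will be precisely this inequality check, using (\ref{General Eq cJ}), (\ref{General Eq rJ}), (\ref{General Eq tJJ'}), (\ref{General Eq sJ}), (\ref{General Eq bound s}) and the genericity bound $p\geq4f+4$ from (\ref{General Eq genericity}), broken into cases on the memberships of $j$ and $j\pm1$ in $J$, $J'$, $J_{\rhobar}$, and $J''$; the coordinates $j=j_0$ and $j=j_0-1$ should be the most delicate, and finding the right $J''$ (likely depending on $J\Delta J'$ and the shifting structure, analogously to the role of $J''$ in the proof of Proposition \ref{General Prop more 0}) is itself part of the challenge.
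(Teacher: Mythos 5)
Your Steps 1 and 2 are sound as far as they go (the reduction to the case $(J'+1)^{\sh}\subseteq J^{\sh}+1$, and the verification that Proposition \ref{General Prop relation 1} applies to $(J'+1,\un{i}_0,j_0)$ for a witness $j_0$ of the strict inclusion), but Step 3 cannot be completed with this choice of $j_0$; this is a fatal gap, not a bookkeeping issue. Your witness satisfies $j_0\in J'$, $j_0\notin J_{\rhobar}$ and $j_0+1\in J$, so the target exponent of $Y_{j_0}$ on the left of (\ref{General Eq Y^c-r}) is
\begin{equation*}
    c^{J'}_{j_0}+(p-1)\delta_{j_0\in J^{\sh}}-r^{J\setminus J'}_{j_0}=\delta_{j_0+1\notin J'}(r_{j_0}+1)+0-\delta_{j_0+1\in J\setminus J'}(r_{j_0}+1)=0,
\end{equation*}
since $\delta_{j_0+1\notin J'}=\delta_{j_0+1\in J\setminus J'}$ when $j_0+1\in J$. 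On the other hand, Proposition \ref{General Prop relation 1} requires $j_0\notin J''$, so whatever $J''$ you choose, the vanishing relation it produces carries the factor $Y_{j_0}^{2(i_0)_{j_0}+t^{J'+1}(J'')_{j_0}}$ with exponent at least $p-1-s^{J'+1}_{j_0}\geq1$ by (\ref{General Eq bound s}). A monomial whose $Y_{j_0}$-exponent is $0$ is never a $\un{Y}$-multiple of one whose $Y_{j_0}$-exponent is positive, so the left-hand side of (\ref{General Eq Y^c-r}) can never be written as (something in $\FF\ddbra{N_0}$) times your relation. Concretely: take $f=3$, $J_{\rhobar}=\emptyset$, $J=J'=\set{0,1}$, witness $j_0=0$; the target exponent vector is $(0,r_1+1,p-2-r_2)$, while every instance of (\ref{General Eq relation 1 statement}) with $j_0=0$ contains a positive power of $Y_0$.

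The paper's proof circumvents exactly this obstruction by choosing $j_0$ differently: it decomposes $J^{\nss}$ into maximal intervals (possible because $J^{\nss}\neq\cJ$), selects an interval meeting $(J'\setminus\partial J)^{\nss}$, and takes $j_0$ to be its \emph{right endpoint} rather than the witness itself. Then either $j_0+1\notin J$, in which case $j_0\in(\partial J)^{\nss}\subseteq J'$ and $c^{J'}_{j_0}=r_{j_0}+1>0$; or $j_0+1\in J^{\ss}$, in which case the identity $(J')^{\nss}\cap(J^{\ss}-1)=\emptyset$ (the contrapositive of your Step 1 observation) forces $j_0\notin J'$, so the factor $Y_{j_0}^{p-1-r_{j_0}}$ coming from $-\un{r}^{J\setminus J'}$ is available (recall $\un{c}^{J'}-\un{r}^{J\setminus J'}=\un{c}^J+\sum_{j\in J\setminus J'}(p-1-r_j)e_j$ by Lemma \ref{General Lem r and c}(v)). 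The indices strictly between the witness and $j_0$ all lie in $J\setminus J'$ by minimality, so they too are covered by $\prod_{j\in J\setminus J'}Y_j^{p-1-r_j}$, and Proposition \ref{General Prop relation 1} is then applied with $J''$ the complement of the segment from the witness to $j_0$. If you redo Steps 2 and 3 with that $j_0$ and that $J''$, the exponent comparison does close.
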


\begin{proof}
    Our assumption implies $f\geq2$. By Lemma \ref{General Lem r and c}(v), the LHS of (\ref{General Eq Y^c-r}) is well-defined (since $\un{c}^J\geq\un{0}$) and it suffices to show that 
    \begin{equation}\label{General Eq Y^c-r 2}
        \bbbra{\un{Y}^{\un{c}^J+(p-1)\un{e}^{J^{\sh}}}\scalebox{1}{$\prod$}_{j\in J\setminus J'}Y_j^{p-1-r_j}}\smat{p&0\\0&1}x_{J'+1,\un{e}^{(J^{\sh}+1)}}=0.
    \end{equation}

    We let $\un{i}\eqdef\un{e}^{J^{\sh}+1}-\un{e}^{(J'+1)^{\sh}}$. If $\un{i}\ngeq\un{0}$, then by (\ref{General Eq seq small}) we have $x_{J'+1,\un{e}^{J^{\sh}+1}}=0$, which proves (\ref{General Eq Y^c-r 2}). From now on we assume that $\un{i}\geq\un{0}$, which implies $(J'+1)^{\sh}\subseteq J^{\sh}+1$. Then we claim that
    \begin{equation}\label{General Eq Y^c-r 5}
        (J')^{\nss}\cap\bigbra{J^{\ss}-1}=\emptyset.
    \end{equation}
    Otherwise, there exists $j_1\in(J')^{\nss}$ such that $j_1+1\in J^{\ss}=(J')^{\ss}$, which implies $j_1+1\in(J'+1)^{\sh}\subseteq J^{\sh}+1$. Hence $j_1\in J^{\sh}$, which is a contradiction since $j_1\notin J_{\rhobar}$. 

    Since $J^{\nss}\neq\cJ$ by assumption, we divide $J^{\nss}$ into a disjoint union of intervals not adjacent to each other. Since $(\partial J)^{\nss}\subsetneqq(J')^{\nss}\subseteq J^{\nss}$ by assumption, we choose an interval $I$ as above such that $(J'\setminus\partial J)^{\nss}\cap I\neq\emptyset$ and denote by $j_0$ the right boundary of $I$. Since $j_0+1\notin J^{\nss}$ by construction, we have either $j_0+1\in J^{\ss}=(J')^{\ss}$, which implies $j_0\notin J'$ by (\ref{General Eq Y^c-r 5}), or $j_0+1\notin J$, which implies $j_0\in(\partial J)^{\nss}\subseteq J'$. In particular, in both cases we have $j_0\notin(J'\setminus\partial J)^{\nss}$ and $j_0+1\in(J'+1)\Delta(J')^{\ss}$.

    \hspace{\fill}
    
    (a). First we suppose that $j_0+1\notin J$, which implies $j_0\in(\partial J)^{\nss}\subseteq J'$. We let $1\leq w\leq f-1$ be minimal such that $j_0-w\in(J'\setminus\partial J)^{\nss}$. By the construction of $I$ we have $j_0-w,j_0-w+1,\ldots,j_0\in J^{\nss}$ and $j_0-w+1,\ldots,j_0-1\notin J'$. Then by (\ref{General Eq sJ}) we have if $w\geq2$
    \begin{equation}\label{General Eq Y^c-r 4}
        s^{J'+1}_j=
    \begin{cases}
        p-2-r_j&\text{if}~j=j_0\\
        r_j&\text{if}~j=j_0-w+2,\ldots,j_0-1~(\text{and $w\geq3$})\\
        r_j+1&\text{if}~j=j_0-w+1,
    \end{cases}
    \end{equation}
    and $s^{J'+1}_{j_0}=p-1-r_{j_0}$ if $w=1$.
    
    Then we let $J''\eqdef\cJ\setminus\set{j_0-w+1,\ldots,j_0}$. For each $j=j_0-w+1,\ldots,j_0+1$ we have $j-1\notin J_{\rhobar}$ by construction, hence $j\notin J^{\sh}+1$, which implies $i_j\leq0$ and hence $i_j=0$ (since $\un{i}\geq\un{0}$). Then by (\ref{General Eq tJJ'}) and (\ref{General Eq Y^c-r 4}) we have
    \begin{equation*}
    \begin{aligned}
        2i_j+t^{J'+1}(J'')_j&=p-1-s^{J'+1}_j+\delta_{j-1\in J''}\\
    &=\begin{cases}
        r_j+1&\text{if}~j=j_0\\
        p-1-r_j&\text{if}~j=j_0-w+1,\ldots,j_0-1~(\text{and $w\geq2$}).
    \end{cases}
    \end{aligned}
    \end{equation*}
    By Proposition \ref{General Prop relation 1} applied to $(\un{i},J'+1,J'')$ with $j_0$ as above and using (\ref{General Eq seq small}), we have
    \begin{equation*}
        \bbbra{Y_{j_0}^{r_{j_0}+1}\scalebox{1}{$\prod$}_{j=j_0-w+1}^{j_0-1}Y_j^{p-1-r_j}}\smat{p&0\\0&1}x_{J'+1,\un{e}^{(J^{\sh}+1)}}=0.
    \end{equation*}
    Since $j_0-w+1,\ldots,j_0-1\in J\setminus J'$, to prove (\ref{General Eq Y^c-r 2}) it is enough to show that $c^J_{j_0}+(p-1)\delta_{j_0\in J^{\sh}}\geq r_{j_0}+1$, which follows from (\ref{General Eq cJ}) since $j_0\in J$ and $j_0+1\notin J$.

    \hspace{\fill}

    (b). Then we suppose that $j_0+1\in J^{\ss}=(J')^{\ss}$, which implies $j_0\notin J'$. We use the same definition of $w$, $J''$ as in (a). In particular, we still have $j_0-w,j_0-w+1,\ldots,j_0\in J^{\nss}$ and $j_0-w+1,\ldots,j_0-1\notin J'$. Then by (\ref{General Eq sJ}) and (\ref{General Eq tJJ'}) we have
    \begin{equation*}
        s^{J'+1}_j=
    \begin{cases}
        r_j&\text{if}~j=j_0-w+2,\ldots,j_0\\
        r_j+1&\text{if}~j=j_0-w+1
    \end{cases}
    \end{equation*}
    and $2i_j+t^{J'+1}(J'')_j=p-1-r_j$ for $j=j_0-w+1,\ldots,j_0$. By Proposition \ref{General Prop relation 1} applied to $(\un{i},J'+1,J'')$ with $j_0$ as above and using (\ref{General Eq seq small}), we have
    \begin{equation*}
        \bbbra{\scalebox{1}{$\prod$}_{j=j_0-w+1}^{j_0}Y_j^{p-1-r_j}}\smat{p&0\\0&1}x_{J'+1,\un{e}^{(J^{\sh}+1)}}=0.
    \end{equation*}
    Since $j_0-w+1,\ldots,j_0\in J\setminus J'$, this completes the proof of (\ref{General Eq Y^c-r 2}).
\end{proof}


\begin{proposition}\label{General Prop xJ' xJ}
    Let $0\leq k\leq f$. Assume that Theorem \ref{General Thm seq} is true for $|J|\leq k-1$. Let $J,J'\subseteq\cJ$ such that $|J|\leq k$ and $J^{\ss}\subseteq J'\subsetneqq J$.
    \begin{enumerate}
    \item 
    If $J^{\nss}\neq\cJ$, then we have (see the end of \S\ref{General Sec relation} for $\mu_{*,J}/\mu_{*,J'}$)
    \begin{equation*}
        \varepsilon_{J'}x_{J',\un{r}^{J\setminus J'}+\un{e}^{J^{\sh}}}=
    \begin{cases}
        0,&\text{if}~J'\nsupseteq J^{\ss}\sqcup(\partial J)^{\nss}\\
        (-1)^{|(J'\setminus\partial J)^{\nss}|}\frac{\mu_{*,J}}{\mu_{*,J'}}v_J,&\text{if}~J'\supseteq J^{\ss}\sqcup(\partial J)^{\nss}.
    \end{cases}
    \end{equation*}
    \item 
    If $J^{\nss}=\cJ$  (i.e.\,$(J,J_{\rhobar})=(\cJ,\emptyset)$, which implies $k=f$) and $J'\neq\emptyset$, then we have
    \begin{equation*}
        \varepsilon_{J'}x_{J',\un{r}^{J\setminus J'}}=(-1)^{|J'|+1}\frac{\mu_{*,J}}{\mu_{*,J'}}v_J.
    \end{equation*}
    \end{enumerate}
\end{proposition}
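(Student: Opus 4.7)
The plan is by increasing induction on $|J'|$ for part (i), treating the vanishing subcase and the base of the induction separately; part (ii) runs in parallel, with Proposition \ref{General Prop vector complement} replacing Proposition \ref{General Prop vector} and no vanishing subcase to dispose of.

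For the vanishing subcase of (i), I will observe that $J'\nsupseteq J^{\ss}\sqcup(\partial J)^{\nss}$ together with the hypothesis $J^{\ss}\subseteq J'$ forces the existence of some $j_0\in(\partial J)^{\nss}\setminus J'$. Unpacking the definitions gives $j_0\in J\setminus J_{\rhobar}$ and $j_0+1\notin J$, whence $j_0,\,j_0+1\notin J'\subseteq J$ and $j_0\notin J^{\sh}$. A direct application of \eqref{General Eq Lem rJ} then shows the $j_0$-th coordinate of $\un{r}^{J\setminus J'}+\un{e}^{J^{\sh}}$ equals $-1$, and Lemma \ref{General Lem simple 0} (applicable since $|J'|\leq k-1$) yields $x_{J',\un{r}^{J\setminus J'}+\un{e}^{J^{\sh}}}=0$.

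Set $J_0\eqdef J^{\ss}\sqcup(\partial J)^{\nss}$ and $T_{J,J''}\eqdef\varepsilon_{J''}x_{J'',\un{r}^{J\setminus J''}+\un{e}^{J^{\sh}}}$. I proceed by increasing induction on $|J'|$ over $\{J':J_0\subseteq J'\subsetneqq J\}$. For the step $J'\supsetneqq J_0$, I apply Lemma \ref{General Lem lem for zw} to $(J,J')$ and expand the identity using \eqref{General Eq Seq p001} applied to $J'$ with $\un{i}=\un{e}^{J^{\sh}+1}$; the simplification $p\delta(\un{e}^{J^{\sh}+1})=p\un{e}^{J^{\sh}}$ together with the disjoint-union identity $\un{r}^{J'\setminus J''}+\un{r}^{J\setminus J'}=\un{r}^{J\setminus J''}$ (Lemma \ref{General Lem r and c}(ii)) collapses the exponents, giving $\sum_{J^{\ss}\subseteq J''\subseteq J'}\mu_{J'+1,J''}T_{J,J''}=0$. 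Isolating the $J''=J'$ term, killing the $J''\nsupseteq J_0$ terms by the vanishing subcase, and substituting the inductive formula for the $J_0\subseteq J''\subsetneqq J'$ terms (via $\mu_{J'+1,J''}/\mu_{J'+1,J'}=\mu_{*,J''}/\mu_{*,J'}$), the desired formula for $T_{J,J'}$ reduces to
\[
\sum_{J_0\subseteq J''\subseteq J'}(-1)^{|(J''\setminus\partial J)^{\nss}|}=0,
\]
which holds because $J'\setminus J_0\subseteq(J\setminus\partial J)^{\nss}$ is nonempty and, parameterising $J''=J_0\sqcup K$ with $K\subseteq J'\setminus J_0$, one checks $|(J''\setminus\partial J)^{\nss}|=|K|$ (using $(J^{\ss}\setminus\partial J)^{\nss}=\emptyset$), reducing to the classical $\sum_K(-1)^{|K|}=0$.

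The base case $J'=J_0$ (meaningful only when $J_0\subsetneqq J$) will be handled by applying \eqref{General Eq Seq def} directly with outer $J_0$ and $\un{i}=\un{r}^{J\setminus J_0}+\un{e}^{J^{\sh}}$. Each summand $x_{J'',\un{i}+\un{r}^{J_0\setminus J''}}$ rewrites (via Lemma \ref{General Lem r and c}(ii)) as $x_{J'',\un{r}^{J\setminus J''}+\un{e}^{J^{\sh}}}$ and vanishes by the vanishing subcase (since $J''\subsetneqq J_0$ means $J''\nsupseteq J_0$). The remaining first term $\un{Y}^{\un{\ell}}\smat{p&0\\0&1}x_{J_0+1,\un{i}'}$ is identified with $\mu_{J_0+1,J}v_J$ through Proposition \ref{General Prop vector} applied to $(J_0+1,J)$, whose hypotheses I verify directly: $(J_0)^{\ss}=J^{\ss}$ is automatic, and every $j\in J^{\nss}$ lies either in $(\partial J)^{\nss}=J_0^{\nss}$ (if $j+1\notin J$) or in $(J-1)^{\nss}$ (if $j+1\in J$), giving $J^{\nss}\subseteq J_0^{\nss}\Delta(J-1)^{\nss}$. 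Part (ii) follows the same template with $J_0=\emptyset$: there is no vanishing subcase, and Proposition \ref{General Prop vector complement} (applied to $J'+1$) replaces Proposition \ref{General Prop vector}; the extra $x_{\emptyset,\un{r}}$ it produces is exactly cancelled by the $J''=\emptyset$ summand on the right-hand side of \eqref{General Eq Seq def} (after simplifying $\un{r}^{\cJ\setminus J'}+\un{r}^{J'}=\un{r}$), and the alternating signs combine to $(-1)^{|J'|+1}$.

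The main obstacle will be the exponent-matching in the base case of (i): checking that the $\un{\ell}$ and $\un{i}'$ coming from the decomposition $\un{r}^{J\setminus J_0}+\un{e}^{J^{\sh}}=p\delta(\un{i}')+\un{c}^{J_0}-\un{\ell}$ coincide with the exponent $\prod Y_j^{s^J_j}\prod Y_j^{p-1}$ and the shift $\un{e}^{((J_0+1)\cap J)^{\nss}}$ appearing in Proposition \ref{General Prop vector} for $(J_0+1,J)$. This is a case analysis over the four cases determined by whether $j\in J_0$ and whether $j+1\in J$, using \eqref{General Eq sJ}, \eqref{General Eq cJ} and \eqref{General Eq rJ}, and is best isolated as a technical combinatorial lemma in the spirit of Lemma \ref{General Lem r and c}.
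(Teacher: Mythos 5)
Your proposal is correct and follows essentially the same route as the paper's proof: the vanishing subcase via Lemma \ref{General Lem simple 0} at a coordinate $j_0\in(\partial J)^{\nss}\setminus J'$, the base case $J'=J^{\ss}\sqcup(\partial J)^{\nss}$ by matching the recursion \eqref{General Eq Seq def}/\eqref{General Eq Seq p001} against Proposition \ref{General Prop vector} applied to $(J_0+1,J)$, the induction step via Lemma \ref{General Lem lem for zw} together with the alternating-sum identity, and part (ii) by the same template with Proposition \ref{General Prop vector complement}. The exponent-matching you flag as the main obstacle is exactly the Claim the paper verifies by a case-by-case computation with \eqref{General Eq sJ}, \eqref{General Eq cJ}, \eqref{General Eq rJ}, so your plan is complete in substance.
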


\begin{proof}
    (i). We let $J^{\nss}\neq\cJ$ and separate the following cases:

    \hspace{\fill}
    
    (a). Suppose that $J'\nsupseteq J^{\ss}\sqcup(\partial J)^{\nss}$. Since $J'\supseteq J^{\ss}$, we have $J'\nsupseteq\partial J$. Then we let $j_0\in\partial J$ (i.e.\,$j_0\in J$ and $j_0+1\notin J$) and $j_0\notin J'$. This implies $j_0\in J\setminus J'$ and $j_0+1\notin J\setminus J'$, hence $r^{J\setminus J'}_{j_0}=-1$ by (\ref{General Eq rJ}). We also have $j_0\notin J^{\sh}$. Then we deduce from Lemma \ref{General Lem simple 0} that $x_{J',\un{r}^{J\setminus J'}+\un{e}^{J^{\sh}}}=0$.

    \hspace{\fill}

    (b). Suppose that $J^{\ss}\sqcup(\partial J)^{\nss}\subseteq J'\subsetneqq J$. We use increasing induction on $\babs{J'\setminus\!\bigbra{J^{\ss}\sqcup(\partial J)^{\nss}}}$, which equals $\babs{(J'\setminus\partial J)^{\nss}}$. 
    
    First we assume that $J'=J^{\ss}\sqcup(\partial J)^{\nss}$. By Proposition \ref{General Prop vector} applied to $(J'+1,J)$, we have
    \begin{equation}\label{General Eq xJ'xJ 1}
    \begin{aligned}
        \mu_{J'+1,J}v_J&=\bbbra{\scalebox{1}{$\prod$}_{j\in J_0}Y_j^{s^J_j}\scalebox{1}{$\prod$}_{j\notin J_0}Y_j^{p-1}}\smat{p&0\\0&1}\bbra{\un{Y}^{-\un{e}^{((J'+1)\cap J)^{\nss}}}v_{J'+1}}\\
        &=\bbbra{\scalebox{1}{$\prod$}_{j\in J_0}Y_j^{s^J_j}\scalebox{1}{$\prod$}_{j\notin J_0}Y_j^{p-1}}\smat{p&0\\0&1}x_{J'+1,\un{e}^{J_1}},
    \end{aligned}
    \end{equation}
    where the second equality follows from (\ref{General Eq seq small}) and
    \begin{align*}
    &\begin{aligned}
        J_0&\eqdef\bigbra{(J'+1)\Delta J}-1=J'\Delta(J-1)=\bigbra{J^{\ss}\sqcup(\partial J)^{\nss}}\Delta\bigbra{(J-1)^{\ss}\sqcup(J-1)^{\nss}}\\
        &\hspace{1.5cm}=\bigbra{J\Delta(J-1)}^{\ss}\sqcup\bigbra{(\partial J)\Delta(J-1)}^{\nss}=\bigbra{J\Delta(J-1)}^{\ss}\sqcup\bigbra{J\cup(J-1)}^{\nss};
    \end{aligned}\\
    &\begin{aligned}
        J_1&\eqdef\bigbra{(J'+1)\cap J}^{\nss}\sqcup(J'+1)^{\sh}=\bigbra{(J'+1)\cap J^{\nss}}\sqcup\bigbra{(J'+1)\cap(J')^{\ss}}\\
        &\hspace{1.5cm}=\bigbra{(J'+1)\cap J^{\nss}}\sqcup\bigbra{(J'+1)\cap J^{\ss}}=(J'+1)\cap J=\bigbra{J'\cap(J-1)}+1.
    \end{aligned}
    \end{align*}
    We write $\un{s}\in\ZZ^f$ with $s_j\eqdef s^J_j$ if $j\in J_0$ and $s_j\eqdef p-1$ if $j\notin J_0$. 

    \hspace{\fill}
    
    \noindent\textbf{Claim.} We have
    \begin{equation}\label{General Eq xJ'xJ 2}
        p\un{e}^{J_1-1}+\un{c}^{J'}-\un{s}=\un{r}^{J\setminus J'}+\un{e}^{J^{\sh}}.
    \end{equation}
    
    \proof Fix $j\in\cJ$. We assume that $j\in J_{\rhobar}$, the case $j\notin J_{\rhobar}$ being similar. In particular, we have $j\in J'$ if and only if $j\in J$, which implies
    \begin{equation}\label{General Eq xJ'xJ 3}
        p\delta_{j\in J_1-1}-\delta_{j\in J^{\sh}}=(p-1)\delta_{j\in J\cap(J-1)}.
    \end{equation}
    Since $j\in J'$ if and only if $j\in J$, and $j\in J_0$ if and only if $j\in J\Delta(J-1)$, by (\ref{General Eq rJ}), (\ref{General Eq cJ}) and (\ref{General Eq sJ}) with a case-by-case examination we have
    \begin{equation}\label{General Eq xJ'xJ 4}
    \begin{aligned}
        r^{J\setminus J'}_j&=\delta_{j+1\in J\setminus J'}(r_j+1);\\
        c^{J'}_j&=\delta_{j\notin J'}(p-2-r_j)+\delta_{j+1\notin J'}(r_j+1)=\delta_{j\notin J}(p-2-r_j)+\delta_{j+1\notin J'}(r_j+1);\\
        s_j&=\delta_{j\notin J}(p-2-r_j)+\delta_{j+1\notin J}(r_j+1)+(p-1)\delta_{j\in J\cap(J-1)}.
    \end{aligned}
    \end{equation}
    Combining (\ref{General Eq xJ'xJ 3}) and (\ref{General Eq xJ'xJ 4}) we get (\ref{General Eq xJ'xJ 2}).\qed

    \hspace{\fill}
    
    By (\ref{General Eq Seq p001}) applied to $J'+1$ and using $(J')^{\ss}=J^{\ss}$, we deduce from (\ref{General Eq xJ'xJ 1}) that 
    \begin{align*}
        \mu_{J'+1,J}v_J&=\sum\limits_{J^{\ss}\subseteq J''\subseteq J'}\varepsilon_{J''}\mu_{J'+1,J''}x_{J'',\bigbra{p\un{e}^{J_1-1}+\un{c}^{J'}+\un{r}^{J'\setminus J''}-\un{s}}}\\
        &=\sum\limits_{J^{\ss}\subseteq J''\subseteq J'}\varepsilon_{J''}\mu_{J'+1,J''}x_{J'',\bigbra{\un{r}^{J\setminus J'}+\un{r}^{J'\setminus J''}+\un{e}^{J^{\sh}}}}\\
        &=\sum\limits_{J^{\ss}\subseteq J''\subseteq J'}\varepsilon_{J''}\mu_{J'+1,J''}x_{J'',\un{r}^{J\setminus J''}+\un{e}^{J^{\sh}}},
    \end{align*}
    where the second equality follows from (\ref{General Eq xJ'xJ 2}) and the last equality follows from Lemma \ref{General Lem r and c}(ii). We know from (a) that  $x_{J'',\un{r}^{J\setminus J''}+\un{e}^{J^{\sh}}}=0$ for $J^{\ss}\subseteq J''\subsetneqq J'$, hence we conclude that
    \begin{equation*}
        \varepsilon_{J'}\mu_{J'+1,J'}x_{J',\un{r}^{J\setminus J'}+\un{e}^{J^{\sh}}}=\mu_{J'+1,J}v_J,
    \end{equation*}
    which proves (i) when $J'=J^{\ss}\sqcup(\partial J)^{\nss}$.

    \hspace{\fill}

    Next we assume that $J^{\ss}\sqcup(\partial J)^{\nss}\subsetneqq J'\subsetneqq J$. By Lemma \ref{General Lem lem for zw}, (\ref{General Eq Seq p001}) applied to $J'+1$ and using $(J')^{\ss}=J^{\ss}$, we have
    \begin{align*}
        0&=\un{Y}^{\un{c}^{J'}+(p-1)\un{e}^{J^{\sh}}-\un{r}^{J\setminus J'}}\smat{p&0\\0&1}x_{J'+1,\un{e}^{(J^{\sh}+1)}}\\
        &=\sum\limits_{J^{\ss}\subseteq J''\subseteq J'}\varepsilon_{J''}\mu_{J'+1,J''}x_{J'',\bigbra{p\un{e}^{J^{\sh}}+\un{c}^{J'}+\un{r}^{J'\setminus J''}-\bigbra{\un{c}^{J'}+(p-1)\un{e}^{J^{\sh}}-\un{r}^{J\setminus J'}}}}\\
        &=\sum\limits_{J^{\ss}\subseteq J''\subseteq J'}\varepsilon_{J''}\mu_{J'+1,J''}x_{J'',\un{r}^{J\setminus J''}+\un{e}^{J^{\sh}}},
    \end{align*}
    where the last equality follows from Lemma \ref{General Lem r and c}(ii). We know from (a) that $x_{J'',\un{r}^{J\setminus J''}+\un{e}^{J^{\sh}}}=0$ for $J''\nsupseteq J^{\ss}\sqcup(\partial J)^{\nss}$, hence we have
    \begin{equation}\label{General Eq xJ'xJ 5}
        \sum\limits_{J^{\ss}\sqcup(\partial J)^{\nss}\subseteq J''\subseteq J'}\varepsilon_{J''}\mu_{J'+1,J''}x_{J'',\un{r}^{J\setminus J''}+\un{e}^{J^{\sh}}}=0.
    \end{equation}
    By the induction hypothesis, we have for $J^{\ss}\sqcup(\partial J)^{\nss}\subseteq J''\subsetneqq J'$
    \begin{equation}\label{General Eq xJ'xJ 6}
        \varepsilon_{J''}x_{J'',\un{r}^{J\setminus J''}+\un{e}^{J^{\sh}}}=    (-1)^{|(J''\setminus\partial J)^{\nss}|}\frac{\mu_{*,J}}{\mu_{*,J''}}v_J.
    \end{equation}
    Moreover, if we denote $m\eqdef|(J'\setminus\partial J)^{\nss}|$, then (by the definition of $\mu_{*,J}/\mu_{*,J''}$) we have
    \begin{equation}\label{General Eq xJ'xJ 7}
    \begin{aligned}
        \sum\limits_{J^{\ss}\sqcup(\partial J)^{\nss}\subseteq J''\subsetneqq J'}(-1)^{|(J''\setminus\partial J)^{\nss}|}\frac{\mu_{*,J}}{\mu_{*,J''}}\mu_{J'+1,J''}&=\bbbra{\sum\limits_{(J''\setminus\partial J)^{\nss}\subsetneqq(J'\setminus\partial J)^{\nss}}(-1)^{|(J''\setminus\partial J)^{\nss}|}}\mu_{J'+1,J}\\
        &=\bbbra{\sum\limits_{i=0}^{m-1}(-1)^i\binom{m}{i}}\mu_{J'+1,J}=(-1)^{m+1}\mu_{J'+1,J}.
    \end{aligned}
    \end{equation}
    Combining (\ref{General Eq xJ'xJ 5}), (\ref{General Eq xJ'xJ 6}) and (\ref{General Eq xJ'xJ 7}), we conclude that
    \begin{equation*}
        \varepsilon_{J'}x_{J',\un{r}^{J\setminus J'}+\un{e}^{J^{\sh}}}=    (-1)^{|(J'\setminus\partial J)^{\nss}|}\frac{\mu_{*,J}}{\mu_{*,J'}}v_J,
    \end{equation*}
    which proves (i).

    \hspace{\fill}
    
    (ii). Let $(J,J_{\rhobar})=(\cJ,\emptyset)$ and $\emptyset\neq J'\neq\cJ$. The proof is by increasing induction on $|J'|$ and is similar to (i), but using Proposition \ref{General Prop vector complement} instead of Proposition \ref{General Prop vector}. We leave it as an exercise.
\end{proof}

\section{The degree function}\label{General Sec app degree}

In this appendix, we study the degree function on $\pi$ and give a proof of Proposition \ref{General Prop degree}, see Proposition \ref{General Prop degree appendix}. It guarantees that the elements $x_{J,\un{i}}$ defined in \S\ref{General Sec xJi} give rise to elements in the dual module $\Hom_A(D_A(\pi),A)$, and is needed to prove Theorem \ref{General Thm rank 2^f}.

Let $Z_1\cong1+p\OK$ be the center of $I_1$. Since $\pi$ has a central character, $Z_1$ acts trivially on $\pi$. We still denote by $\fm_{I_1}$ the maximal ideal of $\FF\ddbra{I_1/Z_1}$ when there is no possible confusion. For $0\leq j\leq f-1$, we view $Y_j$ as an element of $\FF\ddbra{I_1/Z_1}$ and we define
\begin{equation*}
    Z_j\eqdef\sum\limits_{\lambda\in\Fq\x}\lambda^{-p^j}\pmat{1&0\\p[\lambda]&1}\in\FF\ddbra{I_1/Z_1}.
\end{equation*}
Since $Z_j$ commutes with each other, for $\un{i}\in\NNN^f$ we write $\un{Z}^{\un{i}}$ for $\prod_{j=0}^{f-1}Z_j^{i_j}$. For $0\leq j\leq f-1$, we denote by $y_j,z_j\in\gr\bra{\FF\ddbra{I_1/Z_1}}$ (the graded ring for the $\fm_{I_1}$-adic filtration) the associated elements of $Y_j,Z_j\in\FF\ddbra{I_1/Z_1}$. We define the $\gr\bra{\FF\ddbra{I_1/Z_1}}$-module
\begin{equation*}
    \gr(\pi)\eqdef\bigoplus\limits_{n\geq0}\pi[\fm_{I_1}^{n+1}]/\pi[\fm_{I_1}^n].
\end{equation*}
By the proof of \cite[Cor.~5.3.5]{BHHMS1} using condition (ii) on $\pi$ (see above Theorem \ref{General Thm main1}) and taking $\FF$-linear dual, the $\gr(\FF\ddbra{I_1/Z_1})$-module $\gr(\pi)$ is annihilated by the ideal $(y_jz_j,z_jy_j;\,0\leq j\leq f-1)$, hence becomes a graded module over $R\eqdef\gr(\FF\ddbra{I_1/Z_1})/(y_jz_j,z_jy_j;\,0\leq j\leq f-1)$, which is a commutative ring, isomorphic to $\FF[y_j,z_j]/(y_jz_j;\,0\leq j\leq f-1)$ with $y_j,z_j$ of degree $-1$ (see \cite[Thm.~5.3.4]{BHHMS1}). For $v\in\pi$, as in \cite[\S3.5]{BHHMS3} we define 
\begin{equation*}
    \deg(v)\eqdef\min\set{n\geq-1:v\in\pi[\fm_{I_1}^{n+1}]}\in\ZZ_{\geq-1}.
\end{equation*}
We denote $\gr(v)\in\pi[\fm_{I_1}^{\deg(v)+1}]/\pi[\fm_{I_1}^{\deg(v)}]\subseteq\gr(\pi)$ if $v\neq0$  and $\gr(v)=0$ if $v=0$ the associated graded element of $v$.

\begin{lemma}\label{General Lem degree v}
    Let $v\in\pi$ with $\deg(v)=d\geq0$.
\begin{enumerate}
    \item 
    For $j\in\cJ$, we have $\deg(Y_jv)\leq\deg(v)-1$. If moreover $d>0$, then the equality holds if and only if $y_j\gr(v)=\gr(Y_jv)\neq0$ in $\gr(\pi)$. Similar statements hold for $Z_j$.
    \item 
    There exists $\un{a},\un{b}\in\NNN^f$ satisfying $\norm{\un{a}}+\norm{\un{b}}=d$ such that $0\neq\un{Y}^{\un{a}}\un{Z}^{\un{b}}v\in\pi^{I_1}$.
    \item 
    We have $\deg\bigbra{\!\smat{p&0\\0&1}v}\leq pd+(p-1)f$.
\end{enumerate}
\end{lemma}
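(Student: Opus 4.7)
Part (i) follows immediately by unraveling definitions: if $v\in\pi[\fm_{I_1}^{d+1}]$ then $\fm_{I_1}^d(Y_jv)\subseteq\fm_{I_1}^{d+1}v=0$, hence $Y_jv\in\pi[\fm_{I_1}^d]$ and $\deg(Y_jv)\leq d-1$; the equality criterion is tautological since $\gr(Y_jv)$ and $y_j\gr(v)$ are by construction the same class in $\gr(\pi)_{d-1}$. The argument for $Z_j$ is identical.

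For part (ii), my plan is to exploit admissibility via graded Nakayama. Since $\pi^\vee$ is finitely generated over $\FF\ddbra{I_1/Z_1}$ with $\pi^\vee/\fm_{I_1}\pi^\vee\cong(\pi^{I_1})^\vee$, topological Nakayama produces a surjection of $\FF\ddbra{I_1/Z_1}$-modules from a finitely generated free module; passing to associated graded and quotienting by $(y_jz_j,z_jy_j)$ (which annihilates $\gr(\pi^\vee)$ by condition (ii) on $\pi$, via the same argument that makes $\gr(\pi)$ a module over $R$) yields a surjection of graded $R$-modules $R\otimes_\FF(\pi^{I_1})^\vee\onto\gr(\pi^\vee)$. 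Dualizing the perfect pairing $\gr(\pi^\vee)_n\otimes\gr(\pi)_n\to\FF$, this forces that for every nonzero $\gr(v)\in\gr(\pi)_d$ some monomial $\un{y}^{\un{a}}\un{z}^{\un{b}}\in R$ with $a_jb_j=0$ for all $j$ and $\norm{\un{a}}+\norm{\un{b}}=d$ sends $\gr(v)$ to a nonzero element of $\gr(\pi)_0=\pi^{I_1}$. Lifting the monomial to $\un{Y}^{\un{a}}\un{Z}^{\un{b}}\in\FF\ddbra{I_1/Z_1}$ and iterating the equality case of (i), the graded class of $\un{Y}^{\un{a}}\un{Z}^{\un{b}}v$ in $\gr(\pi)_0$ coincides with $\un{y}^{\un{a}}\un{z}^{\un{b}}\gr(v)\neq0$, so $\un{Y}^{\un{a}}\un{Z}^{\un{b}}v$ is a nonzero element of $\pi^{I_1}$ as required.

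Part (iii) is the crux, and I plan to prove directly that $\fm_{I_1}^{pd+(p-1)f+1}$ annihilates $\smat{p&0\\0&1}v$. By the Iwahori decomposition of $I_1/Z_1$, this ideal is spanned by ordered monomials $\un{Y}^{\un{a}}\un{t}^{\un{c}}\un{Z}^{\un{b}}$ (with $\un{t}^{\un{c}}$ a torus-generator monomial of length $\norm{\un{c}}$) satisfying $\norm{\un{a}}+\norm{\un{c}}+\norm{\un{b}}\geq pd+(p-1)f+1$, so it suffices to check that each such monomial kills $\smat{p&0\\0&1}v$. The key ingredients are Lemma \ref{General Lem Yj}(i), which gives $Y_j^p\smat{p&0\\0&1}=\smat{p&0\\0&1}Y_{j+1}$; its counterpart $Z_j\smat{p&0\\0&1}=\smat{p&0\\0&1}Z_{j-1}^p$, obtained by a direct matrix computation using $\smat{1&0\\p[\lambda]&1}^p=\smat{1&0\\p^2[\lambda]&1}$; and commutativity of torus elements with $\smat{p&0\\0&1}$. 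Writing $\un{a}=p\un{\alpha}+\un{r}$ with $\un{0}\leq\un{r}\leq\un{p}-\un{1}$ and pushing $\un{Z}^{\un{b}}$ then $\un{Y}^{p\un{\alpha}}$ across $\smat{p&0\\0&1}$ produces
\begin{equation*}
\un{Y}^{\un{a}}\un{t}^{\un{c}}\un{Z}^{\un{b}}\smat{p&0\\0&1}v=\un{Y}^{\un{r}}\smat{p&0\\0&1}\un{Y}^{\delta^{-1}(\un{\alpha})}\un{t}^{\un{c}}\un{Z}^{p\delta(\un{b})}v,
\end{equation*}
where the inner operator lies in $\fm_{I_1}^{\norm{\un{\alpha}}+\norm{\un{c}}+p\norm{\un{b}}}$. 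A short combinatorial check using $\norm{\un{\alpha}}\geq(\norm{\un{a}}-(p-1)f)/p$ splits on whether $\norm{\un{b}}+\norm{\un{c}}\geq1$ (in which case the target $\norm{\un{a}}+p\norm{\un{c}}+p^2\norm{\un{b}}\geq pd+p+(p-1)f$ follows from the hypothesis by absorbing a factor $(p-1)\cdot1$ from $(p-1)\norm{\un{c}}+(p^2-1)\norm{\un{b}}$) or $\norm{\un{b}}=\norm{\un{c}}=0$ (in which case $\norm{\un{a}}=pd+(p-1)f+1$ forces $\norm{\un{\alpha}}\geq\lceil(pd+1)/p\rceil=d+1$ directly); in both cases $\norm{\un{\alpha}}+\norm{\un{c}}+p\norm{\un{b}}\geq d+1$, so the inner operator annihilates $v\in\pi[\fm_{I_1}^{d+1}]$.

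The main obstacle I anticipate in (iii) is justifying that the $\fm_{I_1}$-adic filtration on $\FF\ddbra{I_1/Z_1}$ matches total degree in the Iwahori ordered-monomial basis, so that one may legitimately reduce to ordered monomials; this is a standard but delicate verification relying on the pro-$p$ structure of $I_1/Z_1$. Everything else is clean bookkeeping with the two $\smat{p&0\\0&1}$-commutation relations and the elementary inequality above.
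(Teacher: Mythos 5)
Your parts (i) and (ii) are fine. Part (i) is the paper's argument verbatim; for (ii) the paper argues more directly (if every degree $-1$ element of $R$ killed $\gr(v)$ then $\fm_{I_1}v\subseteq\pi[\fm_{I_1}^{d-1}]$, contradicting $\deg(v)=d$; iterate and then reorder the resulting word using commutativity of $R$), whereas you go through graded Nakayama and the duality between $\gr(\pi)$ and $\gr(\pi^\vee)$. Both work; yours leans a bit harder on the perfectness of the degreewise pairing, but that is guaranteed by admissibility.

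The interesting divergence is in (iii), and the obstacle you flag at the end is exactly the point. You want to show that every element of $\fm_{I_1}^{pd+(p-1)f+1}$ kills $\smat{p&0\\0&1}v$, which forces you to describe that ideal by ordered Iwahori monomials $\un{Y}^{\un{a}}\un{t}^{\un{c}}\un{Z}^{\un{b}}$ and to know that total length computes the $\fm_{I_1}$-adic filtration. That spanning statement is true but is a real input (it is the PBW-type description of $\gr\FF\ddbra{I_1/Z_1}$ from [BHHMS1, Thm.~5.3.4], in which the torus generators moreover sit in degree $-2$, not $-1$, so your length bookkeeping for $\un{t}^{\un{c}}$ is slightly off, though in the harmless direction). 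The paper's proof makes this entire issue evaporate: it applies part (ii) to the element $w=\smat{p&0\\0&1}v$ itself. If $\deg(w)>pd+(p-1)f$, then (ii) produces a \emph{pure} monomial $\un{Y}^{\un{a}}\un{Z}^{\un{b}}$ with $\norm{\un{a}}+\norm{\un{b}}=\deg(w)\geq pd+(p-1)f+1$ and $\un{Y}^{\un{a}}\un{Z}^{\un{b}}w\neq0$. So it suffices to kill only monomials in the $Y_j$ and $Z_j$ with no torus factor and no spanning claim, and the remaining computation (pushing $\un{Z}^{\un{b}}$ and $\un{Y}^{p\un{c}}$ across $\smat{p&0\\0&1}$ and the inequality $\norm{\un{c}}+p\norm{\un{b}}>d$) is exactly the one you perform. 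I would recommend restructuring (iii) this way: it turns your ``standard but delicate verification'' into nothing at all, and it explains why the lemma states (ii) at all.
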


\begin{proof}
    (i). This follows from the fact that $y_j,z_j\in R$ has degree $-1$.

    (ii). If $d=0$, then the statement is trivial, so we let $d>0$. Since $y_0,\ldots,y_{f-1},z_0,\ldots,z_{f-1}$ form an $\FF$-basis of the degree $-1$ part of $R$, which equals $\fm_{I_1}/\fm_{I_1}^2$, there exists one of them, say $y_j$, such that $y_j\gr(v)\neq0$ (otherwise, $\gr(v)$ is annihilated by $\fm_{I_1}/\fm_{I_1}^2$ in $\gr(\pi)$, so $\fm_{I_1}v\subseteq\pi[\fm_{I_1}^{d-1}]$, i.e.\,$v\in\pi[\fm_{I_1}^{d}]$, a contradiction). By (i), we have $\deg(Y_jv)=d-1$. If $d-1>0$, continue this process to $Y_jv\in\pi$ and so on.
    
    In particular, there exist $W_1,\ldots,W_d\in\set{Y_0,\ldots,Y_{f-1},Z_0,\ldots,Z_{f-1}}$ such that $W_1\cdots W_d\,v\in\pi$ has degree $0$ and $w_1\cdots w_d\gr(v)\neq0$ in $\gr(\pi)$, where $w_i\in R$ is the associated graded element of $W_i$ for $1\leq i\leq d$. We let $W_1',\ldots,W_d'$ be a permutation of $W_1,\ldots,W_d$ such that $W_1'\cdots W_d'$ is of the form $\un{Y}^{\un{a}}\un{Z}^{\un{b}}$ as in the statement. Since $R$ is commutative, we have $w_1'\cdots w_d'\gr(v)\neq0$ in $\gr(\pi)$. As a consequence, $W_1'\cdots W_d'\,v\neq0$ and has degree zero by (i), hence belongs to $\pi^{I_1}$.

    (iii). By (ii), it suffices to show that $\un{Y}^{\un{a}}\un{Z}^{\un{b}}\smat{p&0\\0&1}v=0$ for all $\un{a},\un{b}\in\NNN^f$ such that $\norm{\un{a}}+\norm{\un{b}}\geq pd+(p-1)f+1$. We write $\un{a}=p\un{c}+\un{\ell}$ for the unique $\un{c}\geq\un{0}$ and $\un{0}\leq\un{\ell}\leq\un{p}-\un{1}$. One easily checks that $Z_j\smat{p&0\\0&1}=\smat{p&0\\0&1}Z_{j-1}^p$ for all $j\in\cJ$. Together with Lemma \ref{General Lem Yj}(i), we have
    \begin{equation*}
        \un{Y}^{\un{a}}\un{Z}^{\un{b}}\smat{p&0\\0&1}v=\un{Y}^{\un{\ell}}\un{Y}^{p\un{c}}\smat{p&0\\0&1}\un{Z}^{p\delta(\un{b})}v=\un{Y}^{\un{\ell}}\smat{p&0\\0&1}\un{Y}^{\delta\inv(\un{c})}\un{Z}^{p\delta(\un{b})}v.
    \end{equation*}
    Since $\deg(v)=d$, using (i) it suffices to show that $\norm{\delta\inv(\un{c})}+\norm{p\delta(\un{b})}>d$. Indeed, we have
    \begin{equation*}
    \begin{aligned}
        \norm{\delta\inv(\un{c})}+\norm{p\delta(\un{b})}&=\norm{\un{c}}+p\norm{\un{b}}=\bra{\norm{a}-\norm{\ell}}/p+p\norm{b}\\
        &\geq\bra{\norm{a}-(p-1)f}/p+p\norm{b}\geq\bra{\norm{a}+\norm{b}-(p-1)f}/p\geq(pd+1)/p>d,
    \end{aligned}
    \end{equation*}
    which completes the proof.
\end{proof}

Recall that we have constructed $x_{J,\un{i}}\in\pi$ for $J\subseteq\cJ$ and $\un{i}\in\ZZ^f$ in Theorem \ref{General Thm seq}.

\begin{lemma}\label{General Lem degree xJi}
    Let $J\subseteq\cJ$ and $\un{i}\in\ZZ^f$ such that $\un{i}\geq\un{e}^{J^{\sh}}$ (see \S\ref{General Sec sw} for $\un{e}^{J^{\sh}}$ and note that $\norm{\un{e}^{J^{\sh}}}=|J^{\sh}|$).
\begin{enumerate}
    \item 
    If $z_j\gr(x_{J,\un{i}})=0$ for all $j\in\cJ$, then we have $\deg(x_{J,\un{i}})=\norm{\un{i}}-|J^{\sh}|$.
    \item 
    If $\deg(x_{J,\un{i}})>\norm{\un{i}}-\abs{J^{\sh}}$, then there exists $j_0\in\cJ$ such that $\deg(x_{J,\un{i}+e_{j_0}})\geq\deg(x_{J,\un{i}})+2$.
\end{enumerate}
\end{lemma}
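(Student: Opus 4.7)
The proof will split into the two parts, with part (i) establishing a precise degree formula under the annihilation hypothesis and part (ii) deducing the jump via the commutation relation $z_jy_j = y_jz_j = 0$ in $R$.

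For (i), the lower bound is immediate: by Theorem \ref{General Thm seq}(ii) and (\ref{General Eq seq small}) we have $\un{Y}^{\un{i}-\un{e}^{J^{\sh}}}x_{J,\un{i}} = x_{J,\un{e}^{J^{\sh}}} = v_J \neq 0$, so iterating Lemma \ref{General Lem degree v}(i) yields $\deg(x_{J,\un{i}}) \geq \norm{\un{i}-\un{e}^{J^{\sh}}} = \norm{\un{i}} - |J^{\sh}|$. For the upper bound, let $d \eqdef \deg(x_{J,\un{i}})$ and use Lemma \ref{General Lem degree v}(ii) to produce $\un{a},\un{b} \in \NNN^f$ with $\norm{\un{a}}+\norm{\un{b}} = d$ and $0 \neq \un{Y}^{\un{a}}\un{Z}^{\un{b}}x_{J,\un{i}} \in \pi^{I_1}$. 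Since $R$ is commutative and $z_j\gr(x_{J,\un{i}}) = 0$ for every $j$ by hypothesis, any factor $z_j$ with $b_j \geq 1$ can be moved past the other $y$'s and $z$'s to act directly on $\gr(x_{J,\un{i}})$, forcing $\un{y}^{\un{a}}\un{z}^{\un{b}}\gr(x_{J,\un{i}}) = 0$; hence $\un{b} = \un{0}$. Then $\un{Y}^{\un{a}}x_{J,\un{i}} = x_{J,\un{i}-\un{a}}$ is nonzero, so Corollary \ref{General Cor degree}(i) (contrapositive) forces $\norm{\un{a}} \leq \norm{\un{i}} - |J^{\sh}|$. Combined with $\norm{\un{a}} = d \geq \norm{\un{i}} - |J^{\sh}|$, this gives the desired equality $d = \norm{\un{i}} - |J^{\sh}|$.

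For (ii), the contrapositive of (i) furnishes $j_0 \in \cJ$ with $z_{j_0}\gr(x_{J,\un{i}}) \neq 0$; in particular $x_{J,\un{i}} \neq 0$ and $\deg(x_{J,\un{i}}) > \norm{\un{i}} - |J^{\sh}| \geq 0$ (using $\un{i} \geq \un{e}^{J^{\sh}}$), so $\deg(x_{J,\un{i}}) \geq 1$. Since $Y_{j_0}x_{J,\un{i}+e_{j_0}} = x_{J,\un{i}}$ is nonzero and not $I_1$-invariant, we have $\deg(x_{J,\un{i}+e_{j_0}}) \geq 2$, and in particular $> 0$, so Lemma \ref{General Lem degree v}(i) applies. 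I claim $y_{j_0}\gr(x_{J,\un{i}+e_{j_0}}) = 0$ in $\gr(\pi)$. Indeed, were it nonzero, Lemma \ref{General Lem degree v}(i) would give $\gr(x_{J,\un{i}}) = \gr(Y_{j_0}x_{J,\un{i}+e_{j_0}}) = y_{j_0}\gr(x_{J,\un{i}+e_{j_0}})$; then
\begin{equation*}
    z_{j_0}\gr(x_{J,\un{i}}) = z_{j_0}y_{j_0}\gr(x_{J,\un{i}+e_{j_0}}) = 0
\end{equation*}
in $R$, contradicting the choice of $j_0$. Applying Lemma \ref{General Lem degree v}(i) once more with this vanishing, we conclude
\begin{equation*}
    \deg(x_{J,\un{i}}) = \deg(Y_{j_0}x_{J,\un{i}+e_{j_0}}) < \deg(x_{J,\un{i}+e_{j_0}}) - 1,
\end{equation*}
i.e.\,$\deg(x_{J,\un{i}+e_{j_0}}) \geq \deg(x_{J,\un{i}}) + 2$.

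The main conceptual step is the reduction in (i) that eliminates all $\un{Z}$-factors from the monomial produced by Lemma \ref{General Lem degree v}(ii); this is what turns the crude lower bound from Lemma \ref{General Lem degree v}(ii) into the sharp equality $d = \norm{\un{i}}-|J^{\sh}|$, and it is the only place where the commutativity of $R$ and the hypothesis of (i) genuinely interact. Once (i) is in place, the proof of (ii) is essentially formal, relying only on the relation $z_{j_0}y_{j_0} = 0$ to exclude the equality case in Lemma \ref{General Lem degree v}(i) applied to $x_{J,\un{i}+e_{j_0}}$.
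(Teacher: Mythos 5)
Your proof is correct and follows essentially the same route as the paper: part (i) reduces to a pure $\un{Y}$-monomial reaching $\pi^{I_1}$ (you re-derive this from Lemma \ref{General Lem degree v}(ii) and the commutativity of $R$, where the paper cites the corresponding step of \cite[Prop.~3.5.1]{BHHMS3}) and then combines the degree lower bound with Corollary \ref{General Cor degree}(i), while part (ii) is the same contradiction via $z_{j_0}y_{j_0}=0$ in $R$. No gaps.
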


\begin{proof}
    (i). By the second paragraph of the proof of \cite[Prop.~3.5.1]{BHHMS3}, there exists $\un{a}\in\NNN^f$ such that $0\neq\un{Y}^{\un{a}}x_{J,\un{i}}\in\pi^{I_1}$ and $\deg(x_{J,\un{i}})=\norm{\un{a}}$. By Theorem \ref{General Thm seq}(ii) and (\ref{General Eq Seq def}), we have $\un{Y}^{\un{i}-\un{e}^{J^{\sh}}}x_{J,\un{i}}=x_{J,\un{e}^{J^{\sh}}}=v_J\neq0$, hence $\norm{\un{a}}=\deg(x_{J,\un{i}})\geq\norm{\un{i}}-\abs{J^{\sh}}$ by Lemma \ref{General Lem degree v}(i). Then by Corollary \ref{General Cor degree}(i),(ii), we must have $\un{a}=\un{i}-\un{e}^{J^{\sh}}$, hence $\deg(x_{J,\un{i}})=\norm{\un{i}}-\abs{J^{\sh}}$.

    (ii). By (i), there exists $j_0\in\cJ$ such that $z_{j_0}\gr(x_{J,\un{i}})\neq0$. Since $Y_{j_0}x_{J,\un{i}+e_{j_0}}=x_{J,\un{i}}$ by Theorem \ref{General Thm seq}(ii), we have $\deg(x_{J,\un{i}+e_{j_0}})\geq\deg(x_{J,\un{i}})+1$ by Lemma \ref{General Lem degree v}(i). Assume on the contrary that $\deg(x_{J,\un{i}+e_{j_0}})=\deg(x_{J,\un{i}})+1$, then by Lemma \ref{General Lem degree v}(i) we have $y_{j_0}\gr(x_{J,\un{i}+e_{j_0}})=\gr(x_{J,\un{i}})$, hence $z_{j_0}y_{j_0}\gr(x_{J,\un{i}+e_{j_0}})=z_{j_0}\gr(x_{J,\un{i}})\neq0$. This is a contradiction since $z_{j_0}y_{j_0}=0$ in $R$.
\end{proof}

\begin{lemma}\label{General Lem degree f+1}
    For $J\subseteq\cJ$ and $\un{i}\in\ZZ^f$ such that $\un{i}\leq\un{f}+\un{1}$, we have $x_{J,\un{i}}\in\pi^{K_1}$.
\end{lemma}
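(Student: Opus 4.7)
My first move is to reduce the proof to showing $x_{J,\un{f}+\un{1}}\in\pi^{K_1}$. Each $Y_j=\sum_{a\in\Fq\x}a^{-p^j}\smat{1&[a]\\0&1}\in\FF\ddbra{N_0}$ is a linear combination of elements of $\GL_2(\OK)$, and since $K_1$ is normal in $\GL_2(\OK)$, each $Y_j$ (and hence any monomial $\un{Y}^{\un{s}}$ with $\un{s}\geq\un{0}$) preserves $\pi^{K_1}=D_0(\rhobar)$. Combined with Theorem \ref{General Thm seq}(ii), which gives $\un{Y}^{\un{s}}x_{J,\un{f}+\un{1}}=x_{J,\un{f}+\un{1}-\un{s}}$ for any $\un{s}\geq\un{0}$, this shows it suffices to handle the single index $\un{i}=\un{f}+\un{1}$.

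\textbf{Computing the recursion.} A direct case-by-case calculation using \eqref{General Eq cJ} shows that when $\un{i}=\un{f}+\un{1}$, the unique decomposition $\un{i}=p\delta(\un{i}')+\un{c}^J-\un{\ell}$ with $\un{0}\leq\un{\ell}\leq\un{p}-\un{1}$ yields $\un{i}'=\un{e}^{J\cap(J+1)}$ and $\un{\ell}=\un{c}'^J-\un{1}$, where $\un{c}'^J$ is as in \eqref{General Eq c'J} (all four cases in \eqref{General Eq cJ}/\eqref{General Eq c'J} produce a difference of exactly $-1$). The recursion \eqref{General Eq Seq def} therefore reads
\begin{equation*}
\varepsilon_J\mu_{J+1,J}\,x_{J,\un{f}+\un{1}}=\un{Y}^{\un{c}'^J-\un{1}}\smat{p&0\\0&1}\bigl(\un{Y}^{-\un{e}^{(J\cap(J+1))^{\nss}}}v_{J+1}\bigr)-\sum_{J^{\ss}\subseteq J'\subsetneqq J}\varepsilon_{J'}\mu_{J+1,J'}\,x_{J',\un{f}+\un{1}+\un{r}^{J\setminus J'}},
\end{equation*}
so everything reduces to showing each term on the right-hand side belongs to $D_0(\rhobar)$.

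\textbf{Placing the terms in $D_0(\rhobar)$.} For the first term, I adapt the proof of Lemma \ref{General Lem lie in D0}(i): applying Proposition \ref{General Prop Isigma0m}(ii) with $(\un{i},J+1,J''_1)=(\un{e}^{(J\cap(J+1))^{\nss}},J+1,J\Delta(J-1))$ puts the relevant element in $I(\sigma_{J^{\ss}},\sigma_{\un{e}^J})\subseteq D_{0,\sigma_{J^{\ss}}}(\rhobar)$, provided one verifies the sharper bound $c'^J_j-1\geq\delta_{j\notin J''_1}(2i_j+t^{J+1}(J''_1)_j)+\delta_{J''_1=\emptyset}$; this is the statement of Lemma \ref{General Lem r and c}(iv) with $\un{\delta}=\un{0}$ tightened by one, which holds thanks to the genericity bounds \eqref{General Eq bound s} and $p\geq 4f+4$. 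The sum terms are handled by induction on $|J|$: for each $J^{\ss}\subseteq J'\subsetneqq J$ (so $|J'|<|J|$), one expands $x_{J',\un{f}+\un{1}+\un{r}^{J\setminus J'}}$ via \eqref{General Eq Seq p001} applied to $J'$ and runs exactly the same analysis as in the proof of Lemma \ref{General Lem lie in D0}(ii), using Proposition \ref{General Prop Isigma0m}(ii) together with Corollary \ref{General Cor structure of D0} to show each summand lies in $D_{0,\sigma_{(J')^{\ss}}}(\rhobar)\subseteq D_0(\rhobar)$.

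\textbf{Main obstacle.} The subtlety is that we use ``one fewer $\un{Y}$'' than in Lemma \ref{General Lem lie in D0}, namely $\un{c}'^J-\un{1}$ rather than $\un{c}'^J$, so the bookkeeping in Proposition \ref{General Prop Isigma0m}(ii) becomes tight; checking case-by-case (against the four regimes of \eqref{General Eq c'J}) that one unit of slack is available in every coordinate is the key combinatorial step and relies crucially on the genericity assumption $p\geq 4f+4$ from \eqref{General Eq genericity}.
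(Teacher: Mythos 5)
Your proposal is correct and follows essentially the same route as the paper: reduce to $\un{i}=\un{f}+\un{1}$ via Theorem \ref{General Thm seq}(ii), identify $\un{\ell}=\un{c}^{\prime J}-\un{1}$ in the recursion \eqref{General Eq Seq def} using \eqref{General Eq cJ c'J}, and rerun the proof of Lemma \ref{General Lem lie in D0} with one extra unit of slack. The only cosmetic difference is that the "tightened" inequality you flag as the key combinatorial step is already built into the paper's Lemma \ref{General Lem r and c}(iv), which is stated for arbitrary $\un{\delta}\in\set{0,1}^f$ precisely so that one can take $\un{\delta}=\un{1}$ here instead of $\un{\delta}=\un{0}$.
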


\begin{proof}
    By Theorem \ref{General Thm seq}(ii), it suffices to show that $x_{J,\un{f}+\un{1}}\in\pi^{K_1}$. 
    
    Recall that $\un{c}^{\prime J}\in\ZZ^f$ is defined in (\ref{General Eq c'J}), which satisfies $\un{1}\leq\un{c}^{\prime J}\leq\un{p}-\un{1}$ by (\ref{General Eq genericity}). By the proof of Lemma \ref{General Lem lie in D0} except that we apply Lemma \ref{General Lem r and c}(iv) with $\un{\delta}=\un{1}$ instead of $\un{\delta}=\un{0}$, we have
    \begin{enumerate}
        \item 
        $\un{Y}^{\un{c}^{\prime J}-\un{1}}\smat{p&0\\0&1}x_{J+1,\un{e}^{J\cap(J+1)}}\in\pi^{K_1}$;
        \item 
        $x_{J',\un{f}+\un{1}+\un{r}^{J\setminus J'}}\in\pi^{K_1}$ for each $J'\subseteq\cJ$ such that $J^{\ss}\subseteq J'\subsetneqq J$ (see \eqref{General Eq rJ} for $\un{r}^{J\setminus J'}$).
    \end{enumerate}
    Moreover, by (\ref{General Eq cJ c'J}) we have (see (\ref{General Eq cJ}) for $\un{c}^J$)
    \begin{equation*}
        \un{f}+\un{1}=p\delta\bigbra{\un{e}^{J\cap(J+1)}}+\un{c}^J-\bra{\un{c}^{\prime J}-\un{1}}.
    \end{equation*}
    Hence we deduce from (\ref{General Eq Seq def}) (with $\un{i}=\un{f}+\un{1}$) that $x_{J,\un{f}+\un{1}}\in\pi^{K_1}$.    
\end{proof}

The following proposition is a generalization of \cite[Prop.~3.5.1]{BHHMS3} (where $\rhobar$ was assumed to be semisimple).

\begin{proposition}\label{General Prop degree appendix}
    For $J\subseteq\cJ$ and $\un{i}\in\ZZ^f$, we have $\deg\bigbra{x_{J,\un{i}}}\leq\norm{\un{i}}-\abs{J^{\sh}}$. If moreover $\un{i}\geq\un{e}^{J^{\sh}}$, then we have $\deg\bigbra{x_{J,\un{i}}}=\norm{\un{i}}-\abs{J^{\sh}}$.
\end{proposition}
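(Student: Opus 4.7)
The lower bound $\deg(x_{J,\un{i}})\geq\norm{\un{i}}-|J^{\sh}|$ for $\un{i}\geq\un{e}^{J^{\sh}}$ is immediate: by Theorem \ref{General Thm seq}(ii) and \eqref{General Eq seq small}, $\un{Y}^{\un{i}-\un{e}^{J^{\sh}}}x_{J,\un{i}}=x_{J,\un{e}^{J^{\sh}}}=v_J\neq0$, so iterating Lemma \ref{General Lem degree v}(i) yields the claim.

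For the upper bound I would argue by strong induction on $\max_{j}i_{j}$. The base case $\un{i}\leq\un{f}+\un{1}$ uses Lemma \ref{General Lem degree f+1}: then $x_{J,\un{i}}\in\pi^{K_1}$, and since $\smat{1&0\\p[\lambda]&1}\in K_1$ for $\lambda\in\Fq\x$, we get $Z_j x_{J,\un{i}}=\bigbra{\sum_{\lambda\in\Fq\x}\lambda^{-p^j}}x_{J,\un{i}}=0$ (using $q>2$), hence $z_j\gr(x_{J,\un{i}})=0$ for all $j\in\cJ$. When $\un{i}\geq\un{e}^{J^{\sh}}$ Lemma \ref{General Lem degree xJi}(i) then gives the equality $\deg(x_{J,\un{i}})=\norm{\un{i}}-|J^{\sh}|$; when $\un{i}\leq\un{f}$ and $\un{i}\ngeq\un{e}^{J^{\sh}}$, $x_{J,\un{i}}=0$ by \eqref{General Eq seq small}; the few remaining configurations with some $i_{j}=f+1$ and $\un{i}\ngeq\un{e}^{J^{\sh}}$ can be handled analogously.

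For the inductive step with $\max_j i_j>f+1$, write $\un{i}=p\delta(\un{i}')+\un{c}^J-\un{\ell}$ with $\un{0}\leq\un{\ell}\leq\un{p}-\un{1}$; by \eqref{General Eq max1<max2}, $\max_j i'_j<\max_j i_j$, so the induction hypothesis applies to $x_{J+1,\un{i}'}$ and yields $\deg(x_{J+1,\un{i}'})\leq\norm{\un{i}'}-|(J+1)^{\sh}|$. One then invokes the defining identity \eqref{General Eq Seq def} to express $x_{J,\un{i}}$ as a combination of $\un{Y}^{\un{\ell}}\smat{p&0\\0&1}x_{J+1,\un{i}'}$ and the correction terms $x_{J',\un{i}+\un{r}^{J\setminus J'}}$ for $J^{\ss}\subseteq J'\subsetneqq J$, and seeks to bound their degrees via Lemma \ref{General Lem degree v}(i),(iii).

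The hard part will be that the crude bound $\deg(\un{Y}^{\un{\ell}}\smat{p&0\\0&1}x_{J+1,\un{i}'})\leq p\norm{\un{i}'}-p|(J+1)^{\sh}|+(p-1)f-\norm{\un{\ell}}$ coming from Lemma \ref{General Lem degree v}(iii) reduces the inductive step to an inequality of the form $(p-1)f+|J^{\sh}|-p|(J+1)^{\sh}|\leq\norm{\un{c}^J}$ which fails in general: for instance, when $J=\cJ$ and $J_{\rhobar}=\emptyset$ one has $\norm{\un{c}^{\cJ}}=0$ by \eqref{General Eq cJ} whereas the left-hand side is $(p-1)f>0$. Thus the individual terms in \eqref{General Eq Seq def} may overshoot the target degree, and the bound for their sum (equal to $\varepsilon_J\mu_{J+1,J}x_{J,\un{i}}$) only follows from a cancellation among leading graded pieces. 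Extracting this cancellation is the main obstacle; I expect one must track the graded representative $\gr(x_{J,\un{i}})$ in $\gr(\pi)$ and use the vanishing results of Appendix \ref{General Sec app vanish}, particularly Corollary \ref{General Cor w 0} and Proposition \ref{General Prop xJ' xJ} (which already codify the precise relations between $x_{J',\un{r}^{J\setminus J'}+\un{e}^{J^{\sh}}}$ and $v_J$ used in the proof of Theorem \ref{General Thm seq}), to see that the excess-degree contributions of the principal term and of the correction terms cancel exactly.
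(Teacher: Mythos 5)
Your lower bound and base case coincide with the paper's, and you have correctly diagnosed the central difficulty: the term-by-term estimate of \eqref{General Eq Seq def} via Lemma \ref{General Lem degree v}(iii) overshoots the target $\norm{\un{i}}-\abs{J^{\sh}}$ by roughly $(p-1)f$, so the sharp bound cannot be propagated directly through the recursion. But at precisely this point your argument stops being a proof. The sentence ``I expect one must track the graded representative \ldots to see that the excess-degree contributions \ldots cancel exactly'' restates the problem rather than solving it, and the tools you invoke (Corollary \ref{General Cor w 0}, Proposition \ref{General Prop xJ' xJ}) only control the very specific elements $x_{J',\un{r}^{J\setminus J'}+\un{e}^{J^{\sh}}}$ entering the proof of Theorem \ref{General Thm seq}; they give no handle on the leading graded term of a general $x_{J,\un{i}}$. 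A secondary issue is that your induction runs only on $\max_j i_j$, whereas the correction terms $x_{J',\un{i}+\un{r}^{J\setminus J'}}$ have indices with larger entries, so one also needs an induction on $\abs{J}$ (hypothesis (a) in the paper) to bound them.

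The missing idea is that the paper establishes no cancellation at all. It keeps the crude estimate only as a weak a priori bound --- for $\abs{J}=k$ and $\max_j i_j\leq pm$ one gets $\deg(x_{J,\un{i}})\leq\norm{\un{i}}+(p-1)f$ --- and then argues by contradiction using Lemma \ref{General Lem degree xJi}(ii): if $\deg(x_{J_0,\un{i}})$ exceeds $\norm{\un{i}}-\abs{J_0^{\sh}}$ by even $1$, then some $z_{j_0}\gr(x_{J_0,\un{i}})\neq0$, and since $z_{j_0}y_{j_0}=0$ in $R=\gr(\FF\ddbra{I_1/Z_1})/(y_jz_j,z_jy_j)$ (a consequence of the multiplicity-one hypothesis (ii) on $\pi$), the degree of $x_{J_0,\un{i}+e_{j_0}}$ must jump by at least $2$, so the excess grows by at least $1$ per step while $\max_j i_j$ grows by exactly $1$. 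Iterating $(p-1)m$ times from $\max_j i_j=m+1$ keeps $\max_j i_j\leq pm$ but produces an excess of $(p-1)m$, contradicting the weak bound because $m\geq f+1$ and $\abs{J_0^{\sh}}\leq f\leq p-2$. This amplification mechanism is the actual substance of the proof and is absent from your proposal; without it (or a genuinely worked-out cancellation argument) the inductive step is not established.
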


\begin{proof}
    First we make the following observation. Let $J\subseteq\cJ$ and $m\geq1$. Assume that $\deg(x_{J,\un{i}})=\norm{\un{i}}-|J^{\sh}|$ for all $\un{e}^{J^{\sh}}\leq\un{i}\leq\un{m}$, then we have $\deg(x_{J,\un{i}})\leq\norm{\un{i}}-|J^{\sh}|$ for all $\un{i}\leq\un{m}$. Indeed, by Theorem \ref{General Thm seq}(ii) we have $x_{J,\un{i}}=\un{Y}^{\un{m}-\un{i}}x_{J,\un{m}}$, hence $\deg(x_{J,\un{i}})\leq(\norm{\un{m}}-\abs{J^{\sh}})-\norm{\un{m}-\un{i}}=\norm{\un{i}}-\abs{J^{\sh}}$ by Lemma \ref{General Lem degree v}(i). In particular, we only need to prove the result for $\un{i}\geq\un{e}^{J^{\sh}}$.

    We prove the result by increasing induction on $|J|$ and on $\max_ji_j$. For $J\subseteq\cJ$ and $\un{i}\in\ZZ^f$ such that $\un{e}^{J^{\sh}}\leq\un{i}\leq\un{f}+\un{1}$, by Lemma \ref{General Lem degree f+1} we have $Z_jx_{J,\un{i}}=0$ for all $j\in\cJ$, hence $z_j\gr(x_{J,\un{i}})=0$ for all $j\in\cJ$. By Lemma \ref{General Lem degree xJi}(i) we deduce that $\deg(x_{J,\un{i}})=\norm{\un{i}}-|J^{\sh}|$.
    
    Then we let $0\leq k\leq f-1$ and $m\geq f+1$. Assume that the result is true for 
    \begin{enumerate}
        \item[(a)]
        $|J|\leq k-1$ and $\un{i}\in\ZZ^f$;
        \item[(b)]
        $|J|=k$ and $\max_ji_j\leq m$,
    \end{enumerate}
    we prove the result for $|J|=k$ and $\un{i}\geq\un{e}^{J^{\sh}}$ such that $\max_ji_j=m+1$.

    \hspace{\fill}

    \noindent\textbf{Claim.} For $J\subseteq\cJ$ such that $|J|=k$ and $\un{i}\in\ZZ^f$ such that $\max_ji_j\leq pm$, we have
    \begin{equation*}
        \deg(x_{J,\un{i}})\leq\norm{\un{i}}+(p-1)f.
    \end{equation*}

    \proof We write $\un{i}=p\delta(\un{i}')+\un{c}^{J}-\un{\ell}$ for the unique $\un{i}',\un{\ell}\in\ZZ^f$ such that $\un{0}\leq\un{\ell}\leq\un{p}-\un{1}$ (see \eqref{General Eq cJ} for $\un{c}^J$). Then we claim that $\max_j{i'_j}\leq m$. Indeed, for each $j$ we have 
    \begin{equation*}
        i'_{j+1}=\bbra{i_j-c^{J}_j+\ell_j}/p\leq\bigbra{pm-0+(p-1)}/p<m+1,
    \end{equation*}
    hence $i'_{j+1}\leq m$. Since $|J+1|=|J|=k$, by (b) we have $\deg(x_{J+1,\un{i}'})\leq\norm{\un{i}'}-|(J+1)^{\sh}|\leq\norm{\un{i}'}$. Then by Lemma \ref{General Lem degree v}(i),(iii) we have
    \begin{equation}\label{General Eq Prop degree 1}
        \deg\bbra{\un{Y}^{\un{\ell}}\smat{p&0\\0&1}x_{J+1,\un{i}'}}\leq p\norm{\un{i}'}+(p-1)f-\norm{\un{\ell}}=\norm{\un{i}}-\norm{\un{c}^J}+(p-1)f\leq\norm{\un{i}}+(p-1)f,
    \end{equation}
     where the last inequality uses $\un{c}^{J}\geq\un{0}$. For $J'\subseteq\cJ$ such that $J^{\ss}\subseteq J'\subsetneqq J$, by (a) we have (see (\ref{General Eq rJ}) for $\un{r}^{J\setminus J'}$)
    \begin{equation}\label{General Eq Prop degree 2}
        \deg\bigbra{x_{J',\un{i}+\un{r}^{J\setminus J'}}}\leq\norm{\un{i}}+\norm{\un{r}^{J\setminus J'}}-|(J')^{\sh}|\leq\norm{\un{i}}+(p-1)f,
    \end{equation}
    where the last inequality uses $r^{J\setminus J'}_j\leq p-1$ for all $j\in\cJ$ by (\ref{General Eq genericity}).
    Combining \eqref{General Eq Seq def}, \eqref{General Eq Prop degree 1} and \eqref{General Eq Prop degree 2}, we deduce that $\deg(x_{J,\un{i}})\leq\norm{\un{i}}+(p-1)f$.\qed

    \hspace{\fill}

    Assume on the contrary that $\deg(x_{J_0,\un{i}^{(1)}})\geq\norm{\un{i}^{(1)}}-|J_0^{\sh}|+1$ for some $|J_0|=k$ and $\un{i}^{(1)}\geq\un{e}^{J^{\sh}}$ such that $\max_ji_j^{(1)}=m+1$. By Lemma \ref{General Lem degree xJi}(ii), there exists $\un{i}^{(2)}=\un{i}^{(1)}+e_{j_1}$ for some $j_1\in\cJ$ such that $\deg\bigbra{x_{J_0,\un{i}^{(2)}}}\geq\norm{\un{i}^{(1)}}-|J_0^{\sh}|+3=\norm{\un{i}^{(2)}}-|J_0^{\sh}|+2$. Moreover, we have $\max_ji_j^{(2)}\leq m+2$. Similarly, there exists $\un{i}^{(3)}=\un{i}^{(2)}+e_{j_2}$ for some $j_2\in\cJ$ such that $\deg\bigbra{x_{J_0,\un{i}^{(3)}}}\geq\norm{\un{i}^{(3)}}-|J_0^{\sh}|+3$, which moreover satisfies $\max_ji_j^{(3)}\leq m+3$. Continue this process, there exists $\un{i}^{((p-1)m)}\in\ZZ^f$ such that $\max_ji_j^{((p-1)m)}\leq pm$ and
    \begin{equation*}
        \deg\bigbra{x_{J_0,\un{i}^{((p-1)m)}}}\geq\norm{\un{i}^{((p-1)m)}}-|J_0^{\sh}|+(p-1)m.
    \end{equation*}
    By the Claim above, we also have
    \begin{equation*}
        \deg\bigbra{x_{J_0,\un{i}^{((p-1)m)}}}\leq\norm{\un{i}^{((p-1)m)}}+(p-1)f.
    \end{equation*}
    This is a contradiction since $m\geq f+1$ and $|J_0^{\sh}|\leq f\leq p-2$ by (\ref{General Eq genericity}).
\end{proof}

\section{The actions of \texorpdfstring{$\varphi$}. and \texorpdfstring{$\OK\x$}. on \texorpdfstring{$\Hom_A(D_A(\pi),A)$}.}\label{General Sec app okx}

In this appendix, we determine the actions of $\varphi$ and $\OK\x$ on $\Hom_A(D_A(\pi),A)(1)$. The main results are Proposition \ref{General Prop phi-OK* action new} and Corollary \ref{General Cor a-action}. Here the $\OK\x$-action is much more technical to compute explicitly than in the semisimple case (see \cite[Prop.~3.8.3]{BHHMS3}). Instead, we give a congruence relation which uniquely determines the $\OK\x$-action. The results of this appendix will be used in \cite{Wang3}.

\begin{lemma}\label{General Lem a varphiq a}
    Let $a\in A$, $\lambda\in\FF\x$ and $\un{s}\in\ZZ^f$ such that $a=\lambda\un{Y}^{\un{s}}\varphi_q(a)$. If $\un{s}=(q-1)\un{t}$ for some $\un{t}\in\ZZ^f$ and $\lambda=1$, then we have $a\in\FF\un{Y}^{-t}$. Otherwise, we have $a=0$.
\end{lemma}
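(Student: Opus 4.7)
\bigskip

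\noindent\emph{Proof plan for Lemma~\ref{General Lem a varphiq a}.}

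The plan is to expand every element of $A$ as a formal Laurent series in $\un{Y}$, translate the functional equation into a recursion on coefficients, and then analyse the resulting orbit dynamics. Concretely, write $a=\sum_{\un{m}\in\ZZ^f}a_{\un{m}}\un{Y}^{\un{m}}$ with $a_{\un{m}}\in\FF$. Since $\varphi(Y_j)=Y_{j-1}^p$, we have $\varphi_q=\varphi^f$ acting on monomials by $\varphi_q(\un{Y}^{\un{m}})=\un{Y}^{q\un{m}}$. Expanding both sides of $a=\lambda\un{Y}^{\un{s}}\varphi_q(a)$ and matching coefficients of $\un{Y}^{\un{m}}$ yields the rule
\begin{equation*}
    a_{\un{m}}=\lambda\, a_{(\un{m}-\un{s})/q}\ \text{if}\ \un{m}\equiv\un{s}\!\pmod{q\ZZ^f},\qquad a_{\un{m}}=0\ \text{otherwise}.
\end{equation*}

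Iterating this rule defines, for each $\un{m}$ with $a_{\un{m}}\neq0$, a sequence $\un{n}_0=\un{m}$, $\un{n}_{k+1}=(\un{n}_k-\un{s})/q$ of elements of $\ZZ^f$ (if the iteration ever fails to be integral, then $a_{\un{m}}=0$ contrary to assumption). An induction gives $\un{n}_k=q^{-k}\bigl(\un{m}-\tfrac{q^k-1}{q-1}\un{s}\bigr)$, which converges in $\RR^f$ to $-\un{s}/(q-1)$, and moreover $a_{\un{m}}=\lambda^k a_{\un{n}_k}$ for every $k$. The key observation is that $(\un{n}_k)$ is an integer sequence converging in $\RR^f$, so it must eventually be constant, and its limit must lie in $\ZZ^f$; this forces $\un{s}\in(q-1)\ZZ^f$. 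So if $\un{s}\notin(q-1)\ZZ^f$ no $\un{m}$ can contribute a nonzero coefficient, whence $a=0$.

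Assume now $\un{s}=(q-1)\un{t}$ with $\un{t}\in\ZZ^f$; then the limit is $-\un{t}$, and $\un{n}_k=-\un{t}$ for all large $k$, giving $a_{\un{m}}=\lambda^k a_{-\un{t}}$ for such $k$. Taking two successive values of $k$ yields $a_{-\un{t}}=\lambda\, a_{-\un{t}}$, so if $\lambda\neq1$ we obtain $a_{-\un{t}}=0$, hence $a_{\un{m}}=0$ for every $\un{m}$ and $a=0$. Finally, if $\lambda=1$, we compute directly which $\un{m}$ admit an iteration reaching $-\un{t}$: the condition $\un{n}_k=-\un{t}$ rewrites as $\un{m}=-q^k\un{t}+\tfrac{q^k-1}{q-1}\un{s}=-q^k\un{t}+(q^k-1)\un{t}=-\un{t}$, so only $\un{m}=-\un{t}$ contributes and $a=a_{-\un{t}}\un{Y}^{-\un{t}}\in\FF\un{Y}^{-\un{t}}$.

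The only mildly delicate step is the first one: identifying $a\in A$ with a formal Laurent series in $\un{Y}$ so that coefficient-wise comparison is legitimate. This follows from the description of $A$ as the completion of $\FF\ddbra{N_0}_S$ for the filtration $F_\bullet$ recalled in \S\ref{General Sec basis}, since the monomials $\un{Y}^{\un{m}}$ form a topological $\FF$-basis with unique expansions, and $\varphi_q$ is a continuous $\FF$-linear endomorphism respecting this basis. Everything else is the elementary dynamical argument above; the main conceptual point is simply that an integer sequence converging in $\RR^f$ is eventually constant, which both forces $\un{s}=(q-1)\un{t}$ and pins down the unique orbit that can support a nonzero coefficient.
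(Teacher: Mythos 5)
Your proof is correct, but it follows a genuinely different route from the paper's. The paper avoids Laurent-series expansions altogether: it chooses $m$ with $\lambda^m=1$ and $\FF\subseteq\FF_{q^m}$, so that $\varphi_q^m$ becomes the honest $q^m$-power map, iterates the relation to obtain $a^{q^m}=\un{Y}^{-((q^m-1)/(q-1))\un{s}}a$, and then uses that $A$ is an integral domain to conclude that a nonzero $a$ must be a unit of the form $c\un{Y}^{-\un{t}}a_1$ with $c\in\FF\x$ and $a_1\in1+F_{-1}A$; comparing both sides forces $a_1=1$, $\un{s}=(q-1)\un{t}$ and $\lambda=1$. Your coefficient-matching argument instead reduces everything to the dynamics of the affine contraction $\un{n}\mapsto(\un{n}-\un{s})/q$ on $\ZZ^f$, whose unique fixed point $-\un{s}/(q-1)$ must be integral for any coefficient to survive, and the computation showing that only $\un{m}=-\un{t}$ has an orbit reaching the fixed point is the right way to pin down the surviving monomial. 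What the paper's argument buys is independence from the explicit Laurent-series model of $A$ (it only needs that $A$ is a domain together with the shape of its unit group); what yours buys is a completely elementary, case-transparent treatment that needs no auxiliary exponent $m$ and handles the three alternatives ($\un{s}\notin(q-1)\ZZ^f$; $\un{s}=(q-1)\un{t}$ with $\lambda\neq1$; $\lambda=1$) uniformly. The one external input you correctly flag --- that elements of $A$ admit unique expansions $\sum_{\un{m}}a_{\un{m}}\un{Y}^{\un{m}}$ and that $\varphi_q$ acts diagonally on monomials --- is indeed available: it is the explicit description of $A$ from \cite{BHHMS2} that the present paper itself uses elsewhere (for instance when writing elements of $A$ as infinite sums of monomials in the proof of Theorem \ref{General Thm rank sub}), so there is no gap.
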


\begin{proof}
    Let $m>0$ be large enough such that $q^m$ is a multiple of $\abs{\FF}$ and $\lambda^m=1$. In particular, $\varphi_q^m$ acts as $x\mapsto x^{q^m}$ on $A$. By iteration, we have $a^{q^m}=\un{Y}^{-((q^m-1)/(q-1))\un{s}}a$. Suppose that $a\neq0$. Since $A$ is an integral domain, we have $a^{q^m-1}=\un{Y}^{-((q^m-1)/(q-1))\un{s}}$. In particular, we have $a\in A\x$, hence we can write $a=c\un{Y}^{-\un{t}}a_1$ with $c\in\FF\x$, $\un{t}\in\ZZ^f$ and $a_1\in1+F_{-1}A$. Then we deduce that $a_1=1$ and $(q^m-1)\un{t}=((q^m-1)/(q-1))\un{s}$, which implies $\un{s}=(q-1)\un{t}$, and we necessarily have $\lambda=1$.
\end{proof}

For $a\in A\x$ and $k=\sum_{i=0}^mk_i\varphi^i\in\ZZ[\varphi]$ with $m\in\NNN$ and $k_i\in\ZZ$ for all $0\leq i\leq m$, we define $a^k\eqdef\prod\nolimits_{i=0}^m\varphi^i(a^{k_i})\in A\x$. This makes $A\x$ a $\ZZ[\varphi]$-module. By completeness, $1+F_{-1}A$ is a $\ZZ_{(p)}[\varphi]$-module, where $\ZZ_{(p)}$ is the localization of $\ZZ$ with respect to the prime ideal $(p)$.

\begin{lemma}\label{General Lem theta map}
    Let $J,J'\subseteq\cJ$, $\lambda_j\in\FF\x$ and $1\leq h_j\leq p-2$ for all $j\in\cJ$. Consider the map
    \begin{equation*}
    \begin{aligned}
        \theta:\bigbra{A^{[\Fq\x]}}^f&\to\bigbra{A^{[\Fq\x]}}^f\\
        (a_i)_{i\in\cJ}&\mapsto\bbra{a_i-\lambda_i\bbbra{\scalebox{1}{$\prod\limits_{j-i\in J\setminus J'}$}Y_j^{h_j(1-\varphi)}\scalebox{1}{$\prod\limits_{j-i\in J'\setminus J}$}Y_j^{-h_j(1-\varphi)}}\varphi(a_{i+1})}_{i\in\cJ}.
    \end{aligned}    
    \end{equation*}
    \begin{enumerate}
    \item 
    If $J'\neq J$, then $\theta(\un{a})=\un{0}$ implies $\un{a}=\un{0}$.
    \item
    If $J'=J$ and $\lambda_i=1$ for all $i$, then $\theta(\un{a})=\un{0}$ implies $\un{a}=\un{\mu}$ for some $\mu\in\FF$.
    \item 
    If $J'=J\setminus\set{j_0}$ for some $j_0\in J$ and $\un{0}\neq\un{b}\in\bigbra{(F_{0}A\setminus F_{-1}A)\cap A^{[\Fq\x]}}^f$, then the equation $\theta(\un{a})=\un{b}$ has no solution.
    \item 
    If $J'\subsetneqq J$ and $b_i\in F_{|J\setminus J'|(1-p)}A\cap A^{[\Fq\x]}$ for all $i$, then the equation $\theta(\un{a})=\un{b}$ has at most one solution. If moreover $1\leq h_j\leq p-1-f$ for all $j$, then there is a unique solution which moreover satisfies $a_i\equiv b_i~\mod~F_{(f+1)(1-p)}A$ for all $i$.
    \end{enumerate}
\end{lemma}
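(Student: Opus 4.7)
The plan is to iterate the recurrence $a_i = b_i + c_i\varphi(a_{i+1})$, where $c_i\eqdef\lambda_i\prod_{j-i\in J\setminus J'}Y_j^{h_j(1-\varphi)}\prod_{j-i\in J'\setminus J}Y_j^{-h_j(1-\varphi)}$, exactly $f$ times around the cycle of indices to reduce the system to the single fixed-point equation $a_i = D_i + C_i\varphi_q(a_i)$ for each $i$, where $C_i\eqdef\prod_{\ell=0}^{f-1}\varphi^\ell(c_{i+\ell})$ and $D_i$ is a sum involving the $b$'s. Using $\varphi^\ell(Y_j)=Y_{j-\ell}^{p^\ell}$, $C_i$ simplifies to a monomial $\Lambda_i\un{Y}^{\un{e}(i)}$ with $\Lambda_i\eqdef\prod_\ell\lambda_{i+\ell}^{p^\ell}\in\FF\x$ and exponents
\begin{equation*}
e_m(i)=(\delta_{m-i\in J\setminus J'}-\delta_{m-i\in J'\setminus J})H_m - p(\delta_{m+1-i\in J\setminus J'}-\delta_{m+1-i\in J'\setminus J})H_{m+1},
\end{equation*}
where $H_m\eqdef\sum_{\ell=0}^{f-1}p^\ell h_{m+\ell}\in(0,q-1)$. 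For parts (i) and (ii) we take $\un{b}=\un{0}$, giving the homogeneous equation $a_i=C_i\varphi_q(a_i)$; Lemma \ref{General Lem a varphiq a} forces $a_i=\mu_i\un{Y}^{-\un{t}(i)}$ with $\un{e}(i)=(q-1)\un{t}(i)$ and $\Lambda_i=1$, while the $[\Fq\x]$-invariance then imposes $\sum_m t_m(i)p^m\equiv 0\pmod{q-1}$. A telescoping computation (in which the adjacent contributions from $p^m$ and $p\cdot p^{m-1}$ cancel except across the wrap-around at $m=0$, producing a factor $1-q$) gives
\begin{equation*}
\sum_m e_m(i)p^m = -(q-1)\sum_{j_0}\sigma(j_0)H_{i+j_0},
\end{equation*}
where the sum runs over those $j_0\in(J\setminus J')\cup(J'\setminus J)$ with $i+j_0\equiv 0\pmod{f}$, and $\sigma$ is $+1$ on $J\setminus J'$ and $-1$ on $J'\setminus J$. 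If $J\neq J'$, pick any $j^*\in(J\setminus J')\cup(J'\setminus J)$ and $i\in\cJ$ with $i+j^*\equiv 0\pmod{f}$: the $[\Fq\x]$-invariance forces $\sigma(j^*)H_{i+j^*}\equiv 0\pmod{q-1}$, contradicting $0<H_{i+j^*}<q-1$; hence $a_i=0$, and the injectivity of $\varphi$ on $A$ propagates this through $a_j=c_j\varphi(a_{j+1})$ to $\un{a}=\un{0}$, proving (i). For (ii), $C_i=1$ gives $a_i\in\FF$ by Lemma \ref{General Lem a varphiq a}, and the recursion $a_i=\varphi(a_{i+1})$ (with $\varphi$ acting trivially on $\FF$) collapses to $a_i=a_{i+1}$, so all entries coincide in some $\mu\in\FF$.

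For (iv), uniqueness follows from (i) applied to the difference of two solutions. For existence under $h_j\leq p-1-f$, I use the Picard iteration $a_i^{(0)}\eqdef 0$, $a_i^{(n+1)}\eqdef b_i+c_i\varphi(a_{i+1}^{(n)})$: after $f$ steps, $a_i^{(n+f)}-a_i^{(n+f-1)}=C_i\varphi_q(a_i^{(n)}-a_i^{(n-1)})$, so $d_n\eqdef\deg(a_i^{(n)}-a_i^{(n-1)})$ obeys $d_{n+f}=\deg(C_i)+q\,d_n$ with attracting fixed point $d^*=-\deg(C_i)/(q-1)$. Since $\deg(C_i)=(p-1)\sum_{j_0\in J\setminus J'}H_{i+j_0}\leq(p-1-f)|J\setminus J'|(q-1)$ gives $d^*\geq-(p-1-f)|J\setminus J'|$, and the hypothesis $\deg(b_i)\leq(1-p)|J\setminus J'|<d^*$, the iterates form a Cauchy sequence converging in the complete ring $A$ to the unique solution. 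The congruence $a_i\equiv b_i\pmod{F_{(f+1)(1-p)}A}$ then follows from the bound $\deg(a_i-b_i)=\deg(c_i\varphi(a_{i+1}))\leq(p-1)(p-1-f)+p(1-p)|J\setminus J'|\leq(f+1)(1-p)$.

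The main obstacle is part (iii), where iteration no longer converges and a direct degree argument is required. Suppose for contradiction $\un{a}$ solves $\theta(\un{a})=\un{b}$; let $n\eqdef\max_i\deg(a_i)$, attained at some $i_0$. If $n\geq 0$, the equation $a_{i_0-1}=b_{i_0-1}+c_{i_0-1}\varphi(a_{i_0})$ exhibits a term $c_{i_0-1}\varphi(a_{i_0})$ of degree $(p-1)h_{(i_0-1)+j_0}+pn>n$ (since $(p-1)(h+n)\geq p-1\geq 1$), while $b_{i_0-1}$ has degree $0\leq n$ and cannot cancel it; hence $\deg(a_{i_0-1})>n$, contradicting maximality. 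Therefore $\un{a}\in(F_{-1}A)^f$. Now reducing $\theta(\un{a})=\un{b}$ modulo $F_{-1}A$: every monomial of $c_i\varphi(a_{i+1})=\lambda_iY_{i+j_0}^{h_{i+j_0}}Y_{i+j_0-1}^{-ph_{i+j_0}}\varphi(a_{i+1})$ has total exponent congruent to $h_{i+j_0}\pmod{p}$, since $\varphi(a_{i+1})$ is supported on exponents divisible by $p$; this is nonzero as $1\leq h_{i+j_0}\leq p-2$, so $c_i\varphi(a_{i+1})$ has no monomial of exponent sum zero and thus has trivial image in $F_0A/F_{-1}A$. The equation reduces to $0\equiv\overline{a_i}\equiv\overline{b_i}$ in $F_0A/F_{-1}A$, contradicting $\overline{b_i}\neq 0$.
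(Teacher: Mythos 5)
Your proposal is correct and follows the same overall strategy as the paper: iterate the recursion $f$ times to reduce to a $\varphi_q$-fixed-point equation and invoke Lemma \ref{General Lem a varphiq a} together with $[\Fq\x]$-invariance for (i)--(ii), a degree argument for (iii), and a convergent geometric series (your Picard iteration is exactly the paper's $\un{b}+\sum_{k\geq1}(\id-\theta)^k(\un{b})$) for (iv).

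Two of your sub-arguments are genuinely different in execution, and both are valid. In (i), you compute $\sum_m e_m(i)p^m\equiv-(q-1)\epsilon_{-i}H_0$ by telescoping and contradict $0<H_0<q-1$ at the single index $i=-j^*$, then propagate $a_i=0$ around the cycle; the paper instead bounds the components of $\un{t}=\un{s}/(q-1)$ by $|t_j|\leq p-1$ and rules out $\un{0},\pm(\un{p}-\un{1})$, so that $\un{Y}^{-\un{t}}$ is never $[\Fq\x]$-invariant. Your version is arguably more direct, at the cost of needing to choose the right index. In (iii), you first force $\un{a}\in(F_{-1}A)^f$ by a maximal-degree argument and then kill the class of $c_i\varphi(a_{i+1})$ in $F_0A/F_{-1}A$ using that all its monomials have total degree $\equiv h_{i+j_0}\not\equiv0\pmod p$; the paper instead runs a strict cyclic descent of valuations $\deg(a_{i_0-f})<\cdots<\deg(a_{i_0})$, which avoids having to appeal to the monomial-support description of $A$ (your mod-$F_{-1}A$ step is fine, but it implicitly uses that an element of $F_0A$ supported on total degrees $\not\equiv0\bmod p$ lies in $F_{-1}A$, which deserves a sentence). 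The remaining issues are cosmetic: your sign conventions for \emph{degree} flip between valuation and filtration index, and the bound $\deg(c_i)\leq(p-1)(p-1-f)$ in (iv) is missing a factor $|J\setminus J'|$, though the final inequality $\leq(f+1)(1-p)$ still holds.
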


\begin{proof}
    (i). We write $\lambda\eqdef\prod\nolimits_{i=0}^{f-1}\lambda_i$ and $h^{(j)}\eqdef\sum\limits_{i=0}^{f-1}h_{j+i}p^i$ for all $j\in\cJ$. If $\theta(\un{a})=0$, then by iteration we have $a_0=\lambda\un{Y}^{\un{s}}\varphi_q(a_0)$, where 
    \begin{equation}\label{General Eq theta map s}
        \un{s}\eqdef\sum\limits_{j\in J\setminus J'}h^{(j)}(e_j-pe_{j-1})-\sum\limits_{j\in J'\setminus J}h^{(j)}(e_j-pe_{j-1})\neq\un{0}.
    \end{equation}

    \hspace{\fill}

    \noindent\textbf{Claim.} Suppose that $\un{s}=(q-1)\un{t}$ for some $\un{t}\in\ZZ^f$. Then we have $|t_j|\leq p-1$ for all $j\in\cJ$ and $\un{0}\neq\un{t}\neq\pm(\un{p}-\un{1})$. 
    
    \proof Since $0\leq h^{(j)}\leq(p-2)(1+p+\cdots+p^{f-1})$ for all $j\in\cJ$, we deduce from (\ref{General Eq theta map s}) that
    \begin{equation*}
        |t_j|\leq\frac{(p-2)(1+p+\cdots+p^{f-1})(p+1)}{q-1}=\frac{(p-2)(p+1)}{p-1}<p,
    \end{equation*}
    hence $|t_j|\leq p-1$ for all $j\in\cJ$. We also deduce from (\ref{General Eq theta map s}) that 
    \begin{equation*}
        \bigabs{\norm{\un{t}}}\leq\frac{(p-2)(1+p+\cdots+p^{f-1})(p-1)f}{q-1}=(p-2)f,
    \end{equation*}
    which implies $\un{t}\neq\pm(\un{p}-\un{1})$.\qed

    \hspace{\fill}
    
    By Lemma \ref{General Lem a varphiq a}, the only possible nonzero solution for $a_0$ in $A$ is a scalar multiple of $\un{Y}^{-\un{t}}$, which is not fixed by $[\Fq\x]$ since $-(\un{p}-\un{1})<\un{t}<\un{p}-\un{1}$ and $\un{t}\neq\un{0}$. Hence $a_0=0$, and we conclude that $a_i=0$ for all $i$.

    \hspace{\fill}

    (ii). If $\theta(\un{a})=\un{0}$, then by iteration we have $a_0=\varphi_q(a_0)$. By Lemma \ref{General Lem a varphiq a}, we deduce that $a_0=\mu\in\FF$, hence $a_i=\mu$ for all $i$.

    \hspace{\fill}

    (iii). In this case, the equation $\theta(\un{a})=\un{b}$ becomes 
    \begin{equation}\label{General Eq ai ai+1}
        a_i=\lambda_iY_{j_0+i}^{h_{j_0+i}(1-\varphi)}\varphi(a_{i+1})+b_i~\forall\,i\in\cJ.
    \end{equation}
    For $0\neq a\in A$, we say that $a$ has degree $m$ if $a\in F_{-m}A\setminus F_{-(m+1)}A$. We also define $\deg(0)\eqdef\infty$. In particular, a nonzero scalar has degree zero, and $\varphi$ multiplies the degree by $p$ (see (\ref{General Eq action on Yj})). We choose $i_0\in\cJ$ such that $b_{i_0}\neq0$ (hence $\deg(b_{i_0})=0$) and let $i=i_0$ in (\ref{General Eq ai ai+1}). Since the degree of $Y_{j_0+i_0}^{h_{j_0+i_0}(1-\varphi)}$ is not a multiple of $p$, the two terms of the RHS of (\ref{General Eq ai ai+1}) have different degrees. Comparing the degrees of both sides of (\ref{General Eq ai ai+1}), we deduce that $\deg\bbra{a_{i_0}}\leq0$. 

    Then we let $i=i_0-1$ in (\ref{General Eq ai ai+1}). Since $\deg\bbra{a_{i_0}}\leq0$, we have
    \begin{equation*}
        \deg\bbra{Y_{j_0+i_0-1}^{h_{j_0+i_0-1}(1-\varphi)}\varphi(a_{i_0})}=p\deg(a_{i_0})-(p-1)h_{j_0+i_0-1}<\min\set{\deg(a_{i_0}),0}.
    \end{equation*}
    Comparing the degrees of both sides of (\ref{General Eq ai ai+1}), we deduce that $\deg\bbra{a_{i_0-1}}<\deg\bbra{a_{i_0}}$.
    
    Then we let $i=i_0-2$ in (\ref{General Eq ai ai+1}) and continue this process. We finally deduce that 
    \begin{equation*}
        \deg(a_{i_0})=\deg(a_{i_0-f})<\deg(a_{i_0-f+1})<\cdots<\deg(a_{i_0-1})<\deg(a_{i_0}),
    \end{equation*}
    which is a contradiction.

    \hspace{\fill}
    
    (iv). By (i), the equation $\theta(\un{a})=\un{b}$ has at most one solution. If moreover $1\leq h_j\leq p-1-f$ for all $j\in\cJ$, since $b_i\in F_{|J\setminus J'|(1-p)}A$ for all $i\in\cJ$, we have for all $i\in\cJ$
    \begin{align*}
        \deg\Big(\!\bigbra{(\id-\theta)(\un{b})}_i\!\Big)&=\deg\bbra{\lambda_i\bbbra{\scalebox{1}{$\prod$}_{j-i\in J\setminus J'}Y_j^{h_j(1-\varphi)}}\varphi(b_{i+1})}\\
        &\geq p\deg(b_{i+1})-|J\setminus J'|(p-1)(p-1-f)\\
        &\geq\max\sset{(f+1)(p-1),\deg(b_{i+1})+1}.
    \end{align*}
    Hence the series $\un{a}\eqdef\un{b}+\sum\nolimits_{k=1}^{\infty}(\id-\theta)^k(\un{b})$ converges in $\bigbra{A^{[\Fq\x]}}^f$, gives a solution of the equation and satisfies the required congruence relation.
\end{proof}

Recall from \S\ref{General Sec basis} that $\Hom_A(D_A(\pi),A)(1)$ is an \'etale $(\varphi,\OK\x)$-module of rank $2^f$. We denote by $\Mat(\varphi)$ and $\Mat(a)$ ($a\in\OK\x$) the matrices of the actions of $\varphi$ and $\OK\x$ on $\Hom_A(D_A(\pi),A)(1)$ with respect to the basis $\sset{x_J:J\subseteq\cJ}$ of Theorem \ref{General Thm rank 2^f}, whose rows and columns are indexed by the subsets of $\cJ$.

Let $Q\in\GL_{2^f}(A)$ be the diagonal matrix with $Q_{J,J}=\un{Y}^{\un{r}^{J^c}}$ for $J\subseteq\cJ$ (see \eqref{General Eq rJ} for $\un{r}^{J^c}$). Then the matrices $\Mat(\varphi)'$, $\Mat(a)'$ ($a\in\OK\x$) with respect to the new basis $\set{x''_J\eqdef\un{Y}^{-\un{r}^{J^c}}(x_J):J\subseteq\cJ}$ of $\Hom_A(D_A(\pi),A)(1)$ are given by $\Mat(\varphi)'=Q\Mat(\varphi)\varphi(Q)\inv$ and $\Mat(a)'=Q\Mat(a)a(Q)\inv$. 

\begin{proposition}\label{General Prop phi-OK* action new}
    \begin{enumerate}
    \item We have (see \eqref{General Eq gamma} for $\gamma_{J+1,J'}$)
    \begin{equation*}
        \Mat(\varphi)'_{J',J+1}=
    \begin{cases}
        \gamma_{J+1,J'}\prod\limits_{j\notin J}Y_j^{(r_j+1)(1-\varphi)}&\text{if}~J^{\ss}\subseteq J'\subseteq J\\
        0&\text{otherwise}.
    \end{cases}
    \end{equation*}
    \item 
    For $a\in[\Fq\x]$, we have $\Mat(a)'=I$. 
    \item
    Assume that $J_{\rhobar}\neq\cJ$. Up to twist by a continuous character $\OK\x\to\FF\x$, there exists a unique $\OK\x$-action on $\Hom_A(D_A(\pi),A)(1)$ which satisfies (ii) and commutes with $\varphi$ as in (i). Moreover, the matrix $\Mat(a)'$ ($a\in\OK\x$) satisfies for $J,J'\subseteq\cJ$
    \begin{enumerate}
    \item
    $\Mat(a)'_{J',J}=0$ if $J'\nsubseteq J$;
    \item 
    $\Mat(a)'_{J',J}\in F_{|J\setminus J'|(1-p)}A$ if $J'\subseteq J$.
    \end{enumerate}
    \item 
    Assume that $J_{\rhobar}=\cJ$. Up to diagonal matrices $B\in\GL_{2^f}(\FF)$ such that $B_{J,J}=B_{J+1,J+1}$ for all $J\subseteq\cJ$, there exists a unique $\OK\x$-action on $\Hom_A(D_A(\pi),A)(1)$ which satisfies (ii) and commutes with $\varphi$ as in (i). Moreover, the matrix $\Mat(a)'$ is diagonal for all $a\in\OK\x$.
    \end{enumerate} 
\end{proposition}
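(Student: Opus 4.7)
The plan is to handle (i) and (ii) by direct matrix computation from Proposition \ref{General Prop phi-OK* action} using the change-of-basis identities
\[
\Mat(\varphi)' = Q\,\Mat(\varphi)\,\varphi(Q)^{-1}, \qquad \Mat(a)' = Q\,\Mat(a)\,a(Q)^{-1},
\]
with $Q$ the diagonal matrix $Q_{J,J} = \un{Y}^{\un{r}^{J^c}}$, and then to reduce the uniqueness statements in (iii) and (iv) to Lemma \ref{General Lem theta map}.

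For (i), applying Proposition \ref{General Prop phi-OK* action}(i) together with $\varphi(Y_j) = Y_{j-1}^p$ from \eqref{General Eq action on Yj} gives $\Mat(\varphi)'_{J',J+1} = \gamma_{J+1,J'}\un{Y}^{\un{v}}$ for $J^{\ss}\subseteq J'\subseteq J$, and the claim reduces to verifying the coordinatewise identity
\[
v_j \;=\; r^{(J')^c}_j - c^J_j - r^{J\setminus J'}_j - p\,r^{(J+1)^c}_{j+1} \;=\; (r_j+1)\delta_{j\notin J} - p(r_{j+1}+1)\delta_{j+1\notin J}
\]
for every $j \in \cJ$. I would check this by splitting into the cases $j+1 \notin J$; $j+1 \in J$ and $j+1 \in J'$; and $j+1 \in J$ and $j+1 \notin J'$, using Definition \ref{General Def rcepi}. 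For (ii), the relation $a(Q_{J,J}) = \ovl{a}^{\un{r}^{J^c}}Q_{J,J}$ from \eqref{General Eq action on Yj} combined with Proposition \ref{General Prop phi-OK* action}(ii) makes the scalar factors cancel to give $\Mat(a)'_{J,J} = 1$.

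Existence in (iii) and (iv) is automatic from the $\OK\x$-action on $\Hom_A(D_A(\pi),A)(1)$ inherited from $\pi$. For uniqueness, given two such actions with matrices $\Mat(a)'_1,\Mat(a)'_2$, set $N(a) = \Mat(a)'_1\cdot\Mat(a)'_2{}^{-1}$; then $N(a)\,\Mat(\varphi)' = \Mat(\varphi)'\,\varphi(N(a))$, the entries of $N(a)$ lie in $A^{[\Fq\x]}$ (because both actions commute with the $[\Fq\x]$-action whose matrix is $I$), and $N([\lambda]) = I$. Writing this equation entrywise using (i) yields, for each pair $(J',J)$, a system exactly of the shape treated by Lemma \ref{General Lem theta map}, with $h_j = r_j+1$ lying in $[1,p-1-f]$ by \eqref{General Eq genericity} and $\lambda_j$ given by ratios of the $\gamma$-constants. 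In case (iii), since $J_{\rhobar} \neq \cJ$, a descending induction on $|J\setminus J'|$ applies Lemma \ref{General Lem theta map}(i) at each step to kill the off-diagonal blocks $N_{J',J}(a)$; the diagonal entries then satisfy the cycle equation $N_{J,J}(a) = \varphi(N_{J+1,J+1}(a))$, which by iteration and Lemma \ref{General Lem a varphiq a} forces them into $\FF\x$, and the continuity of $a\mapsto N(a)$ combined with $N([\lambda]) = I$ and the pro-$p$-versus-prime-to-$p$ argument gives $N \equiv I$. In case (iv), where $\Mat(\varphi)'$ is supported purely on the shift $(J,J+1)$, the off-diagonal analysis collapses and only the cycle equations remain, yielding by Lemma \ref{General Lem theta map}(ii) the described freedom $B_{J,J} = B_{J+1,J+1}$.

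To establish the moreover properties (a), (b) in (iii) for the canonical action, I would argue by descending induction on $|J\setminus J'|$: starting from the diagonal case and using the matrix equation to solve for $\Mat(a)'_{J',J}$ from previously-determined data, Lemma \ref{General Lem theta map}(iv) both produces the entry and certifies that it lies in $F_{|J\setminus J'|(1-p)}A$, while for $J' \nsubseteq J$ the induction never starts, giving (a). The main obstacle is controlling the filtration bounds at each inductive step so that the hypothesis $b_i \in F_{|J\setminus J'|(1-p)}A$ of Lemma \ref{General Lem theta map}(iv) is met by the inductively determined data; this requires careful bookkeeping of the exponents $\un{c}^J$, $\un{r}^{J\setminus J'}$, and $p\,\delta(\un{r}^{(J+1)^c})$ contributed by $\Mat(\varphi)'$ at each stage, as well as distinguishing the triangular contributions coming from the non-semisimple part $J_{\rhobar}^c$ (where Lemma \ref{General Lem theta map}(iv) genuinely applies) from the diagonal cycle behavior coming from $J_{\rhobar}$.
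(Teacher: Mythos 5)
Your parts (i) and (ii) are correct and coincide with the paper's computation (the identity $r^{(J')^c}_j-r^{J\setminus J'}_j=r^{J^c}_j$ from Lemma \ref{General Lem r and c}(ii) is exactly how the paper simplifies before the case check), and your overall framework for (iii)--(iv) --- write the commutation relation entrywise, kill the entries with $J'\nsubseteq J$ by Lemma \ref{General Lem theta map}(i), get constants on the diagonal by Lemma \ref{General Lem theta map}(ii), and treat the entries with $J'\subsetneqq J$ by induction on $|J\setminus J'|$ via Lemma \ref{General Lem theta map}(iv) --- is the paper's. However, there is a genuine gap in your uniqueness argument for (iii). The cycle equation only gives $N_{J,J}(a)\in\FF$ with $N_{J,J}(a)=N_{J+1,J+1}(a)$, i.e.\ a constant on each orbit of the shift $J\mapsto J+1$ acting on subsets of $\cJ$; distinct orbits (e.g.\ $\emptyset$, $\cJ$, $\{0\},\{1\},\dots$) can a priori carry distinct constants, and that is precisely the residual freedom $B_{J,J}=B_{J+1,J+1}$ in (iv). Your substitute, "continuity plus the pro-$p$ versus prime-to-$p$ argument," cannot close this: the cycle equations are identical in cases (iii) and (iv), your argument nowhere uses $J_{\rhobar}\neq\cJ$, and if it forced $N\equiv I$ in (iii) it would force the same in (iv), contradicting the statement you are proving. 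You have not identified where the hypothesis $J_{\rhobar}\neq\cJ$ enters.

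The missing ingredient is the paper's Claim 2, which is also the only place Lemma \ref{General Lem theta map}(iii) is used. For $J'=J\setminus\{j_0\}$ with $j_0\in J$, $j_0\notin J_{\rhobar}$ (such $j_0$ exists for suitable $J$ exactly because $J_{\rhobar}\neq\cJ$), one has $J^{\ss}\subseteq J'\subsetneqq J$, so $\Mat(\varphi)'$ has a genuinely sub-diagonal entry coupling the orbit of $J$ to the orbit of $J'$; the $(J',J+1)$-entries of the commutation relation then form an inhomogeneous theta-map system whose inhomogeneous term is $\frac{\gamma_{*,J'}}{\gamma_{*,J}}\bigl(\xi_{a,J}-\xi_{a,J'}P_{a,j_0}\bigr)$, which lies in $F_0A\setminus F_{-1}A$ whenever $\xi_{a,J}\neq\xi_{a,J'}$ (since $P_{a,j_0}\in 1+F_{1-p}A$); Lemma \ref{General Lem theta map}(iii) says such a system has no solution, forcing $\xi_{a,J}=\xi_{a,J'}$, and combining with shift-invariance identifies all the constants. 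Two secondary omissions: (1) the original system for $P_a$ is not literally of theta-map shape because of the factor $a(P_\varphi)$ rather than $P_\varphi$; one must first divide out by the explicit rank-one solution $P_{a,j}=f_{a,j}^{h^{(j)}(1-\varphi)/(1-q)}$ (the paper's Claim 1) before Lemma \ref{General Lem theta map} applies --- your $N(a)$ trick avoids this for uniqueness but not for the "moreover" part; (2) the bound $F_{|J\setminus J'|(1-p)}A$ in (iii)(b) is not what Lemma \ref{General Lem theta map}(iv) certifies directly (that gives a congruence mod $F_{(f+1)(1-p)}A$); it follows from the explicit evaluation $b_0\equiv\frac{\gamma_{*,J'}}{\gamma_{*,J}}\prod_{j\in J\setminus J'}(1-P_{a,j})$ together with $1-P_{a,j}\in F_{1-p}A$, which is exactly the bookkeeping you defer rather than carry out.
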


\begin{proof}
    (i). We have $\Mat(\varphi)'_{J',J+1}=Q_{J',J'}\Mat(\varphi)_{J',J+1}\varphi(Q_{J+1,J+1})\inv$. Hence we deduce from Proposition \ref{General Prop phi-OK* action}(i) that $\Mat(\varphi)'_{J',J+1}\neq0$ if and only if $J^{\ss}\subseteq J'\subseteq J$, in which case we have (see \eqref{General Eq cJ} for $\un{c}^J$)
    \begin{equation*}
        \Mat(\varphi)'_{J',J+1}=\gamma_{J+1,J'}\un{Y}^{\un{r}^{(J')^c}}\un{Y}^{-(\un{c}^{J}+\un{r}^{J\setminus J'})}\varphi\bigbra{\un{Y}^{\un{r}^{(J+1)^c}}}\inv=\gamma_{J+1,J'}\un{Y}^{\un{r}^{J^c}-\un{c}^J-p\delta(\un{r}^{(J+1)^c})},
    \end{equation*}
    where the last equality follows from Lemma \ref{General Lem r and c}(ii) and \eqref{General Eq action on Yj}. By \eqref{General Eq Lem rJ} and \eqref{General Eq Lem cJ} we have
    \begin{equation*}
    \begin{aligned}
        r^{J^c}_j-c^J_j-pr^{(J+1)^c}_{j+1}
        &=\bigbra{\delta_{j+1\notin J}(r_j+1)-\delta_{j\notin J}}-\bigbra{\delta_{j\notin J}(p-2-r_j)+\delta_{j+1\notin J}(r_j+1)}\\
        &\hspace{1.5cm}-p\bigbra{\delta_{j+1\notin J}(r_{j+1}+1)-\delta_{j\notin J}}\\
        &=\delta_{j\notin J}(r_j+1)-\delta_{j+1\notin J}\bigbra{p(r_{j+1}+1)},
    \end{aligned}
    \end{equation*}
    which proves the required formula for $\Mat(\varphi)'_{J',J+1}$ using \eqref{General Eq action on Yj}.

    \hspace{\fill}

    (ii). Let $a\in[\Fq\x]$. We deduce from Proposition \ref{General Prop phi-OK* action}(ii) and (\ref{General Eq action on Yj}) that $\Mat(a)'$ is a diagonal matrix with
    \begin{equation*}
        \Mat(a)'_{J,J}=Q_{J,J}\Mat(a)_{J,J}a(Q_{J,J})\inv=\un{Y}^{\un{r}^{J^c}}\ovl{a}^{\un{r}^{J^c}}a\bigbra{\un{Y}^{\un{r}^{J^c}}}\inv=1.
    \end{equation*}

    \hspace{\fill}
    
    (iii) and (iv). For simplicity, we denote by $P_{\varphi}$ the matrix $\Mat(\varphi)'$ and we let $P_a\in\GL_{2^f}(A)$ ($a\in\OK\x$) be the matrices for the $\OK\x$-action. Since $[\Fq\x]$ fixes the basis $\set{x''_J:J\subseteq\cJ}$ by (ii), it also fixes the matrices $P_a$. By the commutativity of the actions of $\varphi$ and $\OK\x$, we have
    \begin{equation}\label{General Eq phi ok commute}
        P_a\,a(P_{\varphi})=P_{\varphi}\,\varphi(P_a).
    \end{equation}
    Since $(P_{\varphi})_{J',J+1}\neq0$ if and only if $J^{\ss}\subseteq J'\subseteq J$ by (i), comparing the $(J',J+1)$-entries of (\ref{General Eq phi ok commute}) we get
    \begin{equation}\label{General Eq (J',J+1)-entry P}
        \sum\limits_{J^{\ss}\subseteq J''\subseteq J}(P_a)_{J',J''}\,a(P_{\varphi})_{J'',J+1}=\sum\limits_{J'':(J'')^{\ss}\subseteq J'\subseteq J''}(P_{\varphi})_{J',J''+1}\varphi(P_a)_{J''+1,J+1}.
    \end{equation}

    \hspace{\fill}

    \noindent\textbf{Claim 1.} For $j\in\cJ$ we let $P_{a,j}\eqdef f_{a,j}^{h^{(j)}(1-\varphi)/(1-q)}\in1+F_{1-p}A$, where $f_{a,j}\eqdef\ovl{a}^{p^j}Y_j/a(Y_j)\in1+F_{1-p}A$ and $h^{(j)}=\sum\nolimits_{i=0}^{f-1}h_{j+i}p^i$ as in the proof of Lemma \ref{General Lem theta map} with $h_j\eqdef r_j+1$. For $J\subseteq\cJ$ we let $P_{a,J}\eqdef\prod\nolimits_{j\notin J}P_{a,j}\in1+F_{1-p}A$. In particular, $P_{a,J}$ is fixed by $[\Fq\x]$. Then for all $J\subseteq\cJ$, we have
    \begin{equation*}
        P_{a,J}\,a(P_{\varphi})_{J,J+1}=(P_{\varphi})_{J,J+1}\varphi(P_{a,J+1}).
    \end{equation*}
    
    \proof By (i) and by definition, it suffices to show that for all $j\in\cJ$ we have
    \begin{equation*}
        P_{a,j}~a\bbra{Y_j^{(r_j+1)(1-\varphi)}}=Y_j^{(r_j+1)(1-\varphi)}\varphi(P_{a,j+1}).
    \end{equation*}
    Since $\varphi(Y_{j+1})=Y_j^p$ by (\ref{General Eq action on Yj}), it suffices to show that for all $j\in\cJ$ we have
    \begin{equation*}
        f_{a,j}^{h^{(j)}/(1-q)}\,a\bigbra{Y_j^{r_j+1}}=Y_j^{r_j+1}f_{a,j}^{ph^{(j+1)}/(1-q)},
    \end{equation*}
    which follows from the equality $ph^{(j+1)}-h^{(j)}=(q-1)(r_j+1)$.\qed

    \hspace{\fill}
    
    We define $Q_a\in\GL_{2^f}(A)$ by $(Q_a)_{J',J}=(P_a)_{J',J}P_{a,J}\inv$, which is fixed by $[\Fq\x]$. Then it suffices to prove the uniqueness for $Q_a$. Dividing the LHS of (\ref{General Eq (J',J+1)-entry P}) by  $P_{a,J}\,a(P_{\varphi})_{J,J+1}\in A\x$ and the RHS of (\ref{General Eq (J',J+1)-entry P}) by $(P_{\varphi})_{J,J+1}\varphi(P_{a,J+1})\in A\x$ using Claim 1 and (i), we get
    \begin{multline}\label{General Eq (J',J+1)-entry Q}
        \sum\limits_{J^{\ss}\subseteq J''\subseteq J}\bbbra{\frac{\gamma_{*,J''}}{\gamma_{*,J}}(Q_a)_{J',J''}\scalebox{1}{$\prod$}_{j\in J\setminus J''}P_{a,j}}\\
        =\sum\limits_{J'':(J'')^{\ss}\subseteq J'\subseteq J''}\bbbra{\frac{\gamma_{J''+1,J'}}{\gamma_{J+1,J}}\bbra{\scalebox{1}{$\prod$}_{j\in J\setminus J''}Y_j^{h_j(1-\varphi)}\scalebox{1}{$\prod$}_{j\in J''\setminus J}Y_j^{-h_j(1-\varphi)}}\varphi(Q_a)_{J''+1,J+1}}.
    \end{multline}

    \hspace{\fill}

    (a). We assume that $J'\nsubseteq J$. We use increasing induction on $|J|-|J'|$ (which ranges from $-f$ to $f$) to show that $(Q_a)_{J',J}=0$. By the induction hypothesis, we have $(Q_a)_{J',J''}=0$ if $J''\subsetneqq J$, and $(Q_a)_{J''+1,J+1}=0$ if $J''\supsetneqq J'$. Hence it follows from (\ref{General Eq (J',J+1)-entry Q}) that 
    \begin{equation}\label{General Eq Qa varphiq Qa}
        (Q_a)_{J',J}=\frac{\gamma_{J'+1,J'}}{\gamma_{J+1,J}}\bbbra{\scalebox{1}{$\prod$}_{j\in J\setminus J'}Y_j^{h_j(1-\varphi)}\scalebox{1}{$\prod$}_{j\in J'\setminus J}Y_j^{-h_j(1-\varphi)}}\varphi(Q_a)_{J'+1,J+1}.
    \end{equation}
    A similar equality holds replacing $(J',J)$ with $(J'+i,J+i)$ (for all $i\in\cJ$), hence it follows from Lemma \ref{General Lem theta map}(i) (with $\lambda_i=\gamma_{J'+i+1,J'+i}/\gamma_{J+i+1,J+i}$) that $(Q_a)_{J',J}=0$.

    In the case $J_{\rhobar}=\cJ$, which implies $J^{\ss}=J$ for all $J\subseteq\cJ$, the equation (\ref{General Eq (J',J+1)-entry Q}) is the same as (\ref{General Eq Qa varphiq Qa}). Then as in the previous paragraph, we deduce from Lemma \ref{General Lem theta map}(i) that $(Q_a)_{J',J}=0$ for all $J'\neq J$.
    
    \hspace{\fill}
    
    (b). We assume that $J'=J$. Then by a similar argument, the equation (\ref{General Eq Qa varphiq Qa}) still holds and becomes $(Q_a)_{J,J}=\varphi(Q_a)_{J+1,J+1}$. By Lemma \ref{General Lem theta map}(ii), we deduce that $(Q_a)_{J,J}=\xi_{a,J}$ for some $\xi_{a,J}\in\FF\x$ (nonzero since $Q_a$ is invertible), and we have $\xi_{a,J}=\xi_{a,J+1}$. In particular, this completes the proof of (iv).

    \hspace{\fill}

    \noindent\textbf{Claim 2.} If $J_{\rhobar}\neq\cJ$, then $\xi_{a,J}$ does not depend on $J$. 
    
    \proof It suffices to show that $\xi_{a,J}=\xi_{a,J'}$ for all $J,J'$ such that $J'=J\setminus\set{j_0}$ for some $j_0\in J$. Since $(Q_a)_{J',J}=0$ for $J'\nsubseteq J$, we deduce from (\ref{General Eq (J',J+1)-entry Q}) that
    \begin{equation*}
        (Q_a)_{J',J}+\delta_{j_0\notin J_{\rhobar}}\frac{\gamma_{*,J'}}{\gamma_{*,J}}\xi_{a,J'}P_{a,j_0}=\frac{\gamma_{J'+1,J'}}{\gamma_{J+1,J}}Y_{j_0}^{h_{j_0}(1-\varphi)}\varphi(Q_a)_{J'+1,J+1}+\delta_{j_0\notin J_{\rhobar}}\frac{\gamma_{*,J'}}{\gamma_{*,J}}\xi_{a,J}.
    \end{equation*}
    A similar equality holds replacing $(J',J)$ with $(J'+i,J+i)$ (hence $j_0$ is replaced with $j_0+i$). For each $i\in\cJ$, we let 
    \begin{equation*}
        b_i\eqdef\delta_{j_0+i\notin J_{\rhobar}}\frac{\gamma_{*,J'+i}}{\gamma_{*,J+i}}\bbra{\xi_{a,J+i}-\xi_{a,J'+i}P_{a,j_0+i}}=\delta_{j_0+i\notin J_{\rhobar}}\frac{\gamma_{*,J'+i}}{\gamma_{*,J+i}}\bbra{\xi_{a,J}-\xi_{a,J'}P_{a,j_0+i}}.
    \end{equation*}
    Suppose on the contrary that $\xi_{a,J}\neq\xi_{a,J'}$. Since $P_{a,j}\in1+F_{1-p}A$ for all $j$, we deduce that $b_i\in(F_0A\setminus F_{-1}A)\cap A^{[\Fq\x]}$ for all $i$, and not all equal to $0$ since $J_{\rhobar}\neq\cJ$. Then by Lemma \ref{General Lem theta map}(iii) (with $\lambda_i=\gamma_{J'+i+1,J'+i}/\gamma_{J+i+1,J+i}$) we deduce a contradiction.\qed

    \hspace{\fill}

    (c). In the rest of the proof we assume that $J_{\rhobar}\neq\cJ$. Since $\xi_{a,J}$ does not depend on $J$ by Claim 2, we denote it by $\xi_a$. Since $(Q_a)_{J',J}=0$ for all $J'\nsubseteq J$ by (a), the assignment $a\mapsto\xi_a$ defines a continuous character of $\OK\x$ with values in $\FF\x$. By considering $\xi_a\inv P_a$, we may assume that $\xi_a=1$ for all $a\in\OK\x$. To finish the proof of (iii), we use increasing induction on $|J\setminus J'|$ to show that for $J'\subseteq J$ there is a unique choice of $(Q_a)_{J',J}$, which moreover satisfies
    \begin{equation*}
        (Q_a)_{J',J}\equiv
    \begin{cases}
        \frac{\gamma_{*,J'}}{\gamma_{*,J}}\prod\limits_{j\in J\setminus J'}\bbra{1-P_{a,j}}&\mod~F_{(f+1)(1-p)}A\quad\text{if}~J'\supseteq J^{\ss}\\
        0&\mod~F_{(f+1)(1-p)}A\quad\text{if}~J'\nsupseteq J^{\ss}.
    \end{cases}
    \end{equation*}
    Since $(Q_a)_{J,J}=\xi_a=1$ by (b) and assumption, the case $J'=J$ is true. Then we assume that $J'\subsetneqq J$. Since $(Q_a)_{J',J}=0$ for $J'\nsubseteq J$ by (a), (\ref{General Eq (J',J+1)-entry Q}) gives
    \begin{multline}\label{General Eq (J',J+1)-entry Q 2}
        \sum\limits_{J'\cup J^{\ss}\subseteq J''\subseteq J}\bbbra{\frac{\gamma_{*,J''}}{\gamma_{*,J}}(Q_a)_{J',J''}\scalebox{1}{$\prod$}_{j\in J\setminus J''}P_{a,j}}\\
        =\sum\limits_{J'':(J'')^{\ss}\subseteq J'\subseteq J''\subseteq J}\bbbra{\frac{\gamma_{J''+1,J'}}{\gamma_{J+1,J}}\bbra{\scalebox{1}{$\prod$}_{j\in J\setminus J''}Y_j^{h_j(1-\varphi)}}\varphi(Q_a)_{J''+1,J+1}},
    \end{multline}
    which implies that 
    \begin{equation}\label{General Eq (J',J+1)-entry Q 3}
        (Q_a)_{J',J}-\frac{\gamma_{J'+1,J'}}{\gamma_{J+1,J}}\bbbra{\scalebox{1}{$\prod$}_{j\in J\setminus J'}Y_j^{h_j(1-\varphi)}}\varphi(Q_a)_{J'+1,J+1}=b_0,
    \end{equation}
    where $b_0\eqdef b_{0,1}-b_{0,2}$ with 
    \begin{equation*}
    \begin{aligned}
        b_{0,1}&\eqdef\sum\limits_{J'':(J'')^{\ss}\subseteq J'\subsetneqq J''\subseteq J}\bbbra{\frac{\gamma_{J''+1,J'}}{\gamma_{J+1,J}}\bbra{\scalebox{1}{$\prod$}_{j\in J\setminus J''}Y_j^{h_j(1-\varphi)}}\varphi(Q_a)_{J''+1,J+1}};\\
        b_{0,2}&\eqdef\sum\limits_{J'\cup J^{\ss}\subseteq J''\subsetneqq J}\bbbra{\frac{\gamma_{*,J''}}{\gamma_{*,J}}(Q_a)_{J',J''}\scalebox{1}{$\prod$}_{j\in J\setminus J''}P_{a,j}}.
    \end{aligned}
    \end{equation*}
    
    By the induction hypothesis together with $1-P_{a,j}\in F_{1-p}A$ and $h_j\leq p-1-f$ (by (\ref{General Eq genericity})), each term in the summation of $b_{0,1}$ lies in $F_{(f+1)(1-p)}A$ unless the term for $J''=J$, which appears if and only if $J'\supseteq J^{\ss}$. If $J'\nsupseteq J^{\ss}$, then we have $b_{0,1}\in F_{(f+1)(1-p)}A$. Moreover, for each $J''$ such that $J'\cup J^{\ss}\subseteq J''\subsetneqq J$, we have $J'\nsupseteq J^{\ss}=(J'')^{\ss}$. Hence by the induction hypothesis, we deduce that $b_{0,2}\in F_{(f+1)(1-p)}A$, hence $b_0\in F_{(f+1)(1-p)}A$. If $J'\supseteq J^{\ss}$, then by the induction hypothesis we have
    \begin{equation*}
    \begin{aligned}
        b_0=b_{0,1}-b_{0,2}&\equiv\frac{\gamma_{*,J'}}{\gamma_{*,J}}-\sum\limits_{J'\subseteq J''\subsetneqq J}\bbbra{\frac{\gamma_{*,J''}}{\gamma_{*,J}}\frac{\gamma_{*,J'}}{\gamma_{*,J''}}\scalebox{1}{$\prod$}_{j\in J''\setminus J'}(1-P_{a,j})\scalebox{1}{$\prod$}_{j\in J\setminus J''}P_{a,j}}\\
        &\hspace{-0.5cm}=\frac{\gamma_{*,J'}}{\gamma_{*,J}}\bbbra{\scalebox{1}{$\prod$}_{j\in J\setminus J'}\bigbra{(1-P_{a,j})+P_{a,j}}-\scalebox{1}{$\sum\limits_{J'\subseteq J''\subsetneqq J}$}\bbra{\scalebox{1}{$\prod$}_{j\in J''\setminus J'}(1-P_{a,j})\scalebox{1}{$\prod$}_{j\in J\setminus J''}P_{a,j}}}\\
        &\hspace{-0.5cm}=\frac{\gamma_{*,J'}}{\gamma_{*,J}}\scalebox{1}{$\prod$}_{j\in J\setminus J'}(1-P_{a,j})\quad\bigbra{\mod~F_{(f+1)(1-p)}A}.
    \end{aligned}
    \end{equation*}
    In particular, we have $b_0\in F_{|J\setminus J'|(1-p)}A$ since $1-P_{a,j}\in F_{1-p}A$ for all $j$. For $i\in\cJ$, we define $b_i$ in a similar way as $b_0$ replacing $(J',J)$ with $(J'+i,J+i)$, and a similar equality as (\ref{General Eq (J',J+1)-entry Q 3}) holds replacing $(J',J)$ with $(J'+i,J+i)$ and $b_0$ with $b_i$. Then we deduce from Lemma \ref{General Lem theta map}(iv) (with $\lambda_i=\gamma_{J'+i+1,J'+i}/\gamma_{J+i+1,J+i}$) that there is a unique solution of $(Q_a)_{J',J}$, which satisfies 
    \begin{equation*}
        (Q_a)_{J',J}\equiv b_0~\mod F_{(f+1)(1-p)}A.
    \end{equation*}
    This completes the proof.
\end{proof}

Finally, we can determine the $\OK\x$-action on $\Hom_A(D_A(\pi),A)(1)$. In the semisimple case, this is computed explicitly in \cite[Prop.~3.8.3]{BHHMS3}.

\begin{corollary}\label{General Cor a-action}
    If $J_{\rhobar}\neq\cJ$, then the $\OK\x$-action on $\Hom_A(D_A(\pi),A)(1)$ is the unique one in Proposition \ref{General Prop phi-OK* action new}(iii) which satisfies $\Mat(a)'_{J,J}\in1+F_{1-p}A$ for all $a\in\OK\x$ and $J\subseteq\cJ$. 
\end{corollary}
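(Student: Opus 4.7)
The plan is to identify, within the one-parameter family of $\OK\x$-actions classified in Proposition \ref{General Prop phi-OK* action new}(iii), the specific one realizing the natural $\OK\x$-action on $\Hom_A(D_A(\pi),A)(1)$. From the proof of Proposition \ref{General Prop phi-OK* action new}(iii)(b)(c), any two such actions differ by twist by a continuous character $\xi:\OK\x\to\FF\x$ (the constant value of the diagonal entry $(Q_a)_{J,J}$), so that the normalized action has $\Mat(a)'_{J,J}=P_{a,J}=\prod\nolimits_{j\notin J}P_{a,j}\in 1+F_{1-p}A$ while the twisted action has $\Mat(a)'_{J,J}=\xi_a\cdot P_{a,J}$.

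First I would dispose of uniqueness. The subset $1+F_{1-p}A$ of $A\x$ is closed under multiplication, since $F_{1-p}A\cdot F_{1-p}A\subseteq F_{2(1-p)}A\subseteq F_{1-p}A$, and one has $\FF\x\cap(1+F_{1-p}A)=\set{1}$ because no nonzero scalar different from $1$ can be congruent to $1$ modulo monomials of total degree $\geq p-1$. Hence if two twists $\xi,\xi'$ both produce diagonal entries in $1+F_{1-p}A$, their ratio lies in $\FF\x\cap(1+F_{1-p}A)=\set{1}$, forcing $\xi=\xi'$.

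For existence, I would show that the twist character $\xi$ attached to the genuine $\OK\x$-action is trivial. By Proposition \ref{General Prop phi-OK* action new}(ii), $\Mat(a)'_{J,J}=1$ for $a\in[\Fq\x]$. On the other hand, for such $a$, formula (\ref{General Eq action on Yj}) yields $a(Y_j)=\ovl{a}^{p^j}Y_j$, so that $f_{a,j}=1$ and hence $P_{a,J}=1$. Comparison gives $\xi_a=1$ on $[\Fq\x]$, so $\xi$ factors through $\OK\x/[\Fq\x]\cong 1+p\OK$. Since $1+p\OK$ is pro-$p$ while $\FF\x$ has order coprime to $p$, any such continuous character is trivial. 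Therefore $\xi_a=1$ for all $a\in\OK\x$, and $\Mat(a)'_{J,J}=P_{a,J}\in 1+F_{1-p}A$, as required.

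The main obstacle is conceptual rather than computational: after the heavy lifting of Proposition \ref{General Prop phi-OK* action new}(iii), the identification of the correct member of the family reduces entirely to pinning down one character of $\OK\x$, which vanishes for elementary group-theoretic reasons (triviality on $[\Fq\x]$ combined with pro-$p$ versus prime-to-$p$). No fine analysis of the off-diagonal entries $\Mat(a)'_{J',J}$ for $J'\subsetneq J$ is needed.
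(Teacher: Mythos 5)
Your proof is correct, and for the essential step it takes a genuinely different route from the paper. Both arguments start from the same input (the proof of Proposition \ref{General Prop phi-OK* action new}(iii) exhibits the genuine action as the twist of the normalized one by a continuous character $\xi:\OK\x\to\FF\x$, with $\Mat(a)'_{J,J}=\xi(a)P_{a,J}$ and $P_{a,J}\in1+F_{1-p}A$), and both use $\FF\x\cap(1+F_{1-p}A)=\set{1}$ for uniqueness. Where you diverge is in proving $\xi=1$: the paper does this by an explicit computation, reducing to $a(x_{\emptyset})\in\un{a}^{\un{r}}(1+F_{1-p}A)x_{\emptyset}$ via the closed formula \eqref{General Eq explicit sequence} for $x_{\emptyset,\un{i}}$ and then invoking the semisimple-case computation of \cite[Prop.~3.8.3]{BHHMS3}, whereas you argue purely group-theoretically: $\xi$ is trivial on $[\Fq\x]$ by comparing Proposition \ref{General Prop phi-OK* action new}(ii) with $P_{a,J}=1$ (since $f_{a,j}=1$ for $a\in[\Fq\x]$ by \eqref{General Eq action on Yj}), and trivial on $1+p\OK$ because a continuous character of a pro-$p$ group into the prime-to-$p$ group $\FF\x$ has trivial image. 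Your argument is shorter and avoids both \eqref{General Eq explicit sequence} and the external reference; its only cost is that it leans entirely on $\xi$ being a continuous homomorphism, which the proof of Proposition \ref{General Prop phi-OK* action new}(iii)(c) does assert (multiplicativity coming from the cocycle relation $P_{ab}=P_a\,a(P_b)$ together with the triangularity of $P_a$, continuity from continuity of the action), so this is a legitimate input. What the paper's computation buys in exchange is the explicit formula for $x_{\emptyset,\un{i}}$ (used elsewhere, e.g.\ to justify the definition of $x_{\emptyset,\un{r}}$ before Proposition \ref{General Prop vector complement}) and an actual closed form for the diagonal entry rather than just its coset modulo $1+F_{1-p}A$.
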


\begin{proof}
    By the proof of Proposition \ref{General Prop phi-OK* action new}(iii), there exists a continuous character $\xi:\OK\x\to\FF\x$ such that for all $a\in\OK\x$ and $J\subseteq\cJ$ we have $\Mat(a)'_{J,J}=\xi(a)P_{a,J}$ with $P_{a,J}\in1+F_{1-p}A$. To prove that $\xi$ is trivial, it suffices to show that $\Mat(a)'_{\emptyset,\emptyset}\in1+F_{1-p}A$. Using the change of basis matrix $Q$ which is diagonal, it suffices to show that $\Mat(a)_{\emptyset,\emptyset}\in\un{a}^{\un{r}}(1+F_{1-p}A)$. Hence it is enough to prove that $a(x_{\emptyset})\in\un{a}^{\un{r}}(1+F_{1-p}A)x_{\emptyset}$. 

    We claim that for all $\un{i}\in\ZZ^f$, we have
    \begin{equation}\label{General Eq explicit sequence}
        x_{\emptyset,\un{i}}=\mu_{\emptyset,\emptyset}^{-n}\un{Y}^{\un{p^n}-\un{1}-\un{i}}\smat{p&0\\0&1}^nv_{\emptyset}
    \end{equation}
    for any $n\geq0$ such that $\un{p^n}-\un{1}-\un{i}\geq\un{0}$. Indeed, by Proposition \ref{General Prop vector simple} with $J=\emptyset$, we have $\un{Y}^{\un{p}-\un{1}}\smat{p&0\\0&1}v_{\emptyset}=\mu_{\emptyset,\emptyset}v_{\emptyset}$, hence using Lemma \ref{General Lem Yj}(i) the RHS of (\ref{General Eq explicit sequence}) does not depend on $n$. By (\ref{General Eq seq small}) and (\ref{General Eq zw}) with $J=\emptyset$, we deduce that (\ref{General Eq explicit sequence}) is true for $\un{i}=\un{f}$. Moreover, using Lemma \ref{General Lem Yj}(i) one easily checks that the RHS of (\ref{General Eq explicit sequence}) satisfies Theorem \ref{General Thm seq}(ii),(iii) for $J=\emptyset$. Hence by the uniqueness of $x_{\emptyset,\un{i}}$ (see Theorem \ref{General Thm seq} and its proof) we deduce that (\ref{General Eq explicit sequence}) is true for all $\un{i}\in\ZZ^f$.
    
    In particular, $x_{\emptyset,\un{i}}$ has the same expression as in the semisimple case, see \cite[(103)]{BHHMS3}. Then we conclude by the explicit computation for the semisimple case, see \cite[Prop.~3.8.3]{BHHMS3}.
\end{proof}

\begin{remark}
    If $J_{\rhobar}=\cJ$, then similar to the proof of Corollary \ref{General Cor a-action} and using the explicit computation in \cite[Prop.~3.8.3]{BHHMS3} for all $J$, one can show that the $\OK\x$-action on $\Hom_A(D_A(\pi),A)(1)$ is the unique one in Proposition \ref{General Prop phi-OK* action new}(iv) which satisfies $\Mat(a)'_{J,J}\in1+F_{1-p}A$ for all $a\in\OK\x$ and $J\subseteq\cJ$.
\end{remark}

\section{Some relations between constants}\label{General Sec app lemmas}

In this appendix, we collect some equalities among the various constants defined throughout this article, whose proofs are elementary. They are used throughout this article.

\begin{lemma}\label{General Lem p-2-s and s}
    Let $J,J'\subseteq\cJ$ satisfying $(J-1)^{\ss}=(J')^{\ss}$. Then for each $j\in(J\Delta J')-1$, we have (see (\ref{General Eq sJ}) for $\un{s}^J$)
    \begin{equation*}
        2\delta_{j\in(J\cap J')^{\nss}}+(p-2-s^J_j)+\delta_{j\in J\Delta J'}=s^{J'}_j.
    \end{equation*}
\end{lemma}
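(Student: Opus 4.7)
The proof will be a direct case analysis driven by the piecewise definition of $s^J_j$ in (\ref{General Eq sJ}), so there is no conceptual difficulty; the only task is to verify the identity in each case while using the hypothesis $(J-1)^{\ss}=(J')^{\ss}$ to rule out inconsistent subcases.

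My plan is to fix $j$ with $j+1\in J\Delta J'$ and split into the two main cases $j+1\in J,\ j+1\notin J'$ and $j+1\notin J,\ j+1\in J'$. Within each main case I would further subdivide according to whether $j\in J$, whether $j\in J'$, and whether $j\in J_{\rhobar}$. This gives at most eight subcases per main case, but several of them are ruled out by the hypothesis $(J-1)\cap J_{\rhobar}=J'\cap J_{\rhobar}$: namely, $j\in(J-1)\cap J_{\rhobar}$ is equivalent to $j\in J_{\rhobar}$ together with $j+1\in J$, while $j\in J'\cap J_{\rhobar}$ is equivalent to $j\in J_{\rhobar}$ together with $j\in J'$, so the equality of these sets forces $(j+1\in J\iff j\in J')$ whenever $j\in J_{\rhobar}$; this kills exactly the subcases where $j\in J_{\rhobar}$ but $j+1\in J\Leftrightarrow j\in J'$ fails.

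For each surviving subcase I would read off $s^J_j$ and $s^{J'}_j$ from (\ref{General Eq sJ}), together with the values of $\delta_{j\in(J\cap J')^{\nss}}$ (which is $1$ precisely when $j\in J\cap J'$ and $j\notin J_{\rhobar}$) and $\delta_{j\in J\Delta J'}$, and verify the identity. As a representative sanity check: if $j+1\in J$, $j+1\notin J'$, $j\in J$, $j\in J'$, $j\notin J_{\rhobar}$, then $s^J_j=p-1-r_j$, $s^{J'}_j=r_j+1$, and the left-hand side is $2+(p-2-(p-1-r_j))+0=r_j+1$, matching. The dual case $j+1\notin J$, $j+1\in J'$, with the roles of $J$ and $J'$ essentially swapped (and $\delta_{j\in J\Delta J'}$ still handling the parity), goes through identically.

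The main obstacle is purely bookkeeping: ensuring that the exclusion coming from $(J-1)^{\ss}=(J')^{\ss}$ is applied correctly so that no impossible configuration is tested, and that the terms $2\delta_{j\in(J\cap J')^{\nss}}$ and $\delta_{j\in J\Delta J'}$ are assigned consistently. Since the paper already freely uses such case-by-case lemmas (compare Lemma \ref{General Lem change origin}), I would simply note that the verification is elementary and leave the routine table of cases to the reader after displaying one or two representative computations.
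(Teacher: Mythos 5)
Your proposal is correct and follows essentially the same route as the paper: a direct case analysis on the membership of $j,j+1$ in $J$, $J'$ and $J_{\rhobar}$, with the hypothesis $(J-1)^{\ss}=(J')^{\ss}$ used exactly as you describe to rule out the subcases where $j\in J_{\rhobar}$ but $j+1\in J\Leftrightarrow j\in J'$ fails (the paper invokes this only in the one subcase $j\in J$, $j\notin J'$ where it is needed). Your representative computation checks out against \eqref{General Eq sJ}, so the remaining subcases are indeed routine.
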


\begin{proof}
    We assume that $j+1\in J$ and $j+1\notin J'$. Otherwise we have $j+1\notin J$ and $j+1\in J'$, and the proof is similar. We separate the following cases.
    
    If $j\in J$ and $j\in J'$, then the LHS equals $2\delta_{j\notin J_{\rhobar}}+(p-2-(p-3-r_j+2\delta_{j\notin J_{\rhobar}}))+0=r_j+1$, which equals the RHS.

    If $j\in J$ and $j\notin J'$, then the LHS equals $0+(p-2-(p-3-r_j+2\delta_{j\notin J_{\rhobar}}))+1=r_j+2-2\delta_{j\notin J_{\rhobar}}$. Hence it suffices to show that $j\notin J_{\rhobar}$. Indeed, if $j\in J_{\rhobar}$, then $j\in (J-1)^{\ss}=(J')^{\ss}\subseteq J'$, which is a contradiction.

    If $j\notin J$ and $j\in J'$, then the LHS equals $0+(p-2-(p-2-r_j))+1=r_j+1$, which equals the RHS.

    If $j\notin J$ and $j\notin J'$, then the LHS equals $0+(p-2-(p-2-r_j))+0=r_j$, which equals the RHS.
\end{proof}

\begin{lemma}\label{General Lem m}
    Let $J,J'\subseteq\cJ$ satisfying $(J-1)^{\ss}=(J')^{\ss}$, and let $\un{m}=\un{m}\bigbra{\un{e}^{(J\cap J')^{\nss}},J,(J\Delta J')-1}$ (see (\ref{General Eq mj})). Then we have 
    \begin{equation*}
        m_j=\delta_{j\in J'}(-1)^{\delta_{j+1\notin J}}~\forall\,j\in\cJ.
    \end{equation*}
\end{lemma}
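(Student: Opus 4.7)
The plan is to substitute the data $\un{i} = \un{e}^{(J\cap J')^{\nss}}$ and $J'' = (J\Delta J')-1$ directly into the defining formula (\ref{General Eq mj}) for $\un{m}$, and then to show that the resulting expression simplifies to $\delta_{j \in J'}(-1)^{\delta_{j+1 \notin J}}$. After pulling out the common sign $(-1)^{\delta_{j+1\notin J}}$, this reduces to verifying the purely combinatorial identity
\begin{equation*}
2\delta_{j \in (J\cap J')^{\nss}} + \delta_{j \in (J-1)^{\ss}} - \delta_{j \in J\Delta(J-1)^{\ss}} + \delta_{j \in J\Delta J'} \;=\; \delta_{j \in J'}
\end{equation*}
for all $j \in \cJ$, under the hypothesis $(J-1)^{\ss} = (J')^{\ss}$.

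The verification is by case analysis, first splitting according to whether $j \in J_{\rhobar}$ or not. If $j \notin J_{\rhobar}$, then $\delta_{j \in (J-1)^{\ss}} = 0$ and $\delta_{j \in J\Delta(J-1)^{\ss}} = \delta_{j\in J}$, while $\delta_{j\in(J\cap J')^{\nss}} = \delta_{j\in J}\delta_{j\in J'}$, so the identity collapses to
\begin{equation*}
2\delta_{j \in J}\delta_{j \in J'} - \delta_{j \in J} + \delta_{j \in J\Delta J'} \;=\; \delta_{j \in J'},
\end{equation*}
which is immediate from the four sub-cases on memberships of $j$ in $J$ and $J'$. If $j \in J_{\rhobar}$, then $\delta_{j \in (J\cap J')^{\nss}} = 0$ and the identity becomes
\begin{equation*}
\delta_{j+1 \in J} - \delta_{j \in J \Delta (J-1)} + \delta_{j \in J\Delta J'} \;=\; \delta_{j \in J'}.
\end{equation*}
Here the hypothesis $(J-1)^{\ss} = (J')^{\ss}$ provides the crucial bridge: restricted to $j \in J_{\rhobar}$, it forces the equivalence $j+1 \in J \Leftrightarrow j \in J'$, and the four sub-cases on memberships of $j$ and $j+1$ in $J$ again verify the identity (all four values of $\delta_{j\in J'}$ are then dictated by $\delta_{j+1\in J}$).

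I do not anticipate any real obstacle: the lemma amounts to formal manipulation of indicator functions once the correct substitution into \eqref{General Eq mj} is made, and the only non-trivial input is the equivalence $j+1 \in J \Leftrightarrow j \in J'$ for $j \in J_{\rhobar}$ extracted from $(J-1)^{\ss} = (J')^{\ss}$. The eight sub-cases are each a one-line arithmetic check, so the whole argument can reasonably be left as an elementary case-by-case verification.
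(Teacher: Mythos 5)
Your proposal is correct and follows essentially the same route as the paper's own proof: substitute into \eqref{General Eq mj}, factor out $(-1)^{\delta_{j+1\notin J}}$, split on $j\in J_{\rhobar}$ versus $j\notin J_{\rhobar}$, and in the latter case use that $(J-1)^{\ss}=(J')^{\ss}$ forces $j+1\in J\Leftrightarrow j\in J'$. The indicator-function manipulations you outline match the paper's computation case by case, so there is nothing to add.
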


\begin{proof}
    For $j\in\cJ$, by definition we have
    \begin{equation*}
        m_j=(-1)^{\delta_{j+1\notin J}}\bbra{2\delta_{j\in(J\cap J')^{\nss}}+\delta_{j\in(J-1)^{\ss}}-\delta_{j\in J\Delta(J-1)^{\ss}}+\delta_{j\in J\Delta J'}}.
    \end{equation*}
    If $j\notin J_{\rhobar}$, then we have
    \begin{equation*}
    \begin{aligned}
        m_j&=(-1)^{\delta_{j+1\notin J}}\bbra{2\delta_{j\in J\cap J'}+0-\delta_{j\in J}+\delta_{j\in J\Delta J'}}\\
        &=(-1)^{\delta_{j+1\notin J}}\bbra{2\delta_{j\in J}\delta_{j\in J'}-\delta_{j\in J}+(\delta_{j\in J}+\delta_{j\in J'}-2\delta_{j\in J}\delta_{j\in J'})}=\delta_{j\in J'}(-1)^{\delta_{j+1\notin J}}.
    \end{aligned}
    \end{equation*}
    If $j\in J_{\rhobar}$, then the assumption $(J-1)^{\ss}=(J')^{\ss}$ implies that $j\in J-1$ if and only if $j\in J'$, hence we have 
    \begin{equation*}
    \begin{aligned}
        m_j&=(-1)^{\delta_{j+1\notin J}}\bigbra{0+\delta_{j\in J-1}-\delta_{j\in J\Delta(J-1)}+\delta_{j\in J\Delta J'}}\\
        &=(-1)^{\delta_{j+1\notin J}}\bigbra{\delta_{j\in J'}-\delta_{j\in J\Delta J'}+\delta_{j\in J\Delta J'}}=\delta_{j\in J'}(-1)^{\delta_{j+1\notin J}}.
    \end{aligned}
    \end{equation*}
    This completes the proof.
\end{proof}

\begin{lemma}\label{General Lemma relation 2 appendix}
    Keep the assumptions of Proposition \ref{General Prop relation 2}.
\begin{enumerate}
    \item 
    Let $\un{m}\eqdef\un{m}(\un{i},J,J')$ and $\un{m}'\eqdef\un{m}(\un{i}',J\setminus\set{j_0+2},J'')$ (see (\ref{General Eq mj})). Then we have $\un{m}=\un{m}'$ and $m_{j_0+1}=m'_{j_0+1}=0$.
    \item 
    We have (see (\ref{General Eq tJJ'}) for $t^J(J')$)
    \begin{equation}\label{General Eq relation 2 tJ appendix}
    \begin{aligned}
        2i_j+t^J(J')_j&=2i'_j+t^{J\setminus\set{j_0+2}}(J'')_j~\text{if}~j\neq j_0+1;\\
        2i_{j_0+1}+t^J(J')_{j_0+1}&=r_{j_0+1}+1;\\
        2i'_{j_0+1}+t^{J\setminus\set{j_0+2}}(J'')_{j_0+1}&=p-1-r_{j_0+1}.\\
    \end{aligned}        
    \end{equation}
    \item 
    We let $\un{c},\un{c}'\in\ZZ^f$ such that
    \begin{align*}
    &\begin{aligned}
        c_j&= pi_{j+1}+\delta_{j+1\in J\Delta(J-1)^{\ss}}s^{(J-1)^{\ss}}_j\\
        &\hspace{0.8cm}+\delta_{j+1\notin J\Delta(J-1)^{\ss}}(p-1)-\delta_{j\notin J'}\bbra{2i_j+t^J(J')_j}-\delta_{j=j_0+1}\delta_{j_0+1\notin J};
    \end{aligned}\\
    &\begin{aligned}
        c'_j&= pi'_{j+1}+\delta_{j+1\in(J\setminus\set{j_0+2})\Delta(J-1)^{\ss}}s^{(J-1)^{\ss}}_j\\
        &\hspace{0.8cm}+\delta_{j+1\notin(J\setminus\set{j_0+2})\Delta(J-1)^{\ss}}(p-1)-\delta_{j\notin J''}\bbra{2i'_j+t^{J\setminus\set{j_0+2}}(J'')_j}-\delta_{j=j_0+1}\delta_{j_0+1\notin J}.
    \end{aligned}
    \end{align*}
    Then we have $\un{c}=\un{c}'$.
    \item 
    If moreover $2i_j-\delta_{j\in J\Delta(J-1)^{\ss}}+\delta_{j-1\in J'}\geq0$ for all $j\in\cJ$, then we have $\un{c}=\un{c}'\geq\un{0}$ and $\un{Y}^{\un{c}}B_1=\un{Y}^{\un{c}}B_2\neq0$. In particular, $B_1$ and $B_2$ have the same $H$-eigencharacter. 
\end{enumerate}
\end{lemma}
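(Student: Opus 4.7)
The plan is to prove parts (i), (ii), (iii) as purely combinatorial identities from the definitions, and then deduce (iv) by a direct computation that expresses both $\un{Y}^{\un{c}}B_1$ and $\un{Y}^{\un{c}'}B_2$ as the LHS of Proposition \ref{General Prop vector simple} applied to $J$ (resp.\,$J\setminus\set{j_0+2}$).

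For (i), I would unfold the formula \eqref{General Eq mj} and exploit the fact that $\un{i}$ and $\un{i}'$ differ only at position $j_0+2$, $J$ and $J\setminus\set{j_0+2}$ differ only at $j_0+2$, and $J'$ and $J''$ differ only at $j_0+1$. The equality $m_j=m'_j$ is then automatic away from $j\in\set{j_0,j_0+1,j_0+2}$. The remaining three positions are treated one by one, using two key facts: $(J-1)^{\ss}=(J\setminus\set{j_0+2}-1)^{\ss}$ since $j_0+1\in(J-1)^{\nss}$; and $j_0\in J'\iff j_0+1\in J$ by hypothesis. The latter identity forces $\delta_{j_0+1\in (J-1)^{\ss}}=0$ and $\delta_{j_0+1\in J\Delta(J-1)^{\ss}}=\delta_{j_0+1\in J}$, from which $m_{j_0+1}=m'_{j_0+1}=0$ drops out. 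Part (ii) is analogous: for $j$ different from $j_0+1,j_0+2$, both sides coincide trivially; at $j=j_0+2$, the equality reduces to $s^J_{j_0+2}-s^{J\setminus\set{j_0+2}}_{j_0+2}=1-2\delta_{j_0+2\in(J-1)^{\ss}}$, verified by a three-case split on $(j_0+3\in J,\ j_0+2\in J_{\rhobar})$; the two explicit values at $j=j_0+1$ come from plugging in $i_{j_0+1}=i'_{j_0+1}=0$ and expanding $s^J_{j_0+1},s^{J\setminus\set{j_0+2}}_{j_0+1}$ via \eqref{General Eq sJ} split on $j_0+1\in J$. Part (iii) is then another coordinate-by-coordinate check, the only nontrivial position being $j=j_0+1$; plugging in the formula for $i'_{j_0+2}$ and expanding $s^{(J-1)^{\ss}}_{j_0+1}$ by the two cases of $\delta_{j_0+2\in(J-1)^{\ss}}$ collapses $c_{j_0+1}-c'_{j_0+1}$ to $0$.

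For (iv), the formula defining $\un{c}$ is engineered precisely so that $c_j$ equals the gap between the power of $Y_j$ in $\bbbra{\prod_{j+1\in J\Delta(J-1)^{\ss}}Y_j^{s^{(J-1)^{\ss}}_j}\prod_{j+1\notin J\Delta(J-1)^{\ss}}Y_j^{p-1}}\un{Y}^{p\delta(\un{i})}\smat{p&0\\0&1}\un{Y}^{-\un{i}}v_J$ and the power of $Y_j$ in $B_1$. Using Lemma \ref{General Lem Yj}(i) to commute $\un{Y}^{p\delta(\un{i})}$ past $\smat{p&0\\0&1}$ cancels against $\un{Y}^{-\un{i}}$, yielding $\smat{p&0\\0&1}v_J$; Proposition \ref{General Prop vector simple} applied to $J$ (with $(J\cap(J-1)^{\ss})^{\nss}=\emptyset$) then gives $\un{Y}^{\un{c}}B_1=\mu_{J,(J-1)^{\ss}}v_{(J-1)^{\ss}}$. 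A parallel computation with $J\setminus\set{j_0+2}$ in place of $J$, combined with the normalizing factor $\mu_{J,(J-1)^{\ss}}/\mu_{J\setminus\set{j_0+2},(J-1)^{\ss}}$ built into $B_2$, yields $\un{Y}^{\un{c}'}B_2=\mu_{J,(J-1)^{\ss}}v_{(J-1)^{\ss}}$; together with (iii) this gives $\un{Y}^{\un{c}}B_1=\un{Y}^{\un{c}'}B_2$, nonzero since $v_{(J-1)^{\ss}}\neq 0$. The $H$-eigencharacter claim then follows from the invertibility of the $\un{Y}^{\un{c}}$-action on eigenspaces.

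The main obstacle is verifying $\un{c}\geq\un{0}$. For $j\in J'$ the positive terms of $c_j$ dominate the subtracted contribution of at most $1$ trivially. For $j\notin J'$, the bound $2i_j+t^J(J')_j\leq p-1$ coming from \eqref{General Eq bound s} and \eqref{General Eq bound tJ} is essentially sharp, so when $i_{j+1}=0$ one must invoke the positivity hypothesis at $j+1$: since $j\notin J'$, the inequality $2i_{j+1}-\delta_{j+1\in J\Delta(J-1)^{\ss}}+\delta_{j\in J'}\geq 0$ forces $j+1\notin J\Delta(J-1)^{\ss}$, which upgrades the ``$j+1$-contribution'' in $c_j$ from $s^{(J-1)^{\ss}}_j$ to $p-1$ and makes $c_j\geq 0$. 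The most delicate case is $j=j_0+1\notin J'$ with $j_0+1\notin J$, where an extra $-1$ is subtracted: this is resolved by splitting on $\delta_{j_0+2\in J\Delta(J-1)^{\ss}}$, either gaining a full factor of $p$ from $i_{j_0+2}\geq 1$ (guaranteed by the positivity assumption at $j+1=j_0+2$ together with $j_0+1\notin J'$) or gaining $p-1$ from the stuff term combined with the genericity bound $r_{j_0+1}\leq p-3-2f$ from \eqref{General Eq genericity}.
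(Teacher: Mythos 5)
Your proposal is correct and follows essentially the same route as the paper: parts (i)--(iii) by coordinate-by-coordinate expansion of the definitions (the only nontrivial positions being $j_0+1$ and $j_0+2$), and part (iv) by first establishing $\un{c}\geq\un{0}$ from the positivity hypothesis together with \eqref{General Eq bound tJ} and \eqref{General Eq genericity}, then identifying $\un{Y}^{\un{c}}B_1=\un{Y}^{\un{c}'}B_2=\mu_{J,(J-1)^{\ss}}v_{(J-1)^{\ss}}$ via Lemma \ref{General Lem Yj}(i) and Proposition \ref{General Prop vector simple}. The only (cosmetic) slip is in (i): $\delta_{j_0+1\in(J-1)^{\ss}}=0$ follows from the hypothesis $j_0+1\in(J-1)^{\nss}$ rather than from $j_0\in J'\Leftrightarrow j_0+1\in J$, but the conclusion $m_{j_0+1}=m'_{j_0+1}=0$ is unaffected.
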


\begin{proof}
    (i). If $j\neq j_0+2$ or $f=1$, then by definition we have $m_j=m'_j$ and $m_{j_0+1}=m'_{j_0+1}=0$. If $j=j_0+2$ and $f\geq2$, using $j_0+1\in(J-1)^{\nss}$ (which implies $j_0+2\in J$) and $j_0+3\neq j_0+2$, we have
    \begin{align*}
        (-1)^{\delta_{j_0+3\notin J}}m'_{j_0+2}&=(-1)^{\delta_{j_0+3\notin J\setminus\set{j_0+2}}}m'_{j_0+2}\\        &\hspace{-0.5cm}=2i'_{j_0+2}+\delta_{j_0+2\in((J\setminus\set{j_0+2})-1)^{\ss}}-\delta_{j_0+2\in(J\setminus\set{j_0+2})\Delta((J\setminus\set{j_0+2})-1)^{\ss}}+\delta_{j_0+1\in J''}\\
        &\hspace{-0.5cm}=2\bbra{i_{j_0+2}-\delta_{j_0+1\notin J'}+\delta_{j_0+2\in(J-1)^{\ss}}}+\delta_{j_0+2\in(J-1)^{\ss}}-\delta_{j_0+2\in(J-1)^{\ss}}+\delta_{j_0+1\notin J'}\\
        &\hspace{-0.5cm}=2i_{j_0+2}-\delta_{j_0+1\notin J'}+2\delta_{j_0+2\in(J-1)^{\ss}}\\
        &\hspace{-0.5cm}=2i_{j_0+2}-1+\delta_{j_0+1\in J'}+\delta_{j_0+2\in(J-1)^{\ss}}+1-\delta_{j_0+2\notin(J-1)^{\ss}}\\
        &\hspace{-0.5cm}=2i_{j_0+2}+\delta_{j_0+2\in(J-1)^{\ss}}-\delta_{j_0+2\in J\Delta(J-1)^{\ss}}+\delta_{j_0+1\in J'}\\
        &\hspace{-0.5cm}=(-1)^{\delta_{j_0+3\notin J}}m_{j_0+2},
    \end{align*}
    hence $m_{j_0+2}=m'_{j_0+2}$.
    
    (ii). We prove the case $j=j_0+2$ and $f\geq2$, the other cases being similar and simpler. We also assume that $j_0+3\in J$, the case $j_0+3\notin J$ being similar. Then using (\ref{General Eq sJ}), we have
    \begin{align*}
        2i'_{j_0+2}+t^{J\setminus\set{j_0+2}}(J'')_{j_0+2}&=2i'_{j_0+2}+p-1-s^{J\setminus\set{j_0+2}}_{j_0+2}+\delta_{j_0+1\in J''}\\
        &\hspace{-0.5cm}=2\bbra{i_{j_0+2}-\delta_{j_0+1\notin J'}+\delta_{j_0+2\in J_{\rhobar}}}+p-1-(p-2-r_{j_0+2})+\delta_{j_0+1\notin J'}\\
        &\hspace{-0.5cm}=2i_{j_0+2}-\delta_{j_0+1\notin J'}+p-1-\bbra{p-1-r_{j_0+2}-2\delta_{j_0+2\in J_{\rhobar}}}+1\\
        &\hspace{-0.5cm}=2i_{j_0+2}+p-1-s^J_{j_0+2}+\delta_{j_0+1\in J'}\\
        &\hspace{-0.5cm}=2i_{j_0+2}+t^J(J')_{j_0+2}.
    \end{align*}

    (iii). By (\ref{General Eq relation 2 tJ appendix}) we have $c_j=c'_j$ for $j\neq j_0+1$, so it remains to prove that $c_{j_0+1}=c'_{j_0+1}$. We assume that $j_0+2\in(J-1)^{\ss}$, the case $j_0+2\notin(J-1)^{\ss}$ being similar. Then using (\ref{General Eq sJ}) and (\ref{General Eq relation 2 tJ appendix}) we have
    \begin{equation*}
    \begin{aligned}
        c'_{j_0+1}&=p\bbra{i_{j_0+2}-\delta_{j_0+1\notin J'}+1}+(p-2-r_{j_0+1})+0-\delta_{j_0+1\in J'}(p-1-r_{j_0+1})-\delta_{j_0+1\notin J}\\
        &=p\bbra{i_{j_0+2}-\delta_{j_0+1\notin J'}+1}-1+\delta_{j_0+1\notin J'}(p-1-r_{j_0+1})-\delta_{j_0+1\notin J}\\
        &=pi_{j_0+2}+0+(p-1)-\delta_{j_0+1\notin J'}(r_{j_0+1}+1)-\delta_{j_0+1\notin J}=c_{j_0+1}.
    \end{aligned}
    \end{equation*}

    (iv). If $j\in J'$, then by the definition of $c_j$ and using $i_{j+1}\geq0$, we have 
    \begin{equation*}
        c_j\geq\min\sset{s^{(J-1)^{\ss}}_j,p-1}-1\geq0,
    \end{equation*}
    where the last inequality follows from (\ref{General Eq bound s}).
    
    If $j\notin J'$, then the assumption $2i_{j+1}-\delta_{j+1\in J\Delta(J-1)^{\ss}}+\delta_{j\in J'}\geq0$ implies that either $i_{j+1}\geq1$ or $j+1\notin J\Delta(J-1)^{\ss}$. By the definition of $c_j$ and using $i_{j+1}\geq0$, we have if $j\neq j_0+1$
    \begin{equation*}
        c_j\geq\min\bigset{p,p-1}-\bigbra{2i_j+t^J(J')_j}\geq0,
    \end{equation*}
    where the last inequality follows from (\ref{General Eq bound tJ}) and $\un{i}\leq\un{f}-\un{e}^{J^{\sh}}$. By the definition of $c_{j_0+1}$ and using $i_{j_0+1}=0$ (hence $j_0+1\notin J\Delta(J-1)^{\ss}$) and (\ref{General Eq relation 2 tJ appendix}), we have $c_{j_0+1}\geq(p-1)-(r_{j_0+1}+1)-1\geq0$, where the last inequality follows from (\ref{General Eq genericity}).

    By the definition of $\un{c}$ and since $\un{c}\geq\un{0}$, we have
    \begin{align*}
        \un{Y}^{\un{c}}B_1&=\un{Y}^{p\delta(\un{i})}\bbbra{\scalebox{1}{$\prod$}_{j+1\in J\Delta(J-1)^{\ss}}Y_j^{s^{(J-1)^{\ss}}_j}\scalebox{1}{$\prod$}_{j+1\notin J\Delta(J-1)^{\ss}}Y_j^{p-1}}\smat{p&0\\0&1}\bbra{\un{Y}^{-\un{i}}v_J}\\
        &=\bbbra{\scalebox{1}{$\prod$}_{j+1\in J\Delta(J-1)^{\ss}}Y_j^{s^{(J-1)^{\ss}}_j}\scalebox{1}{$\prod$}_{j+1\notin J\Delta(J-1)^{\ss}}Y_j^{p-1}}\smat{p&0\\0&1}v_J=\mu_{J,(J-1)^{\ss}}v_{(J-1)^{\ss}},
    \end{align*}
    where the second equality follows from Lemma \ref{General Lem Yj}(i) and the last equality follows from Proposition \ref{General Prop vector simple} applied to $J$. Similarly, we have (recall that $\bra{(J\setminus\set{j_0+2})-1}^{\ss}=(J-1)^{\ss}$)
    \begin{align*}
        \un{Y}^{\un{c}}B_2&=\un{Y}^{\un{c}'}B_2=\frac{\mu_{J,(J-1)^{\ss}}}{\mu_{J\setminus\set{j_0+2},(J-1)^{\ss}}}\bbbra{\scalebox{1}{$\prod$}_{j+1\in J'''}Y_j^{s^{(J-1)^{\ss}}_j}\scalebox{1}{$\prod$}_{j+1\notin J'''}Y_j^{p-1}}\smat{p&0\\0&1}v_{J\setminus\set{j_0+2}}\\
        &=\mu_{J,(J-1)^{\ss}}v_{(J-1)^{\ss}},
    \end{align*}
    where $J'''\eqdef(J\setminus\set{j_0+2})\Delta(J-1)^{\ss}$ and the last equality follows from Proposition \ref{General Prop vector simple} applied to $J\setminus\set{j_0+2}$. In particular, we deduce from Lemma \ref{General Lem Yj}(ii) that $B_1$ and $B_2$ have the same $H$-eigencharacter.
\end{proof}

\begin{lemma}\label{General Lem r and c}
    \begin{enumerate}
    \item
    For $J\subseteq\cJ$, we have $\chi'_J\alpha^{\un{r}^J}=\chi_{(\un{r},\un{0})}$, where $\chi'_J\eqdef\chi_J\alpha^{\un{e}^{J^{\sh}}}$ (see (\ref{General Eq rJ}) for $\un{r}^J$ and \S\ref{General Sec sw} for $\chi_J$, $\chi_{(\un{r},\un{0})}$, $\alpha^{\un{i}}$ and $\un{e}^{J^{\sh}}$).
    \item 
    For $J_1,J_2\subseteq\cJ$ such that $J_1\cap J_2=\emptyset$, we have $\un{r}^{J_1\cup J_2}=\un{r}^{J_1}+\un{r}^{J_2}$ (see (\ref{General Eq rJ}) for $\un{r}^J$).
    \item 
    For $J\subseteq\cJ$, we have $\alpha^{\un{c}^J}=\alpha^{\un{r}^{J+1}-\un{r}^J}$ (see (\ref{General Eq cJ}) for $\un{c}^J$).
    \item 
    Let $J'\subseteq J\subseteq\cJ$ and $J''\eqdef J'\Delta(J-1)$. Write $\un{c}\eqdef p\un{e}^{J'\cap(J-1)}+\un{c}^{J'}-\un{f}-\un{r}^{J\setminus J'}$ and let $\un{\delta}\in\set{0,1}^f$. Then for all $j\in\cJ$ we have (see (\ref{General Eq tJJ'}) for $\un{t}^J(J')$)
    \begin{equation*}
        c_j-\delta_j\geq\delta_{j\notin J''}\bbbra{2\bigbra{\delta_{j\in(J'+1)\cap J}-\delta_{j\in(J'+1)^{\sh}}}+t^{J'+1}(J'')_j}+\delta_{J''=\emptyset}.
    \end{equation*}
    \item 
    Let $J'\subseteq J\subseteq\cJ$. Then for all $j\in\cJ$ we have
    \begin{equation*}
        c^{J'}_j-r^{J\setminus J'}_j=c_j^J+\delta_{j\in J\setminus J'}(p-1-r_j).
    \end{equation*}
    \end{enumerate}
\end{lemma}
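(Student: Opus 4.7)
The plan is to prove each of the five claims by direct verification using the explicit formulas for $\un{r}^J$, $\un{c}^J$, $\un{s}^J$, and $\un{t}^J$ given in Definition \ref{General Def rcepi}, (\ref{General Eq sJ}), and (\ref{General Eq tJ}). Claims (ii), (iv), and (v) are coordinate-wise identities (or inequalities), while (i) and (iii) are identities of characters, and hence only require congruence modulo $q-1$.

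For Claim (i), I would first verify the coordinate-wise identity $\un{t}^J = \un{r}^J + \un{e}^{J^{\sh}}$, which is a five-case comparison of (\ref{General Eq tJ}), (\ref{General Eq rJ}), and the definition (\ref{General Eq Jsh}) of $J^{\sh}$. Granting this, the $d$-component of $\chi'_J\alpha^{\un{r}^J}$ is $\un{t}^J-\un{e}^{J^{\sh}}-\un{r}^J=\un{0}$ exactly, while the $a$-component is $\un{s}^J+2\un{t}^J$; to finish, I would invoke the fact that the Serre weight $\sigma_{\un{a}^J}=F(\lambda_J)$ has central character $\chi_{\mu_{\un{r}}}|_Z$ by the definition of $\ft_{\mu_{\un{r}}}$ in the extension graph construction, which gives $\un{s}^J+2\un{t}^J\equiv\un{r}\pmod{q-1}$. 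Claim (ii) follows at once from (\ref{General Eq Lem rJ}) and $J_1\cap J_2=\emptyset$. For Claim (iii), a direct case analysis on the pairs $(\delta_{j\in J},\delta_{j+1\in J})$, refined by $\delta_{j-1\in J}$, produces the uniform formula
\begin{equation*}
    c^J_j-(r^{J+1}_j-r^J_j)=(p-1)-p\delta_{j\in J}+\delta_{j-1\in J}.
\end{equation*}
Multiplying by $p^j$ and summing, the term $(p-1)\sum_j p^j=q-1$ vanishes modulo $q-1$, and the cyclic index shift $j-1\mapsto j$ shows that $\sum_j(\delta_{j-1\in J}-p\delta_{j\in J})p^j\equiv0\pmod{q-1}$, which proves the identity of characters.

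Claim (v) is the cleanest: expanding both sides with (\ref{General Eq Lem cJ}) and (\ref{General Eq Lem rJ}) and using $J'\subseteq J$, the identity $\delta_{j+1\notin J'}-\delta_{j+1\in J\setminus J'}=\delta_{j+1\notin J}$ lets the remaining terms cancel algebraically. Claim (iv) is the main obstacle and requires the most intricate bookkeeping. I would first reduce to the tightest case $\un{\delta}=\un{1}$, then substitute $t^{J'+1}(J'')_j=p-1-s^{J'+1}_j+\delta_{j-1\in J''}$ via (\ref{General Eq tJJ'}) and expand $s^{J'+1}_j$ using (\ref{General Eq sJ}) for $J'+1$. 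The resulting inequality becomes a purely combinatorial statement involving the indicator functions of $J$, $J'$, $J-1$, $(J'+1)^{\sh}$, $J^{\sh}$, and $J''$ at the neighbouring indices $j$ and $j-1$; I would dispatch it by splitting into subcases according to whether $j$ lies in $J$, $J'$, $J-1$, etc., and in each subcase reduce to a numerical inequality on $r_j$. The genericity bound $2f+1\leq r_j\leq p-3-2f$ from (\ref{General Eq genericity}), together with $p\geq4f+4$, provides exactly the slack needed to absorb the constants $f$ and the corrections $\delta_{j\in J^{\sh}}$.
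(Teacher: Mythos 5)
Your proposal is correct and follows essentially the same route as the paper's proof: (i) via the identity $\un{t}^J=\un{r}^J+\un{e}^{J^{\sh}}$ and the central-character argument for $\sigma_J$, (ii) and (v) by direct expansion with (\ref{General Eq Lem rJ}) and (\ref{General Eq Lem cJ}), (iii) by the same coordinate computation $c^J_j-(r^{J+1}_j-r^J_j)=p\delta_{j\notin J}-\delta_{j-1\notin J}$ read modulo $q-1$, and (iv) by substituting (\ref{General Eq tJJ'}) and (\ref{General Eq sJ}) and case-splitting with the genericity bound (\ref{General Eq bound s}). The only (immaterial) difference is organizational: in (iv) the paper bounds the right-hand side uniformly by $p-f$ and then evaluates $c_j$ exactly, rather than comparing termwise in each subcase.
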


\begin{proof}
    (i). By definition, we have $\un{t}^J=\un{r}^J+\un{e}^{J^{\sh}}$ for $J\subseteq\cJ$ (see (\ref{General Eq tJ}) for $\un{t}^J$). Hence it suffices to show that $\chi_J\alpha^{\un{t}^J}=\chi_{(\un{r},\un{0})}$. By definition, we have $\chi_J\alpha^{\un{t}^J}=\chi_{\lambda}$ with (see \S\ref{General Sec sw} for $\chi_{\lambda}$ and $\un{s}^J$)
    \begin{equation*}
        \lambda=\lambda_J+\alpha^{\un{t}^J}=(\un{s}^J+\un{t}^J,\un{t}^J)+(\un{t}^J,-\un{t}^J)=(\un{s}^J+2\un{t}^J,\un{0}).
    \end{equation*}
    Since $\sigma_J=(\un{s}^J)\otimes\det^{\un{t}^J}$ (see \S\ref{General Sec sw}) has the same central character as $\sigma_{\emptyset}=(\un{r})$, we deduce that $a^{\un{s}^J+2\un{t}^J}=a^{\un{r}}$ for all $a\in\Fq$, which completes the proof.

    (ii). 
    This follows immediately from \eqref{General Eq Lem rJ}.

    (iii). By \eqref{General Eq Lem rJ} and \eqref{General Eq Lem cJ} we have
    \begin{align*}
        c^J_j+r^J_j-r^{J+1}_j&=
    \begin{aligned}[t]
        &\bigbra{\delta_{j\notin J}(p-1-r_j)+\delta_{j+1\notin J}(r_j+1)-\delta_{j\notin J}}\\
        &\hspace{3cm}+\bigbra{\delta_{j+1\in J}(r_j+1)-\delta_{j\in J}}-\bigbra{\delta_{j\in J}(r_j+1)-\delta_{j-1\in J}}
    \end{aligned}\\
    &=
    \begin{aligned}[t]
        &p\delta_{j\notin J}-\bigbra{\delta_{j\notin J}+\delta_{j\in J}}(r_j+1)\\
        &\hspace{3cm}+\bigbra{\delta_{j+1\notin J}+\delta_{j+1\in J}}(r_j+1)-\bigbra{\delta_{j\notin J}+\delta_{j\in J}}+\delta_{j-1\in J}
    \end{aligned}\\
    &=p\delta_{j\notin J}-1+\delta_{j-1\in J}=p\delta_{j\notin J}-\delta_{j-1\notin J}.
    \end{align*}
    Hence we have 
    \begin{equation*}
        \alpha^{\un{c}^J+\un{r}^J-\un{r}^{J+1}}=\prod\limits_{j\notin J}\alpha_{j}^p\prod\limits_{j-1\notin J}\alpha_j\inv=\prod\limits_{j\notin J}\alpha_{j+1}\prod\limits_{j-1\notin J}\alpha_j\inv=1,
    \end{equation*}
    which proves (iii).

    (iv). We assume that $j\notin J''$, the case $j\in J''$ being similar and simpler. By definition we have
    \begin{align*}
        &2\bigbra{\delta_{j\in(J'+1)\cap J}-\delta_{j\in(J'+1)^{\sh}}}+\bigbra{t^{J'+1}(J'')_j+\delta_{J''=\emptyset}}+\delta_j\\
        &\hspace{1.5cm}\leq 2\bigbra{\delta_{j\in(J'+1)\cap J}-\delta_{j\in(J'+1)^{\sh}}}+\bigbra{p-1-s^{J'+1}_j+1}+1\\
        &\hspace{1.5cm}\leq2\bigbra{\delta_{j\in(J'+1)\cap J}-\delta_{j\in(J'+1)^{\sh}}}+p-1-\bigbra{2(f-\delta_{j\in(J'+1)^{\sh}})+1+\delta_{f=1}}+1+1\\
        &\hspace{1.5cm}=p-2f+2\delta_{j\in(J'+1)\cap J}-\delta_{f=1}\leq p-2f+2-\delta_{f=1}\leq p-f,
    \end{align*}
    where the second inequality follows from (\ref{General Eq bound s}). Since $j\notin J''$, we have either $j\in J'\cap(J-1)$, or $j\notin J'$ and $j\notin J-1$. We give the proof when $j\in J'\cap(J-1)$, the other case being similar. By the definition of $\un{c}$, (\ref{General Eq rJ}) and (\ref{General Eq cJ}) we have
    \begin{equation*}
    \begin{aligned}
        c_j&=
    \begin{cases}
        p+0-f-0&\text{if}~j+1\in J'\\
        p+(r_j+1)-f-(r_j+1)&\text{if}~j+1\notin J'
    \end{cases}\\
        &=p-f,
    \end{aligned}
    \end{equation*}
    which proves (iv).

    (v). By \eqref{General Eq Lem rJ} and \eqref{General Eq Lem cJ} we have
    \begin{align*}
        c^{J'}_j-r^{J\setminus J'}_j-c_j^J
    &\begin{aligned}[t]
        &=\bigbra{\delta_{j\notin J'}(p-1-r_j)+\delta_{j+1\notin J'}(r_j+1)-\delta_{j\notin J'}}-\bigbra{\delta_{j+1\in J\setminus J'}(r_j+1)-\delta_{j\in J\setminus J'}}\\
        &\hspace{1.5cm}-\bigbra{\delta_{j\notin J}(p-1-r_j)+\delta_{j+1\notin J}(r_j+1)-\delta_{j\notin J}}
    \end{aligned}\\
    &\begin{aligned}[t]
        &=\bigbra{\delta_{j\notin J'}-\delta_{j\notin J}}(p-1-r_j)+\bigbra{\delta_{j+1\notin J'}-\delta_{j+1\in J\setminus J'}-\delta_{j+1\notin J}}(r_j+1)\\
        &\hspace{1.5cm}-\bigbra{\delta_{j\notin J'}-\delta_{j\in J\setminus J'}-\delta_{j\notin J}}
    \end{aligned}\\
        &=\delta_{j\in J\setminus J'}(p-1-r_j).
    \end{align*}
    This proves (v).
\end{proof}


\bibliography{1}
\bibliographystyle{alpha}

\end{document}